\theoremstyle{plain}
\newtheorem{theorem}{Theorem}[section]
\newtheorem{proposition}[theorem]{Proposition}
\newtheorem{lemma}[theorem]{Lemma}
\newtheorem{corollary}[theorem]{Corollary}
\newtheorem{question}[theorem]{Question}
\theoremstyle{definition}
\newcommand{\appsection}[1]{\let\oldthesection\thesection
\renewcommand{\thesection}{Appendix \oldthesection}
\section{#1}\let\thesection\oldthesection}
\newtheorem{definition}[theorem]{Definition}
\newtheorem{notation}[theorem]{Notation}
\theoremstyle{remark}
\newtheorem{remark}[theorem]{Remark}
\newtheorem{example}[theorem]{Example}
\def\D{{\mathbb{D}}}
\def\Z{{\mathbb{Z}}}
\def\Q{{\mathbb{Q}}}
\def\C{{\mathbb{C}}}
\def\P{{\mathbb{P}}}
\def\aaa{{\bold a}}
\def\O{{\mathcal{O}}}
\def\X{{\mathcal{X}}}
\def\Y{{\mathcal{Y}}}
\DeclareMathOperator{\QG}{QG}
\DeclareMathOperator{\Cl}{Cl}
\DeclareMathOperator{\Spec}{Spec}
\DeclareMathOperator{\Proj}{Proj}
\DeclareMathOperator{\Def}{Def}
\DeclareMathOperator{\Hom}{Hom}
\DeclareMathOperator{\cHom}{\mathcal{H}\mathnormal{om}}
\DeclareMathOperator{\wt}{wt}
\DeclareMathOperator{\SL}{SL}
\newcommand{\QED}{\ifhmode\unskip\nobreak\fi\quad {\rm Q.E.D.}} 
\newcommand{\bA}{\mathbb A}
\newcommand{\bB}{\mathbb B}
\newcommand{\bC}{\mathbb C}
\newcommand{\bE}{\mathbb E}
\newcommand{\bG}{\mathbb G}
\newcommand{\bN}{\mathbb N}
\newcommand{\bP}{\mathbb P}
\newcommand{\bQ}{\mathbb Q}
\newcommand{\bR}{\mathbb R}
\newcommand{\bU}{\mathbb U}
\newcommand{\bX}{\mathbb X}
\newcommand{\bY}{\mathbb Y}
\newcommand{\bZ}{\mathbb Z}
\newcommand{\cX}{\mathcal X}
\newcommand{\cI}{\mathcal I}
\newcommand{\cO}{\mathcal O}
\newcommand{\cM}{\mathcal M}
\newcommand{\cN}{\mathcal N}
\begin{document}
\bibliographystyle{amsplain}
\title[Construcci\'on]{Flipping surfaces}
\author{\textrm{Paul Hacking, Jenia Tevelev and Giancarlo Urz\'ua}}


\maketitle

\begin{abstract}
We study semistable extremal $3$-fold neighborhoods, which are fundamental building blocks of birational geometry,
following earlier work of Mori, Koll\'ar, and Prokhorov. We classify possible flips and extend Mori's algorithm for computing flips of extremal neighborhoods of type $k2A$ to more general $k1A$ neighborhoods.
The novelty of our approach is to show that $k1A$
belong to the same deformation family as $k2A$, in fact we explicitly construct the universal family of extremal neighborhoods. This construction follows very closely Mori's division algorithm, which can be interpreted as a sequence of mutations in the cluster algebra. We~identify, in the versal deformation space of a cyclic quotient singularity, the locus of deformations such that the total space admits a (terminal) antiflip. We show that these deformations come from at most two irreducible components of the versal deformation space. As an application, we give an algorithm for computing stable one-parameter degenerations of smooth projective surfaces (under some conditions) and describe several components of the Koll\'ar--Shepherd-Barron--Alexeev boundary of the moduli space of smooth canonically polarized surfaces of geometric genus zero.
\end{abstract}

\section{Introduction}

We continue the study of semistable extremal neighborhoods
pioneered by Mori, Koll\'ar, and Prokhorov \cite{M88, KM92, M02, MP10}.
Our motivation is to compute explicitly stable one-parameter degenerations of
smooth canonically polarized surfaces and to describe their moduli space introduced by Koll\'ar and Shepherd-Barron \cite{KSB88}.

We work throughout over $k=\bC$. Extremal neighborhoods are fundamental building blocks
of the Minimal Model Program approach to $3$-dimensional birational geometry \cite{KM98}.
The goal of this program is to construct a canonical representative in the birational class
of a smooth (or mildly singular) proper $3$-fold by gradually forcing the canonical divisor
to intersect curves non-negatively.
On each step of the program, one has to analyze the local picture of a $3$-fold
around a curve which has negative intersection with the canonical divisor.
Concretely, let $\X$ be the germ of a $3$-fold along a proper reduced irreducible\footnote{
In some papers $C$ is allowed to be reducible, however we can always reduce to the irreducible
case by factoring $f$ locally analytically over $Q \in \Y$, see \cite[8.4]{K88}.} curve $C$ such that $\X$ has terminal singularities.
We say $\X$ is an \emph{extremal neighborhood} if there is a germ $Q \in \Y$ of a $3$-fold and a proper birational morphism
$$f \colon (C \subset \X) \rightarrow (Q \in \Y)$$
such that
 $f_*\cO_\X = \cO_\Y$, $f^{-1}(Q)=C$ (as sets) and $K_\X\cdot C<0$. 
Then $C \simeq \bP^1$.
The exceptional locus (the union of positive-dimensional fibers)  of $f$
is either equal to $C$ or is a divisor with image a curve passing through~$Q$.
We say $f$ is a \emph{flipping contraction} in the first case and a \emph{divisorial contraction} in the second.

If $f$ is a divisorial contraction  
then $Q \in \Y$ is a terminal singularity \cite[Th 1.10]{MP10}.
If one is running the minimal model program, $\Y$~will (locally) give its next step.
If $f$ is a flipping contraction then one has to construct the flip.
The \emph{flip} of $f$ is a germ $\X^+$ of a $3$-fold along a proper reduced curve $C^+$ such that $\X^+$ has terminal singularities, and a proper birational morphism
$$f^+ \colon (C^+ \subset \X^+) \rightarrow (Q \in \Y)$$
such that $f^+_*\cO_{\X^+}=\cO_\Y$, $(f^+)^{-1}(Q)=C^+$ (as sets), and $K_{\X^+}\cdot C^+>0$. 

The existence of the flip was proved by Mori \cite[Th.~0.4.1]{M88}
and uniqueness follows from
$\X^+=\Proj_\Y\bigoplus\limits_{m \ge 0} \cO_\Y(mK_\Y)$
cf. \cite[Cor.~6.4]{KM98}.

To classify  extremal neighborhoods, following Miles Reid,
we choose a general member $E_X \in |-K_\X|$. Let  $E_Y=\pi(E_X) \in |-K_\Y|$. Then $E_X$ and $E_Y$ are normal surfaces with at worst Du Val singularities \cite[Th.~1.7]{KM92}\footnote{
This is a special case of the \emph{general elephant conjecture} of Miles Reid.
}.
We say the extremal neighborhood $f \colon \X \rightarrow \Y$ is \emph{semistable} if $Q \in E_Y$ is a Du Val singularity of type $A$ \cite[p.~541]{KM92}.
Semistable extremal neighborhoods are of two types $k1A$ and $k2A$. We~say $f$ is of type $k1A$ or $k2A$ if the number of singular points of $E_X$ equals one or two respectively \cite[\S1, C.4, p.542]{KM92}.
In the case of a flipping contraction, $f$ is semistable iff the general hyperplane section through $Q \in \Y$ is a cyclic quotient singularity $\frac{1}{n}(1,a)$ (\cite{KM92}, Corollary~3.4 and Appendix), i.e.,
 is (locally analytically) isomorphic to the quotient of $\bA^2$ by $\mu_n$ acting
by $(x,y)\to (\zeta x,\zeta^a y)$, where $\zeta\in\mu_n$ is a primitive root of unity.

We are primarily interested in semistable neighborhoods due to applications to compact moduli spaces of surfaces.
Let $s \in m_{\Y,Q} \subset \cO_{\Y,Q}$ be a general element. Consider the corresponding morphism $\Y\rightarrow\bA^1_s$
and the induced morphism $\X\rightarrow \bA^1_s$.
Write
$$X=X_0=(s=0) \subset \X\quad\hbox{\rm and}\quad Y=Y_0=(s=0) \subset \Y.$$
Thus we can view an extremal neighborhood as a total space of a flat family of surfaces.

Let $X_s$, $0< |s| \ll 1$, denote a nearby smooth fiber. In this paper we assume that the second Betti number
$$b_2(X_s)=1.$$
Our motivation for this assumption is as follows.

Let $\cM_{K^2,\chi}$ be Gieseker's moduli space of canonical
surfaces of general type with given numerical invariants $K^2$ and
$\chi$. Let $\overline{\cM}_{K^2,\chi}$ be its compactification,
the moduli space of stable surfaces of Koll\'ar, Shepherd-Barron,
and Alexeev. Given a map $\D^{\times}\to\cM_{K^2,\chi}$ from a punctured
smooth curve germ, i.e.~ a family $\X^{\times}\to\D^{\times}$ of canonical surfaces,
one is interested in computing its ``stable limit''
$\D\to\overline \cM_{K^2,\chi}$, perhaps after a finite base
change. In many important cases the
family $\X^{\times}\to\D^{\times}$ can be compactified by a flat family
$\X\to\D$, where the special fiber $X$ is irreducible, normal, and
has quotient singularities. Computing the stable limit then amounts to
running the relative minimal model program for $\X\to\D$.
It turns out that we can always reduce to the case of
flips and divisorial contractions of type $k1A$ and $k2A$
with $b_2(X_s)=1$ (in an analytic neighborhood of a contracted curve)
after a finite surjective base change by passing to a small partial resolution using simultaneous resolution of Du Val singularities
(see e.g. \cite[Th.~4.28]{KM98}  and \cite[\S2]{BC94}). See Section \ref{s4} for more details.

Building on \cite{MP10}, we show in Section \ref{s1} that
if $\X$
is an extremal neighborhood of type $k1A$ (resp.~$k2A$) with
$b_2(X_s)=1$ then $X$ is normal and has one (resp.~two) cyclic quotient singularities along $C$ of the following special form
$$(P \in X) \simeq 
\textstyle{\frac{1}{m^2}}(1,ma-1)$$
for some $m,a \in \bN$, $\gcd(a,m)=1$, a so-called \emph{Wahl singularity}.

In \cite{M02} Mori gave an explicit algorithm for computing flips of extremal neighborhoods of type $k2A$.
We extend his result to neighborhoods of type $k1A$.
The novelty of our approach is that
instead of considering extremal neighborhoods one-by-one,
we construct their universal family (see Section~\ref{s2}), and in particular show that $k1A$
neighborhoods belong to the same deformation family as $k2A$ (see Proposition \ref{k1Adegeneratestok2A}, whose proof in given at the end of \S \ref{k1Atok2A}).

\begin{theorem}
For any extremal neighborhood of type $k1A$ or $k2A$ with $b_2(X_s)=1$,
in Section~\ref{s2} we define a four-tuple of integers
$(a_1',m_1',a_2',m_2')$ and the following data
which depends only on this four-tuple:

A positive integer $\delta$, a toric surface $M$ and
a toric birational morphism $p:\, M \rightarrow \bA^2$ which depend only on $\delta$
($M$ is only locally of finite type for $\delta>1$),
flat irreducible  families of surfaces
$$\bU\to M,\quad \bY\to  \bA^2,\quad \bX^+\to\bA^2,$$
and morphisms
$$\pi:\,\bU\to\bY\times_{\bA^2} M,\quad\hbox{\rm and}\quad \pi^+:\,\bX^+\to\bY,$$
such that the following holds.

Let $f \colon (C \subset \X) \rightarrow (Q \in \Y)$ be an extremal neighborhood of type $k1A$ or $k2A$ with $b_2(X_s)=1$
and with associated four-tuple $(a_1',m_1',a_2',m_2')$.
Then there is a morphism
$g \colon (0 \in \bA^1_t) \rightarrow M$
such that $p(g(0))=0 \in \bA^2$ and $\X\to\Y$
is the pull-back of $\bU\to\bY \times_{\bA^2} M$ under $g$.
In other words, $\bU$ is the universal family of extremal neigborhoods.

If $f$ is a flipping contraction then the flip
$f^+:\,\X^+\to\Y$
is the pullback of $\bX^+\to\bY$ under $p \circ g$.
\label{thmintro}
\end{theorem}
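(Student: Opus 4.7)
Since the statement combines a construction with a universality assertion, I would organise the argument in three stages, relying on the classification of Section~\ref{s1}: the surface $X$ has one or two Wahl singularities $\frac{1}{m^2}(1, ma-1)$ along $C \simeq \bP^1$, and the base germ $Q \in \Y$ is a cyclic quotient $\frac{1}{n}(1,a)$.

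Stage 1 is to define the four-tuple and the base $\bY \to \bA^2$. The Hirzebruch--Jung expansion of $n/a$, together with the combinatorial position of the Wahl singularities along $C$, distinguishes a P-resolution of $Q$ whose two Wahl singularities are $\frac{1}{(m_i')^2}(1, m_i' a_i' - 1)$; this is the four-tuple. The two Q-Gorenstein smoothing parameters of these Wahl singularities cut out a two-dimensional irreducible component of the versal deformation space of $Q \in \Y$, and I let $\bY \to \bA^2$ be the restriction of the versal family to this component. The case $k1A$ arises as the locus where one of the Wahl smoothings degenerates, so $k1A$ and $k2A$ share the same four-tuple; this is precisely Proposition~\ref{k1Adegeneratestok2A}.

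Stage 2 is to build $p : M \to \bA^2$ and $\bU \to M$ by running Mori's division algorithm relatively over $\bA^2$. Each step of the algorithm is a cluster mutation, which toroidally is a weighted blow-up of $\bA^2$; the concatenation of these blow-ups is the toric birational morphism $M \to \bA^2$, and the integer $\delta$ records the combinatorial depth. For $\delta = 1$ a single step suffices; for $\delta > 1$ the algorithm iterates indefinitely in a single chart, which is why $M$ is only locally of finite type. Once $M$ is built, $\bU \to \bY \times_{\bA^2} M$ is the simultaneous Q-Gorenstein partial resolution along the Wahl locus, with exceptional curve the universal $C$. The flip family $\pi^+ : \bX^+ \to \bY$ is constructed symmetrically, using the opposite (dual) P-resolution so as to produce the Wahl singularities on $X^+$ rather than on $X$.

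Stage 3 verifies the universal property. Given any extremal neighborhood $f$ with the prescribed four-tuple, the one-parameter Q-Gorenstein deformation $\X \to \bA^1_t$ determines a tangent direction at $0 \in \bA^2$, and iteratively lifting this direction through each mutation defines the morphism $g : \bA^1_t \to M$ with $p(g(0))=0$. The identification $\X \to \Y \;=\; g^*(\bU \to \bY \times_{\bA^2} M)$ then follows locally at each Wahl point from the universal property of the partial resolution. The main obstacle I expect is to show that this lift $g$ exists and is unique in spite of $M$ being only locally of finite type when $\delta > 1$; this is exactly the termination of Mori's division algorithm applied to the discrete data of $\X$, which guarantees that $g$ factors through a finite stage of the mutation tower. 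Once $g$ is in hand, the pull-back statement for $\bX^+$ is automatic because $\bX^+$ lives over $\bA^2$ and the flipping contraction is preserved by flat base change along $p \circ g$.
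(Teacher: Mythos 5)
Your outline reproduces the paper's overall architecture: the four-tuple is the data of the extremal P-resolution $X^+$ of $Q\in Y$, the family $\bX^+\to\bA^2$ is its versal $\bQ$-Gorenstein deformation with $\bY$ its affinization, $M$ is glued from charts indexed by the steps of Mori's division algorithm (mutations), and $k1A$ neighborhoods sit over interior points of one-dimensional toric strata while $k2A$ sit over torus fixed points. But the universality step --- your Stage 3 --- is where the real content lies, and your mechanism for producing $g$ has a gap. The morphism $g$ is not obtained by ``iteratively lifting a tangent direction through each mutation,'' and termination of the division algorithm is a red herring: the chart of $M$ containing $g(0)$ is determined a priori by the singularities of the special fiber $X$ (Proposition~\ref{k1Adegeneratestok2A}), so no infinite process arises. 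Moreover a lift $g$ of the classifying map $\bA^1_t\to\bA^2$ of the induced deformation of $Q\in Y$ is far from unique (the fibers of $p$ over $0$ are positive-dimensional), and different lifts give genuinely different total spaces $\X$; so the data of $\X$ itself, not just of $\Y$, must determine $g$. What is actually needed is that $\bU\to M$ induces a \emph{versal} $\bQ$-Gorenstein deformation of each fiber, so that the given one-parameter deformation $\X\to\bA^1_t$ of $X$ is a pullback. This is Lemma~\ref{locallyversal}, and it rests on a computation your proposal omits: $H^i(T_X(-\log B))=0$ for $i>0$ on the toric fibers, $H^2(T_X)=0$, and $\deg\cN'_{C/X}=-1$ so that $H^1(T_X)=0$ --- i.e., there are no local-to-global obstructions and the explicit charts realize the product of the local versal deformations of the Wahl points. ``The universal property of the partial resolution locally at each Wahl point'' only controls the local pieces and does not globalize without this vanishing.

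A secondary inaccuracy: $\bU$ is not constructed as a ``simultaneous $\bQ$-Gorenstein partial resolution along the Wahl locus'' of $\bY\times_{\bA^2}M$. Each chart $\bX^j$ is built directly as a $\Gamma$-quotient of an explicit complete intersection (the versal deformation of a $k2A$ surface), and the morphism to $\bY\times_{\bA^2}\bA^2_{u^j_1,u^j_2}$ is then \emph{defined} by the terminal cluster variables $F_{k-1},F_k,F_{k+1},F_{k+2}$; in the flipping case its exceptional locus is the codimension-two set $(F_k=F_{k+1}=0)$, a small (anti-flipping) modification rather than a resolution. Proving that the $F_i$ are regular sections, that the charts glue, and that $\pi$ is proper birational with the stated exceptional locus is a substantial part of Section~\ref{s2} that your outline treats as automatic.
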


We give an explicit combinatorial algorithm to compute all $k1A$ and $k2A$ corresponding to this family (see Propositions~\ref{k1Ainvariants} and \ref{k1Adegeneratestok2A}).

The construction of the universal family, which is the content of Section~\ref{s2}, follows very closely the division algorithm in \cite{M02},
which one can interpret as a sequence of mutations in the cluster algebra of rank $2$ with general coefficients (see Remark~\ref{cluster}).

If $\delta =1$ the morphism $p$ is the blowup of $0 \in \bA^2$.
If $\delta\ge2$, $M$ is the toric variety associated to the fan
$\Sigma$ with support
$$
\{ (x_1,x_2) \in \bR^2 \ | \ x_1,x_2 \ge 0 \mbox { and either } x_1 > \xi x_2 \mbox{ or } x_2 > \xi x_1 \} \cup \{0\},
$$
where
$\xi = (\delta + \sqrt{\delta^2-4})/2 \ge 1$.
It has rays spanned by $v_i \in \bZ^2$ for $i \in \bZ \setminus \{0\}$ given by
$v_1=(1,0)$, $v_2=(\delta,1)$,
$v_{i+1}+v_{i-1}=\delta v_i$ for  $i \ge 2$,
and
$v_{-1}=(0,1)$, $v_{-2}=(1,\delta)$,
$v_{-(i+1)}+v_{-(i-1)}=\delta v_{-i}$ for  $i \ge 2$.
Over torus orbits of dimension $1$ (resp. $0$) corresponding to rays of $\Sigma$ (resp.~$2$-dimensional cones),
fibers of $\pi$ are surfaces with one (resp.~two) Wahl singularities.

In the flipping case, the special fiber $Y\subset\Y$ is a germ of a cyclic quotient singularity
$\frac{1}{\Delta}(1,\Omega)$ and the special fiber $X^+\subset\X^+$ is its
\emph{extremal P-resolution}, i.e., a partial resolution $f^+_0 \colon (C^+ \subset X^+) \rightarrow (Q \in Y)$
such that $X^+$ has only Wahl singularities
$\frac{1}{m_i'^2}(1,m_i'a_i'-1)$ for $i=1,2$,
the exceptional curve $C^+=\P^1$, and $K_{X^+}$ is relatively ample.

\begin{example}
Let ${m}'_1=5, {a}'_1=2$, and ${m}'_2=3, {a}'_2=1$. The Wahl singularities $\frac{1}{5^2}(1,9)$ and $\frac{1}{3^2}(1,2)$ are minimally resolved by chains of smooth rational curves $E_1, E_2, E_3$ and $F_1, F_2$ respectively, such that $E_1^2=-3$, $E_2^2=-5$, $E_3^2=-2$, $F_1^2=-5$, and $F_2^2=-2$. Let $(C^+ \subset X^+)$ be a surface germ around a $C^+=\P^1$ such that $X^+$ has these two Wahl singularities, $C^+$ passes through both of them, and in the minimal resolution of $X^+$, the proper transform of $C^+$ is a $(-1)$-curve which only intersects $E_1$ and $F_1$, and transversally at one point. With this data we build a $\pi^+ \colon \bX^+ \to \bY$ as Theorem \ref{thmintro}. We have $\delta=4$. With this we construct the universal antiflip $\pi : \bU \to \bY$, where the central fiber of $\bY$ is a surface germ with one singularity $\frac{1}{94}(1,53)$, and the corresponding birational toric morphism $p \colon M \to \bA^2$ defined by the morphism between fans shown in Figure \ref{f0}.

In this figure, the rays $v_{i}$, $v_{-i}$ with $i\geq 2$ in the fan $\Sigma$ are decorated by the data $(m,a)$ of the associated $k1A$ antiflip, which has as central fiber a surface with one singularity $\frac{1}{m^2}(1,ma-1)$. Two consecutive $v_{i},v_{i+1}$ or $v_{-i},v_{-(i+1)}$ with $i \geq 2$ represents a $k2A$ antiflip, which has two singularities, one from each of the $k1A$ of the rays. These $k1A$'s are recovered from the $k2A$ by $\Q$-Gorenstein smoothing up one of the singularities while preserving the other. When $i$ tends to infinity, the rays $v_i$ and $v_{-i}$ approach to two lines with irrational slopes $\xi$ and $\frac{1}{\xi}$, where $$ \xi=2+\sqrt{3} = 4 - \frac{1}{4 - \frac{1}{\ddots }}.$$ The rays $v_{1}$ and $v_{-1}$ correspond to the Wahl singularities $\frac{1}{3^2}(1,2)$ and $\frac{1}{5^2}(1,9)$ of $X^+$ respectively.

\begin{figure}[htbp]
\includegraphics[width=12cm]{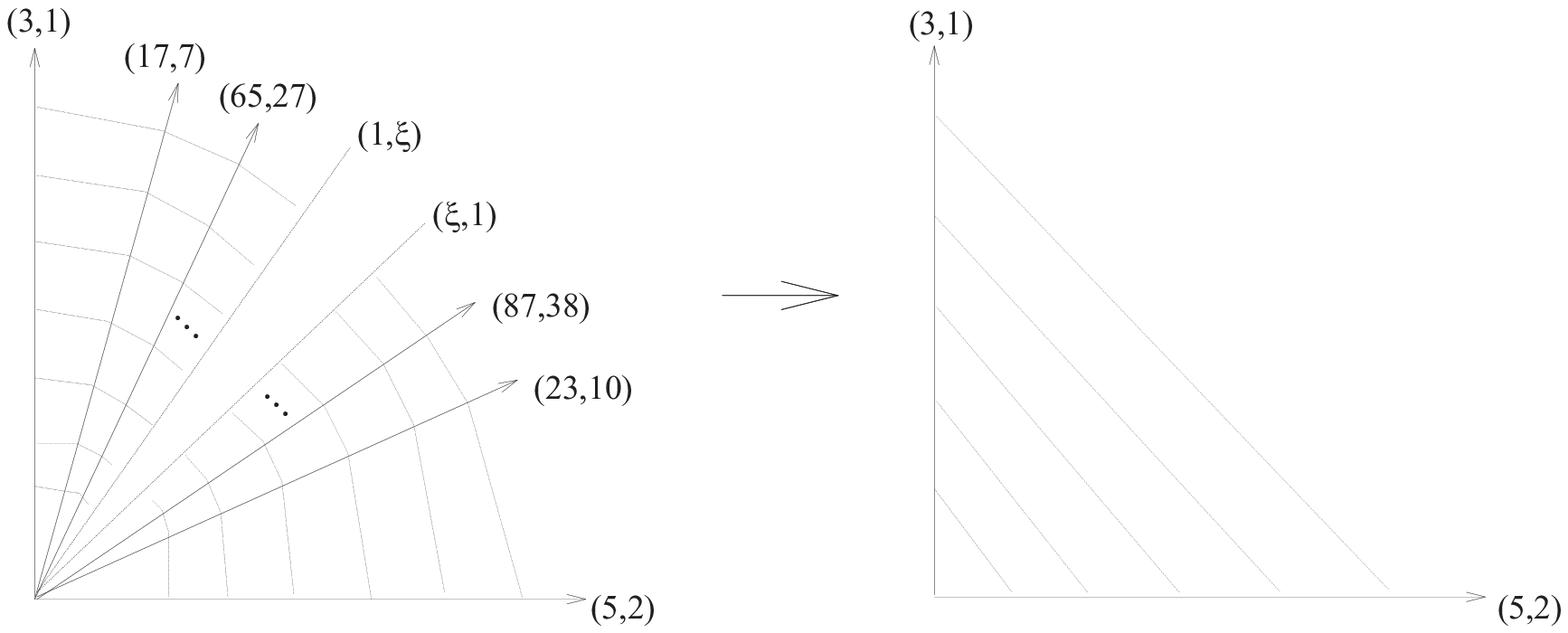}
\caption{The map between fans for $p \colon M \to \bA^2$.}
\label{f0}
\end{figure}
\end{example}

In Section~\ref{s2}, following \cite{M02},
we show that an extremal P-resolution of a cyclic quotient singularity gives a unique family $\bU\to M$ of (terminal) antiflips. In addition we identify, in the versal deformation space of this cyclic quotient singularity $X^+$,
the locus of deformations such that the total space admits a terminal antiflip, see Corollary~\ref{existenceofterminalantiflips}.
Specifically, we show that a smoothing $\X^+\to\bA^1_s$ admits a terminal antiflip
if and only if there is a divisor $D \in \mbox{$|-K_{\X^+}|$}$ such that $D|_{X^+}$ is the toric boundary of $X^+$
for some choice of toric structure and the axial multiplicities $\alpha_1,\alpha_2$
of the singularities of $\X^+$ satisfy
$\alpha_1^2 -\delta\alpha_1\alpha_2 + \alpha_2^2 > 0$.
This raises the following question, which we find very interesting:

\begin{question}
Classify smoothings of cyclic quotient singularities which admit an antiflip (not necessarily terminal).
\end{question}

See also Remark~\ref{asflnsfgajf} and Example~\ref{asdvasfgafg} of a canonical antiflip.

In \S \ref{2extremalnbhd} we classify extremal P-resolutions.
We show, by a combinatorial argument built on continued fractions representing zero \cite{C89,S89} (see \S \ref{contfracpresol}), that any cyclic quotient singularity admits at most two extremal P-resolutions (and thus at most two families of antiflips). Moreover, we prove in \S \ref{delta} that these two families must have the same parameter $\delta$, i.e., they live over the same toric surface $M$. At the end of \S \ref{contfracpresol}, we give a list with the first cyclic quotient singularities (up to order $45$) admitting one or two extremal P-resolutions.

Finally, in Section~\ref{s4}, we give an application to moduli of stable surfaces,
which was the original motivation for this paper.
We start by constructing a stable surface which admits a $\bQ$-Gorenstein smoothing
to a canonically polarized smooth surface with $K^2=4$ and $p_g=0$, following ideas of Lee and Park \cite{LP07}.
The surface $X$ has two Wahl singularities $P_1=\frac{1}{252^2}(1,252 \cdot 145 -1)$ and $P_2=\frac{1}{7^2}(1,7 \cdot 5 -1)$, and no local-to-global obstructions to deform (i.e. deformations of its singularities globalize to deformations of $X$).
The Koll\'ar--Shepherd-Barron--Alexeev moduli space of stable surfaces near $X$ is two-dimensional and has two boundary curves,
which correspond to $\bQ$-Gorenstein deformations $X\rightsquigarrow X_1$ and $X\rightsquigarrow X_2$
which smoothen $P_1$ (resp.~$P_2$)
and preserve the remaining singularity. We address the question of describing minimal models of $X_1$ and $X_2$.
Namely, we consider deformations $\tilde X\rightsquigarrow \tilde X_1$ and $\tilde X\rightsquigarrow \tilde X_2$
of surfaces obtained by simultaneously resolving $P_1$ (resp.~$P_2$) and run our explicit MMP
to find new limits $\tilde X^+$ of each family. This allows us to conclude that
$\tilde X_1$ is a rational surface and $\tilde X_2$ is a Dolgachev surface of type $(2,3)$, i.e.,
an elliptic fibration over $\bP^1$ with two multiple fibers of multiplicity $2$ and $3$, respectively.
More systematic study of the boundary of the moduli space of surfaces
of geometric genus $0$ will appear elsewhere.

\subsection*{Acknowledgements}

Paul Hacking was supported by NSF grants DMS-0968824 and DMS-1201439. Jenia Tevelev was supported by NSF grants DMS-1001344 and DMS-1303415. Giancarlo Urz\'ua was supported by the FONDECYT Inicio grant 11110047 funded by the Chilean Government. We would like to thank I.~Dolgachev, J.~Koll\'ar, S.~Mori, and M.~Reid for helpful discussions. In particular, M. Reid explained to us in detail his joint work with G. Brown which describes explicitly the graded ring of a $k2A$ flip \cite{BR12}.

\tableofcontents

\section{Extremal neighborhoods} \label{s1}

\subsection{Toric background} \label{toric}

We recall that the minimal resolution of a cyclic quotient singularity of type $\frac{1}{n}(1,a)$ has exceptional locus a nodal chain of smooth rational curves
with self-intersection numbers $-b_1,\ldots,-b_r$ where
$$n/a=[b_1,\ldots,b_r]= b_1 - \frac{1}{b_2-\frac{1}{\ddots -\frac{1}{b_r}}},\quad b_i \ge 2 \mbox{ for all $i$}$$
is the expansion of $n/a$ as a Hirzebruch--Jung continued fraction. See \cite{F93}, p.~46. Moreover, the minimal resolution can be described torically as follows.
Identify the singularity $(P \in X)$ with the germ
$$(0 \in \bA^2_{x_1,x_2}/\textstyle{\frac{1}{n}}(1,a))$$
of the affine toric surface corresponding to the cone $\sigma=\bR^2_{\ge 0}$ in the lattice
$$N=\bZ^2+\bZ \textstyle{\frac{1}{n}}(1,a).$$
Define $\alpha_{i},\beta_{i} \in \bN$ for $1 \le i \le r$ by $\alpha_{1}=\beta_{r}=1$ and
$$\alpha_{i}/\alpha_{i-1}=[b_{i-1},\ldots,b_1] \mbox{ for $2 \le i \le r$},$$
$$\beta_{i}/\beta_{i+1}=[b_{i+1},\ldots,b_{r}] \mbox{ for $1 \le i \le r-1$}.$$
Define $$v_i = \frac{1}{n}(\alpha_i,\beta_i) \mbox{ for $1 \le i \le r$}.$$
Then each $v_i$ is a primitive vector in the lattice $N$.
The minimal resolution $\pi \colon \tilde{X} \rightarrow X$ is the toric proper birational morphism corresponding to the subdivision $\Sigma$ of the cone $\sigma$ given by adding the rays
$$\rho_i = \bR_{\ge 0}v_i$$
generated by the $v_i$ for $i=1,\ldots,r$.
Let $E_i$ denote the exceptional curve corresponding to the ray $\rho_i$. Then $E_i$ is a smooth rational curve such that $E_i^2=-b_i$.
Define $$l_i =(x_i=0) \subset X \mbox{ for $i=1,2$.}$$
Let $l_i' \subset \tilde{X}$ denote the strict transform of $l_i$.
Then the toric boundary of $\tilde{X}$ is the nodal chain of curves $l_1',E_r,\ldots,E_1,l_2'$. Moreover, we have
$$\pi^*l_1=l_1'+\frac{1}{n}\sum \alpha_iE_i$$
and
$$\pi^*l_2=l_2'+\frac{1}{n}\sum \beta_i E_i.$$
(Indeed, since $E_i$ corresponds to the primitive vector $v_i=\frac{1}{n}(\alpha_i,\beta_i) \in N$, the orders of vanishing of $x_1$ and $x_2$ along $E_i$ equal $\alpha_i/n$ and $\beta_i/n$ respectively.)
Finally the discrepancies $c_i \in \bQ$ defined by
$$K_{\tilde{X}}=\pi^*K_X+\sum c_i E_i$$
are given by
$$c_i= -1+(\alpha_i+\beta_i)/n.$$
(Indeed, for any toric variety $X$ with boundary $B$ we have $K_X+B=0$. Thus in our case $K_X=-(l_1+l_2)$ and $K_{\tilde{X}}=-(l_1'+E_r+\cdots+E_1+l_2')$. Now the formula for the discrepancies follows from the formulas for the pullback of $l_1$ and $l_2$ above.)

The Wahl singularity is a cyclic quotient singularity of the form
$$(P \in X) \simeq \bA^2_{p,q}/\textstyle{\frac{1}{m^2}}(1,ma-1)$$
for some $m,a \in \bN$, $\gcd(a,m)=1$.
The number $m$ is the index of the singularity.
We have an identification
$$(P \in X) \simeq (\xi\eta=\zeta^{m}) \subset \bA^3_{\xi,\eta,\zeta}/\textstyle{\frac{1}{m}}(1,-1,a)$$
given by
$$\xi=p^m, \quad \eta=q^m, \quad \zeta=pq.$$
The deformation $(P \in \X) \rightarrow (0 \in \bA^1_t)$ of $(P \in X)$ is of the form
$$(0 \in (\xi\eta=\zeta^m+t^{\alpha}h(t)) \subset (\bA^3_{\xi,\eta,\zeta}/\textstyle{\frac{1}{m}}(1,-1,a)) \times (0 \in \bA^1_t))$$
for some $\alpha \in \bN$ and convergent power series $h(t)$, $h(0) \neq 0$.
It follows that we have an isomorphism of germs
$$(P \in \X) \simeq (0 \in (\xi\eta=\zeta^m+t^{\alpha}) \subset (\bA^3_{\xi,\eta,\zeta}/\textstyle{\frac{1}{m}}(1,-1,a))\times \bA^1_t).$$
The number $\alpha \in \bN$ is called the \emph{axial multiplicity} of $P \in \X$ \cite[Def.~1a.5, p.~140]{M88}.
In particular, the $3$-fold germ $\X$ is analytically $\bQ$-factorial by \cite[2.2.7]{K91}.

The following proposition is mostly a combination of results from \cite{M02} and \cite{MP10}.

\begin{proposition} \label{b2=1=>normal}
Let $f \colon (C \subset \X) \rightarrow (Q \in \Y)$
be an extremal neighborhood of type $k1A$ or $k2A$. Let $t \in m_{\Y,Q} \subset \cO_{\Y,Q}$ be a general element
and let $\X \rightarrow (0 \in \bA^1_t)$ be the corresponding morphism. Let $X=X_0$ denote the special fiber and
$X_s$, $0< s \ll 1$, a nearby fiber. Then $b_2(X_s)=1$ iff $X$ is normal and has the following description.

If $f$ is of type $k2A$ then $X$ has two singular points $P_i \in C \subset X$, $i=1,2$.
For each $i=1,2$ there are numbers $m_i,a_i \in \bN$, $m_i > 1$, $\gcd(a_i,m_i)=1$, and an isomorphism of germs
$$(P_i \in C \subset X) \simeq (0 \in (q_i=0) \subset \bA^2_{p_i,q_i}/\textstyle{\frac{1}{m_i^2}}(1,m_ia_i-1)).$$

If $f$ is of type $k1A$ then $X$ has one singular point $P_1 \in C \subset X$.
There are numbers $m_1,a_1 \in \bN$, $\gcd(a_1,m_1)=1$,  and an isomorphism
$$(P_1 \in C \subset X) \simeq (0 \in (p_1^{m_0}=q_1^{m_2}) \subset \bA^2_{p_1,q_1}/\textstyle{\frac{1}{m_1^2}}(1,m_1a_1-1))$$
for some $m_0,m_2 \in \bN$ such that
$$m_0 \equiv m_2(m_1a_1-1) \bmod m_1^2.$$
\end{proposition}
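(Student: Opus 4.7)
The plan is to combine the Mori–Prokhorov classification of terminal $3$-fold singularities occurring on $C$ in a semistable extremal neighborhood with the topological constraint $b_2(X_s)=1$ to pin down both normality of $X$ and the stated analytic form at each singular point. The starting point is that, for a semistable extremal neighborhood of type $k1A$ or $k2A$, results in \cite{M02, MP10} show that every singular point of $\mathcal{X}$ on $C$ is a terminal cyclic quotient singularity of type $cA/n$, and at each such point $P_i$ one may write analytically
$$(P_i \in \mathcal{X}) \simeq \bigl((\xi_i\eta_i=\phi_i(\zeta_i,t)) \subset \mathbb{A}^4_{\xi_i,\eta_i,\zeta_i,t}\bigr)\big/\tfrac{1}{n_i}(1,-1,a_i,0),$$
with $\gcd(a_i,n_i)=1$. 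Moreover the general elephant $E_X\in|-K_{\mathcal{X}}|$ has Du Val singularities of type $A$ and, in the $k2A$ (resp.\ $k1A$) case, contains exactly two (resp.\ one) singular points of $\mathcal{X}$ lying on $C$.

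Restricting a local model to $X=(t=0)$ gives the surface germ $(\xi_i\eta_i=\phi_i(\zeta_i,0))/\tfrac{1}{n_i}(1,-1,a_i)$, which matches the required Wahl form $\tfrac{1}{m_i^2}(1,m_ia_i-1)$ precisely when $\phi_i(\zeta_i,0)=\zeta_i^{m_i}$ (up to a unit) with $n_i=m_i$, via the identification $\xi_i=p_i^{m_i},\ \eta_i=q_i^{m_i},\ \zeta_i=p_iq_i$. To promote this from ``possibility'' to ``necessity'' I would invoke the Wahl criterion: the Milnor fibre of a $\mathbb{Q}$-Gorenstein smoothing of a Wahl singularity is a rational homology ball, so such a smoothing contributes $0$ to $b_2$, whereas the $cA/n$ smoothing of any other cyclic quotient singularity contributes strictly positively. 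Since $C\simeq\mathbb{P}^1\subset X$ already contributes $1$ to $b_2(X_s)$, the hypothesis $b_2(X_s)=1$ forces (i) $X$ to be normal, because any codimension-one non-normal locus would generate further independent classes in $b_2(X_s)$; (ii) each singular point of $X$ on $C$ to be Wahl of the stated form; and (iii) no extra singular points of $\mathcal{X}$ to occur on $C$ beyond those already detected by the general elephant. Conversely, under these three conditions the contributions cancel so that $b_2(X_s)=1$ exactly, establishing the ``if'' direction.

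For $k1A$ it remains to derive the particular local equation $p_1^{m_0}=q_1^{m_2}$ for $C$ at $P_1$ together with the congruence $m_0\equiv m_2(m_1a_1-1)\bmod m_1^2$. Unlike the $k2A$ case (where $C$ necessarily passes through each $P_i$ along a toric ray of the Wahl singularity, giving the equation $q_i=0$), in the $k1A$ case the branch of $C$ at $P_1$ only needs to meet one of the toric boundary divisors transversally. Pulling $C$ back to the $\mu_{m_1^2}$-cover $\mathbb{A}^2_{p_1,q_1}$, its defining equation must be a monomial binomial $p_1^{m_0}-q_1^{m_2}$ (up to unit) that is $\mu_{m_1^2}$-semi-invariant of weight zero, and the weight-zero condition unwinds precisely to the stated congruence $m_0\equiv m_2(m_1a_1-1)\bmod m_1^2$. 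The integers $m_0,m_2$ are interpreted as the contact orders of $C$ with the two toric divisors at $P_1$, determined by the combinatorics of the extremal neighborhood and its general elephant.

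The main obstacle will be converting the heuristic ``Wahl contributes $0$ to $b_2$, non-Wahl contributes something positive'' into a precise numerical inequality sharp enough to rule out all alternatives simultaneously. This requires the rational-homology-ball criterion for Wahl smoothings, careful Milnor-number bookkeeping for more general cyclic quotient smoothings along their $cA/n$ direction, and a Clemens–Schmid or Wang-type exact sequence for $\mathcal{X}\to\mathbb{A}^1_t$ that relates $H^\ast(X_s)$ to $H^\ast(X)$ and to the local vanishing cycles at each singular point of $\mathcal{X}$ on $C$.
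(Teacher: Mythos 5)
Your treatment of the normal case is essentially the paper's argument: the classification from \cite{M02} and \cite{MP10} puts T-singularities $\frac{1}{\rho m^2}(1,\rho ma-1)$ on $C$, the Milnor fiber of a $\bQ$-Gorenstein smoothing of such a point has $b_2=\rho-1$ (zero exactly in the Wahl case), and a Mayer--Vietoris count gives $b_2(X_s)=1+\sum(\rho_i-1)$, forcing $\rho_i=1$. The congruence $m_0\equiv m_2(m_1a_1-1)\bmod m_1^2$ as a semi-invariance condition is also fine, though note that the binomial form $p_1^{m_0}=q_1^{m_2}$ of the local equation of $C$ is itself part of the statement of \cite[Th.~1.8.1]{MP10} rather than something you can deduce from semi-invariance alone (an arbitrary invariant curve germ through $P_1$ need not be binomial; this is where the full force of the classification of $k1A$ neighborhoods enters).

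The genuine gap is the non-normal case, which you flag as ``the main obstacle'' but do not resolve. Your claim that a codimension-one non-normal locus ``would generate further independent classes in $b_2(X_s)$'' is not a proof and does not follow from the Wahl/rational-homology-ball dichotomy: when $X$ is non-normal along $C$ there is no decomposition of $X_s$ into Milnor fibers of quotient singularities of $X$, so the Mayer--Vietoris bookkeeping breaks down entirely. The paper's argument here is a genuinely different and more delicate computation. It takes the normalization $\nu\colon H'\to X$ (whose structure --- two curves $C_1',C_2'$ glued, conjugate singularities $\frac{1}{m}(1,a)$ and $\frac{1}{m}(1,-a)$ at $P_1',P_2'$, a degenerate cusp at $N$ --- is given by \cite[Th.~1.8.2]{MP10}), applies Looijenga's invariance $K^2+b_2=\mathrm{const}$ for smoothings of the rational singularity $Q\in Y$ to relate $K_{\tilde H}^2+b_2(\tilde H)$ to $K_X^2+b_2(X_s)$, computes $K_{\tilde H}^2$ via the toric formula of Lemma~\ref{toricK^2} together with the identity $\sum(a_i-1)=\sum(b_i-1)=r+s-1$ for conjugate singularities, and finally invokes the numerical inequality $\tilde C_1^2+\tilde C_2^2+\sum G_i^2+2l+5\ge 0$ from \cite[Th.~1.8.2]{MP10} to conclude $b_2(X_s)\ge l+4\ge 4$. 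Without some substitute for this chain --- in particular without the $K^2+b_2$ invariance and the MP10 inequality --- the ``only if'' direction of the proposition is not established.
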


\begin{proof}
The proper birational morphism
$$f_0 \colon (C \subset X) \rightarrow (Q \in Y)$$
can be described as follows. If $f$ is of type $k2A$ then $X$ is normal and has two singular points $P_i \in C \subset X$, $i=1,2$, of index greater than $1$. For each $i=1,2$ there are numbers $\rho_i,m_i,a_i \in \bN$, $m_i > 1$, $\gcd(a_i,m_i)=1$, and an isomorphism of germs
$$(P_i \in C \subset X) \simeq (0 \in (q_i=0) \subset \bA^2_{p_i,q_i}/\textstyle{\frac{1}{\rho_im_i^2}}(1,\rho_im_ia_i-1)).$$
See \cite[Rk.~2.3]{M02}.
If $f$ is of type $k1A$ then $X$ is not necessarily normal.  If $X$ is normal then $X$ has one singular point $P_1 \in C \subset X$ of index greater than $1$ and possibly an additional singular point $P_2 \in C \subset X$ of index $1$. There are numbers $\rho_1,m_1,a_1 \in \bN$, $\gcd(a_1,m_1)=1$, $\rho_2 \in \bN$ and an isomorphism
$$(P_1 \in C \subset X) \simeq (0 \in (p_1^{m_0}=q_1^{m_2}) \subset \bA^2_{p_1,q_1}/\textstyle{\frac{1}{\rho_1m_1^2}}(1,\rho_1m_1a_1-1))$$
for some $m_0,m_2 \in \bN$ such that $$m_0 \equiv m_2(\rho_1m_1a_1-1) \bmod \rho_1m_1^2.$$
If $\rho_2=1$ then $P_1 \in X$ is the unique singular point.
If $\rho_2>1$ we have an additional singular point $P_2 \in X$ and an isomorphism
$$(P_2 \in C \subset X) \simeq (0 \in (x_2=0) \subset \bA^2_{x_2,y_2}/\textstyle{\frac{1}{\rho_2}}(1,-1))).$$
(So in particular $P_2 \in X$ is a Du Val singularity of type $A_{\rho_2-1}$.)
See \cite[Th.~1.8.1]{MP10}.

If $X$ is normal then we have $$b_2(X_s)=1+(\rho_1-1)+(\rho_2-1).$$ Indeed, $C$ is irreducible by assumption, the link of a cyclic quotient singularity is a lens space, and the Milnor fiber $M$ of a $\bQ$-Gorenstein smoothing of a singularity of type $\frac{1}{\rho m^2}(1,\rho ma-1)$ has Milnor number $\mu:=b_2(M)=\rho-1$. Now the result follows from a Mayer--Vietoris argument. So $b_2(X_s)=1$ iff $\rho_1=\rho_2=1$ as claimed.

It remains to show that if $X$ is not normal then $b_2(X_s)>1$. We use the classification of the non-normal case given in \cite{MP10}, Theorem~1.8.2.
Let $\nu \colon H' \rightarrow X$ denote the normalization of $X$ and $C'\subset H'$ the inverse image of $C$. Then $C'$ has two components $C_1', C_2' \simeq \bP^1$ meeting in a single point $N'$.  We have an isomorphism
$$(N' \in C' \subset H') \simeq (0 \in (x_1x_2=0) \subset \bA^2_{x_1,x_2}/\textstyle{\frac{1}{n}}(1,b))$$
for some $n,b$. There are also points $P_1' \in C_1' \setminus \{N'\}$, $P_2' \in C_2'\setminus \{N'\}$ and isomorphisms
$$(P_1' \in C_1' \subset H') \simeq (0 \in (x_1=0) \subset \bA^2_{x_1,x_2}/\textstyle{\frac{1}{m}}(1,a))$$
and
$$(P_2' \in C_2' \subset H') \simeq (0 \in (x_1=0) \subset \bA^2_{x_1,x_2}/\textstyle{\frac{1}{m}}(1,-a))$$
for some $m,a$.
The non-normal surface $X$ is formed from $H'$ by identifying the curves $C_1'$ and $C_2'$. The points $P'_1$ and $P'_2$ are identified to a point $P \in X$ and we have an isomorphism
$$(P \in X) \simeq (0 \in (xy=0) \subset \bA^2_{x,y,z}/\textstyle{\frac{1}{m}}(1,-1,a)).$$
Moreover the image $N \in X$ of $N'$ is a degenerate cusp singularity (cf.~\cite[Def.~4.20]{KSB88})
and $X$ has normal crossing singularities along $C \setminus \{N,P\}$.

Let $\mu \colon \tilde{H} \rightarrow H'$ be the minimal resolution. Since $Y=Y_0$ is a rational singularity (by \cite{KM92}, Corollary~3.4), $\tilde{H}$ is a resolution of $Y$, and $Y_s$ is a smoothing of $Y$, we have
$$K_{\tilde{H}}^2+b_2(\tilde{H}) = K_{Y_s}^2+b_2(Y_s).$$
See \cite{L86}, 4.1c. We remark that $K^2$ is well-defined in our situation (see e.g. \cite{L86}), and $b_2$ is the second Betti number as usual. The morphism $X_s \rightarrow Y_s$ is a proper birational morphism of smooth surfaces, hence a composition of blowups, so
$$K_{X_s}^2+b_2(X_s)=K_{Y_s}^2+b_2(Y_s).$$
Also $K_{X_s}^2=K_{X}^2$ because $\X \rightarrow (0 \in \bA^1_t)$ is $\bQ$-Gorenstein, so we obtain
$$K_{{\tilde{H}}}^2+b_2(\tilde{H})=K_X^2+b_2(X_s).$$
We use this equation to show that $b_2(X_s)>1$ if $X$ is non-normal.

Write $\tilde{C}_i \subset \tilde{H}$ for the strict transform of $C_i'$.
The exceptional locus of $\mu$ is a disjoint union of $3$ chains of smooth rational curves $E_1,\ldots,E_r$, $F_1,\ldots,F_s$, and $G_1,\ldots,G_l$ (the exceptional loci over the singularities $P_1'$, $P_2'$, and $N'$) such that the curves
$$E_r,E_{r-1},\ldots,E_1,\tilde{C}_1,G_1,\ldots,G_l,\tilde{C}_2,F_1,\ldots,F_s$$
form a nodal chain.
We have
$$K_{\tilde{H}}+\tilde{C}_1+\tilde{C_2}=\pi^*(K_{H'}+C')+ \sum \alpha_i E_i + \sum \beta_i F_i - \sum G_i$$
for some $\alpha_i,\beta_i \in \bQ$. Write $E_i^2=-a_i$ and $F_i^2=-b_i$. By Lemma~\ref{toricK^2} below, we have
$$(\sum \alpha_i E_i)^2 + (\sum \beta_i F_i)^2 = (2r-\sum a_i -a/m)+(2s-\sum b_i -(m-a)/m)$$
$$=2(r+s)-1 -\sum a_i - \sum b_i.$$
Now since the cyclic quotient singularities $P_1'$ and $P_2'$ are conjugate, that is, of the form $\frac{1}{m}(1,a)$ and $\frac{1}{m}(1,-a)$ for some $a,m$, we have the identity
$$\sum (a_i-1) = \sum (b_i-1) =r+s-1.$$
(Proof: By induction using the following characterization of conjugate singularities. We identify a cyclic quotient singularity $\frac{1}{m}(1,a)$ with the associated continued fraction $m/a=[a_1,\ldots,a_r]$, $a_i \ge 2$. Then $[2],[2]$ is a conjugate pair, and if $[a_1,\ldots,a_r]$, $[b_1,\ldots,b_s]$ is a conjugate pair, so is $[a_1+1,\ldots,a_r]$, $[2,b_1,\ldots,b_s]$.) So we obtain
$$(\sum \alpha_i E_i)^2 + (\sum \beta_i F_i)^2 = 1 - r-s.$$
Thus
$$(K_{\tilde{H}}+\tilde{C}_1+\tilde{C_2}+\sum G_i)^2= (K_{H'}+C')^2+ 1 - r-s.$$
We also compute
$$(K_{\tilde{H}}+\tilde{C}_1+\tilde{C_2}+\sum G_i)^2=K_{\tilde{H}}^2+2K_{\tilde{H}}(\tilde{C}_1+\tilde{C}_2+\sum G_i) + (\tilde{C}_1+\tilde{C_2}+\sum G_i)^2$$
$$=K_{\tilde{H}}^2-2((2+\tilde{C}_1^2)+(2+\tilde{C}_2^2)+\sum (2+G_i^2)) + \tilde{C}_1^2+\tilde{C}_2^2+(\sum G_i)^2+2(l+1)$$
$$=K_{\tilde{H}}^2-\tilde{C}_1^2-\tilde{C}_2^2-\sum G_i^2-2l-6.$$
where we have used the adjunction formula $K_{\tilde{H}}\Gamma+\Gamma^2=-2$ for $\Gamma \simeq \bP^1$.
Combining, we obtain
\begin{equation}\label{MPbound}
K_{\tilde{H}}^2=(K_{H'}+C')^2+\tilde{C}_1^2+\tilde{C}_2^2+\sum G_i^2+2l+7-r-s.
\end{equation}
By \cite{MP10}, Theorem 1.8.2, we have
$$\tilde{C}_1^2+\tilde{C}_2^2+\sum G_i^2+2l+5 \ge 0.$$
So
$$K_{\tilde{H}}^2 \ge (K_{H'}+C')^2 +2-r-s$$
Now $b_2(\tilde{H})=r+s+l+2$ and $K_{H'}+C'=\nu^*K_X$, so combining we obtain
$$K_{\tilde{H}}^2+b_2(\tilde{H}) \ge K_X^2+l+4.$$
Now (\ref{MPbound}) gives
$b_2(X_s) \ge l+4 \ge 4$.
\end{proof}

The first formula of the following lemma is well-known, see e.g. \cite{Ish00}.

\begin{lemma}\label{toricK^2}
Let $(P \in X)$ be a cyclic quotient singularity of type $\frac{1}{n}(1,a)$.
Let $\pi \colon \tilde{X} \rightarrow X$ be the minimal resolution of $X$.
Write $n/a=[b_1,\ldots,b_r]$, $b_i \ge 2$ for all $i$. Let $a'$ denote the inverse of $a$ modulo $n$.
Then $$K_{\tilde{X}}^2=2r+2-\sum_{i=1}^r b_i - (2+a+a')/n.$$

Let $P \in D \subset X$ be a curve such that $$(P \in X, D) \simeq (0 \in \bA^2_{x_1,x_2}/\textstyle{\frac{1}{n}}(1,a),(x_1=0)).$$
Let $D' \subset \tilde{X}$ be the strict transform of $D$.
Then
$$(K_{\tilde{X}}+D')^2=2r-\sum_{i=1}^r b_i -a/n.$$
\end{lemma}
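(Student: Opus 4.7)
The strategy for both formulas is to reduce the self-intersection to a telescoping sum via the continued fraction recurrence $v_{i-1} + v_{i+1} = b_i v_i$ for the primitive vectors of the resolution. By the toric calculation recalled immediately above, $K_{\tilde X} = \pi^* K_X + \sum c_i E_i$ with $c_i = -1 + (\alpha_i + \beta_i)/n$ and $\pi^* K_X \cdot E_i = 0$. Interpreting $K_{\tilde X}^2$ in the standard sense (the self-intersection of the compactly supported discrepancy $\sum c_i E_i$ at a resolution of a germ), adjunction $K_{\tilde X}\cdot E_i = b_i - 2$ on $E_i \simeq \bP^1$ gives
$$K_{\tilde X}^2 \;=\; \sum_{i=1}^r c_i(b_i - 2) \;=\; \bigl(2r - {\textstyle\sum} b_i\bigr) + \tfrac{1}{n}(S_\alpha + S_\beta),$$
where $S_\alpha := \sum_i \alpha_i(b_i - 2)$ and analogously $S_\beta$. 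Extend the sequences by $\alpha_0 = 0,\ \alpha_{r+1} = n$ and $\beta_0 = n,\ \beta_{r+1} = 0$; these are consistent with the recurrence and correspond to the boundary rays $v_0 = (0,1)$ of $l_2'$ and $v_{r+1} = (1,0)$ of $l_1'$. Writing $(b_i - 2)\alpha_i = (\alpha_{i+1} - \alpha_i) - (\alpha_i - \alpha_{i-1})$ and summing telescopes to $S_\alpha = n - 1 - \alpha_r$, and symmetrically $S_\beta = n - 1 - \beta_1$.

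It remains to identify $\alpha_r$ and $\beta_1$. Since $v_r = \tfrac{1}{n}(\alpha_r, 1) \in N = \bZ^2 + \bZ\cdot\tfrac{1}{n}(1, a)$, congruence modulo $\bZ^2$ forces $\alpha_r \equiv a'\pmod n$; as the $\alpha_i$ strictly increase from $0$ to $n$ (induction from $\alpha_{i+1} = b_i\alpha_i - \alpha_{i-1}$ with $b_i \ge 2$), this pins down $\alpha_r = a'$. The mirror argument gives $\beta_1 = a$, and substitution yields the first formula. The second formula follows by the same recipe after twisting by $D$: from $K_X + D = -l_2$ and $\pi^*(-l_2) = -l_2' - \tfrac{1}{n}\sum\beta_iE_i$, together with the toric boundary expression $K_{\tilde X} + D' = -l_2' - \sum E_i$, one obtains $K_{\tilde X} + D' = \pi^*(K_X + D) + \sum d_i E_i$ with $d_i = -1 + \beta_i/n$. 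Since $D' = l_1'$ meets only $E_r$ transversally, the compactly supported part of $(K_{\tilde X} + D')^2$ equals
$$\sum d_i\bigl[(b_i - 2) + \delta_{ir}\bigr] \;=\; \sum d_i(b_i - 2) + d_r,$$
and reusing $S_\beta = n - 1 - a$ together with $d_r = -1 + 1/n$ (from $\beta_r = 1$) produces $2r - \sum b_i - a/n$. The only conceptual step is the lattice-theoretic identification $\alpha_r = a'$, $\beta_1 = a$; everything else is routine telescoping.
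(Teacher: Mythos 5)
Your proof is correct, and all the numerics check out: the telescoping gives $S_\alpha=n-1-\alpha_r$, $S_\beta=n-1-\beta_1$, the lattice argument pins down $\alpha_r=a'$, $\beta_1=a$, and the bookkeeping in the second formula (the extra $\delta_{ir}$ term contributing $d_r=-1+1/n$) lands exactly on $2r-\sum b_i-a/n$. Your reading of $K_{\tilde X}^2$ as the self-intersection of the discrepancy divisor is also the convention the paper implicitly uses (it is what Looijenga's formula requires).

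The route is genuinely different from the paper's, though. The paper exploits the toric identity $K_{\tilde X}=-(l_1'+l_2'+\sum E_i)$ and computes $K_{\tilde X}^2=-K_{\tilde X}\cdot(l_1'+l_2'+\sum E_i)$: the pairing with $l_1'$ and $l_2'$ picks out only the two extremal discrepancies $c_1=-1+(1+a)/n$ and $c_r=-1+(a'+1)/n$, while the pairing with $\sum E_i$ is handled wholesale by adjunction, so no summation identity for the interior $\alpha_i,\beta_i$ is ever needed. You instead expand $(\sum c_iE_i)^2=\sum c_i(K_{\tilde X}\cdot E_i)$ and must evaluate the full sums $S_\alpha,S_\beta$, which you do by telescoping the recurrence $v_{i-1}+v_{i+1}=b_iv_i$ against the boundary values $\alpha_0=0$, $\alpha_{r+1}=n$. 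What the paper's trick buys is brevity (no recurrence manipulation); what yours buys is self-containedness — you derive the identifications $v_1=\frac{1}{n}(1,a)$, $v_r=\frac{1}{n}(a',1)$ from the lattice rather than quoting them, and the telescoping identity $\sum\alpha_i(b_i-2)=n-1-a'$ is of independent use. Either argument is acceptable.
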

\begin{proof}
Write $l_i=(x_i=0) \subset X$ and let $l_i' \subset \tilde{X}$ be the strict transform.
Let $E_1,\ldots,E_r$ be the exceptional curves of $\pi$, a nodal chain of smooth rational curves with self-intersection numbers $-b_1,\ldots,-b_r$, such that $l_2'$ meets $E_1$ and $l_1'$ meets $E_r$.
We have
$$K_{\tilde{X}}=-(l_1'+l_2'+\sum E_i) = \sum a_iE_i$$
for some $a_i \in \bQ$, $-1<a_i<0$. The exceptional curves $E_1$ and $E_r$ correspond to the rays in the fan $\Sigma$ of $\tilde{X}$ generated by $\frac{1}{n}(1,a)$ and $\frac{1}{n}(a',1)$, so $a_1=\frac{1}{n}(1+a)-1$ and $a_r=\frac{1}{n}(a'+1)-1$. Now we have
$$K_{\tilde{X}}^2= -K_{\tilde{X}}(l_1'+l_2'+\sum E_i) = -(\sum a_i E_i)(l_1'+l_2')-K_{\tilde{X}}(\sum E_i)$$
$$=-a_1-a_r+ \sum (E_i^2+2)= 2r+2 - \sum b_i - (2+a+a')/n$$
where we have used the adjunction formula $K_{\tilde{X}}E_i+E_i^2=-2$.

In the second case, we have
$$K_{\tilde{X}}+D'=K_{\tilde{X}}+l_1'=-(l_2'+\sum E_i)=\sum (a_i-\mu_i)E_i$$
where
$$\pi^*l_1=l_1'+\sum \mu_i E_i,$$
in particular, $\mu_1=\frac{1}{n}$.
So
$$(K_{\tilde{X}}+D')^2=-(K_{\tilde{X}}+D')(l_2'+\sum E_i) = -(\sum (a_i-\mu_i)E_i)l_2' -(K_{\tilde{X}}+l_1')(\sum E_i)$$
$$=-(a_1-\mu_1) + \sum (E_i^2+2)-1=2r-\sum b_i-a/n.$$
\end{proof}

\subsection{Explicit description of $k1A$ neighborhoods}

\begin{proposition}\label{k1Ainvariants}
Let $f \colon (C \subset \X) \rightarrow (Q \in \Y)$
be an extremal neighborhood of type $k1A$ with $b_2(X_s)=1$
as in Proposition~\ref{b2=1=>normal}, in particular
$$(P_1\in C \subset X) \simeq (0 \in (p_1^{m_0}=q_1^{m_2}) \subset \bA^2_{p_1,q_1}/\textstyle{\frac{1}{m_1^2}}(1,m_1a_1-1))$$
for some $m_0,m_2 \in \bN$ such that
$$m_0 \equiv m_2(m_1a_1-1) \bmod m_1^2.$$

Let $\pi \colon \tilde{X} \rightarrow X$ denote the minimal resolution of $X$. Thus the exceptional locus of $\pi$ is a nodal chain of smooth rational curves $E_1,\ldots,E_r$ with self-intersection numbers $-e_1$,..., $-e_r$ where
$$m_1^2/(m_1a_1-1)=[e_1,\ldots,e_r].$$
Let $\tilde C\subset \tilde{X}$ denote the strict transform of $C$.
Then $\tilde C$ is a $(-1)$-curve which intersects a single component $E_i$ of the exceptional locus of $\pi$
transversely in one point.
Moreover, identifying $(P_1 \in X)$ with the germ of the affine toric variety given by the cone $\sigma=\bR^2_{\ge 0}$ in the lattice
$$N=\bZ^2+\bZ\textstyle{\frac{1}{m_1^2}}(1,m_1a_1-1),$$
the exceptional divisor $E_i$ is extracted by the toric proper birational morphism 
given by subdividing $\sigma$ by the ray $$\rho=\bR_{\ge 0} \cdot \textstyle{\frac{1}{m_1^2}}(m_2,m_0).$$

Define $a_k \in \bN$, $1 \le a_k \le m_k$, $\gcd(m_k,a_k)=1$ for each $k=0,2$, by
$$[e_r,\ldots,e_{i+1}]=m_0/(m_0-a_0)$$
if $i<r$, $m_0=a_0=1$ if $i=r$, and
$$[e_1,\ldots,e_{i-1}]=m_2/(m_2-a_2)$$
if $i>1$, $m_2=a_2=1$ if $i=1$.

The  surface germ $(Q \in Y)$ is isomorphic to a cyclic quotient singularity of type $\frac{1}{\Delta}(1,\Omega)$, where
$$[e_1,\ldots,e_{i-1},e_i-1,e_{i+1},\ldots,e_r]= \Delta/\Omega,$$
equivalently, $$\Delta=m_1^2-m_0m_2\quad \hbox{\rm and}\quad \Omega=m_1a_1-m_0(m_2-a_2)-1.$$
We also have
$$K_X \cdot C = -\delta/m_1,\quad \hbox{\rm
where }\quad\delta :=(m_0+m_2)/m_1, \quad \delta \in \bN,$$
and
\begin{equation}\label{formulaa_0a_2}
a_0+(m_2-a_2)=\delta a_1.
\end{equation}
\end{proposition}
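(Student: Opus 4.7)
The geometric heart of the proof is the toric interpretation of $(P_1 \in C \subset X)$. Identify $(P_1 \in X)$ with the germ of the affine toric surface for $\sigma = \bR^2_{\ge 0}$ in the lattice $N = \bZ^2 + \bZ\cdot\frac{1}{m_1^2}(1, m_1 a_1 -1)$. The local equation $\{p_1^{m_0} = q_1^{m_2}\}$ of $C$ is then the closure of the $1$-parameter subgroup orbit for the cocharacter $v := \frac{1}{m_1^2}(m_2, m_0)$, and the given congruence $m_0 \equiv m_2(m_1 a_1 -1) \bmod m_1^2$ is precisely the condition $v \in N$. Using the toric expression $K_X = -l_1 - l_2$ together with the parametrization $s \mapsto (s^{m_2}, s^{m_0})$ of $C$, one reads off the local intersection numbers $l_1 \cdot C = m_2/m_1^2$ and $l_2 \cdot C = m_0/m_1^2$, yielding $K_X \cdot C = -(m_0+m_2)/m_1^2$. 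The congruence forces $m_0 + m_2 \equiv 0 \bmod m_1$, so $\delta := (m_0+m_2)/m_1$ is a positive integer and $K_X \cdot C = -\delta/m_1$.

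The crucial step is to show that $v$ coincides with the primitive vector $v_i$ on some ray $\rho_i$ of the fan of the minimal resolution $\tilde X \to X$. The strict transform $\tilde{C}$ is the orbit closure of $\lambda_v$ in $\tilde{X}$; by general toric analysis, $\tilde{C}$ meets the exceptional chain $E_1, \ldots, E_r$ transversely at a single non-torus-fixed point of $E_i$ precisely when $v$ is a positive integer multiple of $v_i$, and otherwise passes through a node $E_j \cap E_{j+1}$ with contact orders determined by the expansion $v = \alpha v_j + \beta v_{j+1}$. Since the $k1A$-semistable assumption implies that $(Q \in Y)$ is a cyclic quotient singularity, its minimal resolution is a single chain of smooth rational curves, and the composition $\tilde{X} \to X \to Y$ factors (after contracting any $(-1)$-curves) through this chain. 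A combinatorial analysis of chains then forces $\tilde{C}$ to meet exactly one $E_i$ transversely in one point, so $v$ lies on $\rho_i$. Combining the discrepancy formula $K_{\tilde{X}} = \pi^* K_X + \sum_j(-1+(\alpha_j+\beta_j)/m_1^2) E_j$ with adjunction $K_{\tilde X}\tilde C + \tilde C^2 = -2$ pins down $v = v_i$ (so that $(\alpha_i, \beta_i) = (m_2, m_0)$) and yields $\tilde{C}^2 = -1$. Blowing down $\tilde C$ produces the minimal resolution of $(Q \in Y)$ with chain $[e_1, \ldots, e_{i-1}, e_i -1, e_{i+1}, \ldots, e_r]$.

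Given $v = v_i$, the two subfans of $\Sigma$ separated by $\rho_i$ correspond to cyclic-quotient singularities on either side; a direct determinant computation in $N$ shows these have indices $m_2$ and $m_0$ respectively, and they are minimally resolved by the sub-chains $E_1, \ldots, E_{i-1}$ and $E_{i+1}, \ldots, E_r$. This gives $[e_1, \ldots, e_{i-1}] = m_2/(m_2-a_2)$ and $[e_r, \ldots, e_{i+1}] = m_0/(m_0-a_0)$, with $a_0, a_2$ characterized as the second invariants of these sub-singularities. The explicit formulas $\Delta = m_1^2 - m_0 m_2$ and $\Omega = m_1 a_1 - m_0(m_2-a_2) -1$ then follow from standard $2 \times 2$-matrix manipulations of Hirzebruch--Jung continued fractions applied to the blown-down chain. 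Finally, the identity $a_0 + (m_2-a_2) = \delta a_1$ is a short arithmetic consequence of $m_0+m_2 = \delta m_1$ together with the closed-form expressions for $\alpha_i, \beta_i$ in terms of the left and right sub-continued fractions. The main obstacle throughout is the identification $v = v_i$ in the second paragraph; the remaining computations amount to combinatorial bookkeeping with Hirzebruch--Jung continued fractions.
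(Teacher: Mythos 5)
Your overall architecture matches the paper's for everything downstream of the description of $\tilde C$: the paper also identifies the ray $\rho=\bR_{\ge 0}\cdot\frac{1}{m_1^2}(m_2,m_0)$, computes $K_X\cdot C=-1-c_i=-(m_0+m_2)/m_1^2$ from the discrepancy of $E_i$, and obtains $\Delta$, $\Omega$ and the identity $a_0+(m_2-a_2)=\delta a_1$ by exactly the $\SL(2,\bZ)$ matrix factorizations of Hirzebruch--Jung continued fractions that you invoke (it deduces $\delta\in\bZ$ from $m_1K_X$ being Cartier rather than from the congruence, though your reduction $m_0\equiv -m_2\bmod m_1$ is equally valid). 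The genuine divergence is at what you yourself flag as the crucial step: the paper does not prove that $\tilde C$ is a $(-1)$-curve meeting a single $E_i$ transversely in one point --- it cites this description of the minimal resolution and strict transform from Mori--Prokhorov \cite{MP10}, Theorem~1.8.1 --- whereas you attempt to derive it from the surface data.

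Your derivation of that step is the soft spot. First, the asserted toric criterion is wrong as stated: the strict transform of a branch with valuation vector $v$ meets $E_i$ at an interior point iff $v\in\rho_i$, but transversely only if $v=v_i$ is primitive (if $v=\lambda v_i$ with $\lambda>1$ the contact order is $\lambda$); there is also the subtlety that $(p_1^{m_0}=q_1^{m_2})$ has $\gcd(m_0,m_2)$ branches on the smooth cover. Second, ``a combinatorial analysis of chains then forces\dots'' is precisely the content to be proved and is left unspecified; factoring $\tilde X\to Y$ through the minimal resolution of $Y$ does not by itself exclude $\tilde C$ passing through a node $E_j\cap E_{j+1}$. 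The clean way to close this is to use that $Q\in Y$ is a rational singularity, so in \emph{any} resolution the exceptional curves are smooth rational, meet pairwise transversely in at most one point, and form a tree; applied to $\tilde X\to Y$, whose exceptional set is $E_1,\ldots,E_r,\tilde C$, this rules out $\tilde C$ passing through a node or meeting two components, and then $K_X\cdot C<0$ together with adjunction and $\tilde C^2<0$ forces $\tilde C^2=-1$, after which the discrepancy computation pins down $v=v_i$. (Your local computation of $K_X\cdot C$ via $l_1\cdot C$ and $l_2\cdot C$ also needs a word: $l_1+l_2$ is only a local anticanonical divisor at $P_1$, so one must either exhibit a global member of $|-K_X|$ restricting to it and meeting $C$ only at $P_1$, or route the computation through $\pi^*K_X\cdot\tilde C$ as the paper does.) With these repairs your argument goes through and is in fact more self-contained than the paper's.
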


\begin{proof}
The description of the minimal resolution together with the strict transform $C'$ of $C$ is given by \cite{MP10}, Theorem~1.8.1.
Contracting $C'$, we obtain a resolution of $Q \in Y$ with exceptional locus a chain of smooth rational curves with self-intersection numbers
$$-e_1,.. ,-e_{i-1},-(e_i-1),-e_{i+1},\ldots,-e_r.$$
Hence $Q \in Y$ is a cyclic quotient singularity of type $\frac{1}{\Delta}(1,\Omega)$ where\footnote{
Although this resolution is not necessarily minimal, the continued fraction is well defined and computes the type of the singularity,
see \cite[Prop.~2.1]{BR12}.
}
$$\Delta/\Omega = [e_1,\ldots,e_{i-1},e_i-1,e_{i+1},\ldots,e_r].$$
Given $n,a \in \bN$ with $a<n$, $\gcd(a,n)=1$, write
$$n/a=[b_1,\ldots,b_r], \quad b_i \ge 2 \mbox{ for all $i$}$$
for the Hirzebruch--Jung continued fraction.
Define $a',q \in \bN$ by
$$aa'=1+qn, \quad a'<n.$$
The continued fraction corresponds to the factorization in $\SL(2,\bZ)$
$$\begin{pmatrix} -q & a \\ -a' & n \end{pmatrix} =
\begin{pmatrix} 0 & 1 \\ -1 & b_1 \end{pmatrix} \cdots \begin{pmatrix} 0 & 1 \\ -1 & b_r \end{pmatrix},$$
see \cite{BR12}, Proposition~2.1.
Now, assuming $i \neq 1,r$, the equalities
$$[e_r,\ldots,e_{i+1}]=m_0/(m_0-a_0)$$
$$[e_1,\ldots,e_{i-1}]=m_2/(m_2-a_2)$$
$$[e_1,\ldots,e_r]=m_1^2/(m_1a_1-1)$$
give
$$[e_1,\ldots,e_{i-1},e_i-1,e_{i+1},\ldots,e_r]=\Delta/\Omega,$$
where
$$\begin{pmatrix} * & (m_1a_1-1)-\Omega \\ * & m_1^2-\Delta \end{pmatrix}=\begin{pmatrix} * & m_2-a_2 \\ * & m_2 \end{pmatrix} \begin{pmatrix} 0 & 0 \\ 0 & 1 \end{pmatrix}\begin{pmatrix} * & m_0-a_0' \\ * & m_0 \end{pmatrix}$$
$$= \begin{pmatrix} * & m_0(m_2-a_2) \\ * & m_0m_2 \end{pmatrix}$$
So $\Omega=m_1a_1-m_0(m_2-a_2)-1$ and $\Delta=m_1^2-m_0m_2$ as claimed. The cases $i=1,r$ are proved similarly.


From the discussion in \S\ref{toric},
the discrepancy $c_i$ in the formula
$$K_{\tilde{X}}=\pi^*K_X+\sum_j c_j E_j$$
is given by
$$c_i=-1+(m_2+m_0)/m_1^2.$$
Now
$$K_X \cdot C = \pi^*K_X \cdot C' = (K_{\tilde{X}}-\sum c_j E_j) \cdot C' = -1-c_i = -(m_2+m_0)/m_1^2.$$
Note that $m_1K_X$ is Cartier so
$$\delta:=(m_2+m_0)/m_1 = -m_1 K_X \cdot C$$
is an integer. The matrix factorization
$$\begin{pmatrix} a_1^2+1-m_1a_1 & m_1a_1-1 \\ m_1a_1+1-m_1^2 & m_1^2 \end{pmatrix} =
\begin{pmatrix} 0 & 1 \\ -1 & e_1 \end{pmatrix} \cdots \begin{pmatrix} 0 & 1 \\ -1 & e_r \end{pmatrix}=$$
$$=\begin{pmatrix} * & m_2-a_2 \\ * & m_2 \end{pmatrix}
\begin{pmatrix} 0 & 1 \\ -1 & e_i \end{pmatrix}
\begin{pmatrix} * & * \\ -(m_0-a_0)\cdot & m_0 \end{pmatrix}$$
implies that
$$\begin{pmatrix} m_2&  -(m_2-a_2) \\ * & * \end{pmatrix}
\begin{pmatrix} a_1^2+1-m_1a_1 & m_1a_1-1 \\ m_1a_1+1-m_1^2 & m_1^2 \end{pmatrix} =
\begin{pmatrix} * & * \\ -(m_0-a_0) & m_0 \end{pmatrix},$$
which gives
\begin{equation}\label{SASasf}
m_0=m_2(m_1a_1-1)-(m_2-a_2)m_1^2
\end{equation}
and
\begin{equation}\label{SASSDFASDasf}
-(m_0-a_0)=m_2(a_1^2+1-m_1a_1)-(m_2-a_2)(m_1a_1+1-m_1^2).
\end{equation}
The equation \eqref{SASasf} implies that
\begin{equation}\label{SASasdgaasf}
\delta=a_1m_2+m_1a_2-m_1m_2.
\end{equation}
Adding \eqref{SASasf} and \eqref{SASSDFASDasf}
gives
\begin{equation}\label{SASasdgaaJJsf}
a_0=a_1^2m_2-(m_1a_1+1)(m_2-a_2).
\end{equation}
Finally, combining \eqref{SASasdgaasf} and \eqref{SASasdgaaJJsf} gives \eqref{formulaa_0a_2}.
\end{proof}


\subsection{$k1A$ degenerates to $k2A$}

\begin{proposition}\label{k1Adegeneratestok2A}
Let $f_0 \colon (C \subset X) \rightarrow (Q \in Y)$ be as in Proposition~\ref{k1Ainvariants}.
We retain the notations $\delta,\Delta,\Omega$ and $a_i,m_i$, $i=0,1,2$.

Define proper birational morphisms
$$f^i_0 \colon (C^i \subset X^i) \rightarrow (Q \in Y)$$
for $i=0,1$ as follows. We identify $(Q \in Y)$ with the germ of the affine toric variety given by the cone $\sigma=\bR^2_{\ge 0}$ in the lattice $$N=\bZ^2+\bZ\textstyle{\frac{1}{\Delta}}(1,\Omega).$$
Define
$$v^i=\frac{1}{\Delta}(m_{i+1}^2,m_i^2) \in N.$$
Let $f^i_0$ be the toric proper birational morphism corresponding to the subdivision $\Sigma^i$ of $\sigma$ given by the ray $\rho^i = \bR_{\ge 0} v^i$,
and $C^i$ the exceptional curve of $f^i_0$.

Let $P^i_1, P^i_2 \in X^i$ denote the torus fixed points corresponding to the maximal cones $\langle e_1,v^i \rangle_{\bR_{\ge 0}}$, $\langle e_2,v^i\rangle_{\bR_{\ge 0}}$ of $\Sigma^i$ respectively.
Then $X^i$ has cyclic quotient singularities of types
$$\frac{1}{m_i^2}(1,m_ia_i-1)\quad\hbox{\rm and}\quad \frac{1}{m_{i+1}^2}(1,m_{i+1}a_{i+1}-1).$$

Moreover,
$$K_{X^i} \cdot C^i =-\frac{\delta}{m_im_{i+1}}.$$

The one parameter deformation of $f^i_0 \colon (C^i \subset X^i) \rightarrow (Q \in Y)$ given by the trivial deformation of the $\frac{1}{m_1^2}(1,m_1a_1-1)$ singularity and the versal $\bQ$-Gorenstein smoothing of the remaining singularity of $X^i$ (with the target $(Q \in Y)$ being fixed) has general fiber isomorphic to the morphism $f_0 \colon (C \subset X) \rightarrow (Q \in Y)$.
\end{proposition}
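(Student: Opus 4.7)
The proof splits into a toric/combinatorial part (establishing the Wahl singularity types at $P^i_1,P^i_2$ and the intersection number $K_{X^i}\cdot C^i$) and a deformation-theoretic part (identifying the general fiber with $f_0$). My plan is to dispatch the first quickly using the matrix factorization of $\Delta/\Omega$ already set up in the proof of Proposition~\ref{k1Ainvariants}, and then devote the bulk of the argument to the second, which is the main obstacle.

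For the toric part, I would first verify that $v^i=\tfrac{1}{\Delta}(m_{i+1}^2,m_i^2)$ is a primitive element of $N=\bZ^2+\bZ\tfrac{1}{\Delta}(1,\Omega)$. The lattice condition reduces to $m_i^2\equiv\Omega\, m_{i+1}^2\pmod{\Delta}$ and follows by a short calculation from $\Delta=m_1^2-m_0m_2$, $\Omega=m_1a_1-m_0(m_2-a_2)-1$, and the relation \eqref{SASasf}. Primitivity of $v^i$ together with the precise Wahl form $\tfrac{1}{m_j^2}(1,m_ja_j-1)$ at $P^i_j$ would follow by splitting the $\SL(2,\bZ)$-factorization of $\Delta/\Omega=[e_1,\ldots,e_{i-1},e_i-1,e_{i+1},\ldots,e_r]$ at the middle entry: the two resulting sub-factorizations are exactly the ones for the Wahl continued fractions $m_i^2/(m_ia_i-1)$ and $m_{i+1}^2/(m_{i+1}a_{i+1}-1)$. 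A determinant computation in the basis $(\tfrac{1}{\Delta}(1,\Omega),e_2)$ of $N$ gives $[N:\bZ e_1+\bZ v^i]=m_i^2$ and $[N:\bZ e_2+\bZ v^i]=m_{i+1}^2$, confirming the types at $P^i_1$ and $P^i_2$. For $K_{X^i}\cdot C^i$, I would apply the standard toric relations $\sum_\rho\langle m,v_\rho\rangle D_\rho\sim 0$ in $\Pic(X^i)_{\bQ}$ with $m=(1,0),(0,1)$, obtaining $D_1\sim-(m_{i+1}^2/\Delta)C^i$ and $D_2\sim-(m_i^2/\Delta)C^i$; combined with $D_1\cdot C^i=1/m_i^2$ and $D_2\cdot C^i=1/m_{i+1}^2$ this yields $(C^i)^2=-\Delta/(m_i^2m_{i+1}^2)$, so $K_{X^i}\cdot C^i=(\Delta-m_i^2-m_{i+1}^2)/(m_i^2m_{i+1}^2)$, which reduces to $-\delta/(m_im_{i+1})$ for both $i=0,1$ using $\Delta=m_1^2-m_0m_2$ and $m_0+m_2=\delta m_1$.

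The hard part will be the deformation-theoretic identification. First I would construct the one-parameter $\bQ$-Gorenstein deformation $\X^i\to(0\in\bA^1_t)$ of $X^i$ with the prescribed local behavior (trivial at the $\tfrac{1}{m_1^2}$-singularity, versal smoothing at the other) and extend the contraction to $\X^i\to Y\times\bA^1_t$. Existence of the deformation comes from vanishing of the local-to-global obstruction for $\bQ$-Gorenstein deformations — an $H^2$-vanishing statement valid in our germ setup around $C^i\cong\bP^1$ — while the contraction extends uniquely because $Y$ is a rational singularity and so $R^1 f^i_{0*}\cO_{X^i}=0$. To identify the general fiber with $f_0$, I would then compute the local equation of the deformed curve $C_s\subset X_s$ at the surviving Wahl singularity: pulling back the monomial generators $x_1,x_2$ of the coordinate ring of $(Q\in Y)$ to the local chart $\bA^2_{p_1,q_1}/\tfrac{1}{m_1^2}(1,m_1a_1-1)$ and restricting to the general fiber, the exponents of the resulting monomials read off directly from $v^i=\tfrac{1}{\Delta}(m_{i+1}^2,m_i^2)$ and match the exponents $m_0,m_2$ in Proposition~\ref{b2=1=>normal}, with the congruence $m_0\equiv m_2(m_1a_1-1)\pmod{m_1^2}$ automatic from the coset condition $v^i\in N$. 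Combined with the reconstruction of $f_0$ from the invariants $(\Delta,\Omega,m_0,m_1,m_2,a_1)$ in Proposition~\ref{k1Ainvariants}, this completes the identification.
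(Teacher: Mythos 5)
Your toric computations (primitivity of $v^i$, the splitting of the $\SL(2,\bZ)$ factorization, and $K_{X^i}\cdot C^i=-\delta/m_im_{i+1}$) are correct and are exactly what the paper dismisses as ``straightforward toric calculations,'' so that half is fine. The deformation-theoretic skeleton is also reasonable: unobstructedness via $H^2$-vanishing and extension of the contraction via $R^1f^i_{0*}\cO_{X^i}=0$ are both legitimate and consistent with what the paper does (Lemma~\ref{locallyversal} and the blowing-down-deformations argument).

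The gap is in the final identification of the general fiber. You claim that the local equation of $C_s$ at the surviving singularity $P_1$ is obtained by ``pulling back the monomial generators $x_1,x_2$ of the coordinate ring of $(Q\in Y)$'' and that ``the exponents of the resulting monomials read off directly from $v^i$.'' But $v^i$ only encodes the central, toric fiber $X^i$; the general fiber $X_s$ is not toric, and the pullbacks of $x_1,x_2$ to the chart around $P_1$ are no longer monomials once $t\neq 0$ -- they depend on the actual equations of the deformation, which your argument never produces. What the orders of vanishing along $C^i$ on the central fiber give you is (at best) the flat limit of the divisor $C_s$ as $s\to 0$ (which is $m_2C^1$, not $C^1$), not the local equation of $C_s$ itself. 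This is precisely where the paper's proof leans on the explicit universal family of Section~\ref{s2}: the versal $\bQ$-Gorenstein deformation of $X^i\to Y$ is realized by the explicit equations $x_1y_1=z^{m_1}+u_1x_2^{\delta}$, $x_2y_2=z^{m_2}+u_2x_1^{\delta}$, and Proposition~\ref{exceptional_locus_local_eq} computes the exceptional curve over $(u_1=0,\,u_2=s)$ in the chart $U_{1,s}$ to be $(q_1^{m_2}+u_2p_1^{\delta m_1-m_2}=0)$ with $\delta m_1-m_2=m_0$. Only then does the weighted-homogeneity of this equation with respect to the ray $\rho=\bR_{\ge 0}\frac{1}{m_1^2}(m_2,m_0)$ show that the strict transform of $C_s$ meets the component $E_i$ singled out in Proposition~\ref{k1Ainvariants}, completing the identification with $f_0$. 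To close your argument you would need either to reproduce such an explicit model of the one-parameter deformation, or to prove a separate uniqueness statement (that a $k1A$ germ over $\frac{1}{\Delta}(1,\Omega)$ with surviving singularity $\frac{1}{m_1^2}(1,m_1a_1-1)$ is unique up to isomorphism); as written, the decisive step is asserted rather than proved.
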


We postpone the proof until the end of \S\ref{k1Atok2A}, since it is a consequence of the construction of the universal family.

\section{Construction of universal family of flipping and divisorial contractions} \label{s2}

We follow \cite{M02} closely.

Let $a_1,a_2, m_1,m_2 \in \bN$ be such that $1 \le a_i \le m_i$ and $\gcd(a_i,m_i)=1$ for each $i=1,2$, and

\begin{enumerate}
\item $\delta := m_1 a_2 + m_2 a_1 - m_1m_2 > 0$,
\item $\Delta := m_1^2+m_2^2 - \delta m_1m_2 > 0$.
\end{enumerate}

\subsection{Construction of versal deformation of $k2A$ surface}\label{k2Adeformation}

Write
$$W=(x_1y_1=z^{m_1}+u_1x_2^{\delta}, \quad x_2y_2 = z^{m_2} +u_2x_1^{\delta}) \subset \bA^5_{x_1,x_2,y_1,y_2,z} \times \bA^2_{u_1,u_2}.$$
$$\Gamma = \{ \gamma=(\gamma_1,\gamma_2)  \ | \ \gamma_1^{m_1} = \gamma_2^{m_2} \} \subset \bG_m^2.$$
Define an action of $\Gamma$ on $W$ by
\begin{equation} \label{action}
\Gamma \ni \gamma=(\gamma_1,\gamma_2) \colon (x_1,x_2,y_1,y_2,z,u_1,u_2) \mapsto \\
\end{equation}
$$(\gamma_1 x_1, \gamma_2 x_2, \gamma_2^{\delta}\gamma_1^{-1}y_1, \gamma_1^{\delta}\gamma_2^{-1}y_2,\gamma_1^{a_1}\gamma_2^{a_2-m_2}z,u_1,u_2)$$
Define
$$W^o=W \setminus (x_1=x_2=0)$$
and
$$\bX=W^o/\Gamma.$$
Write
$$U_1=(x_2 \neq 0) \subset \bX, \quad U_2=(x_1 \neq 0) \subset \bX.$$
Then $\bX=U_1 \cup U_2$ is a open covering of $\bX$ and we have identifications
$$U_i=(\xi_i\eta_i= \zeta_i^{m_i}+u_i) \subset \bA^3_{\xi_i,\eta_i,\zeta_i}/ \textstyle{\frac{1}{m_i}(1,-1,a_i)} \times \bA^2_{u_1,u_2}.$$
for each $i=1,2$.
Here the coordinates $\xi_1,\eta_1,\zeta_1$ are the restrictions of the coordinates $x_1,y_1,z$ on $W$ to the $\mu_{m_1}$ cover of $U_1$ given by setting $x_2=1$.
Similarly for $i=2$.
The glueing
$$U_1 \supset (\xi_1 \neq 0) = (\xi_2 \neq 0) \subset U_2$$
of $U_1$ and $U_2$ is given by
$$\xi_1^{m_1}=\xi_2^{-m_2}, \quad \xi_1^{-a_1}\zeta_1=\xi_2^{m_2-a_2}\zeta_2.$$
(The expression for $\eta_1$ in $U_2$ is determined by the equation of $U_1$.)

Write
$$C=(y_1=y_2=z=u_1=u_2=0) \subset \bX$$
and
$$P_i=(x_i=y_1=y_2=z=u_1=u_2=0) \in \bX$$
for each $i=1,2$.
Then $P_i$ is the point $0 \in U_i \subset \bX$ for each $i=1,2$, and $C$ is a proper smooth rational curve in $\bX$ with local equations
$$C \cap U_i = (\eta_i=u_1=u_2=0) \subset U_i$$
for each $i=1,2$.
Write
$$S_i=(x_i=0) \subset \bX$$
for each $i=1,2$.

The morphism
$$\bX \rightarrow \bA^2_{u_1,u_2}$$
is a flat family of surfaces (flatness is clear in the charts $U_1$ and $U_2$).
Write
$$X=X_0=(u_1=u_2=0) \subset \bX$$
for the fiber over $0 \in \bA^2$.
Then $X$ is a toric surface. Write
$$V_i := U_i \cap X$$
Then
$$V_i=(\xi_i\eta_i = \zeta_i^{m_i}) \subset \bA^3_{\xi_i,\eta_i,\zeta_i}/\textstyle{\frac{1}{m_i}(1,-1,a_i)}$$
and we have an identification
$$V_i=\bA^2_{p_i,q_i}/\textstyle{\frac{1}{m_i^2}(1,m_ia_i-1)}$$
given by
$$\xi_i=p_i^{m_i}, \quad \eta_i=q_i^{m_i}, \quad \zeta_i = p_iq_i.$$
The glueing
$$V_1 \supset (p_1 \neq 0) = (p_2 \neq 0) \subset V_2$$
is given by
$$p_1^{m_1^2}=p_2^{-m_2^2}, \quad p_1^{-(m_1a_1-1)}q_1=p_2^{m_2(m_2-a_2)+1}q_2.$$

The surface $X$ is the toric variety associated to the fan $\Sigma$ in the lattice $N=\bZ^2$ defined as follows. Let
$$v_1=(-m_1a_1+1,-m_1^2), \quad v = (1,0), \quad v_2 = (m_2(m_2-a_2)+1, m_2^2)$$
be vectors in $N$. Let $\Sigma$ be the fan in $N$ with cones $\{0\}$, $\rho_1=\bR_{\ge 0}v_1$, $\rho_2=\bR_{\ge 0}v_2$, $\rho=\bR_{\ge 0} v$, $\sigma_1 = \rho_1+\rho$ and $\sigma_2=\rho+\rho_2$. The ray $\rho_i$ corresponds to the divisor $$l_i:=(p_i=0) \subset V_i$$ for each $i=1,2$ and the ray $\rho$ corresponds to the curve $C \subset X$.

Let
$$\wedge \colon \wedge ^2 \bZ^2 \stackrel{\sim}{\longrightarrow} \bZ$$
$$((a_1,a_2), (b_1, b_2)) \mapsto a_1b_2-a_2b_1$$
denote the usual orientation of $\bZ^2$. Note that
$$v_1 \wedge v_2 = \Delta > 0$$
by assumption. So the support $|\Sigma|$ of the fan $\Sigma$ is the convex cone $\sigma:=\rho_1+\rho_2$. Let $Y$ denote the affine toric surface associated to the cone $\sigma$ in $N$ and let $0 \in Y$ denote the torus fixed point. We compute
$$N= \bZ v_1 \oplus \bZ v_2 + \bZ\frac{1}{\Delta}(1,\Omega)$$
where
$$\Omega \equiv (m_1a_1-1)(m_2a_2+1)-m_1^2(a_2(m_2-a_2)+1)$$
$$\equiv (m_2-\delta m_1)(m_2-a_2)+m_1a_1 -1 \bmod \Delta.$$
Thus
$$Y = \bA^2_{z_1,z_2}/\textstyle{\frac{1}{\Delta}(1,\Omega)}$$
where
$$z_1^{\Delta}=q_1^{m_1^2}, \quad z_2^{\Delta}=q_2^{m_2^2}.$$
We have a toric proper birational morphism
$$\pi_0 \colon X \rightarrow Y$$
with exceptional locus the curve $C \subset X$.

Let $M_{\Gamma}$ denote the cokernel of the homomorphism of abelian groups
$$\bZ \rightarrow \bZ^2, \quad 1 \mapsto (m_1,-m_2).$$
Then
$$\Gamma= \Hom(M_{\Gamma},\bG_m)$$
and
$$M_{\Gamma}=\Hom(\Gamma, \bG_m).$$

\begin{lemma} \label{classgrouplemma}
The equality $\bX=(W \setminus (x_1=x_2=0)) / \Gamma$ induces an identification
$$\Cl(\bX) \stackrel{\sim}{\longrightarrow} M_{\Gamma}$$
such that
$$[S_i] \mapsto \bar{e}_i \in M_{\Gamma}= \bZ^2 / \bZ (m_1,-m_2)$$
for each $i=1,2$.
\end{lemma}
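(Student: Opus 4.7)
The plan is to compute $\Cl(\bX)$ directly from the presentation $\bX = W^o/\Gamma$ together with the standard formula for the class group of a good quotient by a diagonalizable group acting on a factorial variety with only constant units.

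First I would verify that $W^o$ is smooth and that $\Cl(W^o) = \Pic(W^o) = 0$ with $\cO(W^o)^{\times} = k^{\times}$. On the chart $W \cap (x_1 \ne 0)$ the first defining equation of $W$ solves for $y_1$ and, since $x_1$ is invertible, the second solves for $u_2$, giving an isomorphism with $\bA^4 \times \bG_m$; symmetrically for $W \cap (x_2 \ne 0)$. These two affine opens cover $W^o$, so $W^o$ is smooth. Each chart has trivial Picard group and units of the form $k^{\times} \cdot x_i^{\bZ}$, while the overlap $W \cap (x_1 x_2 \ne 0) \simeq \bA^3 \times \bG_m^2$ has units $k^{\times} \cdot x_1^{\bZ} \cdot x_2^{\bZ}$. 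The Mayer--Vietoris sequence for $\cO^{\times}$ on this cover then gives $\cO(W^o)^{\times} = k^{\times}$ and $\Pic(W^o) = 0$, because the restriction map on units is surjective.

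Next I would check that $\Gamma$ acts on $W^o$ with trivial stabilizers away from a closed subset of codimension $\ge 2$. On the dense open $\{x_1 x_2 \ne 0\}$ the conditions $\gamma_i x_i = x_i$ force $\gamma = (1,1)$. Along the generic point of $(x_1 = 0) \cap W^o$ one has $x_2 \ne 0$, so $\gamma_2 = 1$ and $\gamma_1 \in \mu_{m_1}$; generically on this divisor $z \ne 0$ (since on $x_1 = 0$ the first equation reads $z^{m_1} = -u_1 x_2^{\delta}$), so combining with $\gcd(a_1, m_1) = 1$ forces $\gamma_1 = 1$. The symmetric argument handles $(x_2 = 0) \cap W^o$, and a direct dimension count shows the remaining non-free locus has codimension $\ge 2$ in $W^o$. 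With these ingredients the standard principle (a version of the Cox construction of toric varieties, in this mildly non-toric setting) yields a canonical isomorphism
$$
\Cl(\bX) \;\simeq\; \chi(\Gamma) \;=\; M_\Gamma,
$$
whose inverse sends a character $\chi$ to the class of the descended divisor of any $\Gamma$-semi-invariant rational function on $W^o$ of character $\chi$ (well-defined up to $k^{\times}$ because $\Cl(W^o) = 0$ and $\cO(W^o)^{\times} = k^{\times}$).

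To identify $[S_i]$ under this isomorphism, note that $x_i \in \cO(W^o)$ is itself a $\Gamma$-semi-invariant of character $e_i \in \bZ^2 = \chi(\bG_m^2)$, and in the chart $W \cap (x_j \ne 0)$, $j \ne i$, it is a coordinate on the $\bA^4$-factor. Hence $\mathrm{div}_{W^o}(x_i) = (x_i=0) \cap W^o$ is a reduced prime divisor equal to $\pi^{-1}(S_i)$, so $[S_i]$ corresponds to the class $\bar{e}_i$ of $e_i$ in $M_\Gamma = \bZ^2/\bZ(m_1,-m_2)$, as claimed. The main subtlety of the proof is the codimension-$\ge 2$ estimate on the non-free locus of $\Gamma$; the remaining steps are routine coordinate computations or invocations of a standard principle.
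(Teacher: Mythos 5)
Your proof is correct and follows the same strategy as the paper's: establish $\Cl(W^o)=0$ and $\Gamma(W^o,\cO^{\times})=k^{\times}$, check that the $\Gamma$-action is free in codimension one, apply the standard descent isomorphism $\Cl(W^o/\Gamma)\simeq\Hom(\Gamma,\bG_m)=M_\Gamma$, and identify $[S_i]$ via the semi-invariant $x_i$ of weight $e_i$. The only variation is in the sub-step computing $\Cl(W^o)$ and the units: the paper uses excision (removing the principal divisors $(x_i=0)$ from $W^o$ leaves $\bG_m^2\times\bA^3$), whereas you use smoothness of $W^o$ and a two-chart units--Picard sequence; both work, and you additionally spell out the codimension-one freeness of the action, which the paper merely asserts.
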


\begin{proof}
Write
$$Z=(x_1=x_2=0) \subset W.$$
Then
$$Z=(x_1=x_2=z=0) \subset \bA^5_{x_1,x_2,y_1,y_2,z} \times \bA^2_{u_1,u_2}.$$
In particular $Z \subset W$ is an irreducible divisor.
We have
$$(x_1)=Z+\tilde{S}_1, \quad (x_2)=Z+\tilde{S}_2.$$
where $\tilde{S}_i \subset W$ is an irreducible divisor for each $i=1,2$.
Note that
$$W':=W \setminus (Z \cup \tilde{S}_1 \cup \tilde{S}_2) = W \setminus((x_1=0) \cup (x_2=0)) = (\bG_m)^2_{x_1,x_2} \times \bA^3_{z,u_1,u_2}.$$
In particular $\Cl(W')=0$ and $\Gamma(W',\cO_W^{\times})/k^{\times}=\langle x_1,x_2 \rangle$.
Now $W^o=W \setminus Z$ and $\tilde{S}_i \cap W^o=(x_i)$ for $i=1,2$, so
$$\Cl(W^o)=\Cl(W')=0$$
and
$$\Gamma(W^o,\cO_W^{\times})=k^{\times}.$$

Write
$$q \colon W^o \rightarrow \bX=W^o/\Gamma$$
for the quotient map.
We can define a group homomorphism
\begin{equation} \label{classgroupmap}
\Cl(\bX) \rightarrow M_{\Gamma}, \quad [D] \mapsto \chi
\end{equation}
as follows. Write
$$q^{-1}D=(f)$$
some $f \in \Gamma(W^o,\cO_W)$.
Then, for each $\gamma \in \Gamma$, we have
$$\gamma \cdot f = \chi(\gamma)f$$
for some $\chi \in M_{\Gamma}=\Hom(\Gamma,\bG_m)$.

The action of the group $\Gamma$ on $W^o$ has finite stabilizers and is free in codimension $1$.
It follows that the homomorphism (\ref{classgroupmap}) is an isomorphism with inverse
$$\chi \mapsto (\cO_{W^o} \otimes_k \chi^{-1})^{\Gamma}$$
where we regard $\chi \in M_{\Gamma}$ as a one dimensional representation of $\Gamma$ and we identify $[D] \in \Cl(\bX)$ with the isomorphism class of the rank one reflexive sheaf $\cO_\bX(D)$.
\end{proof}

\begin{remark}
The quotient map $q \colon W^o \rightarrow \bX$ is identified with the spectrum of the (relative) Cox ring
$$\Spec_\bX \text{Cox}(\bX) \rightarrow \bX,$$ where Cox$(\bX)=\bigoplus_{[D] \in \Cl(\bX)} \cO_\bX(D)$,
via
$$\Gamma=\Hom(\Cl(\bX),\bG_m).$$
Strictly speaking, in order to define the Cox ring we need to choose a representative $D_{\alpha}$ of each class $\alpha \in \Cl(\bX)$ and compatible isomorphisms
$$\cO_\bX(D_{\alpha_1}) \otimes \cO_\bX(D_{\alpha_2}) \stackrel{\sim}{\longrightarrow} \cO_\bX(D_{\alpha_1+\alpha_2}).$$
In our case we take representatives $c_1S_1+c_2S_2$ for $(c_1,c_2) \in \bZ^2$ a list of coset representatives for the quotient $M_{\Gamma}$ and isomorphisms derived from the isomorphism
$$\cO_\bX(m_1S_1) \simeq \cO_\bX(m_2S_2), \quad x_1^{-m_1} \mapsto x_2^{-m_2}.$$
\end{remark}

\subsection{Cluster variables}

Assume without loss of generality that $m_1 \ge m_2$.
Define a sequence
$$d \colon \bZ \rightarrow \bZ$$
as follows. Define
$$d(1)=m_1, \quad d(2)=m_2.$$
For $i \ge 2$, if $d(i)$ is defined and $d(i)>0$, define
$$d(i+1)=\delta d(i)-d(i-1).$$
\begin{lemma}\label{termination}
There exists $k \ge 3$ such that $d(k-1)>0$ and $d(k) \le 0$.
\end{lemma}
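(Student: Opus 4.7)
The plan is to exhibit a conserved quantity for the recurrence and use it to rule out the possibility that $d(i) > 0$ for all $i$. The first step is to observe that, rewriting the recurrence as $d(i+1) + d(i-1) = \delta d(i)$, the quadratic form
\[
F(i) := d(i)^2 - \delta d(i) d(i+1) + d(i+1)^2
\]
satisfies
\[
F(i+1) - F(i) = \bigl(d(i+2) - d(i)\bigr)\bigl((d(i+2) + d(i)) - \delta d(i+1)\bigr) = 0.
\]
Hence $F(i) = F(1) = m_1^2 - \delta m_1 m_2 + m_2^2 = \Delta > 0$ throughout the range of definition; equivalently, $d(i-1)d(i+1) = d(i)^2 - \Delta$.

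The case $\delta = 1$ is immediate: since $m_1 \ge m_2$, one has $d(3) = m_2 - m_1 \le 0$, so $k = 3$ works. For $\delta \ge 2$, the plan is to argue by contradiction, assuming $d(i) > 0$ for all $i \ge 1$. I will pass to the ratio sequence $r(i) := d(i)/d(i-1) > 0$, for which the recurrence becomes the M\"obius iteration $r(i+1) = \delta - 1/r(i)$. Dividing the invariant by $d(i-1)^2$ yields
\[
r(i)^2 - \delta r(i) + 1 = \Delta/d(i-1)^2 > 0,
\]
and the identity $r(i+1) - r(i) = -(r(i)^2 - \delta r(i) + 1)/r(i)$ then shows that $(r(i))_{i \ge 2}$ is strictly decreasing.

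The positivity of $r(i)^2 - \delta r(i) + 1$ confines each $r(i)$ to $\bR \setminus [1/\xi, \xi]$, where $\xi = (\delta + \sqrt{\delta^2-4})/2 \ge 1$ is the larger root of $x^2 - \delta x + 1$. Since $r(2) = m_2/m_1 \le 1 \le \xi$, and the boundary case $r(2) = \xi = 1$ occurs only when $\delta = 2$ and $m_1 = m_2$, which would force $\Delta = m_1^2(2-\delta) \le 0$, we obtain $r(2) < 1/\xi$. The strictly decreasing positive sequence $(r(i))_{i \ge 2}$, bounded above by $1/\xi$, must converge to some $L \in [0, 1/\xi)$; continuity of the iteration would force $L \in \{1/\xi, \xi\}$ if $L > 0$, contradicting $L < 1/\xi \le \xi$. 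Hence $L = 0$, but then $r(i) < 1/\delta$ for large $i$, giving $r(i+1) = \delta - 1/r(i) < 0$ and contradicting $d(i+1) > 0$. The smallest $k$ with $d(k) \le 0$ therefore exists, and $k \ge 3$ since $d(1), d(2) > 0$. The delicate point is the borderline case $\delta = 2$, where $1/\xi = \xi = 1$ collapses the two roots and the strict inequality $r(2) < 1/\xi$ relies essentially on $\Delta > 0$.
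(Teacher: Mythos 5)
Your proof is correct. Note that the paper itself offers no argument for this lemma (it is stated with an empty proof), so there is nothing to compare against; your verification of the invariant $F(i)=\Delta$, the monotonicity of the ratios $r(i)=d(i)/d(i-1)$ via $r(i+1)-r(i)=-(r(i)^2-\delta r(i)+1)/r(i)$, and the careful treatment of the borderline case $\delta=2$ (where $\Delta>0$ rules out $m_1=m_2$) are all sound.

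One remark: the endgame can be shortened. Once you know $r(i)\le r(2)<1/\xi\le 1$ for all $i\ge 2$ (under the contradiction hypothesis), you already have $d(i)<d(i-1)$ for all $i\ge 2$, so $(d(i))$ would be a strictly decreasing sequence of \emph{positive integers}, which is impossible. This replaces the convergence analysis of the M\"obius iteration (identifying the limit $L$ with a root of $x^2-\delta x+1$ and then deriving $L=0$) with a one-line integrality argument, and avoids any appeal to continuity.
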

\qed

\begin{definition}
Let $k \in \bN$ be the number defined by Lemma~\ref{termination}. Define
$d(k+1)=-d(k-1)<0, d(k+2)=-d(k) \ge 0,$ and
$$m_1':=d(k-1)>0, \quad m_2':=-d(k) \ge 0.$$
\end{definition}

If $\delta=1$ then $k=3$ and we have
$$d(1)=m_1>0, \quad d(2)=m_2>0, \quad d(3)= m_2-m_1 \le 0,$$
$$d(4)=-m_2 <0, \quad d(5)=m_1-m_2  \ge 0.$$
In this case we define $d(i)$ for $i \in \bZ$ by requiring $d(i+5)=d(i)$ for each $i \in \bZ$.
Then
$$d(i-1)+d(i+1)=d(i) \mbox{ for } i \equiv 0,1,2 \bmod 5.$$

If $\delta>1$ then we define $d(i)$ for $i \le 0$ and $i \ge k+3$ by requiring that
$$d(i-1)+d(i+1)=\delta d(i) \mbox{ for } i \neq k+1,k+2.$$

\begin{lemma} If $\delta>1$ then $d(i) \ge 0$ for $i \neq k,k+1$.
\end{lemma}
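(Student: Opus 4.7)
The plan is to prove the inequality separately in three index ranges: $1 \leq i \leq k-1$, the forward extension $i \geq k+2$, and the backward extension $i \leq 0$. In the middle range, positivity is immediate from the defining property of $k$ (Lemma~\ref{termination}), which guarantees $d(i) > 0$ for $1 \leq i \leq k-1$, so no work is required there.

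The main tool for the two extensions is the following monotonicity principle, valid because $\delta \in \bZ$ and $\delta > 1$ force $\delta \geq 2$: if $a \geq b \geq 0$, then $\delta a - b \geq (\delta - 1) a \geq a \geq 0$. This single observation propagates both non-negativity and monotone growth along the recurrence $x_{n+1} = \delta x_n - x_{n-1}$, provided one starts from a pair that is already weakly monotone in the appropriate direction.

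For the forward extension, the first step is to compute
$$d(k+3) = \delta d(k+2) - d(k+1) = -\delta d(k) + d(k-1),$$
which is strictly positive since $d(k) \leq 0$ and $d(k-1) > 0$, and to check that $d(k+3) - d(k+2) = -(\delta-1) d(k) + d(k-1) \geq 0$. Thus $d(k+3) \geq d(k+2) \geq 0$, and the monotonicity principle then yields $d(k+j+1) \geq d(k+j) \geq 0$ for all $j \geq 2$ by induction. For the backward extension, the standing normalization $m_1 \geq m_2$ combined with $\delta \geq 2$ gives $d(0) = \delta m_1 - m_2 \geq 2 m_1 - m_2 \geq m_1 > 0$ and $d(0) - d(1) = (\delta - 1) m_1 - m_2 \geq 0$; applying the monotonicity principle to the reversed recurrence $d(i-1) = \delta d(i) - d(i+1)$ then propagates $d(i-1) \geq d(i) \geq 0$ for all $i \leq 1$.

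There is no significant obstacle to this argument: the whole statement reduces to a double-sided induction driven by the elementary monotonicity principle above. The only points requiring real care are verifying the monotonicity of the two initial pairs $(d(k+2), d(k+3))$ and $(d(1), d(0))$, each of which reduces to a short direct computation using the defining properties of $k$ and the hypothesis $m_1 \geq m_2$.
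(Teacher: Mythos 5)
Your proof is correct. The paper states this lemma without proof, and your two-sided induction is the natural argument: since $\delta\ge 2$, the recurrence $x_{n+1}=\delta x_n-x_{n-1}$ preserves weak monotonicity from any non-negative weakly monotone seed pair, and you verify the two seed pairs correctly — $d(k+3)\ge d(k+2)\ge 0$ from $d(k)\le 0<d(k-1)$, and $d(0)\ge d(1)>0$ from the standing normalization $m_1\ge m_2$.
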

\qed

Define a sequence
$$f \colon \bZ \rightarrow \bZ^2$$
as follows.
$$f(0)=(0,1), \quad f(1)=-(1,0), \quad f(2)=-(0,1), \quad f(3)=(1,0).$$
If $\delta=1$ define
$$f(4)=f(3)- f(2) = (1,1)$$
and define $f(i)$ for $i \in \bZ$ by requiring $f(i+5)=f(i)$ for each $i \in \bZ$.
If $\delta \ge 2$ define $f(i)$ for $i \in \bZ$ by requiring
$$f(i-1)+f(i+1)=\delta f(i) \mbox{ for } i \neq 1,2.$$

\begin{lemma}
If $\delta > 1$ then $f(i) \in \bZ^2_{> 0}$ for all $i \neq 0,1,2,3$.
\end{lemma}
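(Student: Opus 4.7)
The plan is to run a pair of monotonicity inductions, one propagating forward from $i = 3,4$ and the other backward from $i = 0,-1$, using the three-term linear recurrence $f(i+1) = \delta f(i) - f(i-1)$ (valid for $i \geq 3$) and its backward form $f(i-1) = \delta f(i) - f(i+1)$ (valid for $i \leq 0$). First I would compute the two base values directly from the recurrence: applied at $i = 3$ it gives $f(4) = \delta(1,0) - (0,-1) = (\delta, 1)$, and applied at $i = 0$ it gives $f(-1) = \delta(0,1) - (-1,0) = (1, \delta)$. Both lie in $\bZ^2_{>0}$ since $\delta \geq 2$, and in fact both componentwise differences $f(4) - f(3) = (\delta - 1, 1)$ and $f(-1) - f(0) = (1, \delta - 1)$ lie in $\bZ^2_{>0}$ as well.

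For the forward induction I would prove by induction on $i \geq 4$ the strengthened statement $f(i) \in \bZ^2_{>0}$ \emph{and} $f(i) - f(i-1) \in \bZ^2_{>0}$. The inductive step rests on the identity
\[
f(i+1) - f(i) \;=\; (\delta - 1)\,f(i) - f(i-1) \;=\; (\delta - 2)\,f(i) \,+\, \bigl(f(i) - f(i-1)\bigr),
\]
whose right-hand side is componentwise the sum of a nonnegative vector (since $\delta \geq 2$ and $f(i) > 0$) and a strictly positive vector (by the inductive hypothesis). Hence $f(i+1) > f(i) > 0$ componentwise, closing the induction. The backward case $i \leq -1$ is entirely symmetric, driven by the analogous identity $f(i-1) - f(i) = (\delta - 2)\,f(i) + \bigl(f(i) - f(i+1)\bigr)$ with base case $f(-1) > f(0)$; an easy alternative is to observe the symmetry $f(-i) = \mathrm{swap}(f(i+3))$ for $i \geq 0$, which follows by induction from the initial values and the symmetric form of the recurrence, and which reduces the backward case to the forward one.

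There is no substantive obstacle here: the claim is a standard positivity/monotonicity statement for a constant-coefficient Chebyshev-like recurrence with multiplier $\delta \geq 2$. The only small subtlety to keep track of is that $f(3) = (1,0)$ sits on the boundary $\partial \bZ^2_{\geq 0}$, so one must combine the positivity and monotonicity statements in the inductive hypothesis rather than proving either on its own; the base case $f(4) - f(3) = (\delta-1, 1) > 0$ handles this cleanly as soon as $\delta \geq 2$.
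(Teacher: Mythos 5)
Your proof is correct and complete: the base values $f(4)=(\delta,1)$ and $f(-1)=(1,\delta)$ are computed correctly from the recurrence (which the paper imposes for all $i\neq 1,2$, hence at $i=3$ and $i=0$), and the strengthened induction hypothesis combining positivity with componentwise monotonicity, via $f(i+1)-f(i)=(\delta-2)f(i)+\bigl(f(i)-f(i-1)\bigr)$, is exactly what is needed to get past the boundary point $f(3)=(1,0)$. The paper states this lemma without proof, and your argument is the standard one it implicitly intends.
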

\qed

We define a sequence $L_i \in \Cl(\bX)$ for $i \in \bZ$ as follows.
Define
$$L_1=[S_2] \in \Cl(\bX), \quad L_2=[S_1] \in \Cl(\bX).$$
Define $L_i$ for $3 \le i \le k$ by
$$L_{i-1}+L_{i+1}=\delta L_i \mbox{ for } 2 \le i \le k-1.$$
Define
$$L_{k+1}=-L_{k-1}, \quad L_{k+2}=-L_k.$$

If $\delta = 1$ then $k=3$ and
$$L_1=[S_2], \quad L_2=[S_1], \quad L_3= L_2 - L_1, \quad L_4=-L_2, \quad L_5= L_1-L_2.$$
We define $L_i$ for $i \in \bZ$ by requiring $L_{i+5}=L_i$ for each $i \in \bZ$.
Then
$$L_{i-1}+L_{i+1}=\delta L_i \mbox{ for } i \equiv -1,0,1 \bmod 5.$$

If $\delta>1$ define $L_i$ for $i\le 0$ and $i \ge k+3$ by requiring that
$$L_{i-1}+L_{i+1}=\delta L_i \mbox{ for } i \neq 1,2,k,k+1.$$

We define a sequence $F_i \in \Gamma(\bX,L_i)$ for $i \in \bZ$.
Define
$$F_0=y_1,\quad F_1=x_2, \quad F_2=x_1, \quad F_3=y_2.$$
We have $F_i \in \Gamma(\bX,L_i)$ for $i=1,2$ by the definitions, and for $i=0,3$ by the definition (\ref{action}) of the $\Gamma$-action and Lemma~\ref{classgrouplemma}.
Note that the equations of $W$ can be rewritten
$$F_0F_2=z^{d(1)}+F_1^{\delta}u^{-f(1)}=u^{-f(1)}(z^{d(1)}u^{f(1)}+F_1^{\delta})$$
$$F_1F_3=z^{d(2)}+F_2^{\delta}u^{-f(2)}=u^{-f(2)}(z^{d(2)}u^{f(2)}+F_2^{\delta})$$
Define $F_i$ for $4 \le i \le k$ by
$$F_{i-1}F_{i+1}=z^{d(i)}u^{f(i)}+F_i^{\delta} \mbox{ for } 3 \le i \le k-1.$$
Define $F_{k+1}$ by
$$F_{k-1}F_{k+1}=z^{-d(k)}F_k^{\delta}+u^{f(k)}=z^{-d(k)}(z^{d(k)}u^{f(k)}+F_k^{\delta})$$
and $F_{k+2}$ by
$$F_kF_{k+2}=z^{-d(k+1)}F_{k+1}^{\delta}+u^{f(k+1)}=z^{-d(k+1)}(z^{d(k+1)}u^{f(k+1)}+F_{k+1}^{\delta})$$

If $\delta = 1$ then $k=3$ and one checks that $F_5=F_0$. We define $F_i$ for all $i \in \bZ$ by requiring $F_{i+5}=F_i$ for all $i \in \bZ$.
If $\delta > 1$ then we define $F_i$ for $i \le -1$ and $i \ge k+3$ by requiring
$$F_{i-1}F_{i+1}=z^{d(i)}u^{f(i)}+F_i^{\delta} \mbox{ for } i \neq 1,2,k,k+1.$$

We can rewrite the equations defining the $F_i$ as follows:
$$F_{i-1}F_{i+1}=q_iF_i^{\delta}+r_i \mbox{ for } i \in \bZ$$
where $q_i,r_i \in k[z,u_1,u_2]$ are defined as follows.
If $\delta =1$ then
$$q_1=u_1, \quad q_2=u_2, \quad q_3=z^{m_1-m_2}, \quad q_4=z^{m_2}, \quad q_5=1$$
$$r_1=z^{m_1}, \quad r_2=z^{m_2}, \quad r_3=u_1, \quad r_4=u_1u_2, \quad r_5=z^{m_1-m_2}u_2$$
and $q_{i+5}=q_i$, $r_{i+5}=r_i$ for each $i \in \bZ$.
If $\delta >1$ then
$$q_i=\left\{\begin{array}{cc} u_1 & i=1 \\ u_2 & i=2 \\ z^{-d(k)}=z^{m_2'} & i=k \\ z^{-d(k+1)}=z^{m_1'} & i=k+1 \\ 1 & \mbox{otherwise.} \end{array} \right.$$
and
$$r_i=\left\{\begin{array}{cc} z^{d(1)}=z^{m_1} & i=1 \\ z^{d(2)}=z^{m_2} & i=2 \\ u^{f(k)} & i = k \\ u^{f(k+1)} & i=k+1 \\ z^{d(i)}u^{f(i)} & \mbox{otherwise.} \end{array} \right.$$
Note in particular that $q_i,r_i$ are coprime monomials in $k[z,u_1,u_2]$ for each $i$.
Moreover, we have
$$q_{i-1}q_{i+1}r_i^{\delta}=r_{i-1}r_{i+1} \mbox{ for all } i \in \bZ$$

\begin{remark}
In the terminology of \cite{FZ02} the algebra
$$k[u_1^{\pm 1}, u_2^{\pm 2}, z^{\pm 1}][\{F_i \ | \ i \in \bZ\}]$$
is a \emph{cluster algebra} of rank $2$, with clusters $$\{(F_i,F_{i+1})\}_{i \in \bZ}$$ and coefficient group $\bP \simeq \bZ^3$ the group of monomials in $z,u_1,u_2$.
In the notation of \cite{FZ02}, Example~2.5, p.~502,
the exchange relations are given by $b=c=\delta$ and coefficients $q_i,r_i$ as defined above.
In particular the case $\delta=1$ is a cluster algebra of finite type associated to the root system $A_2$.
\label{cluster}
\end{remark}

\begin{proposition}
For each $i \in \bZ$ we have $F_i \in \Gamma(\bX,L_i)$.
\end{proposition}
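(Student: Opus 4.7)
The plan is to prove the statement by induction on $|i|$, with base cases $i = 0,1,2,3$. For the base cases, $F_0 = y_1$, $F_1 = x_2$, $F_2 = x_1$, $F_3 = y_2$ are coordinate functions on $W$, hence regular on $W^o$, and their $\Gamma$-characters read off from \eqref{action} are $\delta \bar{e}_2 - \bar{e}_1$, $\bar{e}_2$, $\bar{e}_1$, $\delta \bar{e}_1 - \bar{e}_2$ respectively. Under the identification $\Cl(\bX) = M_\Gamma$ of Lemma~\ref{classgrouplemma}, these match $[L_1] = [S_2]$ and $[L_2] = [S_1]$ directly, while for $[L_0]$ and $[L_3]$ one uses the defining recursions $L_0 + L_2 = \delta L_1$ and $L_1 + L_3 = \delta L_2$.

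For the inductive step (I treat $i \ge 3$; the direction $i \le 0$ is symmetric), suppose $F_{i-1} \in \Gamma(\bX, L_{i-1})$ and $F_i \in \Gamma(\bX, L_i)$ have been constructed, and define $F_{i+1}$ via the exchange relation $F_{i-1} F_{i+1} = q_i F_i^\delta + r_i$. The character matching is a direct monomial computation: the $\Gamma$-action gives $z$ the character $a_1 \bar{e}_1 + (a_2 - m_2) \bar{e}_2$ and makes $u_1, u_2$ invariant, so $[q_i]$ and $[r_i]$ are explicit elements of $M_\Gamma$. A parallel induction, using the case-by-case formulas for $q_i, r_i$, the sequence $d(i)$, the recursion $L_{i-1} + L_{i+1} = \delta L_i$, and the identity $\delta = m_1 a_2 + m_2 a_1 - m_1 m_2$, shows that $[q_i] = 0$ and $[r_i] = \delta [L_i]$ in $M_\Gamma$; consequently $[F_{i+1}] = \delta [L_i] - [L_{i-1}] = [L_{i+1}]$ as required.

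The main obstacle is the regularity of $F_{i+1}$: one must verify that the quotient $(q_i F_i^\delta + r_i)/F_{i-1}$ is a regular function on $W^o$, not merely a rational one. I verify this chart by chart on $\bX = U_1 \cup U_2$. In the chart $U_\alpha$, $\bX$ is cut out by the local equation $\xi_\alpha \eta_\alpha = \zeta_\alpha^{m_\alpha} + u_\alpha$, and by induction $F_{i-1}, F_i$ can be expressed as polynomials in the local coordinates $(\xi_\alpha, \eta_\alpha, \zeta_\alpha, u_1, u_2)$ modulo this equation. The required divisibility then reduces to checking that $q_i F_i^\delta + r_i$ vanishes on $\{F_{i-1} = 0\} \cap U_\alpha$, which is done by substituting the defining equation of $U_\alpha$. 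The compatibility identity $q_{i-1} q_{i+1} r_i^\delta = r_{i-1} r_{i+1}$ recorded after the definition of the $q_i, r_i$ is the algebraic shadow of this divisibility and is what makes the induction propagate.

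Finally, the boundary steps $i = k, k+1$, where the recursions for $d(i), f(i), L_i$ change sign, are handled by the same argument using the modified exchange coefficients $q_k, r_k, q_{k+1}, r_{k+1}$ listed explicitly above the proposition. In the periodic case $\delta = 1$ one has $k = 3$ and $F_5 = F_0$, so the induction closes after five steps, consistent with the $A_2$ cluster algebra of finite type noted in Remark~\ref{cluster}.
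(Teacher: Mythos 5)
Your base cases and the character bookkeeping (matching $[F_i]$ with $L_i$ in $M_\Gamma=\Cl(\bX)$ via the recursion $L_{i-1}+L_{i+1}=\delta L_i$) are fine and agree with the paper. The gap is in the regularity step. You reduce divisibility of $q_iF_i^\delta+r_i$ by $F_{i-1}$ in $\Gamma(W^o,\cO_W)$ to checking that $q_iF_i^\delta+r_i$ \emph{vanishes on the set} $\{F_{i-1}=0\}$ chart by chart. Set-theoretic vanishing only places the function in the radical of $(F_{i-1})$; to divide you need vanishing to the correct order along every prime divisor in $\operatorname{div}(F_{i-1})$, and these divisors are very far from reduced (compare Lemma~\ref{restrict_to_H}, where $(F_i=0)|_X=d(i-1)l_1+m_2g(i-1)C$ carries large multiplicities). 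Moreover the $F_{i-1},F_i$ are themselves only defined recursively, so there is no explicit local expression to ``substitute'' into. The paper circumvents both problems by a two-sided argument: it proves $q_{i-1}F_{i-1}^{\delta}(q_{i+1}F_{i+1}^{\delta}+r_{i+1})\equiv 0 \bmod F_i$ using the relation one step back and the identity $q_{i-1}q_{i+1}r_i^{\delta}=r_{i-1}r_{i+1}$ (which you correctly identify as the key identity but never actually deploy), so that both $F_iF_{i+2}$ and $q_{i-1}F_{i-1}^{\delta}F_{i+2}$ are regular; since $F_i$ and $q_{i-1}F_{i-1}^{\delta}$ are \emph{coprime on} $\bX$, the polar locus of $F_{i+2}$ is empty in codimension one and normality of $W$ finishes the argument.

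This makes the coprimality statements ($F_i$ coprime to $zu_1u_2$, and $F_i$ coprime to $F_{i+1}$) an essential part of the induction, and your proposal never states or propagates them. Worse, their propagation genuinely breaks at the boundary: for $i=k,k+1$ the coefficients $q_k=z^{m_2'}$, $q_{k+1}=z^{m_1'}$ are not units, so the implication that would give coprimality of $F_{k+1},F_{k+2}$ with $z$ fails, and the paper must invoke the explicit divisor computation of Lemma~\ref{restrict_to_H} (showing $(z=u_1=u_2=0)\not\subset(F_k=0)\cup(F_{k+1}=0)$) to restart the induction. Your final paragraph asserts these steps are ``handled by the same argument,'' which is precisely where a different argument is required. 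As written, the proposal identifies the right ingredients but does not assemble them into a proof of the only nontrivial claim, namely regularity.
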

\begin{proof}
(Cf. \cite{M02}, Theorems~3.10, 3.12, and 3.13.)

From the definitions it is clear that $F_i$ is a rational section of $L_i$ for each $i \in \bZ$. We must show that it is a regular global section, or, equivalently, $F_i \in \Gamma(W^o,\cO_W)$.

We say $f,g \in \Gamma(W^o,\cO_W)$ are \emph{coprime on $\bX$} if $(f=g=0) \subset \bX$ has codimension $2$.

Define statements $A_i,B_i,C_{i,i+1}$ for $i \in \bZ$ as follows.
\begin{itemize}
\item[($A_i$)] $F_i \in \Gamma(\bX,L_i)$.
\item[($B_i$)] $F_i$ and $zu_1u_2$ are coprime on $\bX$.
\item[($C_{i,i+1}$)] $F_i$ and $F_{i+1}$ are coprime on $\bX$.
\end{itemize}

The following implications are immediate from the equations
$$F_{i-1}F_{i+1}=q_iF_i^{\delta}+r_i.$$
\begin{enumerate}
\item For each $i \in \bZ$,
$$(A_{i-1}, A_i, A_{i+1} \mbox{ and } B_i) \Rightarrow (C_{i-1,i} \mbox{ and } C_{i,i+1}).$$
\item For each $i \in \bZ \setminus \{1,2,k,k+1\}$, we have
\begin{enumerate}
\item
$$(A_{i-1}, A_i, A_{i+1} \mbox{ and } C_{i-1,i}) \Rightarrow B_{i-1}$$
\item
$$(A_{i-1}, A_i, A_{i+1} \mbox{ and } C_{i,i+1}) \Rightarrow B_{i+1}$$
\end{enumerate}
\end{enumerate}
Indeed, for (1) we have
$$(F_{i-1}=F_{i}=0) \subset (F_{i}=r_i=0) \subset (F_i=zu_1u_2=0).$$
Similarly for $B_{i,i+1}$.
For (2a), we have
$$(F_{i-1}=zu_1u_2=0) \subset (F_{i-1}=q_iF_i^{\delta}=0)=(F_{i-1}=F_i=0)$$
using $q_i=1$ for $i \neq 1,2,k,k+1$. Case (2b) is similar.

We show
$$(A_{i-2},A_{i-1},A_i,A_{i+1},B_{i-1},B_i \mbox{ and } C_{i-1,i}) \Rightarrow (A_{i-3} \mbox{ and } A_{i+2}).$$
for each $i \in \bZ$.
We have
\begin{equation}
F_iF_{i+2}=q_{i+1}F_{i+1}^{\delta}+r_{i+1} \in \Gamma(W^o,\cO_W).
\label{reg}
\end{equation}

Also, let $\equiv$ denote congruence modulo $F_i$ in $\Gamma(W^o,\cO_W)$, then
$$q_{i-1}F_{i-1}^{\delta}(F_iF_{i+2})=q_{i-1}F_{i-1}^{\delta}(q_{i+1}F_{i+1}^\delta +r_{i+1})$$
$$=q_{i-1}(q_{i+1}(F_{i-1}F_{i+1})^{\delta}+F_{i-1}^{\delta}r_{i+1})=q_{i-1}(q_{i+1}(q_iF_i^{\delta}+r_i)^{\delta}+F_{i-1}^{\delta}r_{i+1})$$
$$\equiv (q_{i-1}q_{i+1}r_i^{\delta})+q_{i-1}F_{i-1}^{\delta}r_{i+1} = r_{i-1}r_{i+1}+q_{i-1}F_{i-1}^{\delta}r_{i+1}$$
$$=r_{i+1}(q_{i-1}F_{i-1}^{\delta}+r_{i-1}) = r_{i+1}(F_{i-2}F_i) \equiv 0.$$
So $q_{i-1}F_{i-1}^{\delta}F_{i+2} \in \Gamma(W^o ,\cO_W)$. Now $F_i$ and $q_{i-1}F_{i-1}^{\delta}$ are coprime on $\bX$, so by \eqref{reg} we have $F_{i+2}$ is regular in codimension $1$ on $W^o$. Hence $F_{i+2}$ is regular on $W^o$ because $W$ is normal. Similarly $F_{i-3}$ is regular on $W^o$.

We verify $A_i,B_i$ for $i=0,1,2,3$ and $C_{i,i+1}$ for $i=0,1,2$ by direct computation. Now by induction using the implications above we prove
$A_i$ for $i \le k+2$, $B_i$ for $i \le k$ and $C_{i,i+1}$ for $i \le k-1$.

We claim that $B_{k+1}$ and $B_{k+2}$ hold. (Note that the implication (2b) was not established for $i=k,k+1$ so cannot be used here.)
Observe that the loci
$$(z=0),(u_1=0),(u_2=0) \subset \bX$$
are irreducible divisors. So it suffices to prove that $(F_k=0)$ and $(F_{k+1}=0)$ do not contain any of these divisors.
But by Lemma~\ref{restrict_to_H} below the intersection $(z=u_1=u_2=0)$ of these divisors is not contained in $(F_k=0)$ or $(F_{k+1}=0)$. This establishes the claim.

Finally we deduce that $A_i,B_i,C_{i,i+1}$ hold for all $i \in \bZ$ by induction.
\end{proof}

\begin{lemma}\label{restrict_to_H}
Define a sequence $g \colon \{1,2,\ldots,k+1\} \rightarrow \bZ$ by $g(1)=0$, $g(2)=1$, and $g(i+1)+g(i-1)=\delta g(i)$ for $2 \le i \le k$.
(Then $g(i) \ge 0$, and $g(i)>0$ unless $i=1$ or $\delta=1$ and $i=k+1=4$.)
Then
$$
\begin{array}{rcll}
(z=0)|_X       & = & l_1        +   l_2                   +  C                               \\
(F_0=0)|_X     & = &               m_1l_2                 +  m_1C                            \\
(F_1=0)|_X     & = &               m_2l_2                                                    \\
(F_i=0)|_X     & = & d(i-1)l_1                            +  m_2g(i-1)C                     \mbox{ for } 2 \le i \le k \\
(F_{k+1}=0)|_X & = &               m_2'l_2                +  (m_2g(k)+m_2')C                 \\
(F_{k+2}=0)|_X & = &               (m_1'+\delta m_2')l_2  +  (m_2g(k+1)+ m_1'+\delta m_2')C.
\end{array}
$$
In particular, set-theoretically we have
$$(F_k=F_{k+1}=0) \cap X = C$$
and
$$(F_{k-1}=F_{k+2}=0) \cap X \subseteq C,$$
with equality for $k \neq 3$.
\end{lemma}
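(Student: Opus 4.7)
The plan is to compute $(F_i=0)|_X$ chart-by-chart in the two affine charts $V_i = \bA^2_{p_i,q_i}/\frac{1}{m_i^2}(1,m_i a_i-1)$ and then propagate via the recursive definition of the $F_i$. The guiding dictionary is that $V_i$ is toric with boundary components $l_i = (p_i = 0)$ and $C\cap V_i = (q_i = 0)$, so a section whose pullback to the $\mu_{m_i^2}$-cover $\bA^2_{p_i,q_i}$ is the monomial $p_i^a q_i^b$ has divisor $a\, l_i + b\,(C\cap V_i)$ on $V_i$.

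Substituting $\xi_i = p_i^{m_i}$, $\eta_i = q_i^{m_i}$, $\zeta_i = p_i q_i$ and noting that $x_j$ is a unit on the $\mu_{m_i}$-cover of $U_i$ for $j\ne i$, the sections $z, x_1, x_2$ pull back on $V_i$ to monomials whose exponents immediately yield
\[
(z=0)|_X = l_1+l_2+C,\quad (F_1=0)|_X = m_2 l_2,\quad (F_2=0)|_X = m_1 l_1.
\]
On $X$ (where $u_1 = u_2 = 0$), the recursions $F_0F_2 = u_1 F_1^\delta + z^{m_1}$ and $F_1F_3 = u_2 F_2^\delta + z^{m_2}$ reduce to $F_0F_2|_X = z^{m_1}$ and $F_1F_3|_X = z^{m_2}$, giving $(F_0=0)|_X = m_1l_2 + m_1 C$ and $(F_3=0)|_X = m_2 l_1 + m_2 C$. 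For the inductive step, $r_i|_X = 0$ for $i\ne 1,2$ and $q_i|_X = 1$ except at $i=k, k+1$, where $q_k|_X = z^{m_2'}$ and $q_{k+1}|_X = z^{m_1'}$, so the recursion restricts on $X$ to $F_{i-1}F_{i+1} = F_i^\delta$ for $3\le i\le k-1$, and $F_{k-1}F_{k+1} = z^{m_2'}F_k^\delta$, $F_kF_{k+2} = z^{m_1'}F_{k+1}^\delta$. Writing $(F_i=0)|_X = A_i l_1 + B_i l_2 + C_i\, C$, taking divisors and using $(z=0)|_X = l_1+l_2+C$, we obtain the three-term recursion $A_{i+1} = \delta A_i - A_{i-1}$ (and similarly for $B_i, C_i$) for $4\le i\le k-1$, with a twist by $m_2'$ and $m_1'$ copies of $l_1+l_2+C$ at the next two steps. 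Matching against the initial values $A_2=m_1, A_3=m_2$, $B_2=B_3=0$, $C_2=0, C_3=m_2$ and the recursions defining $d(\cdot)$ and $g(\cdot)$ proves the formulas for $2\le i\le k$ by induction; at $i=k+1, k+2$ the identities $m_2' = -d(k)$ and $m_1' = d(k-1)$ cancel the $l_1$ coefficient as stated.

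The set-theoretic claims follow by inspection of supports: $(F_k=0)|_X\subseteq l_1\cup C$, $(F_{k+1}=0)|_X\subseteq l_2\cup C$, and both contain $C$ with positive coefficient, so since $l_1\cap l_2 = \emptyset$ in $X$, their intersection equals $C$. The same reasoning gives $(F_{k-1}=F_{k+2}=0)\cap X\subseteq C$, with equality iff $g(k-2)>0$, i.e., $k\ne 3$. The main obstacle is simply bookkeeping at the anomalous indices $i=1,2,k,k+1$ where $q_i$ or $r_i$ deviates from the generic form, and keeping straight which of $l_1, l_2$ appears in which chart; once the toric dictionary on each $V_i$ is set up, the remainder is a routine verification of linear recursions.
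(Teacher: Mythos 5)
Your proposal is correct and follows essentially the same route as the paper: verify $(z=0)|_X$ and $(F_i=0)|_X$ for $0\le i\le 3$ by direct computation in the charts $V_1,V_2$, then induct using $F_{i-1}F_{i+1}=q_iF_i^{\delta}+r_i\equiv q_iF_i^{\delta}\bmod(u_1,u_2)$ (with the anomalous factors $q_k=z^{m_2'}$, $q_{k+1}=z^{m_1'}$ accounting for the extra $l_2$ and $C$ contributions and the identities $m_2'=-d(k)$, $m_1'=d(k-1)$ killing the $l_1$ coefficient), and deduce the set-theoretic statements from $l_1\cap l_2=\emptyset$. Your write-up just makes explicit the "direct computation" and "induction" that the paper's proof leaves to the reader.
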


\begin{proof}
(Cf. \cite{M02}, (3.2), p.~172.)
The description of $(z=0)|_X$ and $(F_i=0)|_X$ for $0 \le i \le 3$ are verified by direct computation.
We establish the description of $(F_i=0)|_X$ for $0 \le i \le k+2$ by induction using the equations
$$F_{i-1}F_{i+1} = q_iF_i^{\delta} + r_i \equiv q_iF_i^{\delta} \bmod (u_1,u_2) \mbox{ for $3 \le i \le k+1$.}$$
The final statements follow using $l_1 \cap l_2 = \emptyset$.
\end{proof}

\subsection{Construction of the contraction and flip}\label{contraction}

Define a sequence $c \colon \bZ \rightarrow \bZ$ as follows.
Define
$$c(1)=a_1, \quad c(2)=m_2-a_2.$$
Define $c(i)$ for $3 \le i \le k$ by
$$c(i-1)+c(i+1)=\delta c(i) \mbox{ for } 2 \le i \le k-1$$
We also define
$$c(k+1)=-c(k-1), \quad c(k+2)=-c(k).$$
If $\delta=1$ then $k=3$ so
$$c(1)=a_1, \quad c(2)=m_2-a_2, \quad c(3)=m_2-a_2-a_1,$$
$$c(4)=a_2-m_2, \quad c(5)=a_1+a_2-m_2.$$
We define $c(i)$ for $i \in \bZ$ by requiring $c(i+5)=c(i)$.
If $\delta>1$ we define $c(i)$ for $i \in \bZ$ by requiring
$$c(i-1)+c(i+1)=\delta c(i) \mbox{ for } i \neq k+1,k+2.$$
(Compare with the definition of the sequence $d(i)$, $i \in \bZ$.)

\begin{definition}
Define $a_1'$ by
$$a_1' \equiv c(k-1) \bmod m_1', \quad 1 \le a_1' \le m_1'$$
and if $m_2'=-d(k)>0$ define $a_2'$ by
$$a_2' \equiv c(k) \bmod m_2', \quad 1 \le a_2' \le m_2'.$$
\end{definition}

Define
$$W'=(x_1'y_1'=z^{m_1'}{x_2'}^{\delta}+u_1', \quad x_2'y_2'=z^{m_2'}{x_1'}^{\delta}+u_2') \subset \bA^5_{x_1',x_2',y_1',y_2',z} \times \bA^2_{u_1',u_2'}$$
and
$$\Gamma'=\{\gamma'=(\gamma_1',\gamma_2') \ | \ {\gamma_1'}^{m_1'}={\gamma_2'}^{m_2'} \} \subset \bG_m^2.$$
Define an action of $\Gamma'$ on $W'$ by
$$\Gamma' \ni \gamma' \colon (x_1',x_2',y_1',y_2',z,u_1',u_2') \mapsto$$ $$(\gamma_1'x_1',\gamma_2'x_2',\gamma_1'^{-1}y_1',\gamma_2'^{-1}y_2',\gamma_1^{c(k-1)}\gamma_2^{c(k)}z, u_1',u_2').$$
Define
$${W'}^o=W' \setminus (x_1'=x_2'=0)$$
and
$$\bX'=({W'}^o)/\Gamma'.$$

Write
$$U_1'=(x_2' \neq 0) \subset \bX', \quad U_2'=(x_1' \neq 0) \subset \bX'$$
Then $\bX'=U_1' \cup U_2'$,
$$U_i'=(\xi_i'\eta_i'={\zeta_i'}^{m_i'}+u_i') \subset \bA^3_{\xi_i',\eta_i',\zeta_i'}/\textstyle{\frac{1}{m_i'}(1,-1,a_i')} \times \bA^2_{u_1',u_2'}$$
for each $i=1,2$, and the glueing is given by
$$U_1' \supset (\xi_1' \neq 0) = (\xi_2' \neq 0) \subset U_2',$$
$${\xi_1'}^{m_1'}={\xi_2'}^{-m_2'}, \quad {\xi_1'}^{-c(k-1)}\zeta_1'={\xi_2'}^{-c(k)}\zeta_2'.$$

Write
$$X'=(u_1'=u_2'=0) \subset \bX',$$
$$C'=(y_1'=y_2'=z=u_1'=u_2'=0) \subset \bX',$$
and
$$P_i'=(x_i'=y_1'=y_2'=z=u_1'=u_2'=0) \subset \bX'$$
for each $i=1,2$.
Then $X'$ is a toric surface, $C' \subset X'$ is a proper smooth rational curve, and $P_1',P_2' \in C'$ are the torus fixed points of $X'$.
We also write $$l_i'=(x_i'=z=u_1'=u_2'=0) \subset X'$$ for each $i=1,2$.

Define
$$\bY=\Spec(\Gamma(\bX',\cO_{\bX'})) = \Spec(\Gamma({W'}^o,\cO_{W'})^{\Gamma'}) \rightarrow \bA^2_{u_1',u_2'}.$$

Note that $(x_1'=x_2'=0) \subset W'$ has codimension $2$, and $W'$ is normal. Thus the coordinate ring of the affine variety $\bY$ is given by
$$\Gamma(\bY,\cO_{\bY})=\Gamma({W'}^o,\cO_{W'})^{\Gamma'}=\Gamma(W',\cO_{W'})^{\Gamma'}=(R'/I')^{\Gamma'}$$
where
$$R':=k[x_1',x_2',y_1',y_2',z,u_1',u_2']$$
and
$$I':=(x_1'y_1'-(z^{m_1'}{x_2'}^{\delta}+u_1'), \, x_2'y_2'-(z^{m_2'}{x_1'}^{\delta}+u_2')) \subset R'.$$

We have a natural morphism
$$\pi' \colon \bX' \rightarrow \bY.$$

Define a birational toric morphism
$$\bA^2_{u_1,u_2} \rightarrow \bA^2_{u_1',u_2'}$$
via
$$u_1' \mapsto u^{f(k+1)}, \quad u_2' \mapsto u^{f(k)}.$$
Define a morphism
$$\pi \colon \bX \rightarrow \bY \times_{\bA^2_{u_1',u_2'}} \bA^2_{u_1,u_2}$$
over $\bA^2_{u_1,u_2}$ using the following diagram

$$ \xymatrix{ \Gamma(\bY,\O_{\bY})= \Gamma(W',\O_{W'})^{\Gamma'} \ar[d]^{\pi^*} \ar@{^{(}->}[r] & \Gamma(W',\O_{W'}) \ar[d] & \ar[l] \ar[dl]^{\phi} R' \\
   \Gamma(\bX,\O_{\bX})=\text{Cox}(\bX)^{\Gamma} \ar@{^{(}->}[r] & \text{Cox}(\bX) &  } $$

by $$\phi(x_1',x_2',y_1',y_2',z)=(F_k,F_{k+1},F_{k+2},F_{k-1},z)$$

Note that by construction the equations of $W'$ correspond to the relations $F_{k-1}F_{k+1}=\cdots$ and $F_kF_{k+2}=\cdots$ between the $F_{k-1},F_k,F_{k+1},F_{k+2}$.
Also, the group $\Gamma'=\Hom(M_{\Gamma'},\bG_m)$ is identified with $\Gamma=\Hom(M_{\Gamma},\bG_m)$ via
$$M_{\Gamma} \stackrel{\sim}{\longrightarrow} \Cl(\bX), \quad \bar{e}_1,\bar{e}_2 \mapsto [S_1], [S_2]$$
and
$$M_{\Gamma'} \stackrel{\sim}{\longrightarrow} \Cl(\bX), \quad \bar{e}_1,\bar{e}_2 \mapsto L_{k}, L_{k+1}.$$
Here we are using the case $i=k$ in the second formula of:

\begin{lemma}
For $1 \le i \le k+1$ we have
$$-K_{\bX}=c(i)L_{i+1}-c(i+1)L_i$$
and an identification
$$\bZ^2 / \bZ (-d(i+1),d(i))  \stackrel {\sim}{\longrightarrow} \Cl(\bX), \quad \bar{e}_1, \bar{e}_2 \mapsto L_{i}, L_{i+1}.$$
\end{lemma}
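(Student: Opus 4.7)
The plan is to verify both assertions at $i=1$ by direct character computations on the Cox-ring presentation $\bX = W^o/\Gamma$ from \S\ref{k2Adeformation}, and then propagate them through the range $1 \le i \le k+1$ using the linear recursions that define $L_i$, $c(i)$, and $d(i)$.

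First, for the canonical class at $i=1$ I would compute the $\Gamma$-weight of a trivializing section of $K_{W^o}$. Since $W$ is a complete intersection in $\bA^7$ cut out by $f_1 = x_1y_1 - z^{m_1} - u_1x_2^{\delta}$ and $f_2 = x_2y_2 - z^{m_2} - u_2x_1^{\delta}$, the Poincaré residue $\omega$ of $dx_1\wedge dx_2\wedge dy_1\wedge dy_2\wedge dz\wedge du_1\wedge du_2$ relative to $f_1, f_2$ is a nowhere-vanishing section of $K_W = K_{W^o}$. Summing the weights $(1,0),(0,1),(-1,\delta),(\delta,-1),(a_1,a_2-m_2),0,0$ of the seven coordinate differentials and subtracting the weights $(0,\delta),(\delta,0)$ of $df_1,df_2$ (both verified using $\delta = m_1a_2+m_2a_1-m_1m_2$ together with the relation $(m_1,-m_2)=0$ in $M_\Gamma$) yields $\chi_\omega=(a_1,a_2-m_2)\in M_\Gamma$. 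Under Lemma~\ref{classgrouplemma}, the class $[-K_\bX]$ equals $\chi_\omega$; the sign is pinned down by the cyclic-quotient model $\bA^2/\mu_n(a,b)$, where $\omega=dx\wedge dy$ has weight $a+b$ and the toric formula $-K = D_1+D_2$ gives class $a+b$. This matches $c(1)L_2 - c(2)L_1 = a_1[S_1]-(m_2-a_2)[S_2] = (a_1,\,a_2-m_2) \in M_\Gamma$, settling $i=1$.

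For the inductive step in the range $2 \le i \le k-1$ the recursions $L_{i-1}+L_{i+1}=\delta L_i$ and $c(i-1)+c(i+1)=\delta c(i)$ give, by a one-line expansion, $c(i+1)L_{i+2}-c(i+2)L_{i+1} = c(i)L_{i+1}-c(i+1)L_i$. At the two exceptional indices $i=k,k+1$ one substitutes $L_{k+1}=-L_{k-1}, L_{k+2}=-L_k$ and $c(k+1)=-c(k-1), c(k+2)=-c(k)$ and verifies directly that the expression is preserved. The same induction handles the class-group identification: at $i=1$ the map $\bZ^2 \to \Cl(\bX)=\bZ^2/\bZ(m_1,-m_2)$, $\bar e_1,\bar e_2 \mapsto [S_2],[S_1]$, is a coordinate swap, hence an isomorphism with kernel $\bZ(-m_2, m_1) = \bZ(-d(2), d(1))$. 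Passing from $(L_i, L_{i+1})$ to $(L_{i+1}, L_{i+2})$ is right multiplication by the $\SL(2,\bZ)$ matrix $\bigl(\begin{smallmatrix}0 & -1 \\ 1 & \delta\end{smallmatrix}\bigr)$ generically and $\bigl(\begin{smallmatrix}0 & -1 \\ 1 & 0\end{smallmatrix}\bigr)$ at $i=k-1,k$. Writing $aL_{i+1}+bL_{i+2}=-bL_i+(a+b\delta)L_{i+1}$ (and the analogous identity at the exceptional steps) reduces membership in $\ker_{i+1}$ to membership in $\ker_i$, which transports the relation vector $(-d(i+1),d(i)) \mapsto (-d(i+2), d(i+1))$ exactly, so both surjectivity and the precise kernel are preserved throughout.

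The main obstacle is the sign in the identification $[-K_\bX] = \chi_\omega$; this is best fixed by the toy computation on $\bA^2/\mu_n(a,b)$ mentioned above. After that the argument is formal, although one must separately record the "exceptional" transition matrices at $i=k-1, k$ because the linear recursion there degenerates to the reflection $L_{k+1}=-L_{k-1}$, $L_{k+2}=-L_k$ rather than the generic $\delta$-recursion; the saving grace is that these degenerate transitions still have determinant $\pm 1$, so the class group presentation survives the induction unchanged.
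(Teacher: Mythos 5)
Your proof is correct. The paper itself gives no argument here — it only cites Mori \cite{M02}, Propositions 3.14 and 3.15 — so your write-up supplies a self-contained verification in the same spirit. Both key steps check out: the residue computation on the complete intersection $W$ gives $\chi_\omega=(a_1,a_2-m_2)\in M_\Gamma$ (the weights of $df_1,df_2$ are indeed $(0,\delta)$ and $(\delta,0)$ modulo $(m_1,-m_2)$), which matches $c(1)L_2-c(2)L_1=a_1[S_1]-(m_2-a_2)[S_2]$ under Lemma~\ref{classgrouplemma}, and your sign convention $[-K_\bX]=\chi_\omega$ is consistent with the paper's later assertion that $z\in\Gamma(\bX',-K_{\bX'})$, since $z$ has exactly this weight. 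The propagation is then the one-line identity $c(i)(L_{i+1}+L_{i-1})=(c(i+1)+c(i-1))L_i$ in the generic range together with the sign flips at $i=k,k+1$, and the transition matrices you record are unimodular, so the kernel $\bZ(-d(i+1),d(i))$ transports exactly as you claim.
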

\begin{proof}
Cf. \cite{M02}, Propositions~3.14 and 3.15, p.~174.
\end{proof}

It follows that $\pi$ is well defined.

\begin{lemma}\label{contractingGmaction}
There exists a $\bG_m$-action on $\bX$, $\bA^2_{u_1,u_2}$, $\bX'$, $\bY$, and $\bA^2_{u_1',u_2'}$ such that
\begin{enumerate}
\item The morphisms
$\bX \rightarrow \bA^2_{u_1,u_2}$, $\bY \rightarrow \bA^2_{u_1',u_2'}$, $\bA^2_{u_1,u_2} \rightarrow \bA^2_{u_1',u_2'}$, $\pi$, and $\pi'$ are $\bG_m$-equivariant, and
\item $\bG_m$ fixes the points $0 \in \bA^2_{u_1,u_2}$, $0 \in \bA^2_{u_1',u_2'}$, and $0 \in \bY$ and in each case acts with positive weights on the corresponding maximal ideal in the affine coordinate ring.
\end{enumerate}
\end{lemma}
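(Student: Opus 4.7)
My approach is to define all five $\bG_m$-actions simultaneously by prescribing weights on the affine coordinates of $W$, $W'$, $\bA^2_{u_1,u_2}$ and $\bA^2_{u_1',u_2'}$, in such a way that the defining equations, the actions of $\Gamma$ and $\Gamma'$ and all the structure morphisms are automatically preserved; the real content is then to choose the weights so the three positivity conditions hold.

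Fix positive rationals $t,\alpha_1,\alpha_2$ (to be optimized at the end) and set $w(z):=\delta t$, $w(u_i):=\delta\alpha_i$. The two equations of $W$ force
$$w(x_1)=m_2t-\alpha_2,\quad w(x_2)=m_1t-\alpha_1,\quad w(y_i)=\delta m_it-w(x_i).$$
Because $\Gamma$ acts by scaling the coordinates via characters, it automatically commutes with every coordinate-scaling $\bG_m$-action, so the action descends to $\bX=W^o/\Gamma$ and $\bX\to\bA^2_{u_1,u_2}$ is equivariant by construction. A straightforward induction on the exchange relations $F_{i-1}F_{i+1}=q_iF_i^\delta+r_i$, starting from the directly verifiable weights of $F_0,F_1,F_2,F_3$, then gives $w(F_i)=d(i)t+f(i)\cdot(\alpha_1,\alpha_2)$ for every $i\in\bZ$. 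Declaring $w(x_1'):=w(F_k)$, $w(x_2'):=w(F_{k+1})$, $w(y_1'):=w(F_{k+2})$, $w(y_2'):=w(F_{k-1})$ and $w(u_j'):=\delta f(k+2-j)\cdot(\alpha_1,\alpha_2)$ yields a $\bG_m$-action on $W'$ compatible with its defining equations; the compatibility check is exactly the pair of exchange relations between $F_{k-1},F_k,F_{k+1},F_{k+2}$. The same coordinate-scaling argument makes the action descend to $\bX'$, and the morphisms $\pi,\pi'$ and the toric base-change $\bA^2_{u_1,u_2}\to\bA^2_{u_1',u_2'}$ are equivariant by construction, since $\pi$ is defined by sending $x_j',y_j'$ to the $F$'s with matching weights and $\bY=\Spec\Gamma(\bX',\cO_{\bX'})$ inherits its action from $\bX'$.

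It remains to choose $(t,\alpha_1,\alpha_2)\in\bZ_{>0}^3$ so that positivity holds at the three prescribed origins. Positivity at $0\in\bA^2_{u_1,u_2}$ is immediate from $w(u_i)=\delta\alpha_i>0$. Positivity at $0\in\bA^2_{u_1',u_2'}$ reduces to the fact that each of $f(k),f(k+1)\in\bZ_{\geq0}^2$ has at least one positive entry, which is $f(3)=(1,0)$, $f(4)=(1,1)$ when $\delta=1$ (so $k=3$), and the already-proved statement $f(i)\in\bZ^2_{>0}$ for $i\notin\{0,1,2,3\}$ when $\delta\geq 2$. The hardest step is positivity at $0\in\bY$: its maximal ideal is generated by $u_1',u_2'$ together with the generators of the maximal ideal of the singular point of the central fibre $Y\subset\bY$. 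Since $Y$ is the affine toric surface attached to the cone $\sigma=\bR_{\geq0}v_1+\bR_{\geq0}v_2$ in the lattice $N$ of \S\ref{k2Adeformation}, the restricted $\bG_m$-action on $Y$ is a one-parameter subgroup $\lambda$ of the torus $N\otimes\bG_m$, and positivity at $0\in Y$ is equivalent to $\lambda$ lying in the interior of $\sigma$, i.e.\ $\langle\lambda,v_1\rangle,\langle\lambda,v_2\rangle>0$. I would read $\lambda$ off explicitly in terms of $(t,\alpha_1,\alpha_2)$ from the weights of the $T$-eigenfunctions on $Y$, verify that these two strict inequalities together with $\alpha_1,\alpha_2>0$ cut out a non-empty open cone in $\bQ_{>0}^3$, and then clear denominators to produce the required integer weights. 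I expect this final verification to reduce cleanly to the standing hypotheses $\delta>0$ and $\Delta=v_1\wedge v_2>0$ from the start of \S\ref{s2}, so the main work is bookkeeping rather than the discovery of any genuinely new inequality.
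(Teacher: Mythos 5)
Your construction is in essence the paper's: the action is defined by assigning weights to the Cox coordinates of $W$, propagated to all the $F_i$ by induction on the exchange relations (using $q_{i-1}q_{i+1}r_i^{\delta}=r_{i-1}r_{i+1}$ to keep them homogeneous), and then transferred to $W'$, $\bX'$, $\bY$ via $x_1'=F_k$, $x_2'=F_{k+1}$, $y_1'=F_{k+2}$, $y_2'=F_{k-1}$, $u_j'=u^{f(k+2-j)}$. The difference is that you work with the whole cone of admissible weight vectors $(t,\alpha_1,\alpha_2)$, whereas the paper simply exhibits one point of it, namely $w(x_1)=w(x_2)=0$, $w(z)=1$, $w(y_i)=w(u_i)=m_i$ (in your parameters $t=1/\delta$, $\alpha_i=m_i/\delta$, then clear denominators). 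Your weight formula $w(F_i)=d(i)t+f(i)\cdot(\alpha_1,\alpha_2)$ checks out against the initial values and both exceptional exchange relations at $i=k+1,k+2$, and your Nakayama-type reduction of positivity at $0\in\bY$ to positivity on $m_{Y,0}$ together with $w(u_1'),w(u_2')>0$ is legitimate, since $m_{Y,0}=m_{\bY,0}/(u_1',u_2')$.

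The one place your argument is not actually carried out is the decisive positivity check at $0\in\bY$: "I would read $\lambda$ off explicitly \dots\ I expect this final verification to reduce cleanly" is a plan, not a proof, and it is precisely the step where something could in principle go wrong. It does work: with the paper's normalization one finds $w(p_i)=0$, $w(q_i)=1$ on the charts $V_i$ of the central fiber, hence $w(z_i)=m_i^2/\Delta>0$ for the orbifold coordinates of $Y=\bA^2_{z_1,z_2}/\frac{1}{\Delta}(1,\Omega)$, so the one-parameter subgroup lies in the interior of $\sigma$ and your admissible cone is nonempty. Alternatively, one can avoid the toric detour entirely, as the paper does, by noting that $\Gamma(\bY,\cO_\bY)$ is generated by $\Gamma'$-invariant monomials in $x_1',x_2',y_1',y_2',z,u_1',u_2'$, that $\wt(F_j)>0$ for $j=k,k+1,k+2$, and that the only coordinate of weight zero is $y_2'=F_2=x_1$ when $k=3$, no power of which is $\Gamma'$-invariant, so every nonconstant invariant monomial still has positive weight. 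You should either perform your toric computation explicitly or substitute one of these two closing arguments; as written the proof is incomplete at exactly this point.
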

\begin{proof}
We define the $\bG_m$ action on $\bX \rightarrow \bA^2_{u_1,u_2}$ by
$$\lambda \ni \bG_m \colon (x_1,x_2,y_1,y_2,z,u_1,u_2) \mapsto (x_1,x_2,\lambda^{m_1}y_1,\lambda^{m_2}y_2,\lambda z, \lambda^{m_1}u_1,\lambda^{m_2}u_2).$$
Recall the equalities $F_0=y_1$, $F_1=x_2$, $F_2=x_1$, and $F_3=y_2$. Using the equations
$$F_{i-1}F_{i+1}=q_iF_i^{\delta}+r_i \mbox{ for } i \in \bZ$$
we find that $F_i$ is a $\bG_m$-eigenfunction for each $i \in \bZ$.
(The equations are seen to be homogeneous for the $\bG_m$-action by induction on $i$ using the equalities $q_{i-1}q_{i+1}r_i^{\delta}=r_{i-1}r_{i+1}$.)
Moreover, the weight of $F_i$ for $2 \le i \le k+2$ is given by
$$\wt(F_i)=g(i-1)m_2 \mbox{ for } 2 \le i \le k,$$
$$\wt(F_{k+1})=m_2g(k)+m_2',$$
and
$$\wt(F_{k+2})=m_2g(k+1)+m_1'+\delta m_2'.$$
Cf. Lemma~\ref{restrict_to_H}.
Recall that $x_1'=F_k$, $x_2'=F_{k+1}$, $y_1'=F_{k+2}$, $y_2'=F_{k-1}$, and $u_1'=u^{f(k+1)}$, $u_2'=u^{f(k)}$.
So we obtain a compatible $\bG_m$-action on $\bX' \rightarrow \bA^2_{u_1',u_2'}$, and an induced action on $\bY \rightarrow \bA^2_{u_1',u_2'}$.
We observe that $\bG_m$ acts with positive weights on $\bA^2_{u_1,u_2}$, $\bA^2_{u_1',u_2'}$, and $\bY$ as claimed.
(Note that if $k=3$ then $\bG_m$ acts with weight $0$ on $y_2'=F_2=x_1$, but no power of $y_2'$ is $\Gamma'$-invariant.)
\end{proof}

\begin{proposition}
Assume $d(k)=0$.
Then $m_1'=\delta=\gcd(m_1,m_2)$, $c(k)=-1$, and we have an identification
$$\bY = (\xi\eta=\zeta^{m_1'}+u_1') \subset \bA^3_{\xi,\eta,\zeta}/\textstyle{\frac{1}{m_1'}(1,-1,a_1')} \times \bA^2_{u_1',u_2'}.$$
\end{proposition}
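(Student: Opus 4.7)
The plan is to establish first the combinatorial identities $m_1'=\delta=\gcd(m_1,m_2)$ and $c(k)=-1$, and then compute $\bY$ as a $\Gamma'$-quotient. For the first bundle, observe that the linear recursion $d(i+1)=\delta d(i)-d(i-1)$ preserves $\gcd(d(i),d(i+1))$ along the chain; evaluating at $i=k-1$ gives $\gcd(m_1,m_2)=\gcd(m_1',0)=m_1'$. The same recursion preserves the quadratic form $Q(i):=d(i)^2+d(i+1)^2-\delta d(i)d(i+1)$, so $\Delta=Q(1)=Q(k-1)=m_1'^2$. The determinant identity $d(i+1)c(i)-d(i)c(i+1)=\delta$ at $i=k-1$ then yields $-m_1' c(k)=\delta$, so $c(k)=-s$ where $s:=\delta/m_1'\in\bN$.

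The harder step is showing $s=1$. Setting $p_i=m_i/m_1'$, the identity $\Delta=m_1'^2$ reads $p_1^2+p_2^2-\delta p_1 p_2=1$ with $\gcd(p_1,p_2)=1$. A Vieta-jumping argument parametrizes all positive solutions as $(p_1,p_2)=(P_{k-1},P_{k-2})$, where $P_0=0$, $P_1=1$, $P_{j+1}=\delta P_j-P_{j-1}$ (matching the backward Chebyshev expansion $d(k-j)=m_1' P_j$). Reducing the recursion gives $P_{j+1}\equiv -P_{j-1}\pmod\delta$, whence $\delta\mid P_j$ for every positive even $j$. Separately, the coprimality $\gcd(a_i,m_i)=1$ together with $\delta\equiv m_2 a_1\pmod{m_1}$ and $\delta\equiv m_1 a_2\pmod{m_2}$ forces $\gcd(\delta,m_i)=m_1'$, equivalently $\gcd(s,p_i)=1$ for $i=1,2$. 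Since $k\geq 3$, at least one of $k-1,k-2$ is an even integer $\geq 2$, so $s\mid\delta\mid p_j$ for some $j\in\{1,2\}$; combined with $\gcd(s,p_j)=1$ this forces $s=1$. Therefore $\delta=m_1'$ and $c(k)=-1$.

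With $\delta=m_1'$ and $m_2'=0$ the defining equations of $W'$ read $x_1'y_1'=(x_2'z)^{m_1'}+u_1'$ and $x_2'y_2'=x_1'^{\,m_1'}+u_2'$, and $\Gamma'=\mu_{m_1'}\times\bG_m$ with the $\bG_m$-factor acting on $(x_1',x_2',y_1',y_2',z,u_1',u_2')$ with weights $(0,1,0,-1,-1,0,0)$. I would then compute $\bY=\Spec\Gamma(W',\O_{W'})^{\Gamma'}$ in two stages. First, the $\bG_m$-invariant subring of the ambient polynomial ring is freely generated by $x_1',y_1',x_2'y_2',x_2'z,u_1',u_2'$; writing $\xi=x_1'$, $\eta=y_1'$, $\zeta=x_2'z$, $w=x_2'y_2'$, the two defining equations become $\xi\eta=\zeta^{m_1'}+u_1'$ and $w=\xi^{m_1'}+u_2'$, the second of which eliminates $w$. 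Second, the residual $\mu_{m_1'}$-action on the surviving variables is $\tfrac{1}{m_1'}(1,-1,c(k-1))$ on $(\xi,\eta,\zeta)$ and trivial on $(u_1',u_2')$, so by the defining congruence $a_1'\equiv c(k-1)\pmod{m_1'}$ we obtain the asserted identification.

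The main obstacle is the middle step: the combination of the coprimality hypothesis with the mod-$\delta$ periodicity of the Chebyshev-like sequence $P_j$ is what rules out $s>1$. The first step is a direct calculation with the two invariants (gcd and quadratic form) of the linear recursion, and the third step is a routine two-stage computation of the $\Gamma'$-invariant ring.
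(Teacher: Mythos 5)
Your proposal is correct, and all the key steps check out. The paper itself gives no argument here beyond the citation ``Mori's proof works verbatim'' (\cite{M02}, Theorem~4.5), so strictly speaking there is nothing in the text to compare against; what you have written is a self-contained proof. The two invariants of the recursion are exactly right: $\gcd(d(i),d(i+1))$ is preserved, giving $m_1'=\gcd(m_1,m_2)$, and the Wronskian $d(i+1)c(i)-d(i)c(i+1)$ is constant and equal to $\delta$ at $i=1$ (using $\delta=m_1a_2+m_2a_1-m_1m_2$), so evaluating at $i=k-1$ with $d(k)=0$ gives $-m_1'c(k)=\delta$ and hence $s:=\delta/m_1'=-c(k)\in\bN$. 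Your argument that $s=1$ is also correct: $P_j:=d(k-j)/m_1'$ satisfies $P_0=0$, $P_1=1$, $P_{j+1}=\delta P_j-P_{j-1}$, so $\delta\mid P_j$ for positive even $j$; since $k\ge3$, one of $p_1=P_{k-1}$, $p_2=P_{k-2}$ is such a term, while $\gcd(\delta,m_i)=\gcd(m_{3-i}a_i,m_i)=\gcd(m_1,m_2)=m_1'$ gives $\gcd(s,p_i)=1$, forcing $s=1$. (The sentence about Vieta jumping and the preserved quadratic form $\Delta=Q(1)=Q(k-1)=m_1'^2$ is not actually used anywhere in this deduction and could be deleted.) The final computation of $\bY=\Spec\bigl((R'/I')^{\Gamma'}\bigr)$ in two stages is also fine: with $m_2'=0$ one has $\Gamma'=\mu_{m_1'}\times\bG_m$, the $\bG_m$-invariant subring of $R'$ is the polynomial ring on $x_1',y_1',x_2'y_2',x_2'z,u_1',u_2'$, the two relations (which are $\bG_m$-homogeneous of weight zero, so they generate the weight-zero part of $I'$) eliminate $x_2'y_2'$, and the residual $\mu_{m_1'}$-weights on $(\xi,\eta,\zeta)=(x_1',y_1',x_2'z)$ are $(1,-1,c(k-1))\equiv(1,-1,a_1')$.
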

\begin{proof}
(Cf. \cite{M02}, Theorem~4.5, p.~178.) Mori's proof works verbatim.
\end{proof}

\begin{remark}
In particular, in the case $d(k)=0$ the family $\bY \rightarrow \bA^2_{u_1',u_2'}$ is independent of $u_2'$, that is, it is the pullback of a family
$\bY' \rightarrow \bA^1_{u_1'}$ under the projection $\bA^2_{u_1',u_2'} \rightarrow \bA^2_{u_1'}$. For the moment we will keep the redundant variable $u_2'$ in order to treat both cases $d(k)=0$ and $d(k)<0$  simultaneously.
\end{remark}

\begin{proposition} \label{universalflip}
Assume $d(k)<0$.
The morphism $$\pi' \colon \bX' \rightarrow \bY$$ is a proper birational morphism with exceptional locus $C'={\pi'}^{-1}(0)$.
Moreover
$$K_{\bX'} \cdot C' = \frac{\delta}{m_1'm_2'} > 0,$$ or, equivalently, $\delta=c{m}'_1{m}'_2 - {m}'_1{a}'_2 -{m}'_2 {a}'_1$, where $-c$ is the self-intersection of the proper transform of $C'$ in the minimal resolution of $X'$.
\end{proposition}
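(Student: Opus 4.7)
\bigskip
\noindent\textbf{Proof plan.} The statement has three ingredients: (a) $\pi'$ is proper with exceptional locus exactly $C'$, (b) $\pi'$ is birational, and (c) the intersection number equals $\delta/(m_1'm_2')$. My plan follows Mori \cite{M02} closely but uses the explicit cover $W'^o\to\bX'$ of §\ref{k2Adeformation}.

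\medskip
\noindent\emph{Birationality and exceptional locus.} The key observation is that the locus $(x_1'=x_2'=0)\subset W'$ that is removed to form $W'^o$ is contained in $(u_1'=u_2'=0)$: setting $x_1'=x_2'=0$ in the defining equations forces $u_1'=u_2'=0$. Hence the canonical surjection $\Gamma(\bY,\cO_\bY)=\Gamma(W',\cO_{W'})^{\Gamma'}\to \Gamma(\bX',\cO_{\bX'})=\Gamma(W'^o,\cO_{W'})^{\Gamma'}$ is an isomorphism on the open locus where $(u_1',u_2')\ne 0$, so $\pi'$ is an isomorphism away from the central fiber $X'\subset\bX'$. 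On the central fiber, the toric description of $X'$ (exactly as for $X\to Y$ in §\ref{k2Adeformation}, only now with the new invariants $m_i',a_i'$) shows that $X'\to Y'$ is the toric partial resolution extracting the single ray corresponding to $C'$. Therefore $\pi'^{-1}(0)=C'$ set-theoretically, $\pi'$ is an isomorphism on $\bX'\setminus C'$, and $\pi'$ is birational.

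\medskip
\noindent\emph{Properness.} I would invoke the $\bG_m$-equivariance established in Lemma~\ref{contractingGmaction}. Since $\bG_m$ acts with strictly positive weights on $m_{\bY,0}$ and fixes $0\in\bY$, the $\bG_m$-flow degenerates any section $\Spec R\to\bY$ into the fixed point $0$. Combining this with the fact that $\pi'^{-1}(0)=C'\cong\bP^1$ is already proper, one runs the standard valuative argument (as in \cite[\S4]{M02}) to lift any $\Spec R\to\bY$ uniquely through $\pi'$: the lift exists after a preliminary $\bG_m$-translate using that $C'$ is complete, and uniqueness follows from separatedness, which is built into our construction as a GIT quotient.

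\medskip
\noindent\emph{Intersection number.} Since $\bX'\to\bA^2_{u_1',u_2'}$ is $\bQ$-Gorenstein (both Wahl singularities admit $\bQ$-Gorenstein deformations and the total space is locally given by the standard Wahl smoothing equations of §\ref{toric}), we have $K_{\bX'}|_{X'}=K_{X'}$, hence $K_{\bX'}\cdot C'=K_{X'}\cdot C'$. Now $X'$ is toric with boundary $l_1'+C'+l_2'$, and I would pass to the minimal resolution $\mu\colon\widetilde X'\to X'$. Writing $\widetilde C'^{\,2}=-c$ (so $K_{\widetilde X'}\cdot\widetilde C'=c-2$ by adjunction), and applying the toric discrepancy formula $c_j=-1+(\alpha_j+\beta_j)/n$ of §\ref{toric} at each $P_i'$, the unique exceptional curve meeting $\widetilde C'$ at $P_i'$ (namely the one corresponding to the primitive vector $\tfrac{1}{m_i'^2}(1,m_i'a_i'-1)$) has discrepancy $-1+a_i'/m_i'$. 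Therefore
\[
K_{X'}\cdot C' = K_{\widetilde X'}\cdot\widetilde C' - \Bigl(-1+\tfrac{a_1'}{m_1'}\Bigr) - \Bigl(-1+\tfrac{a_2'}{m_2'}\Bigr) = c - \tfrac{a_1'}{m_1'}-\tfrac{a_2'}{m_2'}= \tfrac{cm_1'm_2'-m_1'a_2'-m_2'a_1'}{m_1'm_2'}.
\]
This establishes the equivalent formula $\delta=cm_1'm_2'-m_1'a_2'-m_2'a_1'$. The remaining point is to verify that the integer on the right agrees with the $\delta$ introduced at the beginning of §\ref{s2} in terms of $m_1,a_1,m_2,a_2$; this is a direct computation from the definitions of $m_i',a_i'$ via the sequences $d(\cdot)$ and $c(\cdot)$ of §\ref{k2Adeformation}. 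Positivity is then assumption~(1) at the start of §\ref{s2}.

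\medskip
\noindent\emph{Main obstacle.} The technical heart is properness, where one must correctly translate the $\bG_m$-flow argument from the Cox cover $W'^o$ down to $\bX'$; everything else is a bookkeeping exercise in toric geometry combined with the matrix identities already used in the proof of Proposition~\ref{k1Ainvariants}.
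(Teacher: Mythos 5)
Your treatment of properness (reduce to a neighborhood of $0\in\bY$ via $(\pi')^{-1}(0)=C'$, then globalize with the contracting $\bG_m$-action of Lemma~\ref{contractingGmaction}) and your computation of $K_{X'}\cdot C'$ via discrepancies at the two Wahl points are exactly the paper's route; the discrepancy $-1+a_i'/m_i'$ for the exceptional curve on the ray $\frac{1}{m_i'^2}(1,m_i'a_i'-1)$ is correct, and this fills in what the paper leaves implicit in ``using the toric description of the section $X'$''. The genuine gap is in the identification of the exceptional locus. From ``$(\pi')^{-1}(0)=C'$'' you conclude ``therefore $\pi'$ is an isomorphism on $\bX'\setminus C'$''; this does not follow, since one must rule out positive-dimensional fibers over points of $\bY$ other than $0$. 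Your supporting claim that $\pi'$ is an isomorphism away from the central fiber rests on the observation that $W'^o=W'$ over $(u_1',u_2')\neq 0$; but $\bY$ is the affine (GIT) quotient $\Spec\,\Gamma(W',\cO_{W'})^{\Gamma'}$ while $\bX'$ is the geometric quotient of ${W'}^o$, and for these to agree over $(u_1',u_2')\neq 0$ you would still need to check that all $\Gamma'$-orbits there are closed, which you do not do. Note also that the paper's proof of $(\pi')^{-1}(0)=C'$ is not via the toric description of $X'\to Y$ (which controls the central fiber, not the fiber of $\pi'$ over $0\in\bY$, absent a base-change argument) but via exhibiting the invariant functions $(x_i')^a(y_j')^b$ and $(x_i')^az^c$ in $\Gamma(\bY,\cO_{\bY})$, whose common zero locus is $C'$.

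The paper closes the exceptional-locus gap as follows: $z$ is a section of $-K_{\bX'}$, the divisor $\bB'=(z=0)$ contains $C'$ as its only proper curve (its fibers over $\bA^2_{u_1',u_2'}$ are partial smoothings of the nodal chain $l_1'+C'+l_2'$), and $K_{\bX'}$ is relatively ample over $\bY$; hence any $\pi'$-contracted curve $\Gamma$ satisfies $\bB'\cdot\Gamma=-K_{\bX'}\cdot\Gamma<0$, so $\Gamma\subset\bB'$ and $\Gamma=C'$. You should either adopt this argument or, if you insist on your route, supply the orbit-closure verification and then add the (easy) observation that $C'$ is the only proper curve of the toric surface $X'$ and that any proper curve is contracted by a morphism to the affine $\bY$. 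The remaining deferred item, that $cm_1'm_2'-m_1'a_2'-m_2'a_1'$ equals the $\delta$ of Section~\ref{s2}, is indeed a direct computation with the sequences $d(\cdot)$, $c(\cdot)$ and is treated at the same level of detail in the paper.
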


\begin{proof}
(Cf. \cite{M02}, Theorem~4.7, p.~178.)

The morphism $\Pi'$ is clearly birational.

We have
$$(x_i')^a(y_j')^b, (x_i')^az^c \in \Gamma(\bY,\cO_{\bY})$$
for some $a,b,c \in \bN$, for each $i=1,2$ and $j=1,2$.
Recalling that
$$C':=(y_1'=y_2'=z=u_1'=u_2'=0) \subset \bX',$$
we deduce $(\pi')^{-1}(0)=C'$. It follows that $\pi'$ is proper over a neighbourhood of $0 \in \bY$,
and hence over $\bY$ using the existence of a contracting $\bG_m$-action on $\pi' \colon \bX' \rightarrow \bY$, see Lemma~\ref{contractingGmaction}.

We compute
$$K_{\bX'} \cdot C' = K_{X'} \cdot C' = \frac{\delta}{m_1'm_2'}$$
using the toric description of the section $X'$.

Observe that $z \in \Gamma(\bX',-K_{\bX'})$ and $C'$ is the only proper curve contained in $\bB':=(z=0) \subset \bX'$. (Indeed the fibers of
$$\bB' \rightarrow \bA^2_{u_1',u_2'}$$
are obtained from the special fiber
$$\bB'_0=(z=0)|_{X'} = l_1'+C'+l_2'$$
(the toric boundary of $X'$) by smoothing a subset of the two nodes.) Now since $K_{\bX'}$ is relatively ample over $\bY$ (using $K_{\bX'} \cdot C > 0$, $C=(\pi')^{-1}(0)$, and the contracting $\bG_m$-action) it follows that the exceptional locus of $\pi'$ equals $C'$.
\end{proof}

\begin{proposition}
\begin{enumerate}
\item Assume $d(k)<0$.
The morphism $\pi$ is a proper birational morphism with exceptional locus the codimension 2 set $$E:=(F_k=F_{k+1}=0).$$
We have
$$\pi(E)=\{0\} \times_{\bA^2_{u_1',u_2'}} \bA^2_{u_1,u_2} \subset \bY \times_{\bA^2_{u_1',u_2'}} \bA^2_{u_1,u_2}$$
or explicitly
$$\pi(E)=\left\{ \begin{array}{cc} (x_1'=x_2'=y_1'=y_2'=z=u_1u_2=0)  & \mbox{ if $k \ge 4$}\\ (x_1'=x_2'=y_1'=y_2'=z=u_2=0) & \mbox{ if $k=3$}.\\
\end{array}\right.$$
\item Assume $d(k)=0$.
The morphism $\pi$ is a proper birational morphism with exceptional locus the irreducible divisor
$$E:=(F_{k+1}=0)\subset X.$$
We have
$$\pi(E)=(\zeta={\xi}^{\delta}+u^{f(k)}=0) \subset \bY \times_{\bA^2_{u_1',u_2'}} \bA^2_{u_1,u_2}$$
\end{enumerate}
\end{proposition}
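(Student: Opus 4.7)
The plan is to first establish properness of $\pi$ via the contracting $\bG_m$-action (as in the proof of Proposition~\ref{universalflip}), then prove birationality and identify the exceptional locus using the cluster exchange relations $F_{i-1}F_{i+1} = q_i F_i^\delta + r_i$ together with Lemma~\ref{restrict_to_H}.

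For properness, I would invoke Lemma~\ref{contractingGmaction}: $\pi$ is $\bG_m$-equivariant for a $\bG_m$-action that acts with positive weights on the coordinate ring of the base near the origin. Hence it suffices to check that the preimage of the fixed point of the base is proper. In $\bX$, this preimage is contained in the central fiber $X$, and by the definition of $\pi$ it lies in the locus $F_{k-1} = F_k = F_{k+1} = F_{k+2} = z = 0$. By Lemma~\ref{restrict_to_H}, when $d(k)<0$ this locus coincides with $C$; when $d(k)=0$ one combines Lemma~\ref{restrict_to_H} with the explicit description of $\bY$ in the preceding proposition to see that the relevant fiber over $\pi(E)$ is still proper.

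For birationality and the exceptional locus in case $d(k)<0$, my approach is to cover the complement of $(F_k = F_{k+1} = 0)$ by the two open subsets $(F_k \ne 0)$ and $(F_{k+1} \ne 0)$, and on each of these to recursively invert the exchange relations in the form $F_{i-1} = (q_i F_i^\delta + r_i)/F_{i+1}$, running down from $i = k$ to $i = 2$. This expresses the original coordinates $x_1 = F_2$, $x_2 = F_1$, $y_1 = F_0$, $y_2 = F_3$ as regular functions of the pullbacks of $x_1', x_2', y_1', y_2', z$ and the $u_i$, producing a local inverse of $\pi$; in particular the exceptional locus is contained in $E = (F_k = F_{k+1} = 0)$. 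For the reverse inclusion and the description of $\pi(E)$, I would restrict the exchange relations at levels $k$ and $k+1$ to $E$: vanishing of both $F_k$ and $F_{k+1}$ forces $r_k = u^{f(k)}$ and $r_{k+1} = u^{f(k+1)}$ to vanish, which for $k \ge 4$ gives $u_1 u_2 = 0$ (both coordinates of $f(k)$ and $f(k+1)$ are positive) and for $k = 3$ involves only $u_2$; the exchange relations at adjacent levels similarly force $F_{k-1} = F_{k+2} = 0$, and $z = 0$ follows from the equations defining $\bY \times_{\bA^2_{u'}} \bA^2_u$ at the image point. This yields exactly the stated description of $\pi(E)$.

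Case $d(k)=0$ is structurally similar, with $F_{k+1}$ now vanishing along a divisor by Lemma~\ref{restrict_to_H}. The image $\pi(E)$ is obtained by substituting $x_2' = F_{k+1} = 0$ into the defining equations of $\bY$ from the preceding proposition, which forces $\zeta = 0$ and then $\xi^\delta + u^{f(k)} = 0$ on the $U_2'$ chart. The hard step will be the birationality argument: one must verify that the recursive inversion of the exchange relations is well-defined and regular on each of the asserted open subsets, and that no spurious component of the exceptional locus appears at intermediate stages of the induction. This is essentially the cluster-algebra input of Remark~\ref{cluster}, namely that adjacent mutations are inverse to each other on the appropriate open torus, combined with the coprimality of $q_i$ and $r_i$ in the monomial ring $k[z,u_1,u_2]$.
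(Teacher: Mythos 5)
Your properness step follows the paper's route (reduce to the fiber over the origin, then use the contracting $\bG_m$-action of Lemma~\ref{contractingGmaction}), but the containment you assert for that fiber is not correct as stated: $\pi^{-1}(0)$ need not lie in $(F_{k-1}=F_k=F_{k+1}=F_{k+2}=z=0)$. For $k=3$ one has $F_{k-1}=F_2=x_1$, which does not vanish identically on $C$, yet $C=\pi^{-1}(0)$. The point is that $F_{k-1},\dots,F_{k+2},z$ are not functions on the target; only $\Gamma'$-invariant monomials such as $(x_i')^a(y_j')^b$ are, and their simultaneous vanishing gives the \emph{union} $(F_k=F_{k+1}=0)\cup(F_{k-1}=F_{k+2}=0)$, each piece of which lies in $C$ by Lemma~\ref{restrict_to_H}. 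This is repairable, but as written the step is wrong.

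The genuine gap is the inclusion ``exceptional locus $\subseteq E$''. Your plan to invert the exchange relations downward from the cluster $(F_k,F_{k+1})$ requires dividing successively by $F_{k+1}$, then $F_k$, then $F_{k-1}$, and so on; on the open set $(F_k\neq 0)$ there is no control on the vanishing of the intermediate $F_i$, so the regularity of the resulting Laurent expressions — which you yourself flag as ``the hard step'' — is precisely what must be proved and is not. Establishing it would amount to reproving the Laurent/regularity statements of the earlier proposition in the reverse direction. The paper avoids this entirely: since $L_k\cdot C=d(k)/m_1m_2<0$ and $L_{k+1}\cdot C=-d(k-1)/m_1m_2<0$, the bundles $-L_k$ and $-L_{k+1}$ are $\pi$-ample (first over a neighbourhood of $0$, then globally via the contracting $\bG_m$-action), so the sections $F_k,F_{k+1}$ must vanish on every positive-dimensional fiber of $\pi$, giving the inclusion in $E$ with no inversion of mutations. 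For the reverse inclusion the paper then uses that every nonempty fiber of $E\to\bA^2_{u_1,u_2}$ is irreducible of dimension $1$ (Proposition~\ref{exceptional_locus_local_eq}) and algebraically equivalent to a multiple of $C$; and in the case $d(k)=0$ it also needs the irreducibility of the divisor $E$, obtained from a second Betti number argument, which your proposal omits. Your computation of $\pi(E)$ from the vanishing of $r_k=u^{f(k)}$ and $r_{k+1}=u^{f(k+1)}$ on $E$ is consistent with the paper's (which defers to Mori).
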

\begin{proof}
(Cf. \cite{M02}, Theorem~4.3, p.~177.)

It is easy to see that $\pi$ is birational.

Now consider the base change $$\bY \times_{\bA^2_{u_1',u_2'}} \bA^2_{u_1,u_2}$$ and the morphism $\pi \colon X \rightarrow \bY \times_{\bA^2_{u_1',u_2'}} \bA^2_{u_1,u_2}$.
Let $0 \in \bY \times_{\bA^2_{u_1',u_2'}} \bA^2_{u_1,u_2}$ denote the origin.

If $d(k)<0$ then the element $(x_i')^a(y_j')^b \in R'/I'$ is $\Gamma'$-invariant for some $a,b \in \bN$, for each $i=1,2$ and $j=1,2$.
If $d(k)=0$ then the elements
$$(x_1')^a, (y_1')^a, x_2'y_2' \in R'/I'$$
are $\Gamma'$ invariant for some $a \in \bN$.
It follows that
$$\pi^{-1}(0):=(x_1'=x_2'=y_1'=y_2'=u_1=u_2=0) \subset$$
$$(x_1'=x_2'=u_1=u_2=0) \cup (y_1'=y_2'=u_1=u_2=0).$$
Now
$$C=(x_1'=x_2'=u_1=u_2=0)$$
and
$$C \supseteq (y_1'=y_2'=u_1=u_2=0)$$
by Lemma~\ref{restrict_to_H} (using $F_{k-1}=y_2'$, $F_k=x_1'$, $F_{k+1}=x_2'$, $F_{k+2}=y_1'$).
Thus $\pi^{-1}(0) = C$.

It follows that there is an open neighbourhood $0 \in U \subset \bY \times_{\bA^2_{u_1',u_2'}} \bA^2_{u_1,u_2}$ such that $\pi$ is proper over $U$.
Moreover, by Lemma~\ref{contractingGmaction} we have a contracting $\bG_m$-action on $\pi \colon (C \subset \bX) \rightarrow (0 \in \bY \times_{\bA^2_{u_1',u_2'}} \bA^2_{u_1,u_2})$.  It follows that $\pi$ is proper.

Suppose $d(k)<0$.
Then $L_k \cdot C= \frac{d(k)}{m_1 m_2} < 0$ and $L_{k+1} \cdot C=\frac{-d(k-1)}{m_1 m_2} <0$ by \cite{M02}, Proposition~3.14(2), p.~174.
Thus $-L_{k}$ and $-L_{k+1}$ are relatively ample over some open neighbourhood $0 \in U \subset \bY \times_{\bA^2_{u_1',u_2'}} \bA^2_{u_1,u_2}$ (because $\pi^{-1}(0)=C$).
Using the contracting $\bG_m$-action again we deduce that $-L_k$ and $-L_{k+1}$ are relatively ample over $\bY \times_{\bA^2_{u_1',u_2'}} \bA^2_{u_1,u_2}$.
Now $E:=(F_k=F_{k+1}=0)$ where $F_k \in \Gamma(\bX,L_{k})$, $F_{k+1} \in \Gamma(\bX,L_{k+1})$. So the exceptional locus of $\pi$ is contained in $E$.
We have $E \cap X = C = \pi^{-1}(0)$ by Lemma~\ref{restrict_to_H}.
Also, $E$ is preserved by the contracting $\bG_m$-action (because $E=(F_k=F_{k+1}=0)$ and $F_k,F_{k+1}$ are eigenfunctions for the $\bG_m$-action).
Moreover, each fiber of $E$ over $\bA^2_{u_1,u_2}$ is either empty or irreducible of dimension $1$.
Thus every fiber of $E \rightarrow \bA^2_{u_1,u_2}$ is algebraically equivalent to a multiple of $C$. It follows that the exceptional locus of $\pi$ is equal to $E$.

Similarly, if $d(k)=0$, then $-L_{k+1}$ is relatively ample over $\bY \times_{\bA^2_{u_1',u_2'}} \bA^2_{u_1,u_2}$, and $E:=(F_{k+1}=0)$ satisfies $E \cap X = C$ by Lemma~\ref{restrict_to_H} (using $m_2'=-d(k)=0$). Also, the locus $E$ is preserved by the contracting $\bG_m$-action.
It follows that each fiber of $E \rightarrow \bA^2_{u_1,u_2}$ is a proper curve, such that each component is algebraically equivalent to a multiple of $C$.
So the exceptional locus of $\pi$ equals $E$. Finally, since the Milnor number of a $\bQ$-Gorenstein smoothing of a singularity of type $\frac{1}{m^2}(1,ma-1)$ equals $0$, we have $b_2(\bX_t) = b_2(\bX_0)=1$ for all $t \in \bA^2_{u_1,u_2}$. It follows that the fibers of $E \rightarrow \bA^2_{u_1,u_2}$ are irreducible, and hence that $E$ is irreducible.

See \cite{M02}, Theorem~4.5, p.~178 for the identification of $\pi(E)$.

\end{proof}

We identify the exceptional locus $E$ explicitly in the case $d(k)<0$ of flipping contractions.

\begin{proposition}\label{exceptional_locus_local_eq}
Assume $d(k)<0$. The locus
$$E:=(F_k=F_{k+1}=0) \subset \bX$$
is given in the charts $U_1,U_2$ as follows.
We have $E \subset (u_1=0)$ for $k=3$ and $E \subset (u_1u_2=0)$ for $k>3$.
Write $E_i=E \cap (u_i=0)$ for $i=1,2$. Then
$$E_1 \cap U_1 =(q_1^{m_2}+u_2p_1^{\delta m_1 - m_2}) \subset (u_1=0) \subset U_1$$
$$E_1 \cap U_2 =(\eta_2=u_1=0) \subset U_2$$
and similarly, for $k>3$,
$$E_2 \cap U_1 =(\eta_1=u_2=0) \subset U_1$$
$$E_2 \cap U_2=(q_2^{m_1}+u_1p_2^{\delta m_2 - m_1}) \subset (u_2=0) \subset U_2$$
In particular, the nonempty fibers of $E \rightarrow \bA^2_{u_1,u_2}$ are irreducible of dimension $1$.
\end{proposition}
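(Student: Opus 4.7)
The first claim is immediate from the exchange relations. Since $F_k$ and $F_{k+1}$ vanish on $E$, the identities
$$F_{k-1}F_{k+1}=q_kF_k^\delta+r_k,\qquad F_kF_{k+2}=q_{k+1}F_{k+1}^\delta+r_{k+1}$$
force $r_k|_E=r_{k+1}|_E=0$. Reading off the explicit monomial forms: for $k>3$ one has $r_k=u^{f(k)}$ and $r_{k+1}=u^{f(k+1)}$ with $f(k),f(k+1)\in\bZ^2_{>0}$, so the vanishing locus is $(u_1u_2=0)$; for $k=3$ one has $r_3=u_1$ while $r_4$ is divisible by $u_1$, so the locus collapses to $(u_1=0)$.

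For the local equations, my plan is to compute $F_i|_{u_1=0}$ in each chart $U_1,U_2$ by induction on the cluster recursion, exploiting that for $3\le i\le k-1$ the coefficients $q_i=1$, $r_i=u^{f(i)}$ satisfy $r_i|_{u_1=0}=0$ (as $f(i)$ has positive first coordinate), so the recursion $F_{i-1}F_{i+1}=q_iF_i^\delta+r_i$ reduces to $F_{i+1}=F_i^\delta/F_{i-1}$. The initial values come directly from the equations of $W$: in $U_1$, pulling back $x_2y_2=z^{m_2}+u_2x_1^\delta$ to the $\mu_{m_1}$-cover (set $x_2=1$) and substituting $\xi_1=p_1^{m_1}$, $\zeta_1=p_1q_1$ gives $F_3=p_1^{m_2}A$ with $A:=q_1^{m_2}+u_2p_1^{\delta m_1-m_2}$, while $F_2=p_1^{m_1}$; in $U_2$ one has $F_2=1$ and $F_3=\eta_2$. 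A routine induction using $d(i)=\delta d(i-1)-d(i-2)$ and $g(i)=\delta g(i-1)-g(i-2)$ then yields
$$F_i\big|_{u_1=0\cap U_1}=p_1^{d(i-1)}A^{g(i-1)},\qquad F_i\big|_{u_1=0\cap U_2}=\eta_2^{g(i-1)}\qquad (2\le i\le k).$$
Applying the final exchange relation $F_{k-1}F_{k+1}=z^{m_2'}F_k^\delta+u^{f(k)}$, where $u^{f(k)}|_{u_1=0}=0$, and substituting $z=p_1q_1$ in the $U_1$-cover (where the identity $d(k)=-m_2'$ cancels a power of $p_1$) gives
$$F_{k+1}\big|_{u_1=0\cap U_1}=q_1^{m_2'}A^{g(k)},\qquad F_{k+1}\big|_{u_1=0\cap U_2}=\zeta_2^{m_2'}\eta_2^{g(k)}.$$

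The set-theoretic equalities then fall out. In $U_1$ the simultaneous vanishing locus of $F_k,F_{k+1}$ on $(u_1=0)$ is $(A=0)\cup(p_1=q_1=0)$, but $A$ vanishes at the origin, so the second set is redundant and $E_1\cap U_1=(A=0)$. In $U_2$, $F_k=\eta_2^{g(k-1)}$ already cuts out $(\eta_2=0)$ and the equation for $F_{k+1}$ is compatible, giving $E_1\cap U_2=(\eta_2=u_1=0)$. The descriptions of $E_2\cap U_i$ for $k>3$ follow by the evident symmetry exchanging the indices $1\leftrightarrow 2$, which swaps $u_1\leftrightarrow u_2$ and the charts $U_1\leftrightarrow U_2$. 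Irreducibility and $1$-dimensionality of nonempty fibers of $E\to\bA^2_{u_1,u_2}$ is read off the $U_2$-chart: over $(0,c)$ the curve $E_1$ is cut out by $\eta_2=0$ inside the smooth affine surface $\xi_2\eta_2=\zeta_2^{m_2}+c$, reducing to the line $\bA^1_{\xi_2}$ after taking the $\mu_{m_2}$-quotient. The main technical burden is not geometric but rather the cluster-algebraic bookkeeping needed to identify the exponents $d(i),g(i)$ in the inductive formulas; once these are in hand, the conclusion is immediate.
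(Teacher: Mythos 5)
Your computation of $E_1$ is correct and is essentially the paper's computation in different clothing: the paper works fiberwise over $(0,s)$, $s\neq 0$, and identifies the divisors $(F_i=0)|_{\bX_s}$ (in the spirit of Lemma~\ref{restrict_to_H}), whereas you restrict the whole exchange recursion to the divisor $(u_1=0)$ and track $F_i$ in orbifold coordinates; the key facts used (that $q_i=1$ and $r_i=z^{d(i)}u^{f(i)}$ vanishes on $(u_1=0)$ for $3\le i\le k-1$, that $r_k=u^{f(k)}$ vanishes there, and the cancellation $d(k)=-m_2'$) are the same. The first claim ($E\subset(u_1u_2=0)$, resp.\ $(u_1=0)$) and the irreducibility of the fibers are also fine, modulo the small observation that in the $U_2$-cover the locus $(\eta_2=0)$ is $m_2$ disjoint lines permuted transitively by $\mu_{m_2}$ (since $\gcd(a_2,m_2)=1$), so it is irreducible only after passing to the quotient.

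The genuine gap is the derivation of the $E_2$ formulas from "the evident symmetry exchanging the indices $1\leftrightarrow 2$." That symmetry is not evident, because the locus $E$ is \emph{defined} as $(F_k=F_{k+1}=0)$, and the sequence $(F_i)$ together with the termination index $k$ is anchored asymmetrically at $F_1=x_2$, $F_2=x_1$ with the normalization $m_1\ge m_2$ and $d(1)=m_1$, $d(2)=m_2$. Applying the swap $(x_1,y_1,u_1,m_1,a_1)\leftrightarrow(x_2,y_2,u_2,m_2,a_2)$ produces the cluster sequence attached to the data $(m_2,m_1,a_2,a_1)$, which already at the next step differs ($d'(3)=\delta m_1-m_2$ versus $d(3)=\delta m_2-m_1$), so $\sigma(F_i)$ is not $F_{j}$ for any simple reindexing, and it is not formal that $\sigma(E)=E$. (Nor can you run your induction directly on $(u_2=0)$: $r_3=z^{d(3)}u_1$ does not vanish there, so the recursion does not collapse.) To close the gap you must either (a) argue that $E$ is intrinsic — e.g.\ it is the exceptional locus of $\pi$ by the preceding proposition, and $\pi$ is the canonical proper birational morphism to the normal affine variety $\Spec\Gamma(\bX,\cO_{\bX})$, which is swap-invariant — or (b) do what the paper does and invoke Proposition~\ref{glueingfamilies}: the restriction of $\pi$ to $(u_1\neq 0)$ is identified with the restriction of the family for the mutated data $(m_2,\delta m_2-m_1,\dots)$ to $(u_2^2\neq 0)$, under which $E\cap(u_2=0,u_1\neq 0)$ becomes an $E_1$-type locus for the new data; the stated $E_2$ formulas are then read off from that identification. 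As written, half of the statement is asserted rather than proved.
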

\begin{proof}
First observe that $F_k=F_{k+1}=0$ implies $u^{f(k)}=u^{f(k+1)}=0$ using the equations $F_{k-1}F_{k+1}=\cdots$ and $F_kF_{k+2}=\cdots$.
Hence $E \subset (u_1=0)$ if $k=3$ and $E \subset (u_1u_2=0)$ if $k>3$.

Recall that $E \cap (u_1=u_2=0) = C$ by Lemma~\ref{restrict_to_H}.
In Proposition~\ref{glueingfamilies} we show that, for $k>3$, the restriction of the morphism $\pi \colon \bX \rightarrow \bY \times_{\bA^2_{u_1',u_2'}} \bA^2_{u_1,u_2}$ to the open subset
$$(u_1 \neq 0) \subset \bA^2_{u_1,u_2}$$
is identified with the restriction of the morphism $$\pi^2 \colon \bX^2 \rightarrow \bY \times_{\bA^2_{u_1',u_2'}} \bA^2_{u_1^2,u_2^2}$$ associated to the data
$$(m^2_1,m^2_2,a^2_1,a^2_2):=(m_2,\delta m_2 - m_1, m_2-a_2, \delta a_2 - (m_1-a_1))$$
to the open subset
$$(u_2^2 \neq 0) \subset \bA^2_{u_1^2,u_2^2}.$$
Thus it suffices to describe $E \cap (u_2 \neq 0)$.

Let $0 \neq s \in (u_1=0) \in \bA^2_{u_1,u_2}$. Consider the fibers
$$U_{1,s}=(\xi_1\eta_1=\zeta_1^{m_1}) \subset \bA^3_{\xi_1,\eta_1,\zeta_1}/\textstyle{\frac{1}{m_1}}(1,-1,a_1) = \bA^2_{p_1,q_1}/ \textstyle{\frac{1}{m_1^2}(1,m_1a_1-1)}$$
$$U_{2,s}=(\xi_2\eta_2=\zeta_2^{m_2}+u_2) \subset \bA^3_{\xi_2,\eta_2,\zeta_2}/\textstyle{\frac{1}{m_2}(1,-1,a_2)}.$$
Define closed irreducible curves $k_1,k_2,h,F \subset \bX_s$ as follows.
$$k_1=(\xi_1=\zeta_1=0) = (p_1=0) \subset U_{1,s}$$
$$k_2=(\xi_2=0) \subset U_{2,s}$$
$$h \cap U_{1,s} = (\eta_1=\zeta_1 = 0) = (q_1=0) \subset U_{1,s}$$
$$h \cap U_{2,s} = (\zeta_2 = 0) \subset U_{2,s}$$
$$F \cap U_{1,s} = (q_1^{m_2}+u_2p_1^{\delta m_1 - m_2}=0) \subset U_{1,s}$$
$$F \cap U_{2,s} = (\eta_2 = 0) \subset U_{2,s}$$
Now, similarly to Lemma~\ref{restrict_to_H}, one checks
$$
\begin{array}{rcll}
(z=0)|_{\bX_s}       & = & k_1        +   h                                                 \\
(F_0=0)|_{\bX_s}    & = &               m_1h                                             \\
(F_1=0)|_{\bX_s}     & = &               k_2                                                    \\
(F_i=0)|_{\bX_s}     & = & d(i-1)k_1                            +  g(i-1)F                     \mbox{ for } 2 \le i \le k \\
(F_{k+1}=0)|_{\bX_s} & = &               m_2'h                +  g(k)F
\end{array}
$$
Thus, noting that $k_1 \cap h \subset F$, we obtain
$$F=(F_k=F_{k+1}=0) \cap \bX_s = E \cap \bX_s.$$
\end{proof}

\subsection{The universal family of $K$ negative surfaces}

Let $j \in \bZ$ be such that $d(j),d(j+1)>0$. Explicitly, if $\delta>1$ then we require $j \neq k-1,k,k+1$ if $d(k)<0$ and $j \neq k-1,k,k+1,k+2$ if $d(k)=0$. If $\delta=1$ we require $j \equiv 0,1 \bmod 5$ if $d(k)<0$ and $j \equiv 1 \bmod 5$ if $d(k)=0$. We will call the allowed values of $j$ \emph{admissible}.
In what follows the superscripts $j$ are understood modulo $5$ in the case $\delta=1$.

Define
$$m^j_{1}=d(j), \quad m^j_{2}=d(j+1), \quad a^j_{1}=c(j), \quad a^j_{2}=m^j_{2}-c(j+1).$$
Thus $m^1_i=m_i$ and $a^1_i=a_i$ for each $i=1,2$.

\begin{lemma}
We have $1 \le a^j_i \le m^j_i$ and $\gcd(a^j_i,m^j_i)=1$ for each $i=1,2$ and all admissible $j$.
Moreover,
$$\delta^j:=m^j_1a^j_2+m^j_2a^j_1-m^j_1m^j_2 = \delta >0$$
and
$$\Delta^j:=(m^j_1)^2+(m^j_2)^2-\delta^j m^j_1 m^j_2 = \Delta >0.$$
\end{lemma}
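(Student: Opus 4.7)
The plan is to exploit the fact that, on any admissible range, both sequences $d$ and $c$ satisfy the common linear recurrence
$$x(i+1) = \delta\, x(i) - x(i-1),$$
so that ``Wronskian-type'' identities in two consecutive terms are preserved under shifting $j$.

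\emph{Invariance of $\delta^j$ and $\Delta^j$.} A direct rewrite gives
$$\delta^j = d(j)\bigl(d(j+1) - c(j+1)\bigr) + d(j+1) c(j) - d(j) d(j+1) = d(j+1) c(j) - d(j) c(j+1),$$
which is manifestly preserved by $j \mapsto j+1$: substituting $d(j+2)=\delta d(j+1) - d(j)$ and $c(j+2) = \delta c(j+1) - c(j)$ into $d(j+2) c(j+1) - d(j+1) c(j+2)$ returns $d(j+1) c(j) - d(j) c(j+1)$. Evaluating at $j = 1$ gives $m_2 a_1 - m_1(m_2 - a_2) = \delta$. The same mechanism applies to the quadratic form $\Delta^j = d(j)^2 + d(j+1)^2 - \delta d(j) d(j+1)$: it is invariant under $j \mapsto j+1$ by a one-line substitution, and equals $\Delta$ at $j = 1$.

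\emph{Coprimality.} I would prove $\gcd(c(j), d(j)) = 1$ by induction on $|j|$ (in steps of two), with base cases $\gcd(a_1, m_1) = 1$ and $\gcd(m_2 - a_2, m_2) = \gcd(a_2, m_2) = 1$. For the step, let $g := \gcd(c(j+1), d(j+1))$; the Wronskian identity gives $g \mid \delta$, and then the recurrences
$$d(j-1) = \delta d(j) - d(j+1), \qquad c(j-1) = \delta c(j) - c(j+1)$$
together with $g \mid d(j+1),\, c(j+1),\, \delta$ force $g \mid d(j-1)$ and $g \mid c(j-1)$, so $g \mid \gcd(c(j-1), d(j-1)) = 1$. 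Hence $\gcd(a^j_1, m^j_1) = 1$; for the index $i = 2$, note $\gcd(d(j+1) - c(j+1), d(j+1)) = \gcd(c(j+1), d(j+1)) = 1$.

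\emph{Inequalities and main obstacle.} For the bounds $1 \le a^j_i \le m^j_i$, I would introduce the companion sequence $e(j) := d(j) - c(j)$, which satisfies the same recurrence with $e(1) = m_1 - a_1 \ge 0$ and $e(2) = a_2 \ge 1$. The analogous Wronskian computation yields $d(j+1) e(j) - d(j) e(j+1) = -\delta < 0$, so on any admissible segment (where $d(j) > 0$) the ratio $c(j)/d(j)$ is strictly decreasing and $e(j)/d(j)$ strictly increasing in $j$. Combined with $0 < c(2)/d(2) < 1$ and $0 < e(2)/d(2) < 1$, monotonicity then forces $0 < c(j),\, e(j) < d(j)$ throughout the segment, which is exactly the desired inequality. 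I expect the main technical obstacle to be tracking these positivity statements across the ``defect'' indices $i = k, k+1$ (and $i = k+2$ when $d(k) = 0$), where the gluing rules $d(k+1) = -d(k-1)$, $c(k+1) = -c(k-1)$, etc.\ replace the standard recurrence; here one must directly check that the sign bookkeeping is compatible with the notion of admissibility in order to conclude positivity on every admissible segment.
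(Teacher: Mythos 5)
The paper gives no proof of this lemma (it is stated with an immediate \qed), so your Wronskian strategy is being judged on its own. The identities you use are correct and are the right tool for the second half of the statement: $\delta^j = d(j+1)c(j)-d(j)c(j+1)$ is indeed the form that is preserved by the recurrence $x(i+1)=\delta x(i)-x(i-1)$, and the same one-line substitution handles $\Delta^j$; your two-step induction for $\gcd(c(j),d(j))=1$ (using $g\mid\delta$ from the Wronskian and then the backward recurrence) is also sound. One caveat you only half-address: the substitution argument needs the recurrence at index $j+1$, which fails at the defect indices, so to transport $\delta^j=\delta$ and $\Delta^j=\Delta$ from the admissible segment $j\le k-2$ to the segment $j\ge k+2$ you must verify directly from the gluing rules $d(k+1)=-d(k-1)$, $d(k+2)=-d(k)$, $c(k+1)=-c(k-1)$, $c(k+2)=-c(k)$ that the Wronskian and the quadratic form are unchanged (they are: e.g. $d(k+2)c(k+1)-d(k+1)c(k+2)=d(k)c(k-1)-d(k-1)c(k)$). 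You flag this check only for the inequalities, but it is needed for the invariance claims as well.

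The genuine gap is in the bounds $1\le a^j_i\le m^j_i$. Monotonicity of $c(j)/d(j)$ (decreasing) and $e(j)/d(j)$ (increasing) anchored at the single point $j=2$ gives only one-sided control on each side of the anchor: for $j>2$ you get $c(j)<d(j)$ but nothing prevents $c(j)\le 0$; for $j<2$ you get $c(j)>0$ but nothing prevents $c(j)>d(j)$ (equivalently $e(j)<0$). So the sentence ``monotonicity then forces $0<c(j),e(j)<d(j)$ throughout the segment'' does not follow from what you have written; you need a second anchor at the far end of each admissible segment, or a propagation argument such as: from $d(j)e(j+1)=d(j+1)e(j)+\delta$, positivity of $e$ propagates forward along admissible indices, while from $d(j+1)c(j)=d(j)c(j+1)+\delta$, positivity of $c$ propagates backward — and these two propagation directions must then be combined with explicit positivity at both ends of each segment (including across the defect indices, where the signs flip). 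Note also that the boundary case $a^j_1=m^j_1$ (i.e. $e(j)=0$, forcing $m^j_1=1$ by coprimality) does occur, so strict inequalities are not available everywhere. As it stands, the inequality part of the lemma is asserted rather than proved.
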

\qed

We apply the above constructions to the data $(m^j_1,m^j_2,a^j_1,a^j_2)$, and denote the result by the same notations decorated by a superscript $j$.

\begin{proposition} \label{glueingfamilies}
For each admissible $j$ the  family ${\bX'}^j \rightarrow \bA^2_{{u'}^j_1,{u'}^j_2}$ is identified with $\bX' \rightarrow \bA^2_{u_1',u_2'}$ by
identifying variables with the same name (forgetting the superscript), that is,
$$({x'}^j_1,{x'}^j_2,{y'}^j_1,{y'}^j_2,{z'}^j,{u'}^j_1,{u'}^j_2)=(x_1',x_2',y_1',y_2',z',u_1',u_2').$$
In particular we have an induced identification $\bY^j=\bY$.

Let $j,j+1$ be admissible.
Then we have an identification
$$\bX^j \supset (u^j_1 \neq 0) = (u^{j+1}_2 \neq 0) \subset \bX^{j+1}$$
given by
$$z^j=z^{j+1},$$
$$(u^j_1,u^j_2)=((u^{j+1}_2)^{-1}, u^{j+1}_1(u^{j+1}_2)^{\delta}),$$
and
$$F^j_i =\left\{ \begin{array}{cc} F^{j+1}_{i-1} & i \neq 2 \\ (u^{j+1}_2)^{-1} F^{j+1}_{i-1} & i=2.\\
\end{array}\right.$$ where the subscript $i$ is understood modulo $5$ if $\delta=1$.

Using the above identifications we obtain the following:
\begin{enumerate}
\item A toric surface $M$ (only locally of finite type for $\delta > 1$) together with a toric birational morphism
$$p \colon M \rightarrow \bA^2_{u_1',u_2'}$$
such that $M$ is the union of the toric open affine sets $\bA^2_{u^j_1,u^j_2}$ for $j$ admissible and the restriction of $p$ to $\bA^2_{u^j_1,u^j_2}$ is the morphism
$$\bA^2_{u^j_1,u^j_2} \rightarrow \bA^2_{u_1',u_2'}.$$
\item A proper birational morphism
$$\Pi \colon \bU \rightarrow \bY \times_{\bA^2_{u_1',u_2'}} M$$
such that $\Pi^{-1}(\bA^2_{u^j_1,u^j_2})=\bX^j$ and the restriction of $\Pi$ to $\bX^j$ is the morphism
$$\pi^j \colon \bX^j \rightarrow \bY \times_{\bA^2_{u_1',u_2'}} \bA^2_{u^j_1,u^j_2}.$$
\end{enumerate}
\end{proposition}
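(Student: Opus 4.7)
The proposition makes three assertions — the invariance of the target $\bY^j = \bY$ (hence $\bX'^j = \bX'$), the local glueing $\bX^j \cap (u_1^j \ne 0) \cong \bX^{j+1} \cap (u_2^{j+1} \ne 0)$, and the resulting global objects $M$ and $\bU$. I would handle them in that order.

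For the first assertion I would observe that the construction of $\bX'$ (Subsection on the construction of the contraction and flip) depends only on the five-tuple $(m_1', m_2', a_1', a_2', \delta)$ together with the exponents $c(k-1), c(k)$ modulo $m_1', m_2'$. Running the division algorithm with the shifted initial data $(m_1^j, m_2^j, a_1^j, a_2^j)$ produces sequences $d^j, c^j$, and by induction on the recursion $d^j(i-1) + d^j(i+1) = \delta d^j(i)$ (and the analogous one for $c^j$) one checks $d^j(i) = d(i+j-1)$ and $c^j(i) = c(i+j-1)$ in the range where both sides are defined. In particular the terminating index satisfies $k^j = k - j + 1$, which gives $m_i'^j = m_i'$, $a_i'^j = a_i'$, and the already-stated $\delta^j = \delta$. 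The recipe that defines $\bX'$ — including the $\Gamma'$-action and the invariant ring on which $\bY$ is modelled — is then literally unchanged by the shift, yielding $\bX'^j = \bX'$ and $\bY^j = \bY$.

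For the second assertion, the formulas asserted are exactly a cluster mutation: the pair $(F_1^j, F_2^j) = (x_2^j, x_1^j)$ is replaced by $(F_2^j, F_3^j) = (x_1^j, y_2^j)$, which is relabeled as $(F_1^{j+1}, F_2^{j+1})$ in $\bX^{j+1}$. Concretely, I would propagate the substitution
\[
z^j = z^{j+1}, \qquad u_1^j = (u_2^{j+1})^{-1}, \qquad u_2^j = u_1^{j+1}(u_2^{j+1})^{\delta},
\]
together with $F_i^j = F_{i-1}^{j+1}$ for $i \ne 2$ and $F_2^j = (u_2^{j+1})^{-1} F_1^{j+1}$ (the correction being forced by the $\Gamma$-equivariance, since the exponent $\bar e_2$ in the two copies of $M_\Gamma$ differs by a multiple of the mutation vector), through all the exchange relations; the key identity that makes this work is $q_{i-1}^j q_{i+1}^j (r_i^j)^\delta = r_{i-1}^j r_{i+1}^j$, which rewrites $q_i^j, r_i^j$ to $q_{i-1}^{j+1}, r_{i-1}^{j+1}$ after multiplication by the appropriate power of $u_2^{j+1}$. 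Substituting into the two equations defining $W^j$ gives, after clearing these monomial factors, the two equations defining $W^{j+1}$. Finally I would check that the character lattices $M_{\Gamma^j}$ and $M_{\Gamma^{j+1}}$ are canonically identified via $\Cl(\bX^j) = \Cl(\bX^{j+1})$ using the lemma expressing $\Cl(\bX)$ in terms of consecutive $L_i$, confirming that the glueing is $\Gamma$-equivariant.

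Once the pairwise glueings are in hand, the toric surface $M$ is built by glueing the affine charts $\bA^2_{u_1^j, u_2^j}$ across admissible $j$ via the coordinate changes above. These changes are precisely the toric transition maps across the adjacent cones $\langle v_j, v_{j+1} \rangle$ in the fan $\Sigma$ described in the introduction, so $M$ is the associated toric variety — locally of finite type when $\delta > 1$ because infinitely many such cones accumulate onto the irrational rays of slope $\xi, 1/\xi$, and genuinely finite (a blowup of $\bA^2$) when $\delta = 1$ because of the cyclic identification $F_{i+5} = F_i$. The morphism $p : M \to \bA^2_{u_1', u_2'}$ is induced by pulling $u_1', u_2'$ back to monomials in each chart, compatibly across glueings by part one. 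Then $\Pi : \bU \to \bY \times_{\bA^2} M$ is defined by glueing the $\pi^j$; properness and birationality of $\Pi$ pass from those of each $\pi^j$ (established in the preceding propositions). The main obstacle is the cluster-mutation verification in the second stage: matching the two equations of $W^{j+1}$ with the transformed equations of $W^j$ on the nose, and tracking the $\Gamma$-characters attached to the $F_i^j$, is a direct but bookkeeping-heavy monomial calculation, and one must treat the exceptional cases $i \in \{1, 2, k, k+1\}$ (where $q_i, r_i$ have anomalous forms) as well as the cyclic identification in the $\delta = 1$ case, separately.
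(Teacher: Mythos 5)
Your plan is correct, and it is exactly the verification the paper intends: the proposition is stated with only a \qed{} and no written proof, so the shift identity $d^j(i)=d(i+j-1)$, $c^j(i)=c(i+j-1)$ (hence $k^j=k-j+1$ and unchanged $(m_1',m_2',a_1',a_2',\delta)$), together with the chart-by-chart matching of the exchange relations under $u_1^j=(u_2^{j+1})^{-1}$, $u_2^j=u_1^{j+1}(u_2^{j+1})^{\delta}$ and the rescaling of $F_2^j$, is precisely the omitted computation. I checked the substitution on the relations at $i=0,1,2,3$ and the generic $i$ (using $f(i)_2=f(i-1)_1$ and $f(i-1)=\delta f(i)-f(i+1)$), and it closes up as you describe, so nothing essential is missing.
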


\qed

The toric birational morphism $p \colon M \rightarrow \bA^2_{u_1',u_2'}$ can be described explicitly as follows.
We identify $\bA^2_{u_1',u_2'}$ as the toric variety associated to the cone $\sigma=\bR^2_{\ge 0}$ in the lattice $N=\bZ^2$.
We define a fan $\Sigma$ in $N$ with support $|\Sigma|$ contained in $\sigma$ such that the toric birational morphism corresponds to the map of fans
$\Sigma \rightarrow \sigma$.
If $\delta=1$, define $v_1 =(1,0)$,  $v_2=(1,1)$, and $v_{3}=(0,1)$.
Let $\Sigma$ be the fan with cones $\{0\}$, $\bR_{\ge 0}v_i$, $i=1,2,3$, and $\langle v_{i},v_{i+1} \rangle_{\bR_{\ge 0}}$, $i=1,2$.
(So in the case $\delta =1$ the morphism $p$ is the blowup of $0 \in \bA^2_{u_1',u_2'}$.)
If $\delta \ge 2$, define primitive vectors $v_i \in N$ for $i \in \bZ \setminus \{0\}$ by
$$v_1=(1,0), \quad v_2=(\delta,1),$$
$$v_{i+1}+v_{i-1}=\delta v_i \mbox{ for  $i \ge 2$},$$
and
$$v_{-1}=(0,1), \quad v_{-2}=(1,\delta),$$
$$v_{-(i+1)}+v_{-(i-1)}=\delta v_{-i} \mbox{ for  $i \ge 2$}.$$
(So $v_{-i}=T(v_i)$ where $T(x_1,x_2):= (x_2,x_1)$.)
Let $\Sigma$ be the fan  with cones $\{0\}$, $\bR_{\ge 0} v_i$ for $i \in \bZ \setminus \{0\}$, $\langle v_i,v_{i+1}\rangle_{\bR_{\ge 0}}$ for $i \ge 1$, and $\langle v_{-i},v_{-(i+1)} \rangle_{\bR_{\ge 0}}$ for $i \ge 1$.

For $\delta \ge 2$ define
$$\xi = (\delta + \sqrt{\delta^2-4})/2 \ge 1$$
That is, $\xi$ is the larger root of the quadratic equation
$$x^2-\delta x + 1 =0.$$
Equivalently, $\xi$ is given by the infinite Hirzebruch-Jung continued fraction
$$\xi=  \delta -
\frac{1}{\delta - \frac{1}{\ddots }}$$

Then the support $|\Sigma|$ of the fan $\Sigma$ is the region
$$|\Sigma| = \{ (x_1,x_2) \in \bR^2 \ | \ x_1,x_2 \ge 0 \mbox { and either } x_1 > \xi x_2 \mbox{ or } x_2 > \xi x_1 \} \cup \{0\}$$
or, equivalently,
$$|\Sigma|= \{ (x_1,x_2) \in \bR^2 \ | \ x_1,x_2 \ge 0 \mbox { and } x_1^2-\delta x_1x_2+x_2^2 > 0 \} \cup \{0\}.$$

Let $\bE \subset \bU$ denote the exceptional locus of $\Pi$ (thus $\bE \cap \bX^j = E^j$).
Write $\bB=(z=0) \subset \bU$ and $\bB'=(z=0) \subset \bX'$.

\begin{theorem}\label{universality}
Let $f \colon (C \subset \X) \rightarrow (Q \in \Y)$ be an extremal neighborhood of type $k1A$ or $k2A$.
Let $(Q \in \Y) \rightarrow (0 \in \bA^1_t)$ be the morphism determined by a general element $t \in m_{\Y,Q} \subset \cO_{\Y,Q}$.
Assume that the general fiber $X_s$ of $\X \rightarrow (0 \in \bA^1_t)$ satisfies $b_2(X_s)=1$.
Then there is a universal family
$$\Pi \colon \bU \rightarrow \bY \times_{\bA^2_{u_1',u_2'}} M, \quad p \colon M \rightarrow \bA^2_{u_1',u_2'}$$
as in Proposition~\ref{glueingfamilies} and a morphism
$$g \colon (0 \in \bA^1_t) \rightarrow M$$
such that $p(g(0))=0 \in \bA^2_{u_1',u_2'}$ and the diagram
$$(C \subset \X) \rightarrow (Q \in \Y) \rightarrow (0 \in \bA^1_t)$$
is isomorphic to the pullback of the diagram
$$(\bE_{g(0)} \subset \bU) \rightarrow ((0,g(0)) \in \bY \times_{\bA^2_{u_1',u_2'}} M) \rightarrow (g(0) \in M)$$
under $g$.

If $f$ is a flipping contraction and
$$f^+ \colon (C^+ \subset \X^+) \rightarrow (Q \in \Y)$$
is the flip of $f$ then the diagram
$$(C^+ \subset \X^+) \rightarrow (Q \in \Y) \rightarrow (0 \in \bA^1_t)$$
is identified with the pullback of the diagram
$$(C' \subset \bX') \rightarrow (0 \in \bY) \rightarrow (0 \in \bA^2_{u_1',u_2'})$$
under $p \circ g$.

Furthermore, if $D \in |-K_\X|$ is a general element, then we may assume that $D$ is the pullback of $\bB$.
(Then, if $f$ is a flipping contraction, the strict transform $D' \in |-K_{\X^+}|$ is the pullback of $\bB'$.)
\end{theorem}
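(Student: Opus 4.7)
My strategy is to read invariants off the given extremal neighborhood, match them against the combinatorics of $M$, and construct $g$ chart by chart, invoking versality of $\bQ$-Gorenstein smoothings of Wahl singularities to identify the pull-back with $\X$.

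First, by Proposition~\ref{b2=1=>normal} (in the $k2A$ case) or Proposition~\ref{k1Ainvariants} (in the $k1A$ case), the central fiber $X$ is a normal surface with one or two Wahl singularities through which $C\simeq\bP^1$ passes, and I extract the four-tuple $(a'_1,m'_1,a'_2,m'_2)$ associated to the extremal $P$-resolution. This four-tuple determines $\delta$, the fan $\Sigma$, and the universal family $\Pi \colon \bU \to \bY \times_{\bA^2_{u'_1,u'_2}} M$ of Proposition~\ref{glueingfamilies}. By Proposition~\ref{k1Adegeneratestok2A}, in the $k1A$ case $X$ is isomorphic to a fiber of $\bU$ lying over a one-dimensional torus orbit of some chart $\bA^2_{u_1^j,u_2^j}$ of $M$, whereas in the $k2A$ case $X$ is the central fiber of $\bU$ over the origin of the chart $\bA^2_{u_1^1,u_2^1}$.

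Next I construct $g$. In the $k2A$ case I take $g(0)$ to be the origin of $\bA^2_{u_1^1,u_2^1}$; by the construction in \S\ref{k2Adeformation} the restriction of $\bU$ to this chart is the fiber product of the versal $\bQ$-Gorenstein smoothings of the two Wahl singularities of $X$, with $u_i^1$ being the parameter for the $i$th singularity. Letting $\alpha_1,\alpha_2$ denote the axial multiplicities of the two Wahl singularities of $\X\to\bA^1_t$ (as reviewed in \S\ref{toric}), I set
$$g(t) = \bigl(t^{\alpha_1}h_1(t),\ t^{\alpha_2}h_2(t)\bigr) \in \bA^2_{u_1^1,u_2^1}$$
for units $h_i$ read off from the defining equations of $\X$. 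In the $k1A$ case I instead take $g(0)$ to be the point on the relevant one-dimensional orbit that is identified with the fiber $X$, and define $g$ with one component a unit (corresponding to the trivial deformation of the already-smoothed Wahl singularity) and the other of the form $t^{\alpha}h(t)$, where $\alpha$ is the axial multiplicity of the unique Wahl singularity of $\X$.

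To finish, I would verify that $g^\ast \bU \to \bA^1_t$ and $\X \to \bA^1_t$ are isomorphic as $\bQ$-Gorenstein smoothings of $X$ over $\bY\times_{\bA^2}\bA^1_t$: both are classified, via a Schlessinger-type argument, by the axial multiplicities at the Wahl singularities together with the map to the base $\bY$, and these have been matched by construction. The essential inputs are that the local pieces $U_i$ of \S\ref{k2Adeformation} exhaust the analytic germs of Wahl singularity smoothings, while the global glueing along $C$ is rigid thanks to the explicit $\Gamma$-quotient presentation of $\bX$. The flip statement then follows from Proposition~\ref{universalflip} and uniqueness of flips, by pulling $\bX'\to\bY$ back along $p\circ g$. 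Finally, because $z$ generates a relative anticanonical section of $\bU\to M$ and of $\bX'\to\bY$, a general $D\in|-K_\X|$ can be realized as the pull-back of $\bB=(z=0)\subset\bU$, and its strict transform in $\X^+$ as the pull-back of $\bB'=(z=0)\subset\bX'$. The principal obstacle is making the rigidity step precise; the rest is bookkeeping atop the constructions of \S\ref{k2Adeformation} and \S\ref{contraction}.
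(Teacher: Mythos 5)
Your outline matches the paper's proof in structure: identify $X$ (or the pair $(X,D|_X)$) as a fiber of $\bU\to M$ over a torus fixed point ($k2A$) or an interior point of a one-dimensional toric stratum ($k1A$), produce $g$ by versality, pull back $\bX'\to\bY$ along $p\circ g$ and invoke uniqueness of flips, and use the section $z$ for the anticanonical divisor. However, the one step you yourself flag as ``the principal obstacle'' --- the rigidity of the global glueing along $C$ --- is precisely the content of Lemma~\ref{locallyversal}, which the paper cites at this point: the families $\bU\to M$ and $(\bU,\bB)\to M$ induce \emph{versal} $\bQ$-Gorenstein deformations of each fiber because $H^i(T_X(-\log B))=0$ for $i>0$ and $H^1(T_X)=H^2(T_X)=0$, so the local-to-global map to the product of the local $\bQ$-Gorenstein deformation spaces of the Wahl singularities is an isomorphism. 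Since that lemma is established before the theorem, citing it closes your gap; as written, your argument leaves the decisive step open.

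Two further cautions. First, your proposed substitute --- that a one-parameter $\bQ$-Gorenstein smoothing of $X$ is ``classified by the axial multiplicities together with the map to $\bY$'' --- is not correct as stated: the versal $\bQ$-Gorenstein base of $X$ is the two-dimensional germ $(0\in\bA^2_{u_1,u_2})$, and the pullback depends on the full classifying map $g$ (two power series), not merely on the vanishing orders $\alpha_1,\alpha_2$. Versality hands you $g$ directly, with the isomorphism built in, so there is nothing to ``read off from the defining equations of $\X$'' and then re-verify. Second, for the last assertion you still need to know that a \emph{general} $D\in|-K_\X|$ restricts to a general element of $|-K_X|$ (hence to a choice of toric boundary); the paper obtains surjectivity of $H^0(-K_\X)\to H^0(-K_X)$ from Kawamata--Viehweg vanishing and cohomology and base change, and then uses versality of the family of \emph{pairs} $(\bU,\bB)\to M$ to arrange $D=g^*\bB$. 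The mere fact that $z$ is a relative anticanonical section of $\bU$ does not by itself allow you to choose $g$ compatibly with $D$.
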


\begin{remark}
We note that the cone $\langle v_1,v_2 \rangle_{\bR_{\ge 0}}$ corresponds to an extremal neighborhood of type $k2A$ with singularities $$\textstyle{\frac{1}{d(k-1)^2}}(1,d(k-1)(d(k-1)-c(k-1)) -1), \textstyle{\frac{1}{d(k-2)^2}}(1,d(k-2)c(k-2) -1).$$
If it is of flipping type, then $\delta d(k-1) -d(k-2) <0$. In this case one of the singularities of the surface $X'$, which is the fiber over $(0,0)$ of $\bX' \rightarrow \bA^2_{u_1',u_2'}$, is $$\textstyle{\frac{1}{d(k-1)^2}}(1,d(k-1) (d(k-1)-c(k-1)) -1)$$ (see \S \ref{contraction}). Similarly for the cone $\langle v_{-1},v_{-2} \rangle_{\bR_{\ge 0}}$. In this way, both singularities of $X'$ can be read from these ``initial" $k2A$. (It is possible that $d(k-1)=1$. In that case the corresponding point in $X'$ is smooth.)
\label{initialk2A}
\end{remark}

\begin{lemma}\label{locallyversal}
The flat family of surfaces $\bU \rightarrow M$ induces a versal $\bQ$-Gorenstein deformation of each fiber.
The flat families of pairs
$$(\bU,\bB) \rightarrow M$$
and
$$(\bX',\bB') \rightarrow \bA^2_{u_1',u_2'}$$
induce a versal $\bQ$-Gorenstein deformation of each fiber.
\end{lemma}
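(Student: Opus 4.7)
The plan is to combine local versality at each Wahl singularity with the standard local-to-global argument for surfaces with $T$-singularities. I focus first on the bare family $\bU \to M$; the pair families $(\bU,\bB) \to M$ and $(\bX',\bB') \to \bA^2_{u_1',u_2'}$ require only minor modifications.

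First, I would read off the local structure. Near each Wahl singularity $P_i^j$ of a fiber of $\bU \to M$, the chart $U_i^j$ presents the family as
$$(\xi_i \eta_i = \zeta_i^{m_i^j} + u_i^j) \subset (\bA^3_{\xi_i,\eta_i,\zeta_i}/\tfrac{1}{m_i^j}(1,-1,a_i^j)) \times \bA^2_{u_1^j,u_2^j},$$
which by \S\ref{toric} is precisely the standard one-parameter versal $\bQ$-Gorenstein smoothing of $\frac{1}{(m_i^j)^2}(1,\, m_i^j a_i^j - 1)$, pulled back from the $u_i^j$-axis. Hence the local Kodaira--Spencer map sends $\partial/\partial u_i^j$ to a generator of $T^1_{\QG}$ of the local singularity and $\partial/\partial u_{3-i}^j$ to zero. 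The same analysis applies verbatim at the Wahl points of $\bX'$ using the charts $U_i'$ of \S\ref{contraction}.

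Next, I would establish the global vanishings. Every fiber $X$ is a proper birational contraction $f_0 \colon X \to Y$ of the affine rational singularity $Y \simeq \tfrac{1}{\Delta}(1,\Omega)$, with exceptional $\bP^1$ supporting all Wahl singularities of $X$. Writing $\cT^{\QG,\circ}_X$ for the sheaf of equisingular $\bQ$-Gorenstein first-order deformations, the standard local-to-global sequence reads
$$0 \to H^1(X,\cT^{\QG,\circ}_X) \to T^1_{\QG}(X) \to H^0(X,\cT^1_{\QG}) \to H^2(X,\cT^{\QG,\circ}_X) \to T^2_{\QG}(X) \to 0,$$
where $\cT^1_{\QG}$ is a skyscraper with one-dimensional stalks at the Wahl singularities. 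Rationality of $Y$ together with the explicit two-chart toric description of $X$ yields $R^i(f_0)_*\cT^{\QG,\circ}_X = 0$ for $i > 0$, and since $Y$ is affine this gives $H^i(X,\cT^{\QG,\circ}_X) = 0$ for $i > 0$. Consequently $T^1_{\QG}(X) \cong H^0(X,\cT^1_{\QG})$ has dimension equal to the number of Wahl singularities, and $T^2_{\QG}(X) = 0$; so the versal $\bQ$-Gorenstein deformation space is smooth of this dimension. By the Kodaira--Spencer computation of the previous paragraph, the map from the tangent space of the relevant toric chart of $M$ (or of $\bA^2_{u_1',u_2'}$) is surjective onto $T^1_{\QG}(X)$, which is the versality criterion.

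For the pair families, the divisor $\bB = (z=0) \subset \bU$ is Cartier and restricts in $U_i^j$ to $(\zeta_i = 0)$, a flat family over $\bA^2_{u_1^j,u_2^j}$ whose special fiber is the toric boundary $(\xi_i\eta_i = 0)$ of the Wahl singularity $V_i$. Since this boundary is the support of the $\bG_m$-equivariantly distinguished divisor cut out by $\zeta_i$ in the versal smoothing, the deformation functor of the pair agrees with that of the underlying surface (the boundary is carried along canonically by the versal smoothing), and the pair case reduces to the case of the surface already treated. The same argument applies to $(\bX',\bB')$. The main obstacle is the cohomological vanishing $H^i(X,\cT^{\QG,\circ}_X) = 0$ for $i>0$, which is standard for partial resolutions of rational surface singularities but requires either a direct toric/cohomological computation using the explicit presentation of $X$ in \S\ref{k2Adeformation} or an appeal to the deformation theory of surfaces with $T$-singularities of Wahl and Koll\'ar--Shepherd-Barron.
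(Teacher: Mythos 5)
Your overall strategy --- local versality at the Wahl points read off from the explicit charts, plus a local-to-global cohomology vanishing --- is the same as the paper's, but the step you defer as ``the main obstacle'' is exactly where all the work lies, and the way you propose to discharge it does not work. Rationality of $Y$ gives $R^{>0}(f_0)_*\cO_X=0$, but it does \emph{not} give $R^{>0}(f_0)_*\cT^{\QG,\circ}_X=0$: for a partial resolution of a rational surface singularity the higher direct images of the (log or orbifold) tangent sheaf are typically nonzero (already for the minimal resolution of an $A_n$ point one has $R^1\pi_*T_{\tilde X}\neq 0$; these classes sweep out the Artin component). So ``standard for partial resolutions of rational surface singularities'' is not a correct characterization; the vanishing is a special numerical feature of the fibers at hand and has to be proved.

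The paper proves it in two steps that your write-up is missing. First, for the pair: since $(X,B)$ is toric, $T_X(-\log B)\cong \cO_X\otimes_{\bZ}N\cong\cO_X^{\oplus 2}$, so $H^{>0}(T_X(-\log B))=0$ follows from $H^{>0}(\cO_X)=0$ (this is where rationality of $Y$ is actually used). Second, for the bare surface, one compares $T_X$ with $T_X(-\log B)$ via the sequence $0\to T_X(-\log B)\to T_X\to\bigoplus\cN'_{B_i/X}\to 0$ pushed down from the orbifold stack; since $l_1,l_2$ are affine this reduces $H^1(T_X)$ to $H^1(\cN'_{C/X})$ and gives $H^2(T_X)=0$, and the decisive point is the computation $\deg\cN'_{C/X}=C^2-\frac{m_1a_1-1}{m_1^2}-\frac{m_2a_2-1}{m_2^2}=-1$ using $C^2=-\Delta/m_1m_2$. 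If this degree were $\le -2$ the lemma would be false, so this step cannot be bypassed. Note also that the paper's logical order is the reverse of yours: the pair case is proved first (it is the easy one, by toric triviality of the log tangent sheaf), and the surface case is deduced from it. Your reduction of the pair case to the surface case, via the assertion that the two deformation functors ``agree,'' is not justified globally; what is true, and what the paper proves, is that the \emph{local} forgetful maps $\Def^{\QG}(P\in Z,B)\to\Def^{\QG}(P\in Z)$ at the Wahl points are isomorphisms, which must then be combined with the global vanishing for $T_X(-\log B)$.
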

\begin{proof}
By openness of versality and the existence of the contracting $\bG_m$-actions (Lemma~\ref{contractingGmaction}) it suffices to consider the toric fibers.
We will establish the statement for pairs first. We prove it for $(\bU,\bB) \rightarrow M$, the other case being identical.
Let $\bA^2_{u_1,u_2} \subset M$ be a toric chart and $(X,B)$ the toric fiber over $0 \in \bA^2_{u_1,u_2}$.

The logarithmic tangent sheaf $T_{X}(-\log B)$ is the sheaf of infinitesimal automorphisms of the pair $(X,B)$. Since $(X,B)$ is toric we have an identification
$$T_{X}(-\log B) = \cO_{X} \otimes_{\bZ} N \simeq \cO_{X}^{\oplus 2}$$
where $N$ is the lattice of $1$-parameter subgroups of the torus acting on $X$.
Thus $H^i(T_{X}(-\log B))=0$ for $i>0$. It follows that the local-to-global map from the versal $\bQ$-Gorenstein deformation space of the pair $(X,B)$ to the product of the $\bQ$-Gorenstein versal deformation spaces of the singularities $(P_i \in X,B)$, $i=1,2$ of the pair is an isomorphism. Recall also that a singularity $(P \in Z)$ of type $\frac{1}{m^2}(1,ma-1)$ has versal $\bQ$-Gorenstein deformation space a smooth curve germ $(0 \in \bA^1_t)$, with versal family
$$(0 \in (\xi\eta=\zeta^m+t) \subset (\bA^3_{\xi,\eta,\zeta}/\textstyle{\frac{1}{m}}(1,-1,a)) \times \bA^1_t).$$
Moreover, writing $P \in B \subset Z$ for the toric boundary, the forgetful map
\begin{equation}\label{forgetdivisor}
\Def^{\QG}(P \in Z,B) \rightarrow \Def^{\QG}(P \in Z)
\end{equation}
from the versal $\bQ$-Gorenstein deformation space of the pair $(P \in Z, B)$ to the versal $\bQ$-Gorenstein deformation space of $(P \in Z)$ is an isomorphism.
(Indeed $B$ is Cartier on the index one cover $$(P' \in Z') \rightarrow (P \in Z)$$ given by
$$(P' \in Z') = \textstyle{\frac{1}{n}}(1,-1)=(\xi\eta=\zeta^n) \subset \bA^3_{\xi,\eta,\zeta},$$
and the $\bQ$-Gorenstein deformation is induced by an equivariant deformation of the index one cover. So there are no obstructions to lifting $B$ to the deformation. Moreover, since $B=(\zeta=0) \subset Z$, it's easy to see that every infinitesimal embedded deformation of $B$ is induced by an infinitesimal automorphism of $Z$. Combining it follows that the forgetful map (\ref{forgetdivisor}) is an isomorphism.)
Now it is clear from the explicit charts $U_1$, $U_2$ for the restriction of the family $\bU \rightarrow M$ to $\bA^2_{u_1,u_2} \subset M$ that $(\bU,\bB) \rightarrow M$ induces a versal $\bQ$-Gorenstein deformation of the pair $(X,B)$.

Finally, we prove versality for $\bU \rightarrow M$. By the above, it suffices to show that $H^1(T_X)=H^2(T_X)=0$ for each toric fiber $X$, so that the local-to-global map from the versal $\bQ$-Gorenstein deformation space of $X$ to the product of the versal $\bQ$-Gorenstein deformation spaces of the singularities of $X$ is an isomorphism.
Let $\mathfrak{H}$ be the orbifold stack associated to $X$, that is, $\mathfrak{H}$ is the Deligne--Mumford stack with coarse moduli space $X$ determined by the local smooth covers $$(0 \in \bA^2) \rightarrow (0 \in \bA^2/G)$$ at each quotient singularity of $X$.
Write
$$B=B_0+B_1+B_2=C+l_1+l_2$$
and $\mathfrak{B} \subset \mathfrak{H}$, $\mathfrak{B}_i \subset \mathfrak{H}$ for the associated stacks.
Then we have an exact sequence of sheaves on $\mathfrak{H}$
$$0 \rightarrow T_{\mathfrak{H}}(-\log \mathfrak{B}) \rightarrow T_{\mathfrak{H}} \rightarrow \bigoplus_i \cN_{\mathfrak{B}_i/\mathfrak{H}} \rightarrow 0$$
(where $\cN_{Z/W}=\cHom(\cI_{Z/W}/\cI_{Z/W}^2,\cO_Z)$ denotes the normal bundle of a closed embedding $Z \subset W$).
Pushing forward via the natural morphism $q \colon \mathfrak{H} \rightarrow X$, we obtain an exact sequence
$$0 \rightarrow T_{X}(-\log B) \rightarrow T_X \rightarrow \bigoplus \cN'_{B_i/X} \rightarrow 0.$$
where $\cN'_{B_i/X}:=q_*\cN_{\mathfrak{B}_i,\mathfrak{H}}$.
So, since $H^i(T_X(-\log B))=0$ for $i>0$ (see above) and $l_1,l_2$ are affine, we have $H^1(T_X)=H^1(\cN'_{C/X})$ and $H^2(T_X)=0$.
Finally, we compute that the degree of the line bundle $\cN'_{C/X}$ on $C \simeq \bP^1$ equals $-1$, so $H^1(\cN'_{C/X})=0$ as required.
In general, if $L$ is a line bundle on a proper orbifold curve $\mathfrak{C}$, with coarse moduli space $q \colon \mathfrak{C} \rightarrow C$, and orbifold points $$Q_i \in \mathfrak{C} \simeq (0 \in [\bA^1_x/\mu_{r_i}]),$$ where
$$\mu_{r_i} \ni \zeta \colon x \mapsto \zeta x$$
and the action of $\mu_{r_i}$ on the fiber $L|_{Q_i}$ is via the character $\zeta \mapsto \zeta^{-w_i}$, $0 \le w_i < r_i$, then
$$\deg q_*L = \deg L - \sum \frac{w_i}{r_i}.$$
Recall that we have isomorphisms
$$(P_i \in C \subset X) \simeq (0 \in (q_i=0) \subset \bA^2_{p_i,q_i}/\textstyle{\frac{1}{m_i^2}}(1,m_ia_i-1)).$$
Hence the degree of the line bundle $\cN'_{C/X}$ on $C \simeq \bP^1$ is given by
$$\deg \cN'_{C/X} = C^2-\frac{m_1a_1-1}{m_1^2}-\frac{m_2a_2-1}{m_2^2}$$
Using the formula
$$C^2=-\Delta/m_1m_2$$
(see \cite{M02}, Proof of Proposition~2.6), we find $\deg \cN'_{C/X} = -1$. Thus $H^1(\cN'_{C/X})=0$ as required.
\end{proof}

\begin{proof}[Proof of Proposition~\ref{k1Adegeneratestok2A}] \label{k1Atok2A}
The computation of the singularities of $X^i$ and the intersection number $K_{X^i} \cdot C^i$ are straightforward toric calculations.
(Note that it suffices by symmetry to treat the case $i=1$. Now the formulas for $\Delta$ and $\Omega$ agree with those in \S\ref{k2Adeformation} and in particular we see that the vector $v^1$ lies in $N$ and is primitive.)

Now by the description of the versal deformation
$$\pi \colon \bX \rightarrow \bY \times_{\bA^2_{u_1',u_2'}} \bA^2_{u_1,u_2}$$
of $f^i_0 \colon X^i \rightarrow Y$ over $\bA^2_{u_1,u_2}$ given in \S\ref{contraction} and the identification of the fibers over $(u_1u_2=0)$ (in particular the description of the exceptional curve in Proposition~\ref{exceptional_locus_local_eq}) we find that the general fiber over the appropriate component of the boundary $(u_1u_2=0)$ is isomorphic to the given contraction $f_0$. Indeed, it suffices to check that the strict transform of the exceptional curve for the deformation of $f^i_0$ intersects the same component of the exceptional locus of the minimal resolution of the $\frac{1}{m_1^2}(1,m_1a_1-1)$ singularity as in the original $k1A$ surface contraction $f_0 \colon X \rightarrow Y$.
By the description of the local equation of the exceptional curve in Proposition~\ref{exceptional_locus_local_eq} this component corresponds to the ray
$$\rho=\bR_{\ge 0} \textstyle{\frac{1}{m_1^2}(m_2,m_0)}$$
in the toric description of the minimal resolution of the singularity.
(To see this, note first that $m_0+m_2=\delta m_1$ by definition (see Proposition~\ref{k1Ainvariants}), and that the number $\delta$ coincides with the invariant of the same name for the $k2A$ surface $X^i$ defined in \S2, by the computation of $K_{X^i} \cdot C^i$.
Now observe that the local equation of $C'$ is homogeneous with respect to the grading determined by the primitive generator of $\rho$. So the strict transform meets the interior $\bG_m \subset \bP^1$ of the associated exceptional divisor.)
This agrees with the description of the original $k1A$ surface $X$ given in Proposition~\ref{k1Ainvariants}.
\end{proof}

\begin{proof}[Proof of Theorem~\ref{universality}]
By construction (and the classification of $k2A$ extremal neighborhoods), the hyperplane section $X_0=X$ of a $k2A$ extremal neighbourhood occurs as a fiber of a universal family $\bU \rightarrow M$ over a torus fixed point of $M$. See \cite{M02}, Remark~2.3, p.~159.
By Proposition~\ref{k1Adegeneratestok2A}, the hyperplane section of a $k1A$ extremal neighborhood occurs as a fiber of a universal family over an interior point of a $1$-dimensional toric stratum of $M$. Moreover, in each case, the deformation of the fiber given by the family $\bU \rightarrow M$ is a versal $\bQ$-Gorenstein deformation. This establishes the existence of $g$.

In the case of a flipping contraction it is immediate that the pullback $(C' \subset \X') \rightarrow (Q \in \Y)$ of $\bX' \rightarrow \bY$ is a proper birational morphism with exceptional locus $C'$ such that $K_{\X'}$ is $\bQ$-Cartier and $K_{\X'} \cdot C' > 0$.
(Note here that the proper birational morphism $\bX' \rightarrow \bY$ has exceptional locus $C' \subset {\bX'}_0$, see Proposition~\ref{universalflip}, and $p(g(0))=0$.) So it is the flip of $f$.

For the final statement, note first that $H^0(-K_{\X})\rightarrow H^0(-K_X)$ is surjective by Kawamata--Viehweg vanishing and cohomology and base change (using $-K_{\X}$ relatively ample), so the restriction of a general element $D$ of $|-K_{\X}|$ is a general element of $|-K_X|$.
We observe that every pair $(X,B)$ of a $k1A$ or $k2A$ surface $X$ together with a general element $B \in |-K_X|$ occurs as a fiber of $(\bU,\bB) \rightarrow M$.
(See Lemma~\ref{restrict_to_H} and the proof of Proposition~\ref{exceptional_locus_local_eq} for the the description of the fibers of $\bB=(z=0) \subset \bU$ in the $k2A$ and $k1A$ cases respectively.)
The family of pairs $(\bU,\bB)\rightarrow M$ gives a versal $\bQ$-Gorenstein deformation of each fiber by Lemma~\ref{locallyversal}. So we may assume $D$ is the pullback of $\bB$.
\end{proof}

\begin{corollary}\label{existenceofterminalantiflips}
Let $f^+_0 \colon (C^+ \subset X^+) \rightarrow (Q \in Y)$ be a partial resolution of a cyclic quotient singularity such that $X^+$ has T-singularities and $K_{X^+}$ is relatively ample, and $(C^+ \subset \X^+)/(0 \in \bA^1_t)$ a one parameter $\bQ$-Gorenstein smoothing of the germ $(C^+ \subset X^+)$. Then the morphism $f^+_0$ extends to a proper birational morphism
$$f^+ \colon (C^+ \subset \X^+) \rightarrow (Q \in \Y)$$
over $(0 \in \bA^1_t)$.

Assume that the general fiber $X^+_s$, $0<|s|\ll 1$ satisfies $b_2(X^+_s)=1$.
Then $C^+$ is irreducible and $X^+$ has only cyclic quotient singularities of type $\frac{1}{m^2}(1,ma-1)$ for some $m,a \in \bN$ with $\gcd(a,m)=1$. The $3$-fold $\X^+$ has $\bQ$-factorial terminal singularities.

The morphism $f^+_0$ is toric. Choose a toric structure and let $B^+ \subset X^+$ denote the toric boundary.
Let $P_i \in X^+$, $i=1,2$ be the torus fixed points, $m_i$ the index of $P_i$, and define $\delta \in \bN$ by $K_{X^+} \cdot C^+ = \delta/m_1m_2$.

Assume that the exceptional locus of $f^+$ equals $C^+$.

Then there exists an antiflip
$$f \colon (C \subset \X) \rightarrow (Q \in \Y)$$
of $f^+$ such that $\X$ has terminal singularities if and only if the following conditions are satisfied.
\begin{enumerate}
\item For some choice of toric structure, there is a divisor $D \in \mbox{$|-K_{\X^+}|$}$ such that $D|_{X^+}=B^+$, the toric boundary of $X^+$.
\item If $\delta \ge 2$, define $\xi=(\delta+\sqrt{\delta^2-4})/2 \ge 1$. Then the axial multiplicities $\alpha_1,\alpha_2$ of the singularities of $\X^+$ satisfy $\alpha_1 > \xi \alpha_2$ or $\alpha_2 > \xi\alpha_1$. Equivalently, we have
$$\alpha_1^2 -\delta\alpha_1\alpha_2 + \alpha_2^2 > 0.$$
\end{enumerate}
\end{corollary}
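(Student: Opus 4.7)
\textbf{Preliminary claims.} The plan begins with the structural claims before tackling the main equivalence. For the extension of $f^+_0$ to a $3$-fold birational morphism $f^+$, I would take
$$\Y := \Proj_{(0\in\bA^1_t)}\bigoplus_{m\ge 0}(f^+_0)_*\cO_{\X^+}(-mK_{\X^+});$$
since $-K_{X^+}$ is $f^+_0$-ample and $K_{\X^+}$ is $\bQ$-Cartier, this delivers a proper birational morphism extending $f^+_0$ and realises $\Y$ as a flat deformation of $Y$. For the $b_2(X^+_s)=1$ claim, I would re-run the Mayer--Vietoris argument of Proposition~\ref{b2=1=>normal}: the Milnor fiber of a $\bQ$-Gorenstein smoothing of $\frac{1}{\rho m^2}(1,\rho ma-1)$ has $b_2=\rho-1$, so each T-singularity must be Wahl and $C^+$ must be irreducible, whence $\X^+$ is $\bQ$-factorial and terminal by \cite[2.2.7]{K91}. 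The toric structure on $(C^+\subset X^+)$ then comes for free: each Wahl singularity is a cyclic quotient, $C^+\simeq\bP^1$ meets each through one of the torus-invariant divisors transversally, and matching these produces a toric model with boundary $B^+=l_1+C^++l_2$.

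\textbf{Necessity.} Suppose a terminal antiflip $f\colon(C\subset\X)\to(Q\in\Y)$ exists. My plan is to feed $f$ into Theorem~\ref{universality}: this yields a morphism $g\colon(0\in\bA^1_t)\to M$ with $p(g(0))=0$ such that $f^+$ is the pullback of $\bX^+\to\bY$ under $p\circ g$, and moreover a general $D\in|-K_{\X^+}|$ may be taken to be the pullback of $\bB'=(z=0)\subset\bX'$. By Lemma~\ref{restrict_to_H}, $\bB'|_{X'}$ is the toric boundary of $X'$, which transports under the pullback to condition~(1). For condition~(2), I would inspect the local charts
$$U'_i=(\xi'_i\eta'_i={\zeta'_i}^{m'_i}+u'_i)\subset\bA^3/{\textstyle\frac{1}{m'_i}(1,-1,a'_i)}\times\bA^2_{u'_1,u'_2}$$
from~\S\ref{contraction} and write $h:=p\circ g$ in the monomial form $u'_i\circ h=t^{\alpha_i}\cdot(\text{unit})$; here $\alpha_i$ is precisely the axial multiplicity of $P_i\in\X^+$. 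Since $g$ factors through $M$, the weight vector $(\alpha_1,\alpha_2)$ must lie in the support $|\Sigma|$, which for $\delta\ge 2$ is exactly the inequality $\alpha_1^2-\delta\alpha_1\alpha_2+\alpha_2^2>0$; for $\delta=1$ the inequality is automatic.

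\textbf{Sufficiency.} For the converse, I would invoke Lemma~\ref{locallyversal}: the family of pairs $(\bX^+,\bB')\to\bA^2_{u'_1,u'_2}$ is a versal $\bQ$-Gorenstein deformation of each fiber. Condition~(1) then produces a classifying map $h\colon(0\in\bA^1_t)\to\bA^2_{u'_1,u'_2}$ inducing the pair $(\X^+,D)\to(0\in\bA^1_t)$ by pullback. Writing $u'_i\circ h=t^{\alpha_i}\cdot(\text{unit})$ as above, condition~(2) places $(\alpha_1,\alpha_2)\in|\Sigma|$, so the toric valuative criterion applied to $p\colon M\to\bA^2_{u'_1,u'_2}$ lifts $h$ to a unique $g\colon(0\in\bA^1_t)\to M$ with $p\circ g=h$. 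Pulling back $\bU\to M$ along $g$ then furnishes the required terminal antiflip; its total space has terminal singularities because every fiber of $\bU$ carries only Wahl singularities and the family is $\bQ$-Gorenstein by the construction in~\S\ref{s2}.

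\textbf{The main obstacle.} The delicate step will be matching the intrinsic axial multiplicities of $P_1,P_2\in\X^+$ with the $t$-adic valuations of $u'_1\circ h,u'_2\circ h$. This demands a careful comparison of local normal forms: reconciling the universal coordinates $(\xi'_i,\eta'_i,\zeta'_i,u'_i)$ of~\S\ref{contraction} with the standard axial presentation $\xi\eta=\zeta^{m'_i}+t^{\alpha_i}$, modulo the contracting $\bG_m$-action of Lemma~\ref{contractingGmaction} and the unit power series that the versal map will naturally introduce. Once this identification is pinned down, the equivalence becomes essentially tautological, since by Proposition~\ref{glueingfamilies} the inequality $\alpha_1^2-\delta\alpha_1\alpha_2+\alpha_2^2>0$ is precisely the defining inequality of $|\Sigma|$.
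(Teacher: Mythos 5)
Your overall strategy---reduce to the universal family via Theorem~\ref{universality} and Lemma~\ref{locallyversal}, identify the axial multiplicities with the $t$-adic valuations of $u_1'\circ h,u_2'\circ h$, and translate membership of $(\alpha_1,\alpha_2)$ in the support of the fan of $M$ into the quadratic inequality---is exactly the paper's. But two steps need repair. First, your construction of $\Y$ is wrong as stated: the hypothesis is that $K_{X^+}$ (not $-K_{X^+}$) is $f^+_0$-ample, so the $\Proj$ of the anticanonical algebra does not contract $C^+$; since $-K_{\X^+}$ has negative degree on $C^+$, that $\Proj$ would (when finitely generated) produce the antiflip itself, cf.\ Remark~\ref{asflnsfgajf}, not $\Y$. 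The paper instead extends $f^+_0$ by blowing down deformations, using $R^1(f^+_0)_*\cO_{X^+}=0$ (\cite[11.4]{KM92}), which holds because $Y$ is a rational (cyclic quotient) singularity.

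Second, in the necessity direction you cannot ``feed $f$ into Theorem~\ref{universality}'' until you know that $f$ is an extremal neighborhood of type $k1A$ or $k2A$ with $b_2(X_s)=1$: a terminal antiflip is a priori just a terminal extremal neighborhood. The paper closes this gap by invoking the classification of non-semistable flips (\cite{KM92}, Appendix)---using that $Q\in Y$ is a cyclic quotient singularity---to conclude $f$ is semistable, and then Proposition~\ref{b2=1=>normal} together with $b_2(X_s)=b_2(Y_s)=b_2(X^+_s)=1$ to get normality of $X$ and the Wahl normal form. Without this, the appeal to the universal family is unjustified. The remainder of your argument (versality of $(\bX',\bB')$, the lift $g$ of $h$ through $p$ via the toric description of $M$, and the matching of $\alpha_i$ with the vanishing orders of $u_i'\circ h$) coincides with the paper's proof; note only that in the sufficiency direction the paper also spells out how to recover the ``initial'' $k2A$ data $(m_1,a_1,m_2,a_2)$ from the extremal P-resolution data $(m_1',a_1',m_2',a_2',c)$, which you need in order to know which universal family $\bU\to M$ to pull back along $g$.
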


\begin{remark}
In the case that $\X^+$ has fewer than two singularities, we define the axial multiplicities $\alpha_1,\alpha_2$ as follows.
Let $D$ be a general element of $|-K_{\X^+}|$. By condition (1) the restriction $D|_{X^+}=B^+$ is a choice of toric boundary for $X^+$.
Let $P_1,P_2 \in B^+$ be the singular points of $B^+$. If $P_i$ is a smooth point of $\X^+$, then the local deformation $(P_i \in D \subset \X^+) \rightarrow (0 \in \bA^1_t)$ of $(P_i \in B^+ \subset X^+)$ is of the form
$$(0 \in (\xi\eta=t^{\alpha}h(t)) \subset \bA^2_{\xi,\eta} \times \bA^1_t)$$
for some $\alpha \in \bN$ and convergent power series $h(t)$, $h(0) \neq 0$.
Equivalently, $P_i \in D$ is a Du Val singularity of type $A_{\alpha-1}$.
We define $\alpha_i=\alpha$. If $P_i$ is a singular point of $\X^+$ then the definition of the axial multiplicity is the usual one.
\end{remark}

\begin{proof}
The morphism $f^+_0 \colon X^+ \rightarrow Y$ extends to a morphism $f^+ \colon \X^+ \rightarrow \Y$ because $R^1{f^+_0}_*\cO_{X^+}=0$ (see \cite{KM92}, Proposition 11.4).

The morphism $f^+_0 \colon X^+ \rightarrow Y$ is a $P$-resolution of $Y$ \cite{KSB88}, Definition~3.8.
It is toric by \cite{KSB88}, Lemma~3.14. The singularities of $X^+$ are cyclic quotient singularities of type $\frac{1}{\rho m^2}(1,\rho ma -1)$ for some $\rho, m,a \in \bN$ with $\gcd(m,a)=1$ (by the existence of the $\bQ$-Gorenstein smoothing, see \cite{KSB88}, Proposition~3.10). The Milnor fiber $M$ of a $\bQ$-Gorenstein smoothing of such a singularity has Milnor number  $\mu:=b_2(M)=\rho-1$. Thus, if $r$ is the number of irreducible components of $C^+$ and the singularities $P_i \in X^+$ have invariants $\rho_i$, a Mayer--Vietoris argument gives
$$b_2(X^+_s)=r+\sum (\rho_i-1)$$
Hence $b_2(X^+_s)=1$ iff $C^+$ is irreducible and $\rho_i=1$ for each $i$.

The $3$-fold $\X^+$ has singularities of the form
$$(0 \in (\xi\eta=\zeta^m+t^{\alpha}) \subset (\bA^3_{\xi,\eta,\zeta}/\textstyle{\frac{1}{m}}(1,-1,a)) \times \bA^1_t).$$
The number $m \in \bN$ is the index of the singularity and the number $\alpha \in \bN$ is the \emph{axial multiplicity} of the singularity \cite{M88}, Definition--Corollary~1a.5, p.~140.
As already noted, these singularities are terminal, and analytically $\bQ$-factorial by \cite{K91}, 2.2.7. Thus $\X^+$ is terminal and $\bQ$-factorial.
(Note: In general, a one parameter $\bQ$-Gorenstein smoothing of a surface quotient singularity is terminal \cite{KSB88}, Corollary~3.6. The modern proof of this fact uses ``inversion of adjunction", see e.g. \cite{KM92}, Theorem~5.50(1), p.~174.)


Since $Q \in Y=(t=0) \subset \Y$ is a cyclic quotient singularity, it follows from the classification of non-semistable flips \cite{KM92}, Appendix that if $f \colon \X \rightarrow \Y$ is a terminal antiflip of $f^+$ then $f$ is semistable, that is, of type $k1A$ or $k2A$. Moreover $X=(t=0) \subset \X$ is normal and $\rho_1=\rho_2=1$ by Proposition~\ref{b2=1=>normal} because $b_2(X_s)=b_2(Y_s)=b_2(X^+_s)=1$ for $0<|s|\ll 1$.
Let $D \in |-K_{\X}|$ be a general element. Then, by Theorem~\ref{universality}, $f \colon (\X,D) \rightarrow \Y$ is the pullback of a universal family
$$\Pi \colon (\bU,\bB) \rightarrow \bY \times_{\bA^2_{u'_1,u'_2}} M, \quad p \colon M \rightarrow \bA^2_{u'_1,u'_2}$$
via a morphism $g \colon (0 \in \bA^1_t) \rightarrow M$, such that $p(g(0))=0$.

Let $D' \in |-K_{\X^+}|$ denote the strict transform of $D$.
Then $$(\X^+,D') \rightarrow \Y \rightarrow (0 \in \bA^1_t)$$ is the pull back of
$$(\bX',\bB') \rightarrow \bY \rightarrow \bA^2_{u_1',u_2'}$$ via the morphism $p \circ g$, and $D' \in |-K_{\X^+}|=|-K_{\X}|$ is a general element such that $D'|_{X^+}=B^+$, so (1) holds.

The morphism
$$p \circ g \colon \bA^1_t \rightarrow \bA^2_{u_1',u_2'}$$
is given by
$$u_1'=\lambda_1t^{\alpha_1}+\cdots, \quad u_2'=\lambda_2t^{\alpha_2}+\cdots$$
for some $\lambda_1,\lambda_2 \in k\setminus\{0\}$, $\alpha_1,\alpha_2 \in \bN$ (where $\cdots$ denotes higher order terms in $t$).
By the description of the family $\bX' \rightarrow \bA^2_{u_1',u_2'}$, we see that $\alpha_1,\alpha_2$ are the axial multiplicities of the singularities of $\X^+$.

Identify the fan $\Sigma$ of $M$ with its image in the cone $\sigma=\bR_{\ge 0}^2$ of $\bA^2_{u_1',u_2'}$  under the map of fans corresponding to $p$.
The point $g(0)$ lies in the toric boundary stratum of $M$ corresponding to the smallest cone $\tau$ of the fan of $M$ containing the point $(\alpha_1,\alpha_2)$. In particular $(\alpha_1,\alpha_2)$ lies in the support of the fan of $M$. Now the explicit description of the fan of $M$ above gives $(2)$.

Conversely, suppose the conditions (1) and (2) are satisfied.
Let $D \in |-K_{\X^+}|$ be general, so $D|_{X^+}=B^+$ is a choice of toric boundary for $X^+$ by (1).
Let
$$\Pi' \colon (\bX',\bB') \rightarrow \bY \rightarrow \bA^2_{u_1',u_2'}$$
be the deformation of
$$f^+ \colon (X^+,B^+) \rightarrow Y$$
constructed in \S\ref{contraction}.
By Lemma~\ref{locallyversal}, $$f^+ \colon (\X^+,D) \rightarrow \Y \rightarrow (0 \in \bA^1_t)$$ is the pullback of
$$\Pi' \colon (\bX',\bB') \rightarrow \bY \rightarrow \bA^2_{u_1',u_2'}$$ via a map $h \colon (0 \in \bA^1_t) \rightarrow \bA^2_{u_1',u_2'}$ such that $h(0)=0$.

The numerical data to build $\bU$ from $\bX'$ is the following. Let $$\textstyle{\frac{1}{{m'_1}^2}}(1,m'_1 {a'}_1 -1), \textstyle{\frac{1}{{m'_2}^2}}(1,{m'_2} {a'_2} -1)$$ be the singularities of $X^+ \subset \bX'$ such that $\frac{{m'_1}^2}{{m'_1}
{a'_1}-1}=[e_1,\ldots,e_{r_1}]$, $\frac{{m'_2}^2}{{m'_2} {a'_2}
-1}=[f_1,\ldots,f_{r_2}]$, and $$\frac{\Delta}{\Omega}=[f_{r_2},\ldots,f_{1},c,e_1,\ldots,e_{r_1}],$$ where $-c$
is the self-intersection of the proper transform of $C^+$ in the
minimal resolution of $X^+$. If a singularity (or both) is (are) actually smooth, then we set ${m'_i}={a'_i}=1$. Define $$\delta=c{m'_1} {m'_2} - {m'_1} {a'_2} - {m'_2} {a'_1}.$$ Define $m_1={m'_2}$, $a_1={m'_2}-{a'_2}$ if ${m'_2} \neq {a'_2}$, or $a_1=1$ else, and $m_2=\delta {m'_2} +{m'_1}$, $a_2=\frac{\delta + m_1 m_2 - a_1 m_2}{m_1}$. One can check that $0<a_2 <m_2$ and gcd$(m_2,a_2)=1$. If $\frac{{m}_2^2}{{m}_2 {a}_2-1}=[g_1,\ldots,g_{r_3}]$, then one can check $$[g_{r_3},\ldots,g_1,1, f_{r_2},\ldots,f_{1}]= \frac{\Delta}{\Omega}.$$ One can verify that this gives an ``initial" $k2A$ for a Mori sequence whose ``flipping" surface is $X^+$ (see Remark \ref{initialk2A}). For the other initial $k2A$, we define $m_2={m'_1}$, $a_2={m'_1}-{a'_1}$ if ${m'_1} \neq {a'_1}$, or $a_2=1$ else, and $m_1=\delta {m'_1} +{m'_2}$, $a_1=\frac{\delta + m_1 m_2 - a_2 m_1}{m_2}$. Notice that these are the only two possibilities for initial $k2A$ corresponding to $\bX'$.

By assumption (2) and the description of the toric morphism $p \colon M \rightarrow \bA^2_{u_1',u_2'}$, the morphism $h$ admits a lift $g \colon (0 \in \bA^1_t) \rightarrow M$ such that $h=p \circ g$. Indeed, as above, (2) is equivalent to requiring that the vector $(\alpha_1,\alpha_2)$ given by the vanishing orders of the components of $h$ lies in the image of the fan of $M$ in the cone $\sigma=\bR^2_{\ge 0}$ corresponding to $\bA^2_{u_1,u_2}$. This in turn is equivalent to the existence of the lift $g$. Now the pull back of
$\bU \rightarrow \bY \times_{\bA^2_{u_1',u_2'}} M \rightarrow M$ by $g$ is the desired terminal antiflip $f \colon \X \rightarrow \Y \rightarrow (0 \in \bA^1_t)$ of $f^+$.
\end{proof}

\begin{remark}\label{asflnsfgajf}
If we only impose condition (1) in Corollary~\ref{existenceofterminalantiflips} then there is an antiflip $f \colon \X \rightarrow \Y$ such that $\X$ has \emph{canonical} singularities.
This is an application of \cite{KM92}, Theorem~3.1, p.~561. Indeed, by (1), for a general divisor $D \in |-K_{\X^+}|$ the restriction $D|_{X^+}$ is a choice $B^+$ of toric boundary for $\X^+_0=X^+$.
Then, since $D|_{X^+}$ is a nodal curve, $D \in |-K_{\X^+}|$ is a normal surface with at worst Du Val singularities of type $A$ (at the nodes of $D_0$).
Moreover the contraction $\bar{D}=f_*D \in |-K_{\Y}|$ has at worst a Du Val singularity of type $A$. (More precisely, for $D$ general, we have Du Val singularities of type $A_{m_i\alpha_i-1}$ at $P_i \in D$ for $i=1,2$, where $m_i$ is the index and $\alpha_i$ the axial multiplicity of $P_i \in \X^+$, and a Du Val singularity of type $A_{m_1\alpha_1+m_2\alpha_2-1}$ at $Q \in \bar{D}$.) Now by \cite{KM92}, Theorem~3.1, the $\cO_{\Y}$-algebra
$$R(\Y,-K_{\Y}):=\bigoplus_{m \ge 0} \cO_{\Y}(m(-K_{\Y}))$$
is finitely generated, and
$$\X:= \Proj_{\Y} R(\Y,-K_{\Y})$$
has canonical singularities. The morphism $f \colon \X \rightarrow \Y$ is the antiflip of $f^+ \colon \X^+ \rightarrow \Y$.
\end{remark}

\begin{example}\label{asdvasfgafg}
This is an explicit example of a $\Q$-Gorenstein smoothing of a $X^+$ as in Corollary \ref{existenceofterminalantiflips} which has a nonterminal but canonical antiflip. Let $X$ be a surface with one singularity given by the $\Z/4\Z$-quotient of a simple
elliptic singularity (cf. \cite{K88}, Theorem 9.6(3)). The exceptional locus of its minimal
resolution $\widetilde{X}$ is a union of $4$ smooth rational
curves $E_1$, $E_2$, $E_3$, and $F$ so that the $E_i$ are
disjoint, each meets the curve $F$ transversally at a single
point, and $E_1^2=-2$, $E_2^2=-4$, $E_3^2=-4$, and $F^2=-3$.
Assume $X$ has a smooth rational curve $C$ such that its
proper transform in $\widetilde{X}$ is a $(-1)$-curve
intersecting only $E_1$ and transversally at one point. Notice that
$K_{X} \cdot C = -\frac{1}{2}$ and $C \cdot C
=-\frac{2}{5}$.

The surface $X$ has a $\Q$-Gorenstein smoothing $X \subset
\X \to (0 \in \bA_t^1)$ (cf. \cite{W11}). The corresponding singularity in $\X$ is canonical
by \cite{KSB88}, Theorem 5.1. Let $(Q \in Y \subset \Y)$ be the contraction of
$C \subset X \subset \X$ (the blowing down deformation; cf. \cite[11.4]
{KM92}). Then $(Q \in Y)$ is the cyclic quotient singularity $\frac{1}{24}(1,7)$. By
\cite{W11}, Proposition 3.1(4), the extremal nbhd $C \subset \X \to Q \in \Y$ is $\Q$-factorial. Therefore, if
it is of divisorial type, then $\Y$ would be
$\Q$-Gorenstein but $(Q \in \Y)$ is not a T-singularity. So, it is
flipping, and the flip is given by the
P-resolution $X^+ \to Y$ whose exceptional curve is a $\P^1$ passing through two singularities of type $\frac{1}{4}(1,1)$.
\label{canexample}
\end{example}

\section{Classification of extremal P-resolutions}\label{s3}

Fix integers $0<\Omega<\Delta$ with gcd$(\Delta,\Omega)=1$. Let $(Q \in Y)$ be the cyclic quotient singularity
$\frac{1}{\Delta}(1,\Omega)$. An \emph{extremal P-resolution} of $(Q \in Y)$ is a partial resolution $f^+_0 \colon (C^+ \subset X^+) \rightarrow (Q \in Y)$ such that $X^+$ has only \emph{Wahl singularities} ($=$ cyclic of the type $\frac{1}{m^2}(1,ma-1)$ with gcd$(m,a)=1$), the exceptional curve $C^+=\P^1$, and $K_{X^+}$ is relatively ample. They appear when we perform flips with the extremal neighborhoods of type $k1A$ or $k2A$ of the previous sections (see Corollary \ref{existenceofterminalantiflips}). The surface $X^+$ has at most two Wahl singularities $\frac{1}{m_i^2}(1,m_ia_i-1)$ ($i=1,2$) such that $\frac{m_1^2}{m_1
a_1-1}=[e_1,\ldots,e_{r_1}]$, $\frac{m_2^2}{m_2 a_2
-1}=[f_1,\ldots,f_{r_2}]$, and $$\frac{\Delta}{\Omega}=[f_{r_2},\ldots,f_{1},c,e_1,\ldots,e_{r_1}],$$ where $-c$
is the self-intersection of the proper transform of $C^+$ in the
minimal resolution of $X^+$. If a singularity (or both) is (are) actually smooth, then we set $m_i=a_i=1$. We define $$\delta= cm_1 m_2 - m_1 a_2 - m_2 a_1.$$
Then $\Delta= m_1^2 + m_2^2 + \delta m_1 m_2$,
$$K_{X^+} \cdot C^+=\frac{\delta}{m_1 m_2 } >0, \ \ \ \text{and} \ \ \
C^+ \cdot C^+= \frac{- \Delta}{m_1^2 m_2^2} <0.$$

\subsection{The continued fraction of an extremal P-resolution} \label{contfracpresol}

By \cite{KSB88}, Theorem 3.9, there is a natural bijection between P-resolutions of $(Q \in Y)$ and irreducible components of the formal deformation space $\Def(Q \in Y)$. In \cite{C89,S89}, Christophersen and Stevens prove that P-resolutions of $(Q \in Y)$ are in bijection with certain continued fractions representing zero. For extremal P-resolutions we have the following. Let $\frac{\Delta}{\Delta - \Omega} = [c_1,\ldots, c_s]$. Then the extremal P-resolutions of $(Q \in Y)$ are in bijection with pairs $1\leq \alpha<\beta\leq s$ such that $$ 0= [c_1,\ldots,c_{\alpha-1},c_{\alpha}-1,c_{\alpha+1},\ldots,c_{\beta-1},c_{\beta}-1,c_{\beta+1},\ldots,c_s]. $$ We label these P-resolutions by $[c_1,\ldots, \bar{c}_{\alpha},\ldots, \bar{c}_{\beta},\ldots, c_s]$. We now recall how to describe torically the P-resolution from the zero continued fraction. We follow \cite{A98}.

Let us define in $\Z^2$ the vectors $w^0=(0,1)$, $w^1=(1,1)$, and $$w^{i-1} + w^{i+1} = c_i w^i $$ for $i \in \{1,\ldots,s \}$. If $w^i=(x_i,y_i)$, then $\frac{y_i}{x_i}=[1,c_1,\ldots,c_{i-1}]$ for $2\leq i \leq s+1$. Note that $w^{s+1}=(\Delta,\Omega)$.

The fan $\Sigma$ of the toric surface $X^+$, where the toric birational morphism $f^+_0 \colon (C^+ \subset X^+) \rightarrow (Q \in Y)$ is the extremal P-resolution, has two cones $\tau^1= \langle u^1, u^2 \rangle$ and $\tau^2=\langle u^2,u^3 \rangle$ where $u^1=(1,0)$, $u^2=(-(m_2(m_2-a_2)-1),m_2^2)$, and $u^3=(-\Omega,\Delta)$; see \cite{A98} pp.7--8. The $w^{\alpha}, w^{\beta}$ which correspond to $\tau^1, \tau^2$ respectively in \cite{A98} (giving the ``roofs" of the cones) are $$w^{\alpha}=(m_2,m_2-a_2), \ \ \ w^{\beta}=((\Delta-m_2^2) / m_1,(\Omega-(m_2(m_2-a_2)-1))/m_1).$$ One can verify that $\delta = w^{\beta} \wedge w^{\alpha}$.

\begin{proposition}\label{contfracPres}
The extremal P-resolution $[c_1,\ldots, \bar{c}_{\alpha},\ldots, \bar{c}_{\beta},\ldots, c_s]$ has
$$\frac{m_2}{a_2}=[c_1,\ldots,c_{\alpha-1}], \ \ \ \frac{m_1}{a_1}=[c_s,\ldots,c_{\beta+1}],$$ (if $\alpha=1$ and/or $\beta=s$, the corresponding points are smooth) and either $\frac{\delta}{\epsilon}= [c_{\alpha+1},\ldots,c_{\beta-1}]$, if $\alpha +1 \neq \beta$, for some $0<\epsilon<\delta$, or $\delta= 1 \, , \text{if} \ \alpha +1 = \beta$.
\end{proposition}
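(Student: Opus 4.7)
The approach is to prove the three identities of the proposition one by one, using Altinok's vectors $w^i$ together with the explicit formulas for $w^\alpha$ and $w^\beta$ just recalled. The first identity $m_2/a_2 = [c_1, \ldots, c_{\alpha-1}]$ falls out immediately: applying the given formula $y_i/x_i = [1, c_1, \ldots, c_{i-1}]$ at $i = \alpha$ with $w^\alpha = (m_2, m_2 - a_2)$ yields $1 - a_2/m_2 = 1 - 1/[c_1, \ldots, c_{\alpha - 1}]$, hence the identity. The edge case $\alpha = 1$ (where $(m_2, a_2) = (1, 1)$ by convention) is vacuous.

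For the middle identity I will use the wedge relation $\delta = w^\beta \wedge w^\alpha$ noted just before the proposition. Iterating the recurrence $w^{i-1} + w^{i+1} = c_i w^i$ gives
\[
w^{\alpha + j} = A_j \, w^{\alpha+1} + B_j \, w^\alpha, \quad A_0 = 0, \ A_1 = 1, \ A_{j+1} = c_{\alpha+j} A_j - A_{j-1},
\]
so wedging with $w^\alpha$ (and using $w^{\alpha+1} \wedge w^\alpha = 1$, an inductive consequence of $w^1 \wedge w^0 = 1$) produces $\delta = A_{\beta - \alpha}$. The sequence $A_j$ satisfies the standard recursion for numerators of convergents of $[c_{\alpha+1}, c_{\alpha+2}, \ldots]$, so $\delta$ is the numerator of $[c_{\alpha+1}, \ldots, c_{\beta-1}]$; the denominator $\epsilon$ is identified analogously from the companion sequence $B_j$. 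Coprimality of $(\delta, \epsilon)$ comes from $\det M(c_j) = 1$ and $0 < \epsilon < \delta$ from $c_i \ge 2$. When $\alpha + 1 = \beta$ the continued fraction is empty and $A_1 = 1$ directly gives $\delta = 1$.

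The second identity $m_1/a_1 = [c_s, \ldots, c_{\beta+1}]$ is the delicate one, and I would handle it by reversal symmetry. Setting $c'_j := c_{s+1-j}$ and running Altinok's construction with the reversed sequence produces new vectors $w'^j$ with the same initial data $(0,1), (1,1)$. The reversal identity for Hirzebruch--Jung continued fractions gives $[c'_1, \ldots, c'_s] = \Delta/(\Delta - \Omega^*)$ where $\Omega \, \Omega^* \equiv 1 \pmod \Delta$, so $(c'_j)$ is the Christophersen--Stevens sequence of the opposite cyclic quotient singularity $\frac{1}{\Delta}(1, \Omega^*)$. Since the zero-continued-fraction condition is reversal-invariant, the reversed data corresponds to an extremal P-resolution at positions $(\alpha', \beta') = (s+1-\beta, s+1-\alpha)$, representing the same abstract surface germ $X^+ \to Y$ with the two Wahl singularities interchanged in their labelling. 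Applying the first identity in the reversed setup --- where the ``first'' Wahl singularity carries the intrinsic data $(m_1, a_1)$ of the original ``second'' --- gives $[c'_1, \ldots, c'_{\alpha' - 1}] = m_1/a_1$, which is $[c_s, c_{s-1}, \ldots, c_{\beta+1}] = m_1/a_1$.

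The hard part will be justifying that reversal preserves the pair $(m_1, a_1)$ rather than sending it to $(m_1, m_1 - a_1)$, the two being related by the automorphism of $\frac{1}{m_1^2}(1, m_1 a_1 - 1)$ swapping the toric coordinates. A fully self-contained alternative is a direct wedge computation: set $f(j) := w^j \wedge w^\beta$, which satisfies the same linear recurrence as the $w^i$ with $f(\beta) = 0$ and $f(\beta+1) = 1$; then $f(s+1)$ is the numerator of the reversed continued fraction, and substituting the formula for $w^\beta$ together with $\Delta = m_1^2 + m_2^2 + \delta m_1 m_2$ and the matrix expression for $\Omega$ reduces everything to the algebraic identity $\Omega m_2^2 = \Delta(m_2(m_2-a_2)-1) + m_1^2$, so $f(s+1) = m_1$. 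The denominator $a_P = a_1$ then follows by writing $w^s = (w^\beta + a_P w^{s+1})/m_1$ from the inverse of the matrix $M(c_s) \cdots M(c_{\beta+1})$ and matching $a_P$ with $a_1$ using the zero-continued-fraction constraint.
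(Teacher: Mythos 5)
Your proposal follows the paper's proof essentially verbatim: the first identity is read off from $w^\alpha=(m_2,m_2-a_2)$, the value of $\delta$ comes from computing $w^\beta\wedge w^\alpha$ via the continued-fraction recursion (which the paper packages as the determinant identity of Lemma~\ref{partfrac}), and the second identity is obtained ``by symmetry,'' i.e.\ by reversing $[c_1,\ldots,c_s]$, exactly as you do. The ambiguity you flag between $(m_1,a_1)$ and $(m_1,m_1-a_1)$ is settled by the orientation convention $\Delta/\Omega=[f_{r_2},\ldots,f_1,c,e_1,\ldots,e_{r_1}]$: reversing the whole chain (which replaces $\Omega$ by its inverse mod $\Delta$) interchanges the reversed $f$-block with the direct $e$-block, so the ``second-singularity'' data of the reversed setup is literally the pair $(m_1,a_1)$ and no correction by $a\mapsto m-a$ is needed.
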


\begin{proof}
Since $w^{\alpha}=(m_2,m_2-a_2)$, we have $\frac{m_2-a_2}{m_2}=[1,c_1,\ldots,c_{\alpha-1}]$. Hence $\frac{m_2}{a_2}=[c_1,\ldots,c_{\alpha-1}]$. By symmetry, we have $\frac{m_1}{a_1}=[c_s,\ldots,c_{\beta+1}]$.

We know that $\delta = w^{\beta} \wedge w^{\alpha}= x_{\beta}y_{\alpha}-x_{\alpha}y_{\beta}$. Then use Lemma \ref{partfrac} below with $[1,c_1,\ldots,c_r]$ and $\frac{y_i}{x_i}=[1,c_1,\ldots,c_{i-1}]=:[e_1,\ldots,e_i]=\frac{p_{i+1}}{q_{i+1}}$, setting $i=\alpha+2$ and $l=\beta-\alpha-1$ for the second equality in that lemma.
\end{proof}

\begin{lemma} \label{partfrac}
Let $[e_1,\ldots,e_r]$ be a continued fraction with $e_i\geq 1$ and $\frac{p_i}{q_i}=[e_1,\ldots,e_{i-1}]>0$ for all $i \in \{2,\ldots, r+1 \}$. Define $p_1=1$, $p_0=q_1=0$, and $q_0=-1$. Then
$$p_{i-1}q_{i}-p_{i}q_{i-1}=1 \ \ \ \text{and} \ \ \ \frac{p_{i-1}q_{i+l}-p_{i+l}q_{i-1}}{p_{i-1}q_{i+l-1}-p_{i+l-1}q_{i-1}}=[e_{i+l-1},\ldots,e_i] $$ for all $1 \leq i \leq r$, $1 \leq l \leq r+1-i$.
\end{lemma}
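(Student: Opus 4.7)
The plan is to base everything on the standard three-term recursion for Hirzebruch--Jung convergents. First I would observe that the given initial data for $(p_i,q_i)$ together with the defining relation $p_i/q_i = [e_1,\ldots,e_{i-1}]$ force the recursions
$$p_{i+1} = e_i p_i - p_{i-1}, \qquad q_{i+1} = e_i q_i - q_{i-1}$$
for $i \geq 1$. One checks this at $i=1$ (where $p_2=e_1$, $q_2=1$ gives $[e_1]=e_1$) and propagates it inductively from $p_i/q_i = [e_1,\ldots,e_{i-1}]$ to $p_{i+1}/q_{i+1}=[e_1,\ldots,e_i]$ via the recursive definition of the continued fraction.

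The first identity $p_{i-1}q_i - p_i q_{i-1} = 1$ is then an immediate induction on $i$: at $i=1$ one has $0\cdot 0 - 1\cdot (-1)=1$, and for the step, substituting the recursions gives
$$p_i q_{i+1}-p_{i+1}q_i = p_i(e_iq_i-q_{i-1})-(e_ip_i-p_{i-1})q_i = p_{i-1}q_i-p_iq_{i-1}=1.$$
This is essentially the usual matrix/determinant identity for convergents.

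For the second identity, I would fix $i$ and set $A_l := p_{i-1}q_{i+l} - p_{i+l}q_{i-1}$. Applying the recursion separately to $p_{i+l}$ and $q_{i+l}$ shows that $A_l$ satisfies the same three-term recursion $A_l = e_{i+l-1}A_{l-1}-A_{l-2}$, while the initial values are $A_0=1$ (by the first identity) and $A_{-1}=0$. A second induction on $l$ then yields the desired formula: the base case $l=1$ gives $A_1/A_0 = e_i = [e_i]$, and the inductive step reads
$$\frac{A_l}{A_{l-1}} = e_{i+l-1} - \frac{A_{l-2}}{A_{l-1}} = e_{i+l-1} - \frac{1}{[e_{i+l-2},\ldots,e_i]} = [e_{i+l-1},\ldots,e_i].$$

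The only potentially delicate point is verifying $A_{l-1}\neq 0$ so that the division in the inductive step is legitimate. This is handled by the positivity hypothesis: for $i\geq 2$ one has $q_{i-1},q_{i+l}>0$ and $p_j/q_j$ is strictly decreasing under the given hypotheses, so $A_l = q_{i-1}q_{i+l}\bigl(p_{i-1}/q_{i-1}-p_{i+l}/q_{i+l}\bigr)>0$; for $i=1$ one has $A_l = p_{1+l}>0$ directly from $q_0=-1$, $p_0=0$. Since this is the only subtlety and everything else is routine three-term recursion manipulation, I do not expect any serious obstacle in the write-up.
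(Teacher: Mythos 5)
Your proposal is correct and follows essentially the same route as the paper: the paper's (two-line) proof likewise starts from the three-term recursions $p_{i-1}+p_{i+1}=e_ip_i$, $q_{i-1}+q_{i+1}=e_iq_i$ and then does induction on $i$ for the determinant identity and induction on $l$ for the second identity. Your write-up simply fills in the details the paper omits, including the (reasonable) check that the denominators $A_{l-1}$ are nonzero.
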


\begin{proof}
We have $p_{i-1}+p_{i+1}=e_i p_i$ and $q_{i-1}+q_{i+1}=e_i q_i$ for all $1 \leq i \leq r$. We now do induction on $i$ for the first equality, and induction on $l$ for the second.
\end{proof}

The following are all the extremal P-resolutions for cyclic quotient singularities $\frac{1}{\Delta}(1,\Omega)$ with $2 \leq \Delta \leq 45$, apart from the type $\frac{\Delta}{\Delta-1}=[\bar 2, 2,\ldots,2,\bar 2]$ (which corresponds to $\frac{1}{\Delta}(1,1)$). The underlined notation represents another extremal P-resolution for the same singularity.

{\small
$$\frac{7}{5}=[2,\overline 2,\overline 3] \ \ \frac{11}{8}=[2,\overline 2,3, \overline 2] \ \  \frac{13}{10}=[2,2,\overline 2,\overline 4] \ \ \frac{15}{11}=[\overline 2,\underline 2,3,\overline 2, \underline 2]$$ $$\frac{16}{7}=[\overline 3,\underline 2,\overline 2,\underline 3] \ \ \frac{19}{15}=[\overline 2,2,2,3,\overline 2,2] \ \ \frac{19}{12}=[2,\overline 3, \overline 2, 3] $$ $$\frac{21}{17}=[2,2,2,\overline 2,\overline 5] \ \ \frac{22}{17}=[2,2,\overline 2,4,\overline 2] \ \ \frac{23}{19}=[\overline 2,2,2,2,3,\overline 2,2] $$ $$\frac{24}{17}=[2,\overline 2,4,\overline 2,2] \ \ \frac{25}{16}=[\overline 2,3,2,\overline 2,3] \ \ \frac{27}{23}=[\overline 2,2,2,2,2,3, \overline 2,2]$$ $$ \frac{29}{13}=[\overline 3,2,2,\overline 2,4] \ \ \frac{31}{27}=[\overline 2,2,2,2,2,2,3,\overline 2,2] \ \ \frac{31}{26}=[2,2,2,2,\overline 2,\overline 6]$$ $$\frac{31}{24}=[2,2,\overline 2,4,2,\overline 2] \ \ \frac{34}{25}=[\overline 2,2,3,2,\overline 2,3] \ \ \frac{35}{31}=[\overline 2,2,2,2,2,2,2,3,\overline 2,2] $$ $$\frac{36}{23}=[2,\overline 3,\underline 2,\overline 2,\underline 4] \ \ \frac{37}{30}=[2,2,2,\overline 2,5,\overline 2] \ \ \frac{37}{27}=[2,2,\overline 3,\overline 2,4] $$ $$ \frac{39}{35}=[\overline 2,2,2,2,2,2,2,2,3,\overline 2,2] \ \ \frac{39}{29}=[2,\overline 2,3,2,2,2,2,2,2,2,\overline 2] $$ $$\frac{40}{31}=[\overline 2,2,\underline 2,4,\overline 2,2,\underline 2] \ \ \frac{40}{29}=[2,\overline 2,3,3,\overline 2,2] \ \ \frac{41}{18}=[3,2,\overline 2,3,\overline 3]$$ $$\frac{43}{39}=[\overline 2,2,2,2,2,2,2,2,2,3,\overline 2,2] \ \ \frac{43}{37}=[2,2,2,2,2,\overline 2,\overline 7] \ \ \frac{43}{34}=[\overline 2,2,2,3,2,\overline 2,3]$$ $$\frac{43}{33}=[2,2,\overline 2,5,\overline 2,2] \ \ \frac{45}{29}=[\overline 2,3,2,2,\overline 2,4] \ \ \frac{45}{19}=[3, \overline 2,3,\overline 2,3] $$
}

The purpose of the next two subsections is to prove

\begin{theorem} \label{atmost2}
A cyclic quotient singularity $\frac{1}{\Delta}(1,\Omega)$ can admit at most two distinct extremal P-resolutions.
\end{theorem}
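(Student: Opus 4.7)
By Proposition~\ref{contfracPres}, extremal P-resolutions of $\frac{1}{\Delta}(1,\Omega)$ are parametrized by pairs $(\alpha,\beta)$ with $1 \leq \alpha < \beta \leq s$ such that the decremented sequence $d_i := c_i - [i=\alpha] - [i=\beta]$ (with $c_i \geq 2$) satisfies $[d_1,\ldots,d_s] = 0$. My plan is to prove by induction on $s$ that at most two such pairs exist, using two combinatorial facts about continued fractions. First, any continued fraction $[d_1,\ldots,d_s]$ with all $d_i \geq 2$ evaluates to at least $1$ (by a short induction using $[d_1,\ldots,d_s] = d_1 - 1/[d_2,\ldots,d_s] \geq 2 - 1 = 1$), and in particular cannot equal $0$. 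This already forces, for every extremal pair $(\alpha,\beta)$, at least one of $c_\alpha, c_\beta$ to equal $2$; otherwise every $d_i \geq 2$ and $[d_1,\ldots,d_s]>0$.

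Second, I would use the standard reduction for zero continued fractions: if $[d_1,\ldots,d_s]=0$ with $d_{i_0}=1$ and $1<i_0<s$, then $[d_1,\ldots,d_{i_0-1}-1,d_{i_0+1}-1,\ldots,d_s]=0$ is a shorter zero continued fraction (with analogous boundary reductions at $i_0=1$ or $i_0=s$). This turns the counting problem at length $s$ into a similar problem at length $s-2$. The inductive scheme is: suppose for contradiction that we have three distinct extremal pairs $(\alpha_j,\beta_j)$, $j=1,2,3$; select a position $i_0$ with $c_{i_0}=2$ lying in some $\{\alpha_j,\beta_j\}$, perform the reduction, and track how the other two pairs descend to extremal pairs on the reduced continued fraction of length $s-2$. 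If all three pairs descend to distinct extremal pairs, the inductive hypothesis is violated. The base cases $s\leq 3$ are trivial; the list in \S\ref{contfracpresol} provides a sanity check for small $\Delta$.

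The main obstacle is the combinatorial bookkeeping in the reduction step, especially when the reduction site $i_0$ or its neighbors $i_0\pm 1$ belong to $\{\alpha_j,\beta_j\}$ for some $j\neq 1$; in those cases the induced pair on the reduced continued fraction may shift, may coalesce with another pair, or may fail to be a valid extremal pair (e.g.\ if a descended entry drops below $2$). Handling these interactions requires careful casework, and it may be cleaner to iterate several reductions in parallel rather than one at a time. The empirical pattern visible in the tabulated examples---that two extremal pairs always satisfy $\alpha_2-\alpha_1 = \beta_2-\beta_1$, a rigid ``shift'' relation reflecting an underlying palindromic symmetry of the relevant segment of $(c_1,\ldots,c_s)$---suggests that once the right reduction is chosen, two good pairs must arise as symmetric mirror images and a third cannot be accommodated. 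Formalizing this rigidity into a clean induction is where I expect the technical work of the proof to lie.
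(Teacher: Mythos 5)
Your reduction step has a genuine gap, and it is located exactly where the paper's proof has to do its real work. The reduction $[d_1,\ldots,d_{i_0-1}-1,1,d_{i_0+1}-1,\ldots]\to[d_1,\ldots,d_{i_0-1}-1,d_{i_0+1}-1,\ldots]$ is performed with respect to a \emph{specific} extremal pair: the position $i_0$ with $d_{i_0}=1$ is $\alpha_j$ or $\beta_j$ for one pair $j$, and the reduced sequence has its entries at $i_0\pm1$ decreased. The other two pairs $(\alpha_{j'},\beta_{j'})$ produce zero continued fractions of the \emph{original} sequence $(c_1,\ldots,c_s)$, not of the reduced one, so there is no reason they descend to extremal pairs of the shorter sequence; your own caveat about entries ``dropping below $2$'' or pairs ``coalescing'' is symptomatic of the fact that the induction hypothesis simply does not apply to the reduced data. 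The induction does go through cleanly in one situation: when the auxiliary entry $v_0=3(s-1)-\sum c_i+2$ (the number of triangles at the $0$-th vertex in the associated triangulation of the $(s+1)$-gon) equals $1$. Then \emph{every} compatible triangulation contains the ear at vertex $0$, the reduction site is common to all pairs, and all pairs descend uniformly. This is precisely case (B) of the paper's proof, which is handled by exactly the induction you describe.

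The hard case, which your proposal does not resolve, is $v_0>1$. There one shows $c_\alpha=c_\beta=2$ for every extremal pair (the vertex sequence of a triangulation has at least two $1$'s, and with $v_0>1$ they must sit at $\alpha$ and $\beta$), so competing pairs correspond to different choices of which two $2$'s in $(c_1,\ldots,c_s)$ to decrement, and no common reduction site exists. The paper handles this by classifying all triangulations whose vertex sequence has exactly two $1$'s (an inductive ``$T$-singularity-like'' generation procedure), encoding a compatible triangulation by a rotation parameter $k$ acting on the subsequence of entries $>2$ and the gaps $x_i$ between them, and then proving via Gauss sums $\sum x_i\mu^i$ over roots of unity that $k$ is uniquely determined by the $x_i$ — hence at most one triangulation besides the first. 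The ``shift relation'' you observed empirically is a shadow of this rotational rigidity, but establishing it is the entire content of the argument, not a routine bookkeeping step; as written, your induction would need to be supplemented by essentially all of the paper's case (A) analysis.
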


\begin{theorem} \label{samedelta}
If a cyclic quotient singularity $\frac{1}{\Delta}(1,\Omega)$ admits two extremal P-resolutions, then the $\delta$'s are equal.
\end{theorem}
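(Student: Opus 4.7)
By Proposition~\ref{contfracPres}, if we write $\Delta/(\Delta-\Omega)=[c_1,\ldots,c_s]$, each extremal P-resolution of $\frac{1}{\Delta}(1,\Omega)$ corresponds to a pair $1\le\alpha<\beta\le s$ with $[c_1,\ldots,c_\alpha-1,\ldots,c_\beta-1,\ldots,c_s]=0$, and its invariant $\delta$ is the numerator of the HJ continued fraction $[c_{\alpha+1},\ldots,c_{\beta-1}]$ (with the convention $\delta=1$ when $\beta=\alpha+1$). Equivalently, via the lattice vectors $w^0,\ldots,w^{s+1}\in\bZ^2$ introduced in \S\ref{contfracpresol}, one has $\delta=w^\beta\wedge w^\alpha$. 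The plan is to reduce Theorem~\ref{samedelta} to the following combinatorial claim.

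\textbf{Key claim.} If $(\alpha_1,\beta_1)\neq(\alpha_2,\beta_2)$ are two distinct pairs both giving zero continued fractions as above, then $\beta_2-\alpha_2=\beta_1-\alpha_1$ and the middle segments are reverses of each other:
\[
(c_{\alpha_2+1},\ldots,c_{\beta_2-1})=(c_{\beta_1-1},c_{\beta_1-2},\ldots,c_{\alpha_1+1}).
\]
Since the HJ continuant is invariant under reversal of its arguments, $K(c_1,\ldots,c_k)=K(c_k,\ldots,c_1)$ — a standard identity coming from $D M_i D=M_i^T$ with $M_i=\left(\begin{smallmatrix}c_i & -1\\1 & 0\end{smallmatrix}\right)$ and $D=\mathrm{diag}(1,-1)$, by comparing $(1,1)$ entries of the matrix products — the key claim immediately yields $\delta_1=\delta_2$.

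To prove the key claim I would first translate the vanishing condition into a vector identity. The modification $c_j\to c_j-1$ acts on the recurrence $w^{i+1}=c_iw^i-w^{i-1}$ as a rank-one perturbation, and iterating one shows that the ``modified'' endpoint takes the form $\tilde w^{s+1}=w^{s+1}+\mu\,w^\alpha+\nu\,w^\beta$ for explicit coefficients $\mu,\nu$ obeying the same three-term recurrence in the variable $i$; the vanishing of the modified continued fraction is then the single condition $\tilde w^{s+1}\in\bZ\cdot w^0$. Writing this linear relation for each of the two pairs $(\alpha_i,\beta_i)$ and subtracting produces a constraint among $w^{\alpha_1},w^{\beta_1},w^{\alpha_2},w^{\beta_2}$. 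A case split on the relative order of the four indices — disjoint, properly nested, or overlapping — combined with the Christophersen--Stevens reduction rule $[b_1,\ldots,1,\ldots,b_s]=[b_1,\ldots,b_{i-1}-1,b_{i+1}-1,\ldots,b_s]$ that already underlies the proof of Theorem~\ref{atmost2}, narrows the admissible configurations down to a translated pair ($\alpha_2-\alpha_1=\beta_2-\beta_1>0$) and simultaneously forces the required reversal of middle segments by induction on $s$.

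The main obstacle will be the case analysis needed to exclude the non-translated configurations: in each such case one must show that the two postulated zero continued fractions would force the existence of a third one — a \emph{different} pair $(\alpha,\beta)$ giving $[c_1,\ldots,c_\alpha-1,\ldots,c_\beta-1,\ldots,c_s]=0$ — which contradicts the upper bound of two established in Theorem~\ref{atmost2}. Once the translated configuration is isolated, the reversal structure, and hence $\delta_1=\delta_2$, is formal.
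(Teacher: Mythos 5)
Your reduction of the theorem to the ``key claim'' is where the argument breaks down: the key claim is false. Take $\Delta/(\Delta-\Omega)=[2,2,3,2,2,5,3,2,2,3,2,2,5]$ (so $s=13$ and $\Delta=8320$). Both pairs $(\alpha_1,\beta_1)=(1,8)$ and $(\alpha_2,\beta_2)=(4,11)$ give zero continued fractions: $[1,2,3,2,2,5,3,1,2,3,2,2,5]$ and $[2,2,3,1,2,5,3,2,2,3,1,2,5]$ each reduce to $[1,1]$ under the contraction rule $[\ldots,a,1,b,\ldots]\to[\ldots,a-1,b-1,\ldots]$; equivalently, the corresponding vertex sequences of the $14$-gon (with $v_0=3$) are realized by triangulations in which exactly one triangle meets the vertices $\{1,8\}$, respectively $\{4,11\}$. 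The pairs are translates of one another, but the middle segments are $(c_2,\ldots,c_7)=(2,3,2,2,5,3)$ and $(c_5,\ldots,c_{10})=(2,5,3,2,2,3)$, and the reverse of the first is $(3,5,2,2,3,2)$, which is not the second. Both continuants do equal $130$, as the theorem asserts, but not because of reversal invariance: one has $[2,3,2,2,5,3]=130/83$ while $[2,5,3,2,2,3]=130/73$, and $83\cdot 73\not\equiv\pm1\bmod 130$, so the two segments are not reverses and not even dual to one another. Hence the step ``the key claim immediately yields $\delta_1=\delta_2$'' rests on a false statement, and the plan collapses at its foundation.

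This is exactly the difficulty the paper's argument is built to handle. There $\delta_1$ and $\delta_2$ are expressed (Lemma~\ref{deltalemma}) as upper-left entries of products of $2\times 2$ matrices assembled from the data $\{x_i\}$ of the two triangulations, and in the hard cases (II, III, V of \S\ref{delta}) the two products differ not by reversal of the factors but by flipping the signs $i_\alpha\mapsto -i_\alpha$ in the interpolated matrices $R_{\pm1}$; the equality of the upper-left entries is then Lemma~\ref{magic}, a genuinely different identity from reversal invariance of continuants. Separately, even if the key claim were true, your proposal does not prove it: the assertions that the case split ``narrows the admissible configurations down to a translated pair and simultaneously forces the required reversal'' constitute the entire combinatorial content of the theorem, and you yourself flag the case analysis as an unresolved ``main obstacle.'' As written, the proposal is a plan whose central claim is refuted by the example above.
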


Theorem \ref{atmost2} says that there are at most two universal antiflips over a given$\frac{1}{\Delta}(1,\Omega)$, and Theorem \ref{samedelta} says that the $\delta$ of a universal antiflip is an invariant of $\frac{1}{\Delta}(1,\Omega)$. The proofs for both theorems are going to be completely combinatorial, using the continued fraction of an extremal P-resolution and the formula for $\delta$ in Proposition \ref{contfracPres}.

\subsection{At most two extremal P-resolutions over $\frac{1}{\Delta}(1,\Omega)$} \label{2extremalnbhd}

According to \cite{C89,S89}, $[v_1,\ldots,v_r]$ is a zero-continued fraction if and only if
there exists a triangulation of the $(r+1)$-gon with vertices
indexed by $\{0,\ldots,r\}$ such that the number of triangles
meeting at the $i$-th vertex is equal to $v_i$. We define $v_0$
as the number of triangles meeting at the $0$-th vertex. We
obviously have
\begin{equation}\label{zfbdfbafbaf}
v_0=3r-3-\sum_{i=1}^rv_i.
\end{equation}

\begin{definition}
We say that a sequence of integers
$$\aaa=\{a_1,\ldots,a_r\},\quad a_i>1\quad\hbox{\rm for any $i$},$$
is {\em of WW type} if there exists $1\le \alpha<\beta\le r$ such
that
\begin{equation}\label{ADHJSCGVDHMSVG}
[a_1,\ldots,a_\alpha-1,\ldots,a_\beta-1,\ldots,a_r]
\end{equation}
is a zero-continued fraction.
\end{definition}

To prove Theorem \ref{atmost2}, we proceed by induction on
$r$, and along the way we more or less classify sequences of $WW$
type.

A sequence $\aaa$ of $WW$ type gives rise to a triangulation that
corresponds to \eqref{ADHJSCGVDHMSVG}. We say that this
triangulation is {\em compatible} with~$\aaa$. The claim is that
there are at most two triangulations compatible with~$\aaa$. For
any triangulation, we call $\{v_0,\ldots,v_r\}$ the vertex
sequence. By~\eqref{zfbdfbafbaf}, all triangulations compatible
with $\aaa$ have the same $v_0$. We define $a_0=v_0$. Notice that
$a_0$ can be equal to one.

It is easy to see (e.g.~by induction) that the vertex sequence
always contains at least two $1$'s. So we have the following
possibilities:
\begin{enumerate}
\item[(A)] $a_0=v_0>1$, all vertex sequences compatible with
$\aaa$ have two~$1$'s, $\aaa$~is obtained by substituting these
$1$'s with $2$'s. \item[(B)] $a_0=v_0=1$.
\end{enumerate}

We can handle case (B) by induction. Indeed, in this case all
triangulations compatible with $\aaa$ contain a triangle $T$
formed by vertices $r$, $0$, and $1$. Therefore they define
triangulations of the $r$-gon obtained by chopping off the
triangle $T$. These triangulations are compatible with the
sequence $$a_0'=a_1-1,\  a_1'=a_2,\ \ldots,\  a_{r-2}'=a_{r-1},\
a_{r-1}'=a_r-1.$$ If $\aaa'$ contains two $1$'s then $a_1=a_r=2$.
This implies that $r=3$ and we are triangulating a square. This
case is obvious. If $\aaa'$ contains at most one $1$ then we are
done by induction.

Now we deal with case (A). First we inductively classify all
triangulations with exactly two $1$'s in the vertex sequence. If
$v_i=1$ then we can chop off a triangle with vertices $i-1,i,i+1$.
Unless $r=3$, a new triangulation will have exactly two $1$'s, one
of them at $i-1$ or $i+1$. We chop off the next vertex with $1$,
and proceed inductively. In other words, we can get any sequence
$\aaa$ of this type by the following procedure. Start with a
sequence $2,2,2$. Then do a recursion analogous to enumeration of
all $T$-singularities: add a new $2$ on one end and increase
another and by $1$. So, for example, we can do
$$[2,\bar 2,2]\to [3,\bar 2,2,2]\to [2,3,\bar 2,2,3]\to [2,2,3,\bar 2,2,4]\to \ldots$$
After doing several steps, fold a sequence around a circle and add
a new~$\bar 2$ between the ends. Now choose $v_0$ anywhere except
at overlined~$2$'s (which should be decreased to ones in the
vertex sequence). We are going to ignore $v_0$ from now on. In
particular, we will sometimes rotate indices if this will be
convenient.

Start with our sequence $\aaa=\{a_0,\ldots,a_r\}$ of type (A) and
let $s$ be the number of $i$'s such that $a_i>2$. Now take any
triangulation compatible with $\aaa$. Let $0< p<q\le r$ be the
indices such that $v_p=v_q=1$. By the discussion above, unless we
are triangulating a square, there exist unique indices $p',q'$
such that the $p$-th vertex is adjacent to the vertex $v_{p'}>2$,
and similarly for $q,q'$. We rotate, and if necessary reflect the
indices (in the dihedral group) so that, for the first
triangulation, $p=1$ and $p'=0$. We create two new sequences:
\begin{itemize}
\item $v_0=\alpha_0,\ldots,\alpha_{s-1}$ is a subsequence of
$\aaa$ of terms greater than $2$. \item $x_i$ for $i=0,\ldots,s-1$
is the number of $2$'s in $\aaa$ between the terms that correspond
to $\alpha_i$ and $\alpha_{i+1}$ increased by $1$.
\end{itemize} The indexing set here is $\bZ_s$.

Then the inductive procedure above allows us to compute
$\alpha_i$'s in terms of $x_i$'s. There are two cases. If $s=2l$
is even, we have
$$\alpha_i=\begin{cases} x_i& i=0,l\cr x_{-i}+2 & i\ne 0, l\end{cases}$$
If $s=2l+1$ is odd, we have
$$\alpha_i=\begin{cases} x_0& i=0\cr x_l & i=l+1 \cr x_{-i}+2 & i\ne 0, l+1\end{cases}$$
Now consider the second triangulation. We change notation, and let
$p,p',q,q'$ denote the data of the second triangulation.

\begin{example}
The sequence
$$\{5,\overline 2,2,2,2,3,\underline 2,2,7,2, \overline 2,3,2,2,2, \underline 2,5\}$$
has $r=16$, $s=5$, and $l=2$. The overlined and the underlined
$2$'s should be decreased by $1$ to get the two vertex sequences.
The first triangulation has $p=1$, $q=10$; the second has $p=6$,
$q=15$. In this way, $\alpha_0=5$, $\alpha_1=3$, $\alpha_2=7$,
$\alpha_3=3$, and $\alpha_4=5$, and so $x_0=5$, $x_1=3$, $x_2=3$,
$x_3=5$, and $x_4=1$. The sequence
$$\{3,\overline 2,2,3,2, \underline 2,3,5,3,\overline 2,2,3,2,\underline 2,3,5\}$$
has $r=15$, $s=8$, and $l=4$. We have $\alpha_0=3$, $\alpha_1=3$,
$\alpha_2=3$, $\alpha_3=5$, $\alpha_4=3$, $\alpha_5=3$,
$\alpha_6=3$, and $\alpha_7=5$, and so $x_0=3$, $x_1=3$, $x_2=1$,
$x_3=1$, $x_4=3$, $x_5=3$, $x_6=1$, and $x_7=1$.
\end{example}

{\em We first consider the case of odd $s=2l+1$.} After
interchanging $p$ and $q$ if necessary, we can assume that the
vertex $p$ is located clock-wise from the vertex $p'$. We define
$k \in \Z_s$ through $v_{p'}=\alpha_k$. Notice that $k>0$. Then
the sequence $\alpha_0,\ldots,\alpha_{s-1}$ satisfies another
system of equations:
$$\alpha_i=\begin{cases} x_k& i=k\cr x_{k+l} & i=k+l+1 \cr x_{2k-i}+2 & i\ne k, k+l+1\end{cases}$$
(as a reminder, the indexing set is $\bZ_s$). We claim that $k$ is
uniquely determined by the sequence $\{x_i\}$. This will show that
the second triangulation compatible with $\aaa$ is unique (if
exists). We have the following system of equations when $k\neq l$
or $k\neq l+1$

$$\begin{cases} x_i=x_{i+2k} & i\ne 0,-k,l, -k+l\cr
x_0=x_{2k}+2\cr x_{-k}=x_k-2\cr x_l=x_{l+2k}+2\cr
x_{-k+l}=x_{k+l}-2\cr
\end{cases}$$
Let $$\Gamma=\langle k\rangle=\langle 2k\rangle\subset\bZ_s.$$
Then the system of equations above shows that $x_i$ is constant on
$\Gamma$-cosets in $\bZ_s$ unless a coset contains $0$, $-k$, $l$,
or $-k+l$. Next we compute a Gauss sum
$$G=\sum_{i=0}^{s-1}x_i\mu^i,$$
where $\mu$ is a primitive $s$-th root of unity. Clearly only one
or two $\Gamma$ cosets contribute non-trivially to the Gauss sum.
Namely, if
$$l\not\in\Gamma$$
then there will be two non-trivial cosets, and we have
$$G=-2(\mu^{2k}+\mu^{4k}+\ldots+\mu^{2k\alpha})-2(\mu^{2k+l}+\mu^{4k+l}+\ldots+\mu^{2k\alpha+l}),$$
where $\alpha$ is the minimal positive integer such that
$2k\alpha\equiv -k$. Geometric series and some manipulations give
\begin{equation}\label{asdaagaqg}
G=2{\mu^k\over \mu^{k}+1}(1+\mu^l).
\end{equation}
We can clearly recover $\mu^k$, and hence $k$, by applying a
fractional-linear transformation.

If $l\in\Gamma$ then in fact $\Gamma=\bZ_s$, so there is only one
coset. We have $2kl\equiv -k$, and we choose a minimal $t$ such
that $2kt \equiv l$. There are two further subcases, either $t<l$
and we have
$$G=-2(\mu^{2k}+\ldots+\mu^{2kt})-4(\mu^{2k(t+1)}+\ldots+\mu^{2kl})-2(\mu^{2k(l+1)}+\ldots+\mu^{2k(t+l)}),$$
and so $G = 2{\mu^k\over \mu^{k}+1}(1+\mu^l)$, or $t>l$, in which
case a similar calculation gives $G=2{\mu^k\over
\mu^{k}+1}(1+\mu^l)$. For the cases $k=l$ and $k=l+1$ we obtain
same $G$ as well. Therefore $k$ is uniquely determined by the
$\{x_i\}$.

{\em Now let's consider the case of even $s=2l$.} After
interchanging $p$ and $q$ if necessary, we can assume that $p'$
satisfies $v_{p'}=\alpha_{k}$ with $0\le k<l$. There are two
further subcases: in the {\em clockwise} (resp.~{\em
counter-clockwise}) case the vertex $p$ is located clockwise
(resp.~{\em counter-clockwise}) from the vertex $p'$. Notice that
in the clockwise case $k>0$. The sequence
$\alpha_0,\ldots,\alpha_{s-1}$ satisfies a new system of
equations:
$$\alpha_i=\begin{cases} x_i& i=k, l+k\cr x_{2k-i}+2 & i\ne k, k+l\end{cases}$$
in the clockwise case and
$$\alpha_i=\begin{cases} x_{i-1}& i=k, l+k\cr x_{2k-i-1}+2 & i\ne k, k+l\end{cases}$$
in the counter-clockwise case. This gives a system of equations on
$x_i$:
$$\begin{cases} x_i=x_{i+2k}& i\ne0, -k, l, l-k\cr x_i=x_{i+2k}+2 & i=0, l\cr x_i=x_{i+2k}-2 & i=-k, l-k\end{cases}$$
in the clockwise case and
$$\begin{cases} x_i=x_{i+2k-1}& i\ne0, -k, l, l-k\cr x_i=x_{i+2k-1}+2 & i=0, l\cr x_i=x_{i+2k-1}-2 & i=-k, l-k\end{cases}$$
in the counter-clockwise case. We have to recover $k$ (and whether
we have a clockwise or a counter-clockwise case from the sequence
$x_i$). To simplify this problem, consider a new sequence $y_i$
defined as
$$y_i={x_i+x_{i+l}\over 4}, \ i=0,\ldots,l-1.$$
we think about $y_i$ as indexed by the cyclic group $\bZ_l$. It
satisfies the following system of equations:
$$\begin{cases} y_i=y_{i+2k}& i\ne0, -k\cr y_0=y_{2k}+1 \cr y_{-k}=y_{k}-1\end{cases}$$
in the clockwise case and
$$\begin{cases} y_i=y_{i+2k-1}& i\ne0, -k\cr y_0=y_{2k-1}+1 \cr y_{-k}=y_{k-1}-1\end{cases}$$
in the counter-clockwise case. We define a subgroup
$$\Gamma=\langle 2k\rangle\subset\bZ_l\quad(\hbox{\rm resp.\ }\Gamma=\langle 2k-1\rangle\subset\bZ_l)$$
in the clockwise (resp.~counter-clockwise) case. We also define a
Gauss sum
$$G=\sum_{i=0}^{l-1}y_i\mu^i,$$
where $\mu$ is a primitive $l$-th root of unity.

The system of equations above shows that $y_i$ is constant on
$\Gamma$-cosets in $\bZ_l$ unless a coset contains $0$ or $-k$.
Moreover, in the latter case the coset must contain both $0$ and
$-k$, as otherwise we will have a non-constant periodic monotonous
function. Only this coset will contribute non-trivially to~$G$. In
the clockwise case we have
$$-k\in \langle 2k\rangle\quad\Rightarrow\quad \langle k\rangle=\bZ_{2m+1},$$
$-k\equiv 2mk$, and
$$y_{2k}+1=y_{4k}+1=\ldots=y_{2mk}+1=y_{2(m+1)k}=\ldots=y_0.$$
Therefore
$$G=-\mu^{2k}-\mu^{4k}-\ldots-\mu^{2mk}={\mu^k\over 1+\mu^k}.$$
Obviously $\mu^k={G\over 1-G}$ and $G$ determines $k$. In the
counter-clockwise case we have
\begin{equation}\label{afvbdfbsf}
-k\in \langle 2k-1\rangle \quad\Rightarrow\quad \langle k
\rangle=\langle 2k-1\rangle=\bZ_l.
\end{equation}
Hence
$$y_{2k-1}+1=y_{2(2k-1)}+1=\ldots=y_{a(2k-1)}+1=y_{k-1}=\ldots=y_0,$$
where $-k\equiv a(2k-1)$. So we obtain
\begin{equation}\label{afvbAAGS}
G=-\mu^{2k-1}-\mu^{2(2k-1)}-\ldots-\mu^{a(2k-1)}={\mu^{k-1}-\mu^{2k-1}\over
1-\mu^{2k-1}}.
\end{equation}
It follows that
$${G\over 1-G}=\mu^{k-1}{1-\mu^k\over 1-\mu^{k-1}},$$
and so
\begin{equation}\label{akSDJFHL}
\left|{G\over 1-G}\right|\ne1:
\end{equation}
indeed, as we connect a fixed vertex $1$ of an $l$-gon to other
vertices by diagonals, two consecutive diagonals never have equal
length, unless $l=2k-1$, but this would contradict
\eqref{afvbdfbsf}. In particular, we cannot have both a clockwise
(in which case $\left|{G\over 1-G}\right|=1$) and a
counter-clockwise case. It follows from \eqref{afvbAAGS} that we
can find $k$ by solving the quadratic equation
$$(\mu^k)^2+{1\over G-1}(\mu^k)+{G\over 1-G}\mu=0.$$
By \eqref{akSDJFHL}, this equation cannot have two solutions.

\subsection{Same $\delta$ over $\frac{1}{\Delta}(1,\Omega)$} \label{delta}

We now prove Theorem \ref{samedelta}. As for Theorem \ref{atmost2},
we only need to consider case (A). Let $\aaa=\{a_0,\ldots,a_r\}$
be a sequence of type (A). We adopt the same conventions as
before, and define $a_0=\alpha_0, \alpha_1, \ldots, \alpha_{s-1}$,
and the $\{x_0,\ldots,x_{s-1}\}$, where $p_1=1<q_1$ and $p_2<q_2$
are the indices defining the two zero continued fractions
corresponding to $\aaa$. We define for $j=1,2$
$$\frac{\delta_j}{\varepsilon_j}:=[a_{p_j +1},\ldots,a_{q_j -1}],$$
for some $\varepsilon_j$ (see Proposition \ref{contfracPres}). One can verify that
$$\frac{\delta_j^2}{\delta_j \lambda_j +1}= [a_{q_j + 1},a_{q_j
+2},\ldots,a_r,a_0,\ldots,a_{q_j -1}]$$ for some
$0<\lambda_j<\delta_j$ with gcd$(\delta_j,\lambda_j)=1$; this is a property of the dual of continued fractions from Wahl singularities (see \cite{S89}, 3.5). The following is \cite{S89}, Lemma 3.4.

\begin{lemma} Given $[b_1,\ldots,b_r]=\frac{m^2}{ma-1}$ with $0<a<m$
and gcd$(m,a)=1$, we define as in \S \ref{toric} the sequences $\alpha_{i}/\alpha_{i-1}=[b_{i-1},\ldots,b_1]$ for $2 \le i \le r$, and
$\beta_{i}/\beta_{i+1}=[b_{i+1},\ldots,b_{r}]$ for $1 \le i \le r-1$. Define $\alpha_0=0$ and $\beta_0=m^2$. Let $m \delta_i:= \alpha_i + \beta_i$ for $0 \leq i \leq r+1$. Let $\tilde{\alpha}_i$, $\tilde{\beta}_i$,
$(a+m) \tilde{\delta}_i := \tilde{\alpha}_i + \tilde{\beta}_i$ be the
numbers for $\frac{(a+m)^2}{(a+m)m-1} = [e_1+1,\ldots,e_r,2]$.

Then $\tilde{\delta}_i = \delta_i$ for $1 \leq i \leq r+1$,
and $\tilde{\delta}_0=\tilde{\delta}_{r+2}=m+a= \delta_1 +
\delta_{r+1}$.
\end{lemma}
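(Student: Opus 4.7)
The plan is to reduce the claim to the three-term recurrence satisfied by the sequence $\delta_i$. The primitive lattice vectors $v_i=\tfrac{1}{m^2}(\alpha_i,\beta_i)$ appearing in the toric minimal resolution satisfy $v_{i-1}+v_{i+1}=b_iv_i$ (the standard toric relation expressing $E_i^2=-b_i$), so summing coordinates and dividing by $m$ gives the recurrence $\delta_{i-1}+\delta_{i+1}=b_i\delta_i$ on $1\le i\le r$. The same reasoning applied to the augmented chain $[b_1+1,b_2,\ldots,b_r,2]$ (the standard augmentation identity for Wahl chains, provable by induction on $r$ or by a direct $\SL(2,\bZ)$ matrix calculation) yields $\tilde\delta_{i-1}+\tilde\delta_{i+1}=b_i'\tilde\delta_i$ for $1\le i\le r+1$, with $b_1'=b_1+1$, $b_i'=b_i$ for $2\le i\le r$, and $b_{r+1}'=2$.

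Next I would pin down the boundary values. From $\alpha_0=0$, $\beta_0=m^2$ one obtains $\delta_0=m$, and symmetrically $\delta_{r+1}=m$. The primitivity of $v_1=\tfrac{1}{m^2}(1,ma-1)$ forces $\alpha_1=1$, $\beta_1=ma-1$, giving $\delta_1=a$; the involution $a\leftrightarrow m-a$ (which reverses the Hirzebruch--Jung chain) then gives $\delta_r=m-a$. The same analysis applied to the augmented Wahl singularity supplies $\tilde\delta_0=m+a$ and $\tilde\delta_1=a$.

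With these ingredients the conclusion falls out of a short induction. Since $\tilde\delta_1=a=\delta_1$ and a direct computation from the new recurrence at $i=1$ gives $\tilde\delta_2=(b_1+1)a-(m+a)=b_1a-m=\delta_2$, and since the $\tilde\delta$ and $\delta$ recurrences agree on $2\le i\le r$, induction yields $\tilde\delta_i=\delta_i$ for $1\le i\le r$. For $i=r+1$ I would apply the shared recurrence at index $r$ (where $b_r'=b_r$) to get $\tilde\delta_{r+1}=b_r\delta_r-\delta_{r-1}=\delta_{r+1}=m$, and then $\tilde\delta_{r+2}=2\tilde\delta_{r+1}-\tilde\delta_r=2m-(m-a)=m+a=\delta_1+\delta_{r+1}$.

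The main obstacle is the correct identification of the boundary data $\delta_1=a$, $\delta_r=m-a$, and $\tilde\delta_0=m+a$, which requires invoking both the primitivity of the first and last lattice vectors of each Wahl chain and the augmentation rule relating them; once that is established, everything else is mechanical bookkeeping on a linear recurrence.
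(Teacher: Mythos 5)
Your argument is correct. Note that the paper offers no proof of this lemma at all -- it is quoted verbatim as Lemma~3.4 of Stevens [S89] -- so there is no in-paper argument to compare against; your route (the three-term recurrence $\delta_{i-1}+\delta_{i+1}=b_i\delta_i$ coming from $v_{i-1}+v_{i+1}=b_iv_i$, the boundary values $\delta_0=\delta_{r+1}=m$, $\delta_1=a$, $\delta_r=m-a$, and a one-step induction) is exactly the standard one and all the computations check out. One small point worth making explicit: as printed, the fraction $\frac{(a+m)^2}{(a+m)m-1}$ is \emph{not} $[b_1+1,\ldots,b_r,2]$ but its reverse $[2,b_r,\ldots,b_1+1]$; the chain $[b_1+1,\ldots,b_r,2]$ equals $\frac{(a+m)^2}{(a+m)a-1}$ (new index $m+a$, new ``$a$'' equal to the old $a$). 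Your computation $\tilde\delta_1=a$ tacitly adopts this latter, correct reading -- had you taken the displayed fraction literally together with the displayed chain you would have gotten $\tilde\delta_1=m$ and the induction would not close -- so you should state the augmentation identity $[b_1+1,\ldots,b_r,2]=\frac{(m+a)^2}{(m+a)a-1}$ (and prove it by the $\SL(2,\bZ)$ factorization you mention) rather than leave it implicit.
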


This lemma gives a way to compute $m,a$ for continued fractions of Wahl
singularities from the inductive procedure (cf. \cite{KSB88}, Proposition 3.11) $[e_1,\ldots,e_r] \to
[e_1+1,\ldots,e_r,2]$ or $[2,e_1,\ldots,e_r+1]$, starting with
$[4]$. For $[4]$ one has $m=2, a=1$; here $\delta_1=1$,
$\delta_0=\delta_2=2$. Thus for $[5,2]$ we have $m=1+2=3,
a=1+2-2=1$, and so on. Therefore, the $\delta_i$'s can be used to
compute $m, a$ from the continued fraction of $\frac{m^2}{ma+1}$.
Start with $[2,2,2]$ and assign $1$ for the first $2$
and $1$ for the last $2$. For the new continued fraction we assign
to the new $2$ the addition of the extremal previous assignations.
To obtain $m$ from $[e_1,\ldots,e_r]$, we just add the numbers
assigned to $e_1$ and to $e_r$. For $[2,2,2]$ we have
$m=1+1=2$. For $[3,2,2,2]$ we have $m=1+2=3$, for
$[2,3,2,2,3]$ $m=3+2=5$, for $[3,3,2,2,3,2]$
$m=3+5=8$, etc.

The continued fraction $$\frac{x}{y}= b_1 + \frac{1}{b_2 +
\frac{1}{ \ddots + \frac{1}{b_s}}},$$ where $b_i>0$ for all $i$,
will be denoted by $\langle b_1,\ldots,b_s \rangle$. We have

$$\begin{pmatrix} x& *\cr y& * \cr\end{pmatrix}=\begin{pmatrix} b_1& 1\cr 1& 0\cr\end{pmatrix}
\begin{pmatrix} b_2& 1\cr 1& 0\cr\end{pmatrix} \ldots
\begin{pmatrix} b_s& 1\cr 1& 0\cr\end{pmatrix}.$$

\begin{lemma}
Consider a sequence $\aaa= \{a_0,\ldots, a_r \}$ of type (A) as above. We recall that
for $p_2, q_2$ we have indices $p'_2, q'_2$ and an integer $0 \leq
k < s-1$. Then $$\frac{\delta_1}{*}=\langle x_l -1, \ldots, x_1,
x_{-1}, x_0-1 \rangle \ \ \ \  \text{and}
$$ $$\frac{\delta_2}{*}=\begin{cases} \langle x_{k+l} -1, \ldots,
x_{k+1}, x_{k-1}, x_k-1 \rangle & \text{if} \ p'_2=p_2-1
 \cr \langle x_{k-1+l} -1, \ldots, x_{k-2},
x_{k}, x_{k-1}-1 \rangle & \text{if} \ p'_2=p_2+1 \end{cases}.$$
\label{deltalemma}
\end{lemma}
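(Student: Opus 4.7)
The plan is to read off the block structure of each arc $a_{p_j+1},\ldots,a_{q_j-1}$ in terms of the big entries $\alpha_i$ and the intervening runs of $2$'s, substitute the expressions for the $\alpha_i$ from \S\ref{2extremalnbhd}, and then apply a single identity comparing a Hirzebruch--Jung continued fraction of this shape to an ordinary continued fraction.

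Placing $\alpha_0$ at position $0$, the big entry $\alpha_i$ sits at position $x_0+x_1+\cdots+x_{i-1}$, with $(x_i-1)$ twos between $\alpha_i$ and $\alpha_{i+1}$. For the first triangulation the $1$-vertices $p_1=1$ and $q_1$ are the neighbors of the two ``extremal'' positions singled out by the recursion of \S\ref{2extremalnbhd} (in the odd case $s=2l+1$ these are the positions of $\alpha_0$ and $\alpha_{l+1}$), so $q_1 = x_0+x_1+\cdots+x_l -1$ and the arc $a_2,\ldots,a_{q_1-1}$ takes the form
$$\underbrace{2,\ldots,2}_{x_0-2},\ \alpha_1,\ \underbrace{2,\ldots,2}_{x_1-1},\ \alpha_2,\ \ldots,\ \alpha_l,\ \underbrace{2,\ldots,2}_{x_l-2},$$
with $\alpha_i = x_{-i}+2$ for $1\le i\le l$. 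The second triangulation yields an arc of the same shape with the indices cyclically shifted by the integer $k$ of \S\ref{2extremalnbhd}, the two subcases $p'_2 = p_2\pm 1$ corresponding to the two choices of shift direction around $\alpha_k$.

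The key combinatorial input is the identity, proved by induction on $t$ using the $2\times 2$ matrix representations $N(b) = \left(\begin{smallmatrix} b & -1 \\ 1 & 0 \end{smallmatrix}\right)$ and $M(c) = \left(\begin{smallmatrix} c & 1 \\ 1 & 0 \end{smallmatrix}\right)$, that the Hirzebruch--Jung continued fraction
$$[\underbrace{2,\ldots,2}_{a_0},\,n_1+2,\,\underbrace{2,\ldots,2}_{a_1},\,n_2+2,\,\ldots,\,n_t+2,\,\underbrace{2,\ldots,2}_{a_t}]$$
and the ordinary continued fraction $\langle a_0+1,\,n_1,\,a_1+1,\,n_2,\,\ldots,\,n_t,\,a_t+1\rangle$ have the same numerator (though different denominators). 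The base case $t=1$ is the direct check that both numerators equal $(a_0+1)(a_1+1)n_1 + a_0+a_1+2$; the inductive step expands one additional block and tracks how the top-left entry of the matrix product transforms under multiplication by $N(n+2)\,N(2)^a$ versus $M(n)\,M(a+1)$.

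Applying this identity to the arc above with $(a_0,a_1,\ldots,a_l) = (x_0-2,\,x_1-1,\,\ldots,\,x_{l-1}-1,\,x_l-2)$ and $n_i = x_{-i}$ yields $\delta_1/\ast = \langle x_0-1,\,x_{-1},\,x_1,\,x_{-2},\,x_2,\,\ldots,\,x_{-l},\,x_l-1\rangle$, which has the same numerator as the claimed $\langle x_l-1,\,x_{-l},\,x_{l-1},\,x_{-(l-1)},\,\ldots,\,x_1,\,x_{-1},\,x_0-1\rangle$ since reversal preserves the numerator of an ordinary continued fraction. The two formulas for $\delta_2$ follow by the same argument after the cyclic shift in indices. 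The main subtlety is the bookkeeping of the first step: correctly locating $p_j$ and $q_j$ relative to the $\alpha_i$'s so that the boundary runs of $2$'s come out of length $x_0-2$ and $x_l-2$ (rather than $x_0-1$ and $x_l-1$), and distinguishing the subcases $p'_2 = p_2 \pm 1$ by which side of $\alpha_k$ is shortened by one position.
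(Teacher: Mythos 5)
Your route is genuinely different from the paper's. The paper never expands the defining arc $[a_{p_1+1},\ldots,a_{q_1-1}]$ at all: it reads $\delta_1$ off the \emph{complementary} arc $[a_{q_1+1},\ldots,a_r,a_0,\ldots,a_{q_1-1}]=\delta_1^2/(\delta_1\lambda_1+1)$, which is built by the Wahl-type recursion, and tracks Stevens' assigned multiplicities (Lemma~\ref{deltalemma} is deduced from the preceding lemma); the resulting recursion $n_i=x_{\bullet}n_{i-1}+n_{i-2}$ is then recognized as the numerator recursion of $\langle\cdots\rangle$. You instead compute $\delta_1$ directly from its definition as the numerator of the Hirzebruch--Jung fraction of the arc, via a numerator-preserving conversion to an ordinary continued fraction. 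Your key identity is correct (the base case and the matrix induction both check out), and the final reversal step is legitimate since the numerator is the $(1,1)$ entry of a product of symmetric matrices. This is a more self-contained argument than the paper's.

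There is, however, a concrete gap: your location of $q_1$ and the block decomposition of the arc are valid only for odd $s=2l+1$. For even $s=2l$ the corner entries are $\alpha_0=x_0$ and $\alpha_l=x_l$ (not $\alpha_{l+1}$), the vertex $q_1$ sits at $\mathrm{pos}(\alpha_l)+1$ rather than at $x_0+\cdots+x_l-1$, the arc terminates \emph{at} $\alpha_l$ with no trailing run of $2$'s, and $\alpha_l=x_l$ is not of the form $x_{-l}+2$ (indeed $x_{-l}=x_l$ in $\bZ_{2l}$). The paper's second example makes this explicit: for $\{3,\overline 2,2,3,2,2,3,5,3,\overline 2,\ldots\}$ one has $q_1=9=\mathrm{pos}(\alpha_4)+1$ while $x_0+\cdots+x_4-1=10$, and the arc $a_2,\ldots,a_8=2,3,2,2,3,5,3$ ends at $\alpha_4$. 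Your formula would output a continued fraction with $s+1$ entries where the lemma has $s$. The repair is routine (take the last block to be $n_l+2=\alpha_l$ with $a_l=0$, giving $\langle\ldots,x_{l-1},x_l-2,1\rangle=\langle\ldots,x_{l-1},x_l-1\rangle$, consistent with the claimed answer since the recursion's final step replaces $x_{-l}=x_l$ by $x_l-1$), but it must be carried out. The same endpoint care is needed for the two $\delta_2$ subcases: in the counter-clockwise case $p'_2=p_2+1$ the arc \emph{begins} at the big entry $\alpha_k$ rather than inside a run of $2$'s, so "the same argument after the cyclic shift" does not yet discharge those cases as written.
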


\begin{proof}
The proof uses the inductive way to compute $\delta_1$ for the
continued fraction of $\frac{\delta_i^2}{\delta_i \lambda_i +1}$,
and the numbers $\{x_0,\ldots,x_{s-1} \}$ defined before. The role
of $x_0$ is taken by $x_k$ (if $p'_2=p_2-1$) or $x_{k-1}$ (if
$p'_2=p_2+1$) to compute $\delta_2$. We start with $[2,\overline
2,2]$ and use $x_0$ to obtain $[\alpha_0,\overline 2,2,\ldots,2]$.
Then use $x_{-1}$ to obtain $[2,\ldots,2,\alpha_0,\overline
2,2,\ldots,2,\alpha_1]$, and continue. During this process we keep
track of the assigned multiplicities explained above. This defines
the recursion $n_{-1}=1$, $n_0=x_0-1$,
$$n_i=\begin{cases} x_{\frac{-i-1}{2}}n_{i-1} + n_{i-2} & i= \text{odd} \cr x_{\frac{i}{2}} n_{i-1} + n_{i-2} & i= \text{even}\end{cases}$$
for $1\leq i \leq s-2$, and $\delta_1=n_{s-1}= (x_l -1)n_{s-2} +
n_{s-3}$. In this way, $$\frac{n_{s-1}}{n_{s-2}} = \langle x_l -1,
\ldots, x_1, x_{-1}, x_0-1 \rangle.$$
\end{proof}

We are going to compare the $\delta_i$'s, but before another
lemma.

\begin{lemma}\label{magic}
Let $X=\begin{pmatrix} a& b\cr c& d\cr\end{pmatrix}$ be
a matrix such that $\det X=1$. Define matrices
$$R_1=\begin{pmatrix} 1& -1\cr 1& 0\cr\end{pmatrix}\quad\hbox{\rm and}\quad
R_{-1}=\begin{pmatrix} 1& 1\cr -1& 0\cr\end{pmatrix}.$$
Consider an arbitrary sequence $\underline
i=\{i_1,\ldots,i_{q}\}$, where $i_\alpha=\pm1$, and the matrix
$$R_{\underline i}:=XR_{i_1}X^tR_{i_2}XR_{i_3}X^t\ldots $$
(which ends with $X$ or $X^t$ depending on parity of $q$). Then
\begin{enumerate}
\item $R_{\underline i}=\begin{pmatrix} p_{\underline i}(a)&
*\cr *& *\cr\end{pmatrix}$, where $p_{\underline i}(a)$ is a
polynomial which depends only on~$a$. \item If $q$ is even, the
left-upper corner of $R_{\underline i}$ is equal to the left-upper
corner of $R_{\underline j}$, where $\underline
j=\{j_1,\ldots,j_k\}$ is a sequence given by $j_\alpha=-i_\alpha$
for any~$\alpha$.
\end{enumerate}
\end{lemma}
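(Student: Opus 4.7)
The plan is to exhibit a symplectic invariance of $R_1$ and $R_{-1}$ and use it to reduce to the case when $X$ is diagonal. First I would rewrite $R_i=e_1e_1^t + iJ$ where $J=\left(\begin{smallmatrix}0&-1\\1&0\end{smallmatrix}\right)$ is the standard symplectic form; then for any upper unipotent $g=\left(\begin{smallmatrix}1&t\\0&1\end{smallmatrix}\right)$ one has $ge_1=e_1$ and $gJg^t=(\det g)J=J$, hence
\[
gR_ig^t=(ge_1)(ge_1)^t+iJ=R_i.
\]
Dually, $h^tR_ih=R_i$ for any lower unipotent $h$.

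These identities let me follow the substitutions $X\mapsto Xg$ and $X\mapsto hX$ through the product $R_{\underline i}=XR_{i_1}X^tR_{i_2}XR_{i_3}X^t\cdots$. Every $R_{i_k}$ with $k$ odd sits between an $X$ and an $X^t$, so $XgR_{i_k}g^tX^t$ collapses to $XR_{i_k}X^t$; every $R_{i_k}$ with $k$ even sits between an $X^t$ and an $X$, so $X^th^tR_{i_k}hX$ collapses to $X^tR_{i_k}X$. Regrouping then gives
\[
R_{\underline i}(Xg)=\begin{cases}R_{\underline i}(X)&q\text{ odd,}\\ R_{\underline i}(X)\cdot g&q\text{ even,}\end{cases}\qquad R_{\underline i}(hX)=\begin{cases}hR_{\underline i}(X)h^t&q\text{ odd,}\\ hR_{\underline i}(X)&q\text{ even,}\end{cases}
\]
where the trailing $g$ or leading $h$ in the even cases comes from the one $X$ that does not belong to any absorbing sandwich. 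In each case the $(1,1)$-entry is unchanged, since $ge_1=e_1$ and $h^te_1=e_1$.

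For part~(1), when $a\ne 0$ I would pick $h$ lower unipotent with $(2,1)$-entry $-c/a$ to kill $c$, then pick an appropriate upper unipotent $g$ to kill the new $(1,2)$-entry, reducing $X$ to the diagonal matrix $D(a)=\left(\begin{smallmatrix}a&0\\0&1/a\end{smallmatrix}\right)$. By the invariance above, $(R_{\underline i}(X))_{11}=(R_{\underline i}(D(a)))_{11}$, and a direct telescoping computation based on $D(a)R_iD(a)=\left(\begin{smallmatrix}a^2&-i\\i&0\end{smallmatrix}\right)$ exhibits the latter as a polynomial $p_{\underline i}(a)$ in $a$ alone. Since $(R_{\underline i})_{11}$ is a polynomial in the entries of $X$ and the identity $(R_{\underline i})_{11}=p_{\underline i}(a)$ holds on the dense open $\{a\ne 0\}$ of the irreducible variety $\{\det X=1\}$, it extends globally.

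Part~(2) I would deduce from conjugation by $P=\left(\begin{smallmatrix}1&0\\0&-1\end{smallmatrix}\right)$: since $PR_iP=R_{-i}$ one gets $PR_{\underline{-i}}(X)P=R_{\underline i}(PXP)$, and taking $(1,1)$-entries (invariant under conjugation by $P$) together with $(PXP)_{11}=a$, part~(1) forces the two to agree. The parity hypothesis is not actually needed here. The main delicacy in the whole plan is the case analysis in step two: verifying that the leftover conjugating factors at either end of $R_{\underline i}(Xg)$ and $R_{\underline i}(hX)$ always fall outside the top-left entry, irrespective of the parity of $q$.
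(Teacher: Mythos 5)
Your proof is correct, but it takes a genuinely different route from the paper's. For part (1) the paper argues by induction on $q$, carrying the strengthened hypothesis that the first row of $R_{\underline i}$ has the form $(af_{\underline i}(a),\ cf_{\underline i}(a)+g_{\underline i}(a))$ or $(af_{\underline i}(a),\ bf_{\underline i}(a)+g_{\underline i}(a))$ according to the parity of $q$; for part (2) it first uses part (1) to reduce to symmetric $X$, observes that transposition replaces $\underline i$ by the reversed negated sequence, and then specializes to $X=\bigl(\begin{smallmatrix} a& -1\\ 1& 0\end{smallmatrix}\bigr)$ so that the upper-left entry becomes the numerator of a Hirzebruch--Jung continued fraction $(a,i_1,-a,i_2,\ldots,a)$, which is palindrome-invariant. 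You instead exploit the invariance $gR_ig^t=R_i$ for upper unipotent $g$ (equivalently $R_i=e_1e_1^t+iJ$ with $gJg^t=\det(g)J$), which lets you slide unipotent factors through the product and reduce to $X=\mathrm{diag}(a,1/a)$, finishing with Zariski density of $\{a\ne 0\}$ in $\SL_2$; and your part (2) is the clean one-line conjugation $PR_iP=R_{-i}$ with $P=\mathrm{diag}(1,-1)$, which indeed needs no parity hypothesis and no detour through continued fractions. All the identities you use check out (including the bookkeeping of the one unabsorbed $g$ or $h$ at the end/beginning in the even case, which never touches the $(1,1)$ entry since $ge_1=e_1$ and $e_1^th=e_1^t$, and the fact that the telescoped product $\bigl(DR_{i_1}D\bigr)R_{i_2}\bigl(DR_{i_3}D\bigr)\cdots$ has polynomial, not merely Laurent-polynomial, upper-left entry). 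The trade-off: your argument is more structural and proves slightly more in (2), while the paper's induction yields the explicit shape of the whole first row and its continued-fraction reading, which fits the combinatorial flavor of the surrounding section.
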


\begin{proof}
The first statement follows by induction on $q$, where we can
show, more precisely, that
$$R_{\underline i}=\begin{pmatrix} af_{\underline i}(a)& cf_{\underline i}(a)+g_{\underline i}(a)\cr *& *\cr\end{pmatrix}$$
if $q$ is odd and
$$R_{\underline i}=\begin{pmatrix} af_{\underline i}(a)& bf_{\underline i}(a)+g_{\underline i}(a)\cr *& *\cr\end{pmatrix}$$
if $q$ is even, where $f_{\underline i}(a)$ and $g_{\underline
i}(a)$ are polynomials in $a$ only.

The second statement of the lemma follows from the first. Indeed,
in order to compute $p_{\underline i}(a)$ we can assume that
$X=X^t$ is a symmetric matrix of determinant $1$. Then
$R_{\underline i}=R_{\underline k}^t$ (and in particular their
upper-left entries are the same), where $\underline k$ is a
sequence obtained from $\underline j$ by reversing the order. On
the other hand, we can also assume that $X=\begin{pmatrix} a&
-1\cr 1& 0\cr\end{pmatrix}$. Then $p_{\underline i}(a)$ is
equal (up to sign) to the numerator of the Hirzebruch--Jung
continued fraction with entries $(a,i_1,-a,i_2,\ldots,a)$. This
numerator does not change if we reverse the order.
\end{proof}

We will show $\delta_1=\delta_2$ case by case. Some notation: a
sequence $\{i_0,\ldots\}$ is a sequence of $\pm1$,
$\underline{z}=\{z_1,\ldots,z_w\}$ is a sequence of integers, and
$\underline{\bar z}$ is the same sequence of integers but in
reversed order.

\textbf{Case I.} Assume $s=2l$ and $p'_2=p_2+1$, i.e. the
counter-clockwise case. Here we have $\Gamma=\langle 2k-1
\rangle=\Z_{s}$. One can check that for all $d$, $x_{-d} =
x_{k-1+d}$. By the formulas of Lemma \ref{deltalemma}, we have
$\delta_1=\delta_2$.

\textbf{Case II.} Assume $s=2l$ and $p'_2=p_2-1$, i.e. the
clockwise case. We know that either $-k$ or $l-k$ belongs to
$\Gamma$. Assume that $-k \in \Gamma$. We also redefine
$x_0:=x_0-1$ and $x_l=x_l-1$. Then, the transposed formulas of
Lemma \ref{deltalemma} are $$\frac{\delta_1}{*}=\langle x_0,
\underline{z}, x_l+i_0,x_l-i_0,\underline{\bar
z},x_0+i_1,x_0-i_1,\underline{z}, x_l+i_2,x_l-i_2,\underline{\bar
z},\ldots,x_l \rangle
$$ and $$\frac{\delta_2}{*}= \langle x_0, \underline{z}, x_l-i_0,x_l+i_0,
\underline{\bar z},x_0-i_1,x_0+i_1,\underline{z},
x_l-i_2,x_l+i_2,\underline{\bar z},\ldots,x_l \rangle $$ for some
$\{i_0,\ldots \}$ and $\underline z$. But we know that
$\delta_i$'s are computed via the upper-left corner of the
multiplication of the corresponding $2 \times 2$ matrices, and if
$$X=\begin{pmatrix} x_0& 1\cr 1& 0\cr\end{pmatrix} \begin{pmatrix} z_1& 1\cr 1& 0\cr\end{pmatrix} \ldots
\begin{pmatrix} z_w& 1\cr 1&
0\cr\end{pmatrix} \begin{pmatrix} x_l& 1\cr 1&
0\cr\end{pmatrix}$$ we can use Lemma \ref{magic} to prove
$\delta_1=\delta_2$. We notice that
$$\begin{pmatrix} x_l+i_0& 1\cr 1& 0\cr\end{pmatrix} \begin{pmatrix} x_l-i_0& 1\cr 1& 0\cr\end{pmatrix}=
\begin{pmatrix} x_l& 1\cr 1& 0\cr\end{pmatrix}
\begin{pmatrix} 1& -i_0\cr i_0& 0\cr\end{pmatrix}
\begin{pmatrix} x_l& 1\cr 1& 0\cr\end{pmatrix}.$$

\textbf{Case III.} Assume $s=2l$, $p'_2=p_2-1$, and that $l-k \in
\Gamma$. We also redefine $x_0:=x_0-1$ and $x_l=x_l-1$. Then, the
transposed formulas of Lemma \ref{deltalemma} are
$$\frac{\delta_1}{*}=\langle x_0, \underline{z},
x_l+i_0,x_l-i_0,\underline{\bar z},x_0+i_1,x_0-i_1,\underline{z},
x_l+i_2,x_l-i_2,\underline{\bar z},\ldots,x_l \rangle
$$ and $$\frac{\delta_2}{*}= \langle x_l, \underline{\bar z}, x_0-i_0,x_0+i_0,
\underline{z},x_l-i_1,x_l+i_1,\underline{\bar z},
x_0-i_2,x_0+i_2,\underline{z},\ldots,x_0 \rangle $$ for some
$\{i_0,\ldots \}$ and $\underline z$. So we do the same as in Case
II, noticing that by Lemma \ref{magic} the use of $X$ or $X^t$
does not change the $\delta_i$.

\textbf{Case IV.} Assume $s=2l+1$, $k \neq l$ or $l+1$, and that
$l \in \Gamma$. Then $\Gamma=\Z_s$. In this case $p'_2=p_2-1$. One
can verify that $x_d=x_{k+l-d}$ for all $d$, and so by Lemma
\ref{deltalemma} if $\frac{\delta_1}{*}=\langle \underline z
\rangle$ then $\frac{\delta_2}{*}= \langle \underline{ \bar z}
\rangle$. This gives $\delta_1=\delta_2$.

\textbf{Case V.} Assume $s=2l+1$, $k \neq l$ or $l+1$, and that
$l$ is not in $\Gamma$. Here $p'_2=p_2-1$. This case can be
treated as case II.

\textbf{Case VI.} Assume $s=2l+1$, and that $k=l$. Then via Lemma
\ref{deltalemma} we have $\frac{\delta_1}{*}=\langle
x_l-1,x_0,\ldots,x_0,x_0-1 \rangle$ and
$\frac{\delta_2}{*}=\langle x_0-1,x_0,\ldots,x_0,x_l-1 \rangle$.
In the case $k=l+1$, we would have $\frac{\delta_1}{*}=\langle
x_0-1,x_0,x_0-2,\ldots,x_0-2,x_0-1 \rangle$ and
$\frac{\delta_2}{*}=\langle x_0-1,x_0-2,\ldots,x_0-2,x_0,x_0-1
\rangle$.

Therefore, in any case, $\delta_1=\delta_2$.

\section{Applications to moduli spaces of stable surfaces} \label{s4}

Let $\cM_{K^2,\chi}$ be the Gieseker's moduli space of canonical
surfaces of general type with given numerical invariants $K^2$ and
$\chi$. Let $\overline{\cM}_{K^2,\chi}$ be its compactification,
the moduli space of stable surfaces of Koll\'ar, Shepherd-Barron,
and Alexeev. Given a map $\D^{\times}\to\cM_{K^2,\chi}$ from a punctured
smooth curve germ, i.e.~ a family $\X^{\times}\to\D^{\times}$ of canonical surfaces,
one is interested in computing its ``stable limit''
$\D\to\overline \cM_{K^2,\chi}$, perhaps after a finite base
change. We would like to make more explicit a well-known algorithm
for computing a stable limit under an extra assumption that the
family $\X^{\times}\to\D^{\times}$ can be compactified by a flat family
$\X\to\D$, where the special fiber $X$ is irreducible, normal, and
has quotient (=log terminal) singularities. We will show that the stable limit can be found using only flips and divisorial contractions from the extremal neighborhoods of type $k1A$ and $k2A$ of the previous sections.

\begin{remark}
We are going to perform various operations on families
(e.g.~simultaneous partial resolutions) which make it necessary to
assume that the total space of the family is an algebraic space
(or an analytic space) rather than a scheme, even if the total
space of the original family is a scheme. However, our main
applications are to moduli of surfaces of geometric genus~$0$, in
which case any flat family is projective, and therefore its total
space is a scheme. So the reader can ignore this technicality.
\end{remark}

The next lemma shows that we can reduce our analysis to the case when $\X\to\D$ is a
$\Q$-Gorenstein smoothing and the special fiber has Wahl singularities only.
This is a global version of the M-resolution \cite{BC94}
of Behnke and Christophersen.

\begin{lemma}
Let $\cX\to\D$ be a flat family of irreducible reduced projective
surfaces such that the special fiber has quotient (=log-terminal)
singularities. There exists a finite morphism $\D' \to \D$, an
analytic space~$\X'$, and a proper birational map
$F:\,\X'\to\X\times_\D \D'$ such that
\begin{enumerate}
\item The special fiber $X'$ is irreducible and has Wahl singularities.
\item $\X'$ has terminal singularities.
\item $F$ induces a minimal resolution of general fibers $X'_s\to X_s$.
\item $K_{\cX'}$ is $F$-nef.
\end{enumerate}
\end{lemma}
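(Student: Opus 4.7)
The plan is to construct $\X'$ by two successive modifications after finite base changes of $\D$: first a simultaneous resolution of the singularities on the general fibers of $\X\to\D$, and then an $M$-resolution of the remaining log-terminal singularities on the central fiber, extended to the total space via $\bQ$-Gorenstein deformation theory.

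\emph{Stage 1.} For $0<|s|\ll 1$ the surface $X_s$ has only Du Val singularities (being canonical), which trace out one-dimensional singular loci in $\X$ meeting the central fiber at certain singular points of $X$. By the simultaneous resolution theorem \cite[Th.~4.28]{KM98}, after a finite base change $\D_1\to\D$ there is a proper birational map $\X_1\to\X\times_\D\D_1$ whose general fibers $X_{1,s}\to X_s$ are the minimal resolutions. The central fiber $X_1\to X$ is a partial resolution of $X$ obtained by resolving exactly those singular points of $X$ which arise as limits of nearby Du Val points; the remaining singularities of $X_1$ are the log-terminal quotient singularities of $X$ not accounted for in this way.

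\emph{Stage 2.} At each remaining quotient singularity of $X_1$ take the $M$-resolution of Behnke--Christophersen \cite{BC94}; these glue to a partial resolution $\mu\colon\tilde X_1\to X_1$ with only Wahl singularities and with $K_{\tilde X_1}$ $\mu$-nef. The versal $\bQ$-Gorenstein deformation of each Wahl singularity $\frac{1}{m^2}(1,ma-1)$ is the one-parameter family $\xi\eta=\zeta^m+t$, so the versal $\bQ$-Gorenstein deformation space of $\tilde X_1$ is smooth, and its image under the natural map to $\Def(X_1)$ is an irreducible component (the smoothing component associated to the chosen $M$-resolution). After a further finite base change $\D_2\to\D_1$, the classifying map $\D_1\to\Def(X_1)$ of the family $\X_1\to\D_1$ factors through this component, yielding the desired $F\colon\X'\to\X\times_\D\D_2$ with $X'=\tilde X_1$ and with $F$ an isomorphism on the generic fiber.

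Properties (1)--(4) then follow by direct verification: (1) is immediate from the construction; (2) follows since a one-parameter $\bQ$-Gorenstein smoothing of a surface with only Wahl singularities has terminal total space \cite[Cor.~3.6]{KSB88}, and $\X'$ is smooth along its general fibers by Stage 1; (3) holds because Stage 1 yields minimal resolutions of general fibers and Stage 2 is an isomorphism over the generic point of $\D_2$; (4) follows by combining the $\mu$-nefness of $K_{\tilde X_1}$ on the central fiber with the observation that the remaining $F$-contracted curves are the $(-2)$-curves introduced by Stage 1 (hence $K$-trivial by adjunction), together with an upper-semicontinuity argument. The principal obstacle is Stage 2, namely establishing that the given deformation $\X_1\to\D_1$ lifts, after finite base change, to a $\bQ$-Gorenstein deformation of the $M$-resolution $\tilde X_1$; this amounts to identifying the correct smoothing component of $\Def(X_1)$ at each quotient singularity and verifying that the given classifying map lands on it after base change.
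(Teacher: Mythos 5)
Your two-stage plan has the right ingredients in view (a KSB-type reduction plus the Behnke--Christophersen $M$-resolution), but both stages have genuine gaps. Stage 1 does not apply as stated: the simultaneous resolution theorem \cite[Th.~4.28]{KM98} requires \emph{every} fiber, including the special one, to have only Du Val singularities, whereas here the curves of Du Val points on the nearby fibers $X_s$ may well limit into the non-Du Val quotient singularities of $X$. So you cannot first resolve the general fibers and only afterwards confront the quotient singularities of the special fiber --- the two problems are entangled at exactly those points. The paper's proof uses \cite[Th.~3.5]{KSB88} instead, which handles everything in one step: after a finite base change there is a modification $\X'\to\X\times_\D\D'$ with terminal total space, $K_{\X'}$ nef over $\X$, minimal resolution on general fibers, and special fiber having only T-singularities (locally over $\X$ this is a P-resolution). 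That theorem is proved by resolving and running a relative MMP, which is how one gets around the entanglement you would face in Stage 1.

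Stage 2 has a second gap, which you flag but do not close. First, no finite base change can move the classifying map into a different irreducible component of $\Def(X_1)$; the $M$-resolution must be the one attached to the component (equivalently, the P-resolution) that the \emph{given} local deformation already determines at each singular point --- this is a choice forced on you, not one you get to make and then adjust by base change. Second, even granting the correct local choices, lifting the family to a global $\bQ$-Gorenstein deformation of $\tilde X_1$ requires controlling a local-to-global obstruction, which you do not address. The paper sidesteps both issues by working with the threefold rather than with deformation spaces: by \cite[\S2]{BC94}, each singular point of $\X'$ has an analytic neighborhood $U$ admitting a proper \emph{crepant} map $V\to U$ (a crepant $M$-resolution of the one-parameter family) whose special fiber has only Wahl singularities, and these local analytic modifications glue. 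Crepancy also gives property (4) immediately from the $F$-nefness of $K$ already supplied by \cite[Th.~3.5]{KSB88}, with no semicontinuity argument needed. If you want to keep a deformation-theoretic Stage 2, you must at minimum (i) fix the component determined by the given family at each singularity and (ii) prove the global lifting statement, neither of which is routine.
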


\begin{proof}
By \cite{KSB88}, Theorem 3.5, we can choose $\cX'$ such that all properties above are satisfied except that $X'$ is allowed to have arbitrary T-singularities (i.e.~$\Q$-Gorenstein smoothable quotient singularities). Notice that locally on $\cX$ this is nothing but a P-resolution.
By \cite{BC94}, any singular point of $\cX'$ has an analytic neighborhood $U$
and a proper crepant map $V\to U$ (a crepant M-resolution) which has the properties required in the Lemma, in particular the special fiber has Wahl singularities only. Gluing these resolutions gives the required analytic space.
\end{proof}

\begin{theorem}\label{t3}
Assume we have a $\Q$-Gorenstein smoothing $(X \subset \X) \to (0 \in
\D)$ of a projective surface $X$ with only Wahl singularities.
Suppose some positive multiple of $K_X$ is effective. Then, after a finite
sequence of flips and divisorial contractions from extremal neighborhoods of type $k1A$ and $k2A$ (with $b_2(X_s)=1$, as in Proposition \ref{b2=1=>normal}), we obtain a $\Q$-Gorenstein smoothing $F \colon \X'\to \D$ with $K_{\X'}$ $F$-nef.
\end{theorem}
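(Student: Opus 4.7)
The plan is to run the relative minimal model program for $\X \to \D$, while maintaining the Wahl condition on the special fiber at each step (via the preceding M-resolution lemma) and realizing successive extremal contractions as semistable $k1A$ or $k2A$ neighborhoods with $b_2(X_s)=1$ after a suitable base change. Since $X$ has only Wahl singularities, the total space $\X$ has terminal singularities (as described explicitly in \S\ref{toric}) and is analytically $\bQ$-factorial by \cite[2.2.7]{K91}, so the relative MMP over $\D$ applies. The hypothesis that some positive multiple of $K_X$ is effective implies, by upper semicontinuity, that $K_{X_s}$ is effective for $s$ near $0$, and in particular $X_s$ is not uniruled; combined with the standard termination statements for terminal $3$-fold MMP this secures termination of the relative program, yielding a model $F:\X'\to\D$ with $K_{\X'}$ $F$-nef.

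At each step I face a $K$-negative extremal contraction $f:(C\subset\X_i)\to(Q\in\Y_i)$ over $\D$. Because any contracted divisor must have positive-dimensional image in $\D$ (as $X_i$ is itself a flat fiber and cannot contract to a point in the base), $f$ is either a flipping contraction or a divisorial contraction in the sense of Section~\ref{s1}; after finitely many initial steps contracting divisors meeting the generic fiber in $(-1)$-curves, the contracted curve $C$ lies in the special fiber $X_i$. By induction on the MMP steps, I will arrange that $X_i$ has only Wahl singularities: this is preserved under flipping contractions since by Theorem~\ref{thmintro} the flipped surface $X_i^+$ is an extremal P-resolution of the corresponding cyclic quotient singularity, whose components are Wahl; after a divisorial contraction the image fiber may acquire T-singularities that are not Wahl, but those are removed by replacing $\Y_i$ with the small partial resolution supplied by the preceding M-resolution lemma.

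Once $X_i$ has only Wahl singularities, the germ $(Q\in Y_i)$ obtained by contracting $C \subset X_i$ is also cyclic quotient, so the general hyperplane section of $(Q\in\Y_i)$ is cyclic quotient, and by \cite[Cor.~3.4 and Appendix]{KM92} the contraction $f$ is semistable; it is of type $k1A$ or $k2A$ according to whether $C$ meets one or two non-trivial Wahl points of $X_i$. The condition $b_2(X_{i,s})=1$ in an analytic neighborhood of $C$ may initially fail---a singularity of $\X_i$ could be of the form $\frac{1}{\rho m^2}(1,\rho ma-1)$ with $\rho>1$---but after a finite surjective base change $\D'\to\D$ combined with a simultaneous partial resolution of the Du Val singularities of nearby fibers as in \cite[Th.~4.28]{KM98} and \cite[\S2]{BC94}, I pass to a small partial resolution on which $\rho=1$ fiberwise, and hence $b_2(X_{i,s})=1$ locally around $C$. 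The flipping or divisorial nature of the contraction is preserved under this modification.

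The principal difficulty is the compatibility and termination of these local reductions: verifying that applying the M-resolution lemma after each divisorial contraction, together with the base changes needed to enforce $b_2=1$, does not disturb either the global structure of the MMP or the termination argument, and that the Wahl condition on the special fiber is preserved throughout. Once this bookkeeping is complete, iterating Theorem~\ref{thmintro} at each step produces the desired sequence of flips and divisorial contractions of type $k1A$ and $k2A$ and terminates with a model on which $K$ is relatively nef.
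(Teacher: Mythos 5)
Your proposal runs the relative three-fold MMP for $\X\to\D$ directly and then tries to force every step to be a contraction of a curve in the special fiber. The paper instead runs the MMP \emph{guided by the surface MMP on the special fiber}, and the hinge of its proof is the observation that $K_X$ is nef if and only if $K_\X$ is $F$-nef: if $K_\X\cdot C_s<0$ for a curve in a general fiber, one may take $C_s$ to be a $(-1)$-curve, which deforms over $\D^{\times}$, and since $\X$ is $\Q$-Gorenstein the limit of this family of curves meets $K_X$ negatively. This equivalence is what guarantees that one never needs to contract anything dominating $\D$: as long as $K_X$ is not nef one performs a surface extremal contraction $X\to Y$, lifts it to an extremal neighborhood by blowing down deformations (\cite[11.4]{KM92}), and once $K_X$ is nef one stops. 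Your proposal is missing this mechanism. In its place you allow ``finitely many initial steps contracting divisors meeting the generic fiber in $(-1)$-curves'' --- but such contractions are not extremal neighborhoods of type $k1A$ or $k2A$ in the sense of the statement, so they are not permitted; and your justification that $K_{X_s}$ is effective for nearby $s$ ``by upper semicontinuity'' runs in the wrong direction (semicontinuity bounds $h^0(X_s,mK_{X_s})$ from \emph{above} by $h^0(X,mK_X)$, not from below). The hypothesis that a multiple of $K_X$ is effective is used to ensure that the surface MMP on the special fiber cannot terminate in a Mori fiber space; it gives no direct information about the general fiber.

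Two further points. First, the base changes and mid-MMP M-resolutions you insert are unnecessary, and would in any case replace $\D$ by a cover $\D'$, which the statement does not allow. Since $X$ has only Wahl singularities, Proposition \ref{b2=1=>normal} gives $b_2(X_s)=1$ from the outset, and this propagates: a flip replaces the special fiber by an extremal P-resolution, which again has only Wahl singularities, while after a divisorial contraction the drop $b_2(Y_s)=b_2(X_s)-1=0$ in the local Milnor number count forces the new singular point of the special fiber to be Wahl or smooth. Second, your assertion that contracting $C$ in a surface with cyclic quotient singularities produces a \emph{cyclic} quotient singularity $(Q\in Y)$ is precisely the nontrivial point: a priori $Q\in Y$ is only log terminal, i.e., a quotient singularity, and excluding the non-cyclic quotients is the content of Kawamata's analysis (\cite{K88}, pp.~157--159) that the paper invokes. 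As written, your argument asserts rather than establishes the semistability of the neighborhoods encountered.
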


\begin{proof}
Notice that $K_{X}$ is nef if and only if $K_\X$ is $F$-nef.
Indeed, suppose $K_\X \cdot C_s<0$ for some curve in a general
fiber. Without loss of generality we can assume that $C_s$ is a
$(-1)$-curve, and therefore deforms over $\D^{\times}$. Since $\X$ is
$\Q$-Gorenstein, $K_X$ will intersect negatively one of the
irreducible components of the central fiber of the closure of this
family of $(-1)$-curves.

To avoid issues with the fact that $\X$ in general is not a
variety, we will run a MMP on $\X$ guided by a MMP on the special
fiber. Suppose $K_X$ is not nef. Then we have an extremal
contraction $X \to Y$ of log terminal surfaces, where the
exceptional locus $C$ is an irreducible rational curve.
Singularities of $Y$ are log terminal, and therefore rational. By
blowing down deformations \cite[11.4]{KM92}, we get an extremal neighborhood
$\X\to\Y$ of either divisorial or flipping type, and in the latter
case the flip exists in the category of analytic spaces by
\cite{M88}. It remains to show that we only encounter flips (or
divisorial contractions) of types $k1A$ and $k2A$ (with $b_2(X_s)=1$). Since $Y$ has quotient
singularities, we just have to rule out the possibility of having
non cyclic quotient singularities. However, this is known by
\cite{K88}, pp.~157--159, the proof that subcase (3c) of page 154 does not occur.
\end{proof}

\subsection{Example}

Below we construct a stable surface $X$ which belongs to $\overline{\cM}_{K^2,\chi}$ with $K^2=4$ and $p_g=0$ (so $q=0$ and $\chi=1$). The surface $X$ has two Wahl singularities $P_1=\frac{1}{252^2}(1,252 \cdot 145 -1)$ and $P_2=\frac{1}{7^2}(1,7 \cdot 5 -1)$, and no local-to-global obstructions to deform (i.e. deformations of its singularities globalize to deformations of $X$). We will show that a $\Q$-Gorenstein smoothing of $X$ is a simply connected smooth projective surface of general type with the above invariants. This is the first example in the literature with those invariants and two Wahl singularities, which is the maximum possible under no local-to-global obstructions. Examples with one Wahl singularity appeared in \cite{PPS09-2}.

We consider a  $\Q$-Gorenstein deformation of $X$ which preserves the singularity $P_i$ and smooths $P_j$. Since this deformation is trivial around $P_i$, we resolve simultaneously $P_i$ to obtain a $\Q$-Gorenstein smoothing of a surface with a single Wahl singularity at $P_j$. On this deformation we explicitly run the MMP of Theorem \ref{t3}. We do it for each $i=1,2$.

Let us consider the cubic pencil $$t_0 \, (x_0^2+x_1x_2)(2x_0-x_1+x_2) + t_1 \, x_0 x_1 x_2=0$$ in $\P_{x_0,x_1,x_2}^2$ with $[t_0:t_1] \in \P_{t_1,t_2}^1$. Let $L_1=(x_0=0)$,
$L_2=(x_1=0)$, $L_3=(x_2=0)$, $B=(x_0^2+x_1 x_2=0)$, and $A=(2x_0-x_1+x_2=0)$. The curves $B$ and $A$ are tangent at $[1:1:-1]$. This pencil defines an elliptic fibration $g\colon Z \to \P_{t_0,t_1}^1$ with singular fibers $I_7, 2I_1, III$ (Kodaira's notation; Cf. \cite{BHPV04}, p. 201), after we blow-up the nine base points. Choose a nodal $I_1$ fiber and denote it by $F$, and consider the line $M$ passing through the point $r:= A \cap L_1$ and the node of $F$. The line $M$ becomes a double section of $g$.


\begin{figure}[htbp]
\includegraphics[width=10cm]{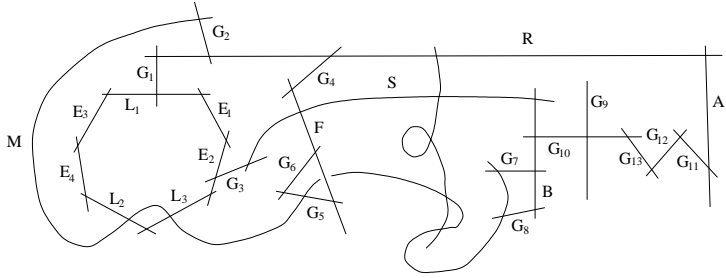}
\caption{Relevant curves in $\widetilde{Z}$} \label{f3}
\end{figure}

We blow up $13$ times $Z$ to obtain $\widetilde{Z}$. Write $\tau \colon \widetilde{Z} \to Z$ for the composition of these blow-ups. The curves $G_i$ in Figure \ref{f3} are the exceptional curves of $\tau$. The order of the blow-ups is indicated by the
subindices of the $G_i$. In Figure \ref{f3} we label all relevant
curves for the construction. The self-intersections of all of them
are in Figure \ref{f4}. Let $\sigma \colon \widetilde{Z} \to X$ be the contraction of the configurations
$$W_1 =  G_9 + G_{10} + B + S + F + G_5 + M + L_2 + E_4 +
E_3 + L_1 + E_1 + E_2$$ and $W_2 = G_{12} + G_{11} + A + R$. Let $P_i$ be the image of $W_i$, so $P_1=\frac{1}{252^2}(1,252 \cdot 145-1)$ and $P_2=\frac{1}{7^2}(1,7 \cdot 5-1)$.

\begin{figure}[htbp]
\includegraphics[width=10cm]{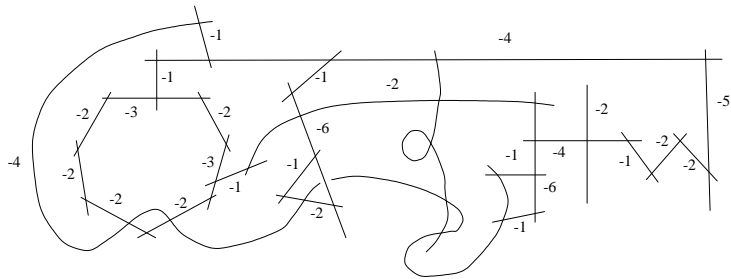}
\caption{Self-intersections of relevant curves in $\widetilde{Z}$}
\label{f4}
\end{figure}

\begin{proposition}
The surface $X$ has no local-to-global obstructions to deform. A $\Q$-Gorenstein smoothing of $X$ is a simply connected projective surface of general type with $K^2=4$ and $p_g=0$. The surface $X$ is a stable surface. \label{p4}
\end{proposition}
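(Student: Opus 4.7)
Since $Z$ is a rational elliptic surface, $K_Z^2 = 0$, and the $13$ blow-ups of the construction give $K_{\widetilde{Z}}^2 = -13$. Write $\sigma^{\ast} K_X = K_{\widetilde{Z}} - \Delta$, where $\Delta$ is a $\bQ$-divisor supported on $W_1 \cup W_2$ whose coefficients are the negatives of the discrepancies at the two Wahl singularities. The projection formula gives $\sigma^{\ast} K_X \cdot \Delta = 0$, hence $K_X^2 = K_{\widetilde{Z}}^2 - \Delta^2$. Using Lemma~\ref{toricK^2} (equivalently, the Hirzebruch--Jung expansions of $252^2/(252 \cdot 145 - 1)$ and $49/34$), one computes $-\Delta^2 = 17$ and thus $K_X^2 = 4$. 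For the holomorphic invariants, Wahl singularities are log terminal and hence rational, so $R^{i}\sigma_{\ast}\O_{\widetilde{Z}} = 0$ for $i > 0$; the Leray spectral sequence then gives $h^{i}(\O_X) = h^{i}(\O_{\widetilde{Z}})$, which vanish for $i > 0$ because $\widetilde{Z}$ is rational. Therefore $\chi(\O_X) = 1$ and $p_g(X) = q(X) = 0$.

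\textbf{Ampleness, hence stability.} Since Wahl singularities are log terminal (in particular semi-log-canonical), showing $X$ is stable reduces to showing $K_X$ is ample. With $K_X^2 = 4 > 0$ already in hand, Nakai--Moishezon asks only that $\sigma^{\ast} K_X \cdot \Gamma > 0$ for every irreducible $\Gamma \subset \widetilde{Z}$ not contained in $W_1 \cup W_2$. Writing $\sigma^{\ast} K_X = K_{\widetilde{Z}} - \Delta$ with $\Delta$ computed as above, the Mori cone of $\widetilde{Z}$ is generated by a finite list of explicit curves (the exceptional $G_i$, the strict transforms of $L_1,L_2,L_3,A,B,F,M,R,S$, and the elliptic fibers), so the verification is a bounded intersection calculation.

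\textbf{Absence of local-to-global obstructions.} The $\bQ$-Gorenstein obstruction space is $T^{2}_{\mathrm{QG}}(X)$. Each Wahl singularity has one-dimensional smooth $\bQ$-Gorenstein deformation space and vanishing local $T^{2}_{\mathrm{QG}}$, so the local-to-global spectral sequence reduces unobstructedness to $H^{2}(X, \cT^{0}_{\mathrm{QG}}(X)) = 0$. Following the template of Lemma~\ref{locallyversal} and the comparison with $T_{\widetilde{Z}}(-\log W)$ on the canonical Deligne--Mumford stack over the Wahl points, this vanishing follows from Kawamata--Viehweg vanishing on the rational surface $\widetilde{Z}$ once the log structure on $W_1 \cup W_2$ is set up correctly. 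I expect this to be the most delicate step, due to the length of the exceptional chain over the high-index point $P_1$.

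\textbf{Properties of the smoothing.} Because $\bQ$-Gorenstein deformations preserve $K^{2}$ and $\chi$, the smooth fiber $X_s$ satisfies $K_{X_s}^{2} = 4$ and $\chi(\O_{X_s}) = 1$. Ampleness being open in the Koll\'ar--Shepherd-Barron moduli problem, $K_{X_s}$ is ample for $s$ small, so $X_s$ is a minimal surface of general type with $p_g = q = 0$ and $K^2 = 4$. For simple connectedness I would decompose $X_s = (X \setminus \{P_1, P_2\}) \cup M_{P_1} \cup M_{P_2}$ with $M_{P_i}$ the Milnor fiber of the Wahl smoothing at $P_i$, and apply van Kampen. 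Since $\widetilde{Z}$ is simply connected, $\pi_1(X \setminus \{P_1, P_2\}) \cong \pi_1(\widetilde{Z} \setminus (W_1 \cup W_2))$ is generated by meridians of the chain components; these map to generators of $\pi_1(\partial M_{P_i}) = \bZ / m_i^2$, and the combined effect of the Milnor fiber relations together with the fact that each $W_i$ bounds in the simply connected $\widetilde{Z}$ trivializes them, yielding $\pi_1(X_s) = 1$.
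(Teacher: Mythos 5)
Your proposal follows the same overall template as the paper (explicit computation of $\sigma^*K_X$ on $\widetilde{Z}$ plus Nakai--Moishezon for stability; vanishing of $H^2$ of the log tangent sheaf for unobstructedness; a Milnor-fiber/van Kampen argument for $\pi_1$), but two of your steps assert conclusions that are exactly the nontrivial content and do not follow from what you wrote. First, for ampleness: the Mori cone of $\widetilde{Z}$ (a $22$-fold blow-up of $\P^2$) is \emph{not} generated by the finite list of curves you name --- there are infinitely many other $(-1)$-curves --- so "a bounded intersection calculation" against that list does not establish $\sigma^*K_X\cdot\Gamma>0$ for all $\Gamma$. The paper's device is different and essential: it exhibits $\sigma^*K_X$ as an explicit \emph{effective} $\Q$-divisor, so nefness need only be checked against the components of its support, and then observes that the support contains the full fiber $F+2G_5+3G_6+G_4$ of the elliptic fibration, forcing any curve with $\sigma^*K_X\cdot\Gamma=0$ to lie in a fiber; one then checks no such curve exists. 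Without some such argument your ampleness claim is unproved.

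Second, and more seriously, your $\pi_1$ argument is circular: the statement that "the Milnor fiber relations together with the fact that each $W_i$ bounds in the simply connected $\widetilde{Z}$ trivializes" the meridians is false as a general principle --- the Milnor fiber of the $\Q$-Gorenstein smoothing of $\frac{1}{m^2}(1,ma-1)$ has $\pi_1=\Z/m$, not $1$, so van Kampen only imposes $l_i^{m_i}=1$ on each meridian, and whether the resulting quotient of $\pi_1(\widetilde{Z}\setminus(W_1\cup W_2))$ is trivial depends on arithmetic of the specific configuration (Lee--Park-type constructions do produce non--simply-connected smoothings). The paper's proof is a genuine computation with Mumford's presentation: it derives $l_1^{98}=1$, $l_5\sim l_1^{723}$ with $723=3\cdot 241$ coprime to $7$, hence $l_1^{14}=1$, then $l_4^2=1$ combined with $l_6^{49}=1$ to kill everything. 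Your sketch contains no substitute for this. Finally, on obstructions, you correctly identify $H^2(T_{\widetilde{Z}}(-\log(W_1+W_2)))=0$ as the target, but Kawamata--Viehweg vanishing is not the mechanism; the paper bounds $h^0(\Omega^1_{Z_0}(K_{Z_0}+F+A+B+G_9+D))$ by $h^0(\Omega^1_{Z_0}(\tau_0^*C+D))=0$ via Serre duality on a partial blow-down --- a statement about global twisted log $1$-forms on a rational surface, not a vanishing theorem for big and nef divisors. As written, these three steps are gaps rather than proofs.
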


\begin{proof}
According to \cite{LP07}, Section 2, if $H^2(Z_{\widetilde{Z}}(- \log
(W_1 + W_2)))=0$, then $X$ has no local-to-global obstructions to deform. Let $\tau_0
\colon Z_0 \rightarrow Z$ be the composition of blow-ups
corresponding to $G_5$, $G_9$, and $G_{10}$. Let $C$ be the
general fiber of $g \colon Z \rightarrow \P_{t_0,t_1}^1$, and let $D=L_2+E_4+E_3+L_1+E_1+E_2$. Then
$$K_{Z_0} \sim - \tau_0^* C + G_5 + G_9 + 2 G_{10}$$ and so
$\text{dim}_{\C} H^2(T_{Z_0}(-F-A-B-G_9-D))=\text{dim}_{\C}H^0(\Omega_{Z_0}^1(K_{Z_0}+F+A+B+G_9+D))
\leq \text{dim}_{\C} H^0(\Omega_{Z_0}^1 (\tau_0^* C + D))=0$. We can follow the strategy in
\cite{LP07} to show $H^2(T_{\widetilde{Z}}(- \log
(W_1 + W_2)))=0$.

We now compute $\sigma^* K_{X}$. We have $K_{\widetilde{Z}} \sim -
\tau^* C + \sum_{i=1}^{13} G_i + G_6 + G_{10} + 2 G_{11} + 5
G_{12} + 8 G_{13}$ and $$\tau^* C \sim F + 2 G_5 + 3 G_6 + G_4
\sim B + A + 2G_9 + 4 G_{10} + 5 G_{11} + 9 G_{12} + 13 G_{13}$$
and so
$$ \sigma^* K_{X} \equiv \frac{125}{252} F + \frac{17}{36} B + \frac{5}{14} A + \frac{5}{7} R +
\frac{248}{252} S + G_1 + G_2 + G_3 + \frac{1}{2} G_4 +
\frac{250}{252} G_5 + $$ $$ \ \ \ \ \ \ \ \ \ \ \frac{1}{2} G_6 +
G_7 + G_8 + \frac{107}{252} G_9 + \frac{214}{252} G_{10} +
\frac{15}{14} G_{11} + \frac{25}{14} G_{12} + \frac{5}{2} G_{13}.
$$

Relevant intersections are $\sigma^* K_{X}.G_8=\sigma^* K_{X}.G_7=
\frac{29}{63}$, $\sigma^* K_{X}.G_{13}=\frac{17}{126}$, $\sigma^*
K_{X}.G_6=\frac{83}{84}$, $\sigma^* K_{X}.G_4=\frac{179}{252}$,
$\sigma^* K_{X}.G_3=\frac{47}{84}$, $\sigma^*
K_{X}.G_2=\frac{59}{84}$, and $\sigma^* K_{X}.G_1=\frac{85}{126}$. This says that $K_X$ is nef.
In addition, note that the support of $\sigma^* K_{X}$ above contains
$F$, $G_4$, $G_5$, and $G_6$. This is the support of a fiber, and so $\sigma^* K_{X}$ zero curves should be contained in fibers. There are no such curves. Then by Nakai-Moishezon criterion we have $K_{X}$ is ample, hence $X$ is stable. We clearly have $p_g(X)=0$ and $K_X^2=4=-13+13+4$, and so for the general fiber of the smoothing.

To compute the fundamental group of the general fiber of a $\Q$-Gorenstein smoothing of $X$, we
follow the same strategy as in \cite{LP07}, Theorem 3. Let us write $l \sim l'$ for two equivalent loops $l, l'$ in $\pi_1 (\widetilde{Z} \setminus (W_1 \cup W_2))$. We refer to \cite{Mum61} (see pages 12 and 20) for the relations we will use among loops. In particular notice that for any loop $l$ around $W_1$ we have $l^{252^2}=1$, and for any loop $l'$ around $W_2$ we have ${l'}^{7^2}=1$. The relevant loops for us are:  $l_1$ around $G_9$, $l_2$ around $G_{10}$, $l_3$ around $G_{12}$, $l_4$ around $B$, $l_5$ around $M$, and $l_6$ around $R$. Because of $G_{13}$, we have $l_3 \sim l_2^{\pm}$. In this way, $l_2^{49}=l_3^{49}=1$ in $\pi_1 (\widetilde{Z} \setminus (W_1 \cup W_2))$. By \cite{Mum61}, we have $l_2 \sim l_1^2$, and so $1=l_2^{49}=l_1^{98}$. By \cite{Mum61}, we also have $l_4 \sim l_1^7$, and so $l_4^{14}=l_1^{98}=1$. Because of $G_7$, we have the relation $l_4 \sim l_5^{\pm}$, and so $l_5^{14}=1$. But, again by \cite{Mum61}, we have $l_5 \sim l_1^{723}$. Hence, since $723=3 \cdot 241$, we obtain $l_1^{14}=1$. Therefore, the relation $l_4 \sim l_1^7$ implies $l_4^2=1$. But now we use $G_2$ to have $l_4 \sim l_6^{\pm}$, and so $l_6^2=1$. This together with $l_6^{49}=1$ gives $l_6=1$. Then $l_4=1$ and $l_5=1$, and so $l_1=1$. The fact that $l_1=l_6=1$ in $\pi_1 (\widetilde{Z} \setminus (W_1 \cup W_2))$ is enough to use directly the method in \cite{LP07}, to conclude that a $\Q$-Gorenstein smoothing of $X$ is simply connected.
\end{proof}

Consider a $\Q$-Gorenstein deformation of $X$ which preserves the singularity $P_1$ and smooths $P_2$. Let $X_1$ be the minimal resolution of $P_1 \in X$. Since this deformation is trivial around $P_1$, we resolve minimally simultaneously $P_1$ to obtain a $\Q$-Gorenstein smoothing $X_1 \subset \X_1 \to 0 \in \D$ of $X_1$. We are going to run Theorem \ref{t3} on $X_1 \subset \X_1 \to 0 \in \D$. In general, we use the following notation to save writing when running the MMP of Theorem \ref{t3}.

\begin{notation} \label{dualgraphs}
Locally, let $F \colon (C \subset X \subset
\X) \to (Q \in Y \subset \Y)$ be an extremal neighborhood of type $k1A$ or $k2A$ (with $b_2(X_s)=1$). Let
$\widetilde{X}$ be the minimal resolution of $X$. We remark that the proper transform $\widetilde{C} \subset \widetilde{X}$ of $C$ is a $(-1)$-curve. We have the following dual graph for the exceptional divisors and $\widetilde{C}$. We draw $\bullet$ for the exceptional curves and a $\ominus$ for $\widetilde{C}$, edges are the transversal intersections between curves. If $F$ is divisorial, then we draw $\bullet$ for exceptional curves in the
dual graph of the minimal resolution of $Y$. If $F$ is flipping,
we draw the dual graph of the exceptional curves in the minimal
resolution of $X^+$ (the surface corresponding to the extremal P-resolution) and the proper transform $\widetilde{C}^+$ of
$C^+$, $\bullet$ for the exceptional and a $\oplus$ for
$\widetilde{C}^+$. Numbers are minus the self-intersections of the
curves, no number means $(-1)$-curve.

\begin{figure}[htbp]
\includegraphics[width=7cm]{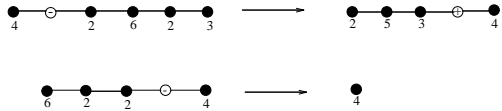}
\caption{Examples of dotted graphs for a flipping and a divisorial birational transformation of type \textit{k}2A}
\label{flipdivex}
\end{figure}

Globally, let $X$ be a projective surface with Wahl singularities, which has a $\Q$-Gorenstein smoothing $(X \subset \X) \to (0 \in \D)$. Let $\widetilde X$ be its minimal resolution. We consider the dual
graph of all exceptional divisors and some curves which may be
used when we perform flips and divisorial contractions of $\X$. At the
beginning we draw $\bullet$ for the exceptional curves and $\circ$
for the rest. One of them is a $\ominus$ which defines the first
birational operation (locally as above). The operation is
indicated with an arrow. The resulting dual graph is the minimal
resolution of the new central fiber (either $Y$ or $X^+$). When
flipping a $\oplus$ will appear, indicating the proper transform
of the flipping curve. Also, a new $\ominus$ will indicate the next
birational operation. Of course the new dotted graph depends on the computations of the previous sections, and they will be omitted.
\end{notation}

We start with the curve $C$ as in Figure \ref{f5}. This curve
defines a flipping extremal neighborhood of type $k1A$ $(C \subset X_1 \subset \X_1) \to (0 \in \D)$. (We use same letter for a curve and its proper transform.) We perform the flip and obtain $C^+ \subset X^+$. This is our new central fiber. In Figure \ref{f5} we show the action of the flip on the minimal resolutions of $X_1$ and $X^+$.

\begin{figure}[htbp]
\includegraphics[width=9cm]{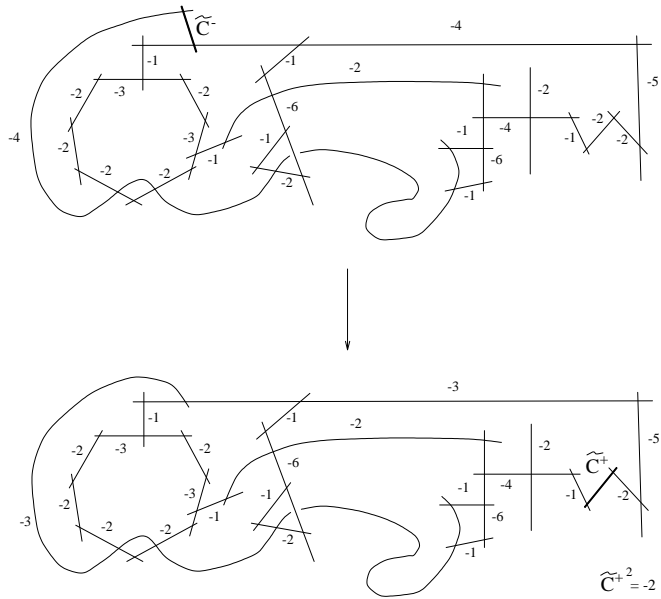}
\caption{The flip between surfaces: from $X_1$ to $X^+$} \label{f5}
\end{figure}

Figure \ref{f6} shows the dual graphs (as explained above) for a
sequence of flips and divisorial contractions starting with $X_1$. The first arrow represents the situation in Figure \ref{f5}. In order we have: two flips, two divisorial contractions and two flips. The three concurrent edges of the last graph represent a (simple) triple point.

\begin{proposition}
The general fiber of  $(X_1 \subset \X_1) \to (0 \in \D)$ is a rational surface. In this way
the singularity $P_1 \in X$ labels a boundary curve in $\overline{\cM}_{K^2,\chi}$ which generically consists of rational stable surfaces with only $\frac{1}{252^2}(1,252 \cdot 145 -1)$ as singularity. \label{p5}
\end{proposition}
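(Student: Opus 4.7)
The plan is to execute explicitly the sequence of birational transformations indicated in Figure \ref{f6} on the $\Q$-Gorenstein smoothing $\X_1 \to \D$, and to check that the terminal central fiber visibly has rational $\Q$-Gorenstein smoothings. First, verify that the curve $C$ of Figure \ref{f5} defines an extremal $k1A$ flipping neighborhood in $\X_1$: read off from the minimal resolution of $X_1$ the Hirzebruch--Jung continued fraction $m_1^2/(m_1 a_1 - 1) = [e_1, \ldots, e_r]$, locate the intersection point of the $(-1)$-curve $\tilde C$, and extract the invariants $(m_0, m_2)$ of Proposition~\ref{k1Ainvariants}. Because $K_{X_1}\cdot C = -\delta/m_1 < 0$ and the exceptional locus is one-dimensional, Theorem~\ref{thmintro} produces the flip $X_1 \dashrightarrow X_1^{(1)}$ whose minimal resolution is the second dotted graph of Figure~\ref{f6}.

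Each subsequent arrow is handled in exactly the same way: identify the $\ominus$-curve, determine the two adjacent Wahl singularities (or the single one in the $k1A$ case), compute $\delta$, and read off whether the step is a flip or a divisorial contraction from whether the contracted curve has index $>1$ singularities on both sides. Applying Theorem~\ref{thmintro} (via the explicit constructions of \S\ref{contraction}) produces the new central fiber. Carrying this out seven times along Figure~\ref{f6} gives a sequence
\[
(X_1 \subset \X_1) \dashrightarrow (X_1^{(1)} \subset \X_1^{(1)}) \dashrightarrow \cdots \dashrightarrow (X_1^{(7)} \subset \X_1^{(7)})
\]
of $\Q$-Gorenstein relative MMP steps over $\D$. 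Because all steps are flips or divisorial contractions of type $k1A$ or $k2A$ with $b_2=1$, the general fibers $(X_1)_s$ and $(X_1^{(7)})_s$ are birational smooth projective surfaces.

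It remains to recognize the final surface as rational. The terminal dotted graph of Figure~\ref{f6} contains a simple triple point, which means the minimal resolution $\widetilde{X}_1^{(7)}$ carries a $(-1)$-curve meeting a chain of rational curves in a way that exhibits a pencil of rational curves; contracting the displayed $(-1)$-curves from $\widetilde{X}_1^{(7)}$ produces a smooth projective surface with a $\bP^1$-fibration (equivalently a Mori fiber space). A $\Q$-Gorenstein smoothing preserves this ruling on the nearby fiber, so $(X_1^{(7)})_s$ is rational, and hence so is $(X_1)_s$ by birational invariance. This proves the first assertion of Proposition~\ref{p5}.

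For the boundary-curve statement, Proposition~\ref{p4} shows that $X$ has no local-to-global obstructions, so the versal $\Q$-Gorenstein deformation of $X$ is smooth of dimension equal to the sum of the versal deformation dimensions of $P_1$ and $P_2$, namely two. The axis corresponding to smoothing $P_2$ while preserving $P_1$ is a smooth curve germ in $\Def^{\QG}(X)$; since $K_X$ is ample (Proposition~\ref{p4}), this family is a curve germ in $\overline{\cM}_{K^2,\chi}$. Its generic point is a stable surface with a single Wahl singularity of type $\frac{1}{252^2}(1, 252 \cdot 145 - 1)$, and by the rationality established above this generic member is a rational stable surface. The main obstacle is the bookkeeping: each MMP step requires computing the new Wahl singularities, the new $\delta$, and the adjacency data according to Propositions~\ref{k1Ainvariants} and~\ref{k1Adegeneratestok2A}, and one must check that the process terminates precisely after the seven arrows shown in Figure~\ref{f6} rather than requiring additional steps or stalling earlier.
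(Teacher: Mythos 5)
Your overall plan---running the explicit $k1A/k2A$ MMP of Figure~\ref{f6} step by step and then identifying the general fiber of the final family---is the same as the paper's. But the crucial last step is wrong as you have written it. You pass to the minimal resolution $\widetilde{X}_1^{(7)}$ of the final central fiber, exhibit a $\bP^1$-fibration there, and then assert that ``a $\Q$-Gorenstein smoothing preserves this ruling on the nearby fiber.'' That implication is false when the central fiber is singular: rationality (or ruledness) of a central fiber with Wahl singularities says nothing about the general fiber of its $\Q$-Gorenstein smoothing. Indeed the surface $X$ of this very section is a rational surface with two Wahl singularities (it is a contraction of a blowup of a rational elliptic surface), yet its $\Q$-Gorenstein smoothing is a simply connected surface of general type with $K^2=4$ --- this Lee--Park mechanism is the entire point of the construction. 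So your argument, applied verbatim one step earlier in the chain, would ``prove'' a false statement.

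The paper's proof is a one-liner precisely because the relevant observation is different: after the seven steps of Figure~\ref{f6} the central fiber $X_1^{(7)}$ is itself \emph{nonsingular} (the divisorial contractions and flips have eliminated all Wahl singularities). Once the central fiber is smooth, the family is a smooth morphism near $0$, the general fiber is deformation equivalent to the central fiber, and rationality is a deformation invariant of smooth projective surfaces ($q$ and $\kappa$ are deformation invariants, and $q=0$, $\kappa=-\infty$ characterize rational surfaces); birational invariance then transfers the conclusion back to $(X_1)_s$. To repair your proof you must verify, from the bookkeeping of Propositions~\ref{k1Ainvariants} and~\ref{k1Adegeneratestok2A} that you correctly set up, that the terminal dual graph corresponds to a smooth surface, and only then invoke deformation invariance; the ruling you see in the last graph is then evidence that this smooth surface is rational, not a structure to be ``carried'' across a smoothing of a singular fiber. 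Your treatment of the boundary-curve statement via Proposition~\ref{p4} and the two-dimensional unobstructed deformation space is fine.
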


\begin{proof} The last central fiber in the sequence of birational
transformations shown in Figure \ref{f6} is nonsingular.
\end{proof}

\begin{figure}[htbp]
\includegraphics[width=9cm]{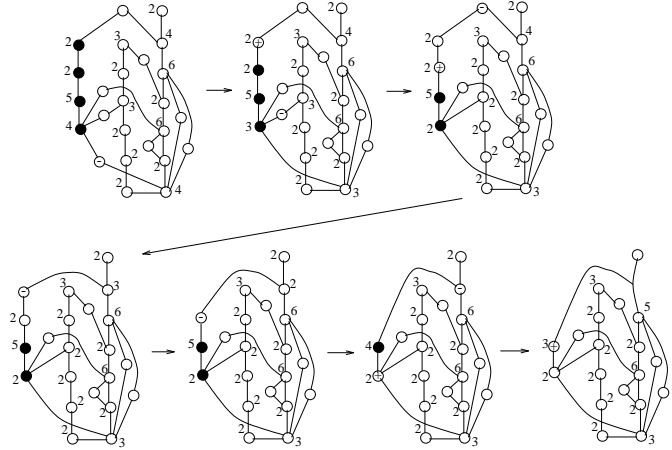}
\caption{Several flips and divisorial contractions} \label{f6}
\end{figure}

We now do it for $P_2 \in X$. As before, consider a $\Q$-Gorenstein deformation of $X$ which preserves the singularity $P_2$ and smooths $P_1$. Let $X_2$ be the minimal resolution of $P_2 \in X$. Since this deformation is trivial around $P_2$, we resolve minimally simultaneously $P_2$ to obtain a $\Q$-Gorenstein smoothing $(X_2 \subset \X_2) \to (0 \in \D)$ of $X_2$.

We now perform $20$ flips, the first $19$ are of type $k1A$ and the
last of type $k2A$. The corresponding sequence of dual graphs is shown in
Figures \ref{f7} and \ref{f8}. (We clarify that in Figures \ref{f7} and \ref{f8} there is one darker edge connecting a $(-2)$-curve and a $(-5)$-curve which are tangent. This is after the 9th flip.)

\begin{figure}[htbp]
\includegraphics[width=9.5cm]{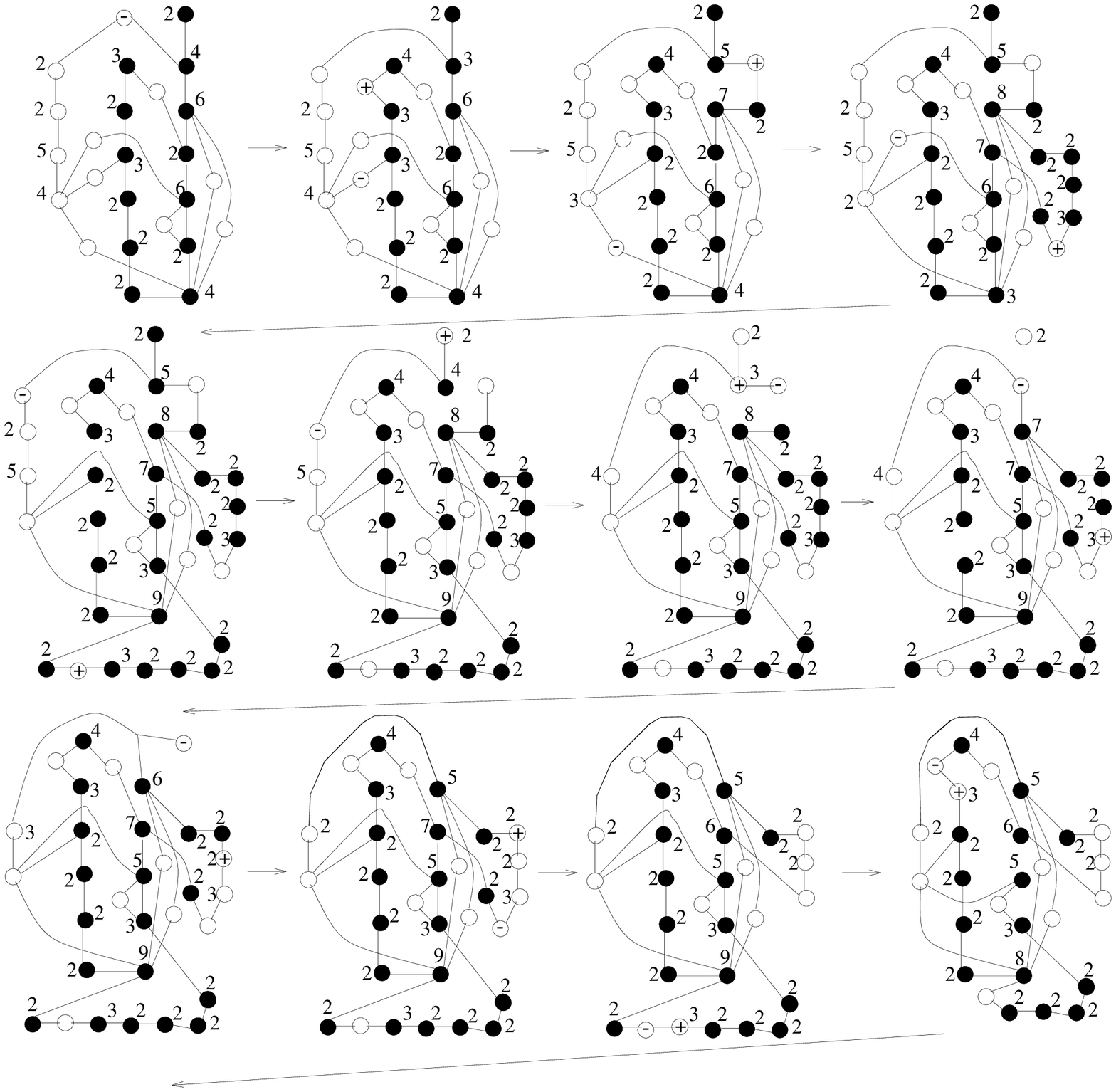}
\caption{}
\label{f7}
\end{figure}

\begin{figure}[htbp]
\includegraphics[width=9.5cm]{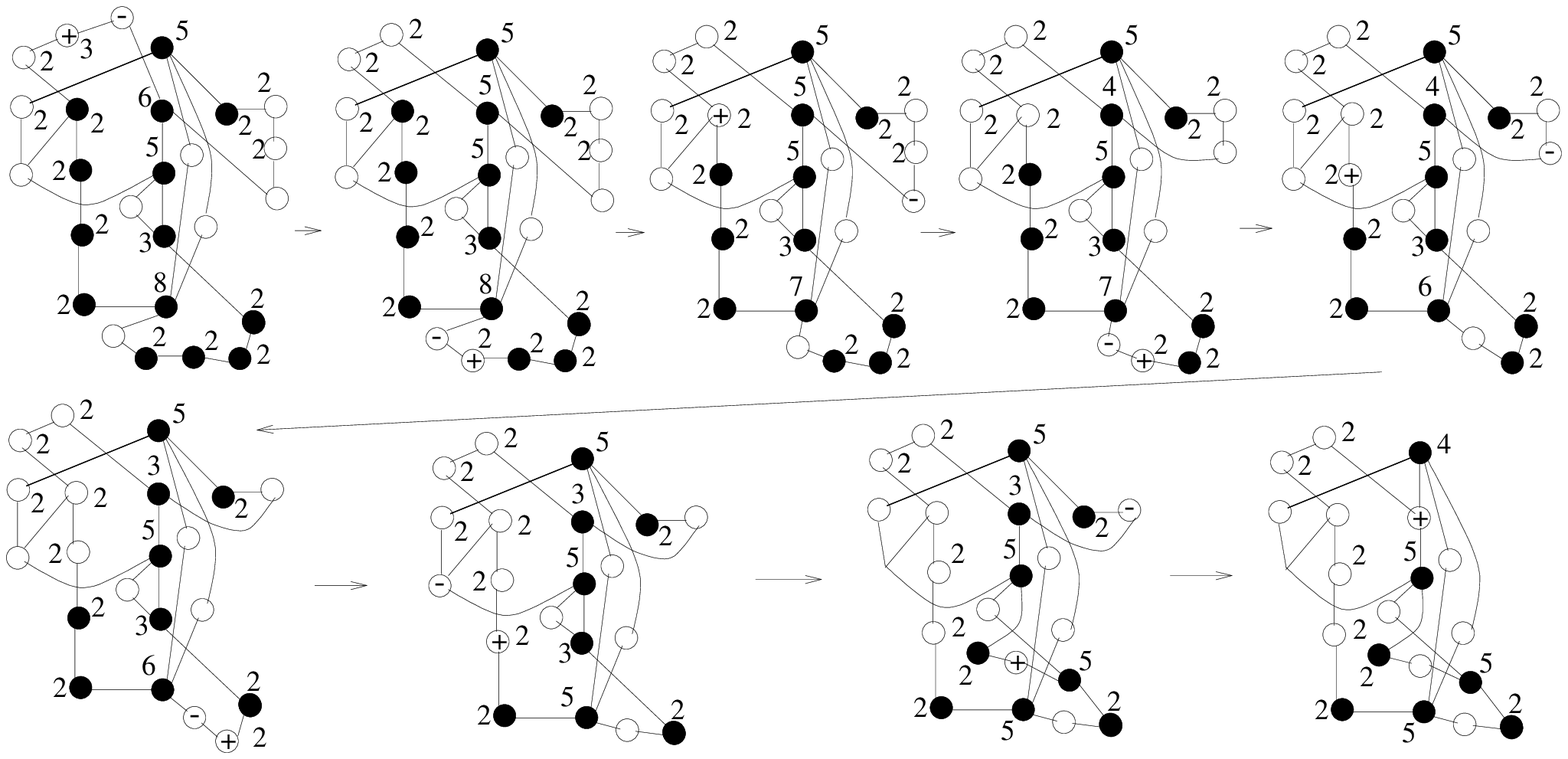}
\caption{}
\label{f8}
\end{figure}

We will prove that the canonical class of the last special fiber (see lower right corner of Figure \ref{f8}) is nef. To save notation, we first contract the configuration $(-5)-(-2)-(-1)-(-5)-(-2)-(-1)-(-5)-(-2)$ into
$(-4)-(-2)-(-3)-(-2)$. Let $X_{20}$ be the resulting surface (last special fiber), and
let $\widetilde{X}_{20}$ be its minimal resolution. In Figure
\ref{f9} we show relevant curves in the blow-up of $\widetilde{X}_{20}$ at one point (the corresponding $(-1)$-curve is $F_9$). Notice that the singular $X_{20}$ has two T-singularities given by the configurations $F+E_1+M+L_2$ and $B$.

The $F_i$'s in Figure \ref{f9} are the exceptional curves to
obtain the minimal initial elliptic fibration (with singular fibers $I_7, 2I_1, III$) from
$\P^2$. The $E_i$'s are the exceptional curves for the four
further blow-ups needed to get $\widetilde{X}_{20}$. Notice we do not
have the $9$th blow-up associated to $F_9$.

Let $\sigma \colon
\widetilde{X}_{20} \rightarrow X_{20}$ be the minimal resolution
of both T-singularities, and let $\pi \colon \widetilde{X}_{20}
\rightarrow \P^2$ be the composition of the $8+4$ blow-ups. In
that way, if $H$ is the class of a line in $\P^2$, then the
canonical class of $\widetilde{X}_{20}$ is
$$K_{\widetilde{X}_{20}} \sim -3H + \sum_{i=1}^8 F_i +F_3 + 2F_4 + F_6 + 2F_7 + E_1+2E_2 + E_3 + E_4.$$

Now we write down $\sigma^* K_{X_{20}}$ $\Q$-effectively. We have
$$ \frac{1}{2} (F+ F_1 + F_2 + 2F_3 + 3F_4 + F_5 + 2F_6 + 3F_7 + F_8 + 2E_1 + 3E_2) \equiv \frac{3}{2}H $$
$$ \frac{1}{2} (B + F_2+ 2F_3 + 3F_4 + F_5 + 2F_6 + 3F_7 + E_3 + E_4) \equiv H$$
$$ \frac{1}{2} (M + F_1 + E_1 + E_2 + E_3 + E_4) \equiv \frac{1}{2} H$$ and so $\sigma^* K_{X_{20}} \equiv \frac{1}{6} F + \frac{1}{6} M
+ \frac{1}{6} E_1 + \frac{1}{2} F_8 + \frac{1}{3} L_2$. Then
$K_{X_{20}}$ is nef and $K_{X_{20}}^2=0$.

\begin{figure}[htbp]
\includegraphics[width=10cm]{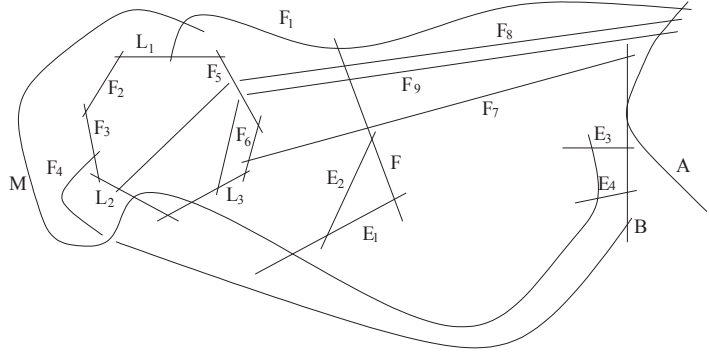}
\caption{The initial elliptic fibration with singular fibers $I_7,2I_1,III$ blown-up four times} \label{f9}
\end{figure}

We know that the
$\Q$-Gorenstein smoothing of $X_{20}$ is simply connected with
$p_g=0$, and it has a $(-4)$ and a $(-5)$ smooth rational curves $C_4$ and $C_5$ respectively, coming from the singularity at $P_2$. If $X_s$ is the general fiber of the $\Q$-Gorenstein smoothing of $X_{20}$, then $K_{X_s}\equiv (1 -\frac{1}{a} - \frac{1}{b}) G$, where $G$ is the general fiber of the elliptic fibration of $X_s$, and $a,b$ are the multiplicities of the two multiple fibers. In this way, we find $a=2$ and $b=3$ using the equations $K_{X_s} \cdot C_4=2$ and $K_{X_s} \cdot C_5 =3$. Therefore $X_s$ is a Dolgachev surface of type $2,3$ \cite{BHPV04}, p.383.

\begin{proposition}
The general fiber of  $(X_2 \subset \X_2) \to (0 \in \D)$ is a Dolgachev surface of type $2,3$ which has a chain of four $\P^1$'s with consecutive self-intersections $(-4)$, $(-5)$, $(-2)$, $(-2)$. In this way the singularity $P_2 \in X$ labels a boundary curve in $\overline{\cM}_{K^2,\chi}$ which generically parametrizes Dolgachev surfaces of type $2,3$ containing the exceptional divisor of $\frac{1}{7^2}(1,7 \cdot 5 -1)$. \label{p5}
\end{proposition}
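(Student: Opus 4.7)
The plan is to run the explicit MMP from Theorem~\ref{t3} on the $\bQ$-Gorenstein smoothing $(X_2 \subset \X_2) \to (0 \in \D)$, carry out the sequence of twenty flips depicted in Figures~\ref{f7} and~\ref{f8} to obtain a new central fiber $X_{20}$, verify that $K_{X_{20}}$ is nef, and finally identify the general fiber as a Dolgachev surface. Each individual flip is determined by the combinatorial data (four-tuple $(a_1',m_1',a_2',m_2')$) of the corresponding extremal neighborhood, so the invariants of each birational step, together with the new singularities and curves produced, are read off directly from the construction in Section~\ref{s2} (Propositions~\ref{k1Ainvariants} and~\ref{k1Adegeneratestok2A}). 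Since all birational transformations occur in a neighborhood of the flipping/contracted curve and the $\bQ$-Gorenstein smoothing is preserved throughout, at each step we continue to have a $\bQ$-Gorenstein smoothing whose general fiber is diffeomorphic to the general fiber of $\X_2 \to \D$.

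To verify that $X_{20}$ is stable, I would contract the chain $(-5)-(-2)-(-1)-(-5)-(-2)-(-1)-(-5)-(-2)$ visible in the final diagram of Figure~\ref{f8} to simplify it to the configuration $(-4)-(-2)-(-3)-(-2)$, so that $X_{20}$ has exactly two T-singularities corresponding to the chain $F+E_1+M+L_2$ and the curve $B$ from Figure~\ref{f9}. Pulling back to the minimal resolution $\widetilde{X}_{20}$ and expressing it as the blowup of $\bP^2$ along the appropriate twelve points (eight coming from the initial elliptic fibration $I_7+2I_1+III$ and four further infinitely-near points), the canonical class $K_{\widetilde{X}_{20}}$ is expressible in terms of $H$ and the exceptional divisors $F_i,E_j$. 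Using the three $\bQ$-linear relations
\begin{align*}
\tfrac{1}{2}(F+F_1+F_2+2F_3+3F_4+F_5+2F_6+3F_7+F_8+2E_1+3E_2) &\equiv \tfrac{3}{2}H,\\
\tfrac{1}{2}(B+F_2+2F_3+3F_4+F_5+2F_6+3F_7+E_3+E_4) &\equiv H,\\
\tfrac{1}{2}(M+F_1+E_1+E_2+E_3+E_4) &\equiv \tfrac{1}{2}H,
\end{align*}
one obtains $\sigma^* K_{X_{20}} \equiv \tfrac{1}{6}F+\tfrac{1}{6}M+\tfrac{1}{6}E_1+\tfrac{1}{2}F_8+\tfrac{1}{3}L_2$, a $\bQ$-effective combination whose support contains no curve with negative intersection with $\sigma^* K_{X_{20}}$. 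This yields $K_{X_{20}}$ nef and $K_{X_{20}}^2=0$, so $X_{20}$ is a stable surface and the MMP has terminated.

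Finally, I would identify the general fiber $X_s$ of the $\bQ$-Gorenstein smoothing of $X_{20}$. By the same local-to-global argument used in Proposition~\ref{p4}, $X_s$ is simply connected with $p_g=0$, and since $K^2=0$ with $K$ nef, the Enriques--Kodaira classification forces $X_s$ to be a properly elliptic surface $\phi:X_s\to\bP^1$ with canonical bundle formula $K_{X_s}\equiv\bigl(1-\tfrac{1}{a}-\tfrac{1}{b}\bigr)G$, where $G$ is the class of a general fiber and $a,b$ are the multiplicities of the (at most two) multiple fibers of $\phi$. The two smooth rational curves $C_4, C_5$ of self-intersection $-4,-5$ persisting from the minimal resolution of $\frac{1}{7^2}(1,34)$ deform to the general fiber, and the adjunction/intersection equations $K_{X_s}\cdot C_4 = 2$ and $K_{X_s}\cdot C_5 = 3$ (read off from $K_{X_s}\equiv (1-1/a-1/b)G$ and the intersections of $C_4,C_5$ with $G$ computed torically in $\widetilde{X}_{20}$) force $\{a,b\}=\{2,3\}$. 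Thus $X_s$ is a Dolgachev surface of type $(2,3)$ containing the chain $(-4),(-5),(-2),(-2)$, and this describes a generic point of the boundary divisor in $\overline{\cM}_{K^2,\chi}$ labelled by the singularity $P_2$. The main obstacle in the argument is purely bookkeeping: propagating the explicit data of each of the twenty flips through Figures~\ref{f7} and~\ref{f8} to arrive at the final configuration whose nefness can be verified directly.
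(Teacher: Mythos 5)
Your proposal is correct and follows essentially the same route as the paper: carry out the twenty flips of Figures~\ref{f7}--\ref{f8}, contract the residual chain to exhibit $X_{20}$ with its two T-singularities, write $\sigma^*K_{X_{20}}$ as the $\bQ$-effective sum $\tfrac16 F+\tfrac16 M+\tfrac16 E_1+\tfrac12 F_8+\tfrac13 L_2$ to get nefness and $K^2=0$, and then pin down the multiplicities $(2,3)$ from $K_{X_s}\cdot C_4=2$, $K_{X_s}\cdot C_5=3$ via the canonical bundle formula. The only caveat is that, as you acknowledge, the real content is the bookkeeping of the twenty flips, which the paper likewise presents only through the dual-graph figures.
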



\vspace{0.3cm}

{\tiny Department of Mathematics and Statistics,

University of Massachusetts,

Amherst, MA, USA.}

\vspace{0.3cm}

{\tiny Facultad de Matem\'aticas,

Pontificia Universidad
Cat\'olica de Chile,

Santiago, Chile.}

\end{document}